\newcommand{\C}{\mathbb{C}}
\newcommand{\R}{\mathbb{R}}
\newcommand{\Z}{\mathbb{Z}}
\newcommand{\Q}{\mathbb{Q}}
\newcommand{\F}{\mathbb{F}}
\newcommand{\G}{\mathbb{G}}
\newcommand{\N}{\mathbb{N}}
\renewcommand{\P}{\mathbb{P}}
\newcommand{\A}{\mathbb{A}}
\newcommand{\cO}{\mathcal O}
\newcommand{\Spec}{\operatorname{Spec}}
\newcommand{\Sp}{\operatorname{Sp}}
\newcommand{\Spf}{\operatorname{Spf}}
\newcommand{\Spa}{\operatorname{Spa}}
\newcommand{\MaxSpec}{\operatorname{MaxSpec}}
\newcommand{\et}{\text{\'et}}
\newcommand{\ad}{\text{ad}}
\newcommand{\perf}{\text{perf}}
\DeclareMathOperator{\Hom}{Hom}
\DeclareMathOperator{\Ext}{Ext}
\DeclareMathOperator{\Pic}{Pic}
\DeclareMathOperator{\Gal}{Gal}
\DeclareMathOperator{\tIm}{Im}
\DeclareMathOperator{\GL}{GL}
\DeclareMathOperator{\An}{An}
\DeclareMathOperator{\Frob}{Frob}
\DeclareMathOperator{\Fil}{Fil}
\DeclareMathOperator{\Gr}{Gr}
\newcommand{\fE}{\mathfrak E}
\newcommand{\fA}{\mathfrak A}
\newcommand{\fL}{\mathfrak L}
\newcommand{\fT}{\mathfrak T}
\newcommand{\fX}{\mathfrak X}
\newcommand{\fG}{\mathfrak G}
\newcommand{\fp}{\mathfrak p}
\newcommand{\fTn}{\mathfrak T_{\alpha/p^n}}
\newcommand{\fEn}{\mathfrak E_{\alpha/p^n}}
\newcommand{\fAn}{\mathfrak A_{\alpha/p^n}}
\newcommand{\fLn}{\mathfrak L_{\alpha/p^n}}
\newcommand{\cL}{\mathcal{L}}
\newcommand{\cU}{\mathcal{U}}
\theoremstyle{theorem}
\newtheorem{theorem}{Theorem}[section]
\newtheorem{lemma}[theorem]{Lemma}
\newtheorem{proposition}[theorem]{Proposition}
\newtheorem{corollary}[theorem]{Corollary}
\newtheorem{conjecture}[theorem]{Conjecture}
\newtheorem{DefProp}[theorem]{Defintion/Proposition}
\newtheorem{question}[theorem]{Question}
\newtheorem{definition}[theorem]{Definition}
\theoremstyle{definition}
\newtheorem{notation}[theorem]{Notation}
\newtheorem{example}[theorem]{Example}
\newtheorem{remark}[theorem]{Remark}
\theoremstyle{remark}
\newtheorem*{theorem*}{Theorem}
\newtheorem*{lemma*}{Lemma}
\numberwithin{theorem}{subsection}
\title[Perfectoid covers of abelian varieties and weight-monodromy]{Perfectoid covers of abelian varieties and the weight-monodromy conjecture}
\author{Peter Wear}
\begin{document}
\maketitle

\begin{abstract}
Deligne's weight-monodromy conjecture gives control over the poles of local factors of $L$-functions of varieties at places of bad reduction. His proof in characteristic $p$ was a step in his proof of the generalized Weil conjectures. Scholze developed the theory of perfectoid spaces to transfer Deligne's proof to characteristic 0, proving the conjecture for complete intersections in toric varieties. Building on Scholze's techniques, we prove the weight-monodromy conjecture for complete intersections in abelian varieties.
\end{abstract}

\section{Introduction}\label{sec:intro}

\subsection{The weight-monodromy conjecture}

The goal of this paper is to prove new cases of Deligne's weight-monodromy conjecture, building off of techniques developed by Scholze. In the introduction, we motivate and state the conjecture, sketch Scholze's approach, and then give an outline of the paper. To simplify the exposition of the introduction we will work over $\Q$, but everything we state holds for any number field. 

Let $X$ be a variety over $\Q$ of dimension $d$. The $L$-function of $X$ is a Dirichlet series defined as a product of local zeta factors over each prime, and is a fundamental object of study in the Langlands program. There are many important conjectures about $L$-functions: they should come from automorphic representations, satisfy a functional equation, and contain lots of information about the variety $X$. For example, the Birch and Swinnerton-Dyer conjecture says that the rank of the group of rational points on an elliptic curve $E$ should be given by the order of the zero of the $L$-function $L(E,s)$ associated to $E$ at $s=1$. A stronger version of the conjecture gives an interpretation of the leading coefficient of the Taylor series at 1 in terms of fundamental invariants of $E$.

When $X$ has good reduction at a prime $p$, the Weil conjectures give us the local factor at $p$. The Riemann hypothesis implies that the poles of the $i$th local factors at such primes will have real part $i/2$. But there will generally be finitely many primes over which $X$ has bad reduction, so we'd like to understand the local factors at these primes as well. Deligne's weight-monodromy conjecture implies that even at primes of bad reduction, all of the poles of the $i$th piece of the local zeta function will have real part at most $i/2$. 

We now give an exposition of how these local zeta factors are constructed via \'etale cohomology, and give a precise statement of the weight-monodromy conjecture. Note that this is just for motivation: the reader just interested in the new arguments can skip straight to the statement of Conjecture \ref{conj:WMC}. Let $\ell$ be a prime different from $p$, let $i$ be an integer between $0$ and $2d$. The $i$th $\ell$-adic cohomology group $V:=H^i_{\et}(X_{\overline\Q_p},\overline\Q_\ell)$ is a finite-dimensional $\overline\Q_\ell$-vector space. The absolute Galois group $G_{\Q_p}:=\Gal(\overline\Q_p/\Q_p)$ acts on $X_{\overline\Q_p}$. This induces an action on $V$, giving a Galois representation \[\rho:G_{\Q_p}\rightarrow \GL(V).\] 

Roughly speaking, this representation is controlled by two parts: the action of Frobenius and the monodromy operator. Frobenius comes from $G_{\F_p}$, the Galois group of the residue field, and is used to define the weights of the representation, while the inertia in the Galois group leads to monodromy. As a first approximation, the weight-monodromy conjecture says that these two facets of the representation interact nicely.

The inertia subgroup $I_{\Q_p}$ of $G_{\Q_p}$ is defined via the short exact sequence \[1\rightarrow I_{\Q_p}\rightarrow G_{\Q_p}\rightarrow G_{\F_p}\rightarrow 1.\] Fix an element $\Phi\in G_{\Q_p}$ which maps to geometric Frobenius: the map $\overline\F_p\rightarrow\overline\F_p$ sending $x\mapsto x^{1/p}$. The local zeta factor of $X$ at $p$ is defined as $\operatorname{det}(1-p^{-s}\Phi|V^{I_{\Q_p}})$.

In the good reduction case, $X$ has a proper smooth model $\fX$ over $\Z_p$ with special fiber $\widetilde\fX/\F_p$. By proper smooth base change, there is a Galois-equivariant isomorphism 
\[H^i_{\et}(X_{\overline\Q_p},\overline\Q_{\ell})\cong H^i_{\et}(\widetilde\fX_{\overline\F_p},\overline\Q_\ell),\]
where the right side has an action of $G_{\F_p}$. In this case, the inertia group acts trivially on $V$, so the local zeta factor is now $\operatorname{det}(1-p^{-s}\Phi|V)$. The Weil conjectures tell us that this is a polynomial with variable $T:=p^{-s}$ whose roots are \emph{Weil numbers of weight $i$}: algebraic integers with absolute value $p^{i/2}$ for any embedding into $\C$. 

In the bad reduction case, the inertia group acts non-trivially on $V$. Grothendieck showed that the action of the pro-$\ell$ part of the tame inertia of $G_{\Q_p}$ leads to a nilpotent operator $N:V\rightarrow V(-1)$ called the \emph{monodromy operator} (here $V(-1)$ denotes a Tate twist of $V$). This operator gives rise to the \emph{monodromy filtration} on V: a finite increasing filtration $\{V_j\}_{j\in\Z}$ such that $N(V_j)\subset V_{j-2}$, and such that $N^j$ induces an isomorphism between the $(i-j)$th and $(i+j)$th graded pieces of the filtration.

The other half of the picture is the weight filtration. The image under $\rho$ of our lift $\Phi$ of geometric Frobenius is a matrix $\rho(\Phi)\in\GL(V)$, which we can use to give an generalized eigenspace decomposition \[V=\displaystyle\oplus^{2i}_{j=0} W'_j.\] Here the eigenvalues of $\Phi$ acting on $W'_j$ are Weil numbers of weight $j$. If we chose a different lift of Frobenius, we get a different decomposition of $V$. However, the weights and dimensions remain the same, as well as the \emph{weight filtration} defined by \[W_m:=\displaystyle\oplus_{j=0}^m W'_j.\] For any choice of $\Phi$, we have $N\Phi=p\Phi N$, so the monodromy operator interacts nicely with the weight filtration: $NW_m\subset W_{m-2}$.

\begin{conjecture}[Deligne, \cite{Hodge}]\label{conj:WMC}
For a proper smooth variety over a local field, the weight filtration is the same as the monodromy filtration.
\end{conjecture}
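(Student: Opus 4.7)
The plan is to follow Scholze's perfectoid strategy, now with the ambient variety taken to be an abelian variety $A$ rather than a toric variety. Over an equal-characteristic local field the weight-monodromy conjecture is a theorem of Deligne, proved via the Weil conjectures. The idea is that, given a complete intersection $X\subset A$ over $\Q_p$, one should construct an equal-characteristic companion $X^\flat$ whose $\ell$-adic cohomology is Galois-equivariantly isomorphic to that of $X$; Deligne's theorem applied to $X^\flat$ then transports back to give the conjecture for $X$. The bridge between the two worlds is a perfectoid cover of $A$ together with the tilting equivalence.

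First, I would construct the perfectoid cover of $A_{\C_p}$ as the inverse limit $A_\infty := \varprojlim_{[p]} A$ along the multiplication-by-$p$ isogeny. The map $[p]$ is \'etale of degree $p^{2\dim A}$, and on the formal group at the identity it is Frobenius-like: it raises formal coordinates approximately to the $p$-th power. One must check that, after completing, $A_\infty$ is represented by a perfectoid space in the sense of Scholze, and identify its tilt, which should be closely related to the universal cover of the $p$-divisible group $A[p^\infty]$ and, on the \'etale level, to an abelian-type object $A^\flat$ in equal characteristic $p$.

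Second, for a complete intersection $X\subset A$ cut out by global sections $s_1,\dots,s_r$ of ample line bundles, I would pull the $s_i$ back to $A_\infty$ and apply an approximation argument in the style of Scholze's toric approximation lemma. This should produce tilted sections $s_i^\flat$ on $A^\flat$ whose common vanishing locus $X^\flat$ is an equal-characteristic complete intersection. Scholze's primitive comparison theorem then furnishes a Galois-equivariant isomorphism of $\ell$-adic \'etale cohomology between $X$ and $X^\flat$, matching the weight and monodromy filtrations on each side. Deligne's equal-characteristic theorem applied to $X^\flat$ closes the argument.

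The principal obstacle is the first step: producing and analyzing the perfectoid cover of $A$. For toric varieties the analogous cover is the manifestly perfectoid tower obtained by adjoining all $p$-th power roots of monomial coordinates, and the tilt is transparent. Abelian varieties lack such global coordinates, so one must leverage the formal-group structure near the identity together with the \'etale nature of $[p]$ on the complement, and identify the resulting tilted object precisely enough to transport ample line bundles and their sections. Ensuring that the approximation step yields an honest complete intersection $X^\flat$, rather than just a cohomological shadow, and carefully tracking the Galois action through tilting will be the other delicate points.
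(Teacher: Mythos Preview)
The statement you were asked about is Conjecture~\ref{conj:WMC}, which the paper does \emph{not} prove: it is stated as Deligne's open conjecture. What the paper proves is the special case Theorem~\ref{thm:WMC abelian varieties}, weight-monodromy for complete intersections in abelian varieties. Your proposal is really a sketch for this special case, not for the full conjecture, so the first thing to note is that your proposal does not address the stated target.

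Restricting attention to the special case, your overall strategy matches the paper's: build a perfectoid cover $A_\infty\sim\varprojlim_{[p]}A$, identify its tilt as coming from an abelian variety $A'$ over $K^\flat$, approximate the defining sections of $X$ by sections over $A'$, and invoke Deligne's equal-characteristic theorem. However, there are two genuine gaps. First, you expect a Galois-equivariant \emph{isomorphism} $H^i(X)\cong H^i(X^\flat)$. The approximation lemma does not produce the tilt of $X$; it produces a nearby hypersurface $Z'\subset A'$ whose preimage lies in a small tube around $X$. The paper therefore only obtains $G$-equivariant maps $f_i^*:H^i(Y_{\C_p})\to H^i(Z_{\C_p^\flat})$, compatible with cup product, and proves injectivity via a Poincar\'e duality argument (Proposition~\ref{prop:Sch-framework}); the comparison theorem you invoke does not apply because $Z'$ is not literally the tilt of anything. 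Second, and this is the paper's main new technical point beyond Scholze's toric argument, on the characteristic-$p$ side the map $[p]:A'\to A'$ is \emph{not} a homeomorphism and does not induce an isomorphism of \'etale topoi, so there is no analogue of Scholze's projection $\pi:\P^n_{K^\flat}\to\P^n_K$. The paper resolves this by factoring $[p]$ as a radicial map followed by a finite \'etale map and using trace maps to push cohomology classes down the tower. Your proposal does not anticipate this obstruction. Finally, your suggestion to build $A_\infty$ from the formal group near the identity is not how the paper proceeds: it uses Raynaud uniformization $A\cong E/M$ and explicit formal models (``$F$-towers'') to construct $A_\infty$, and deformation theory of semi-abelian schemes to produce $A'$ with matching special fiber.
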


Deligne proved his conjecture over characteristic $p$ local fields (under a simplifying assumption which was later removed in \cite{Terasoma} and \cite{Ito}). In mixed characteristic, many special cases are known - see the introduction of \cite{PS} for an overview - but the general conjecture is open. In his thesis \cite{PS}, Scholze introduced perfectoid spaces as a bridge between characteristics 0 and $p$. He proved the weight-monodromy conjecture for complete intersections in toric varieties by using perfectoid spaces to reduce to the characteristic $p$ case that Deligne settled. The goal of this paper is to extend Scholze's techniques from toric varieties to abelian varieties, resulting in our main theorem.

\begin{theorem}\label{thm:WMC abelian varieties}
Let $k$ be a local field of residue characteristic $p$. Let $A$ be an abelian variety over $k$, let $Y$ be a geometrically connected, proper smooth variety which is a set-theoretic complete intersection in $A$. Then the weight-monodromy conjecture holds for $Y$.
\end{theorem}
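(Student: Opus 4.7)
The plan is to adapt Scholze's perfectoid strategy for complete intersections in toric varieties \cite{PS}, replacing the tower of $p$-power roots of toric coordinates by the $[p]$-multiplication tower on $A$. Fix a perfectoid completion $C$ of a sufficiently large algebraic extension of $k$ -- for instance, the completion of $k$ adjoined with all $p$-power torsion of $A$. The first step is to construct, as a perfectoid space over $C$, the inverse limit
\[
A_\infty \;:=\; \varprojlim_n A^{\ad}_C,
\]
with transition maps $[p]$. Perfectoidness can be established directly from a suitable formal integral model on which $[p]$ is, up to a finite \'etale contribution, a lift of Frobenius modulo $p$; alternatively one invokes the Scholze--Weinstein description of the universal cover of the $p$-divisible group $A[p^\infty]$ as a perfectoid space.

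The second step is to tilt. The perfectoid space $A_\infty^\flat$ lives over $C^\flat$, and via Scholze--Weinstein its underlying $p$-divisible data corresponds to a $p$-divisible group over $\cO_{C^\flat}$. I would globalize this to identify $A_\infty^\flat$ as a perfectoid cover of (the analytification of) an abelian variety $A^\flat$ over a local field of characteristic $p$; some care is required here to handle non-ordinary or non-good reduction, possibly by working with a semiabelian or abeloid replacement. Pulling back the given complete intersection $Y$ along $A_\infty \to A_C$ yields a Zariski-closed subspace $Y_\infty \subset A_\infty$, and its tilt $Y_\infty^\flat$ sits inside $A_\infty^\flat$.

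With this diagram in hand, I would apply a version of Scholze's approximation lemma to produce a set-theoretic complete intersection $Y^\flat \subset A^\flat$ in characteristic $p$ whose preimage in $A_\infty^\flat$ approximates $Y_\infty^\flat$ closely enough to induce the same $\ell$-adic cohomology, compatibly with weight and monodromy filtrations. Deligne's theorem, in the forms of \cite{Terasoma} and \cite{Ito}, then gives the weight-monodromy conjecture for $Y^\flat$, and transporting through the chain (approximation, tilting, perfectoid pullback) yields Theorem~\ref{thm:WMC abelian varieties} for $Y$.

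The principal obstacle is the approximation step. In the toric setting, global monomial coordinates make the perturbation of defining equations essentially explicit; abelian varieties admit no analogous coordinates. The hard work will be to formulate a version of Scholze's approximation lemma adapted to the $[p]$-tower, working affinoid-locally using formal group laws or a theta embedding into projective space, and then to verify that the characteristic $p$ subvariety produced is genuinely a set-theoretic complete intersection in $A^\flat$ of the correct dimension. The remaining ingredients -- perfectoidness of $A_\infty$, the tilting identification, and the cohomological transport of weights and monodromy -- should follow by importing the perfectoid machinery of \cite{PS} with only formal changes.
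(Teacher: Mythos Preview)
Your overall architecture matches the paper's, but two steps you call routine are where the actual content lies.

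First, the tilting step. Invoking Scholze--Weinstein on $A[p^\infty]$ gives you a $p$-divisible group over $\cO_{C^\flat}$, not an abelian variety, and there is no formal way to ``globalize'' this. The paper instead works through Raynaud uniformization: it builds explicit formal models of $A$ (via hypercube decompositions of the torus part and formal models of the good-reduction quotient) forming an $F$-tower, and then uses deformation theory of semi-abelian schemes to produce an abeloid $A'$ over $K^\flat$ whose $F$-tower has the same special fiber. Showing that $A'$ can be chosen to be an abelian variety, rather than merely an abeloid, requires a further argument: one must transfer an ample line bundle from $A$ to $A'$, which the paper does by developing a theory of $\G_{m,\infty}$-torsors and tilting the perfectoid cover of the Poincar\'e bundle on $B\times B^\vee$. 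Your phrase ``some care is required here'' covers two full sections of the paper.

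Second, and more concretely wrong, is your claim that the cohomological transport ``should follow by importing the perfectoid machinery of \cite{PS} with only formal changes.'' In the toric case Scholze has a projection $\pi:\P^{n,\ad}_{K^\flat}\to\P^{n,\ad}_K$ on \'etale topoi because the Frobenius lift $\varphi$ on $\P^n_{K^\flat}$ is a homeomorphism. The map $[p]:A'\to A'$ is not: it has degree $p^{2\dim A'}$ on geometric points. So there is no map $A'\to A$ on topoi, and you cannot simply pull back. The paper handles this by factoring $[p]$ on $A'$ as a radicial map (which does induce an isomorphism of \'etale topoi) followed by a finite \'etale map, then using the trace along the \'etale part to build maps $H^i(q_1^{-1}(\tilde Y)_{\C_p^\flat},\Z/\ell^m\Z)\to H^i(\tilde Y'_{\C_p^\flat},\Z/\ell^m\Z)$ going in the required direction. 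Checking that the resulting $f^*_{2d}$ is an isomorphism then requires a separate argument using Chern classes. This is genuinely new relative to \cite{PS} and is not a formal adaptation.

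Your instinct on the approximation lemma is closer to the mark: the paper does not use a theta embedding but rather builds a perfectoid ring $R_\infty$ from global sections of a tower of line bundles $L_n$ (with $L_{n+1}^{\otimes p}\cong [p]^*L_n$), relying on the formal models of line bundles constructed earlier, and approximates the defining section of $Y$ inside $R_\infty$.
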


When $Y$ has dimension at least 3, the varieties that are provably handled by Theorem \ref{thm:WMC abelian varieties} are different from those provably handled by Scholze's theorem: see Remark \ref{rem:lefschetz}. 

\subsection{Scholze's approach via perfectoid spaces}

Our proof of Theorem \ref{thm:WMC abelian varieties} follows the same path as Scholze's proof for toric varieties in \cite[Theorem 9.6]{PS}. We sketch his proof here in the special case of a hypersurface $Y$ in projective space $\P^n_{\Q_p}$. A more detailed and general version of this argument is given in Section \ref{sec:weight-monodromy}.

Let $K$ denote the perfectoid field $\widehat{\Q_p(p^{1/p^\infty})}$, take the base change to $Y_K$ and consider the corresponding $G_K$-representation on \'etale cohomology. This doesn't change the weights as any lift of geometric Frobenius in $G_K$ gives one in $G_{\Q_p}$. It preserves the monodromy operator as it doesn't affect the pro-$\ell$ part of the Galois group. We can therefore prove weight-monodromy after making this base change.

The \emph{tilt} $K^\flat:=\varprojlim_{x\mapsto x^p} K\cong\widehat{\F_p((t^{1/p^\infty}))}$ of $K$ is a perfectoid field of characteristic $p$. The starting point of the theory of perfectoid spaces is the Fontaine-Wintenberger isomorphism of Galois groups $G_K\cong G_{K^\flat}.$ Deligne proved that weight-monodromy holds for $G_{K^\flat}$-representations coming from the \'etale cohomology of varieties over $K^\flat$. This isomorphism of Galois groups allows us to move this result to the $G_K$-representations that we hope to study.

Consider the map of adic spaces $\varphi:\P_K^{n,\ad}\rightarrow\P_K^{n,\ad}$ defined on points by sending $(x_0:\cdots:x_n)$ to $(x_0^p:\cdots:x_n^p)$. Iterating this map to get an inverse system, we get a perfectoid space \[\P_K^{n,\perf}\sim\varprojlim_{\varphi} \P_K^{n,\ad}.\] 
The space $\P_K^{n,\perf}$ isn't an inverse limit of adic spaces, but we get inverse limits if we restrict to the underlying topological spaces or \'etale topoi. This allows topological and \'etale cohomological arguments to work in this setting. The tilt $(\P_K^{n,\perf})^\flat$ is given by the analogous construction over $K^\flat$. The story here is even simpler as the map $\varphi_{K^\flat}:\P_{K^\flat}^{n,\ad}\rightarrow\P_{K^\flat}^{n,\ad}$ induces a homeomorphism of topological spaces and an isomorphism of \'etale topoi. 

Putting all of this together, we get a projection map $\pi:\P_{K^\flat}^{n,\ad}\rightarrow\P_K^{n,\ad}$ on the underlying topological spaces and \'etale topoi. This map is Galois-equivariant under the Fontaine-Wintenberger isomorphism. This suggests the following method for proving weight-monodromy: take a variety over $K$, embed it in projective space, pull back along $\pi$ to get something over $K^\flat$, and apply Deligne's result. The issue is, the pullback won't be a variety: it'll be a transcendental fractally mess. Scholze fixed this problem for hypersurfaces by showing that this pullback can be approximated by a hypersurface with the same $\ell$-adic cohomology.

\subsection{Outline}

With the preceding sketch in hand, we now have enough context give a full outline of this paper. In Section \ref{sec:geometry background}, we give background on the various approaches to non-archimedean geometry that we will use throughout. We have already seen that adic spaces are needed to define perfectoid spaces, but much of our actual work will be done in the more classical categories of rigid analytic spaces and formal schemes. We recall Raynaud's description of rigid spaces as the generic fibers of formal schemes and his uniformization of abelian varieties over non-archimedean fields. We then explain how to construct perfectoid spaces as inverse limits of rigid spaces via what we call ``$F$-towers": certain inverse systems of formal schemes. 

In Section \ref{sec:perfectoid covers}, we use $F$-towers to construct perfectoid covers of abelian varieties (more generally, of abeloids - the nonarchimedean analogue of complex tori) giving an alternate proof of the following theorem. 

\begin{theorem}[Blakestad, Gvirtz, Heuer, Shchedrina, Shimizu, W., Yao]\label{intro:AWSthm}
Let $A$ be an abelian variety (or abeloid) over a perfectoid field $K$ of residue characteristic $p$ with value group contained in $\Q$. There is a perfectoid group $A_\infty$ such that $A_\infty\sim \varprojlim_{[p]} A$. 
\end{theorem}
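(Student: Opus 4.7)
The plan is to build $A_\infty$ in two stages via Raynaud's uniformization: first construct a perfectoid cover of the semi-abeloid covering $A$, then descend through the lattice quotient. After possibly extending scalars (harmless for the existence of a perfectoid cover), write $A \cong E/M$ where $E$ sits in a short exact sequence $0 \to T \to E \to B \to 0$ with $T$ a split torus, $B$ an abeloid with good reduction, and $M \subset E(K)$ a discrete lattice of rank $\dim T$. The problem then splits into (a) constructing a perfectoid cover $E_\infty \sim \varprojlim_{[p]} E$, and (b) descending through the lattice.

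For step (a), the idea is to cover each piece of the filtration and assemble. For the torus $T \cong \G_m^n$ there is the standard perfectoid torus $T_\infty$ with formal model $\Spf K^\circ\langle x_1^{\pm 1/p^\infty},\ldots,x_n^{\pm 1/p^\infty}\rangle$, which is visibly an $F$-tower. For $B$ with good reduction, let $\fB/K^\circ$ be its smooth formal model; the key observation is that on the special fiber $\fB_s$, the isogeny $[p]$ factors as $V\circ F$ where $F$ is the relative Frobenius. This factorization is precisely the input that the $F$-tower formalism from Section~\ref{sec:geometry background} is designed to handle, and extracting the generic fiber of the inverse-limit formal scheme yields a perfectoid $B_\infty \sim \varprojlim_{[p]} B$. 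To assemble $E_\infty$, one applies $\varprojlim_{[p]}$ to the short exact sequence $0 \to T \to E \to B \to 0$; this limit is exact because $[p]$ is surjective on each term, giving $E_\infty$ as a perfectoid extension of $B_\infty$ by $T_\infty$.

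For step (b), note that $[p]: E_\infty \to E_\infty$ is a bijection by construction, so $E_\infty$ carries a canonical $\Z[1/p]$-module structure, and the lattice $M \subset E(K)$ lifts to a subgroup $\widetilde M \cong M\otimes_\Z \Z[1/p] \subset E_\infty$. Set $A_\infty := E_\infty / \widetilde M$; this is perfectoid because $\widetilde M$ acts freely and discretely on the perfectoid cover. A direct diagram chase identifies $A_\infty \sim \varprojlim_{[p]} A$: any compatible sequence $(a_n)$ in $A$ with $[p]a_{n+1}=a_n$ lifts to a sequence in $E$ that is an inverse system modulo $M$, and the obstruction to making it an honest inverse system lies in $M[1/p] = \widetilde M$.

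The main obstacle is the good-reduction step. Unlike Scholze's projective-space case, where the transition map is pure Frobenius after tilting, here each $[p]$ on $\fB$ decomposes as Frobenius composed with Verschiebung modulo $p$, and one must verify that the Verschiebung component does not obstruct perfectoidness in the limit. This is exactly what the $F$-tower criterion established earlier in the paper is built to check: the Verschiebung contribution, being a finite flat isogeny, should become invisible upon passing to the limit of function rings, but this requires a careful structural argument rather than a direct computation. The hypothesis $|K^\times|\subset\Q$ enters here to ensure that a uniformizer of $K^\circ$ has all $p$-power roots needed for the perfectoid structure. Once this step is settled, the extension assembly for $E_\infty$ and the lattice descent for $A_\infty$ are comparatively formal.
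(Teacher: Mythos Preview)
Your step (b) contains a genuine error. The formula $A_\infty = E_\infty/\widetilde M$ with $\widetilde M \cong M\otimes_\Z \Z[1/p]$ is wrong: the action of $\widetilde M$ on $E_\infty$ is not discrete. On the tropical skeleton $\R^r$ of the torus part, $\widetilde M$ acts through the dense subgroup $\ell(M)\otimes\Z[1/p]$, so your quotient collapses the skeleton. For a Tate curve $A=\G_m/q^\Z$, the skeleton of $\varprojlim_{[p]}A$ is the $p$-adic solenoid $(\R\times\Z_p)/\Z$, whereas your recipe gives $\R/\Z[1/p]=\{0\}$. The correct description from \cite{AWS} is $A_\infty = D_\infty\times^{M_\infty}E_\infty$ with $M_\infty\cong M$ a chosen lift of $M$ to $E_\infty(K)$ and $D_\infty = M_\infty\otimes_\Z\Z_p$. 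The present paper takes yet a different route: it extends the translation action of $M$ itself (not any $p$-divisible hull) to each formal model $\fE_{\alpha/p^n}$ in the $F$-tower for $E$, quotients by $M$ at each finite level to obtain formal models $\fA_{\alpha/p^n}$ of $A$, and checks directly that these form an $F$-tower for $A$.

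You also misplace the hypothesis $|K^\times|\subset\Q$. Perfectoid fields automatically have $p$-divisible value group, so this is not about $p$-power roots of a uniformizer. A related slip: $\Spf K^\circ\langle x_i^{\pm 1/p^\infty}\rangle$ is a model of the perfectoid unit polycircle, not of $T_\infty$ for the full rigid torus $T$; the latter is not affinoid and must be covered by poly-annuli indexed by a hypercube decomposition of $\R^r$. The hypothesis $|K^\times|\subset\Q$ enters precisely here: it guarantees $\ell(M)\subset\Q^r$, so one can choose a decomposition invariant under translation by $\ell(M)$, which is what allows the $M$-action to extend to the formal models of $T$ and $E$ and makes the level-by-level quotient possible.
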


We note that this theorem was already known when $A$ has good reduction, see Example \ref{eg:good reduction}. The proof in \cite{AWS} assumes that $K$ is algebraically closed but doesn't have the restriction on the value group. The cover $A_\infty$ of $A$ is our analogue of the perfectoid cover $\P^{n,\perf}_K$ of $\P^n_K$. In the case of projective space, the perfectoid cover can be built from explicit perfectoid $K$-algebras obtained by iterating a lift $\varphi$ of Frobenius. This isn't an option for abelian varieties: it's very hard to write down explicit formulas for them, and there isn't always a lift of Frobenius. The restriction on the value group allows us to construct formal models of $A$.

In Section \ref{sec:tilting abeloids}, we begin our study of the tilt $A_\infty^\flat$. For $\P_K^{n,\ad}$, there was a natural interpretation of the tilt as the perfectoid space $\P_{K^\flat}^{n,\perf}$ sitting above $\P_{K^\flat}^{n,\ad}$. This again could be checked by explicitly working with the perfectoid $K$-algebras involved. There isn't such an obvious candidate for an abelian variety over $K^\flat$ that gives rise to $A_\infty^\flat$. Roughly speaking, a perfectoid space and its tilt will have isomorphic special fibers. To understand $A_\infty^\flat$, we take the special fiber of the $F$-tower used to define $A_\infty$ and deform it to get an $F$-tower over an abeloid $A'/K^\flat$. This gives an alternate proof of the theorem

\begin{theorem}[Heuer, W.]\label{intro:abeloidtilt-intro}
Let $A$ and $A_\infty$ be as in Theorem \ref{intro:AWSthm}. Then after a pro-$p$ extension of $K$, there is a (non-unique) abeloid $A'$ over $K^\flat$ such that $A_\infty^\flat\sim\varprojlim_{[p]} A'.$
\end{theorem}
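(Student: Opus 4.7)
The plan follows the strategy sketched in the paragraph before the statement: rebuild $A_\infty^\flat$ by deforming the special fibre of the $F$-tower that produces $A_\infty$. Throughout, let $\{\fA_n\}$ denote the tower of formal $\cO_K$-schemes constructed in Section \ref{sec:perfectoid covers}, whose rigid generic fibres realise the $[p]$-tower $A\xleftarrow{[p]} A\xleftarrow{[p]}\cdots$ and whose inverse limit $\fA_\infty=\varprojlim\fA_n$ is a formal model of $A_\infty$.

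First I would pass to the special fibre. Fix a pseudo-uniformizer $\varpi\in\cO_K$, and choose a pseudo-uniformizer $t\in\cO_{K^\flat}$ with $t^\sharp$ equal to $\varpi$ up to units; after a pro-$p$ extension of $K$, which is also what guarantees that the $F$-tower construction of Section \ref{sec:perfectoid covers} goes through, compatible choices of such pseudo-uniformizers and enough $p$-power torsion become available. The reductions $\overline{\fA}_n:=\fA_n\otimes_{\cO_K}\cO_K/\varpi$ form an $F$-tower of schemes over $\cO_K/\varpi\cong\cO_{K^\flat}/t$, whose inverse limit is $\fA_\infty/\varpi$. Via the tilting dictionary, this reduction is simultaneously the mod-$t$ reduction of any formal $\cO_{K^\flat}$-model of $A_\infty^\flat$.

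Next I would lift, rather than tilt, the system $\{\overline{\fA}_n\}$ from $\cO_{K^\flat}/t$ back up to $\cO_{K^\flat}$, producing an $F$-tower $\{\fA'_n\}$ together with compatible transition maps; because we are now in equal characteristic $p$, this is a standard deformation problem rather than a crystalline one. The $F$-tower structure is essential: the transition maps are modelled on a twist of the absolute Frobenius, which admits canonical lifts in characteristic $p$, and the formal-smooth pieces of the Raynaud uniformization of the $\fA_n$ deform unobstructedly. Taking rigid generic fibres, $A':=(\fA'_0)^{\mathrm{rig}}$ is an abeloid over $K^\flat$ by Raynaud's theory (cf.\ Section \ref{sec:geometry background}), and the transition maps of the lifted $F$-tower descend to $[p]$-multiplication on $A'$.

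The main obstacle will be verifying that the perfectoid space $\varprojlim_{[p]} A'$ is actually $A_\infty^\flat$ on the nose, not just another perfectoid space with the same reduction modulo $t$. To close this gap I would appeal to the tilting equivalence for perfectoid formal schemes: both the limit $\varprojlim \fA'_n$ and the formal $\cO_{K^\flat}$-model of $A_\infty^\flat$ obtained by direct tilting of $\fA_\infty$ are integral perfectoid, with the same mod-$t$ reduction, and are therefore canonically almost isomorphic, which is enough to produce the $\sim$-relation in the statement. The non-uniqueness of $A'$ asserted in the theorem reflects precisely the freedom in the choice of lift at each finite level, and the pro-$p$ extension of $K$ enters uniformly both in the construction of the original $F$-tower and in its equi-characteristic deformation.
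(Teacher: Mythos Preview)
Your overall shape---reduce mod $\varpi$, deform over $\cO_{K^\flat}$, invoke Lemma \ref{lemma:pF tilt}---matches the paper, but the proposal has a real gap at the deformation step and misidentifies where the pro-$p$ extension is used.

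You propose to deform the formal schemes $\overline{\fA}_n$ directly and claim this is a ``standard deformation problem.'' It is not. The models $\fA_n$ are neither smooth nor formal group schemes: their special fibres are built from chains of $\P^1$s (from the hypercube models of the torus, see Example \ref{eg:formal Gm} and Figure \ref{fig:Tate tower}) crossed with abelian pieces and then glued along a lattice action. There is no off-the-shelf deformation theory that lifts such objects together with their transition maps in a way guaranteeing the generic fibre is a rigid group. Your appeal to ``Raynaud's theory'' for the conclusion that $(\fA'_0)^{\mathrm{rig}}$ is an abeloid is circular: Raynaud's theory \emph{inputs} a Raynaud extension and a lattice, it does not detect abeloid structure on an arbitrary proper rigid space. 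The paper avoids this by never deforming $\fA_n$. Instead it deforms only the formal semi-abelian scheme $\overline E$ to $\overline{E'}$ over $\cO_{K^\flat}$ (Proposition \ref{prop:semi-abelian schemes deform}, which \emph{is} standard: Illusie for the abelian part, $\Ext(\overline B,\overline T)\cong (\overline B^\vee)^r$ for the extension class), and then \emph{rebuilds} an $F$-tower $\{\fE'_{\alpha/p^n}\}$ for $E'$ from scratch using the same hypercube combinatorics, checking by hand that the special fibres match (Theorem \ref{thm:tilting E}).

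Second, you say the pro-$p$ extension is what makes the $F$-tower construction of Section \ref{sec:perfectoid covers} go through. It is not: that construction needs only a finite separable extension (for the Raynaud uniformization, Theorem \ref{thm:Raynaud abeloid}) and the hypothesis $\Gamma\subset\Q$. The pro-$p$ extension enters only in Section \ref{sub:tilting A}, and for a specific reason your outline omits entirely: one must choose a compatible system of $p$-power roots $M^{1/p^n}$ of the lattice $M$ inside $E(K)$, equivalently a lift $M_\infty\hookrightarrow E_\infty(K)$. This is what tilts to give the lattice $M'\subset E'(K^\flat)$ (Lemma \ref{lemma:M' a lattice}), and quotienting $E'$ by $M'$ is how $A'$ is produced (Theorem \ref{thm:tilting A}). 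The non-uniqueness in the statement is precisely the freedom in choosing these $p$-power roots and the deformation $\overline{E'}$, not ``freedom in the choice of lift at each finite level'' of some tower of formal schemes.
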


The proof in \cite{HW} relates the construction to Scholze's perfectoid Shimura varieties. It does not use formal models, which we will need for our application.

In Section \ref{sec:line bundles}, we explain how to transfer line bundles from $A$ to a suitable choice of $A'$ by constructing perfectoid covers of the associated $\G_m$-torsors. This allows us to algebraize the previous theorem: if we start with an abelian variety $A$, we can choose $A'$ to be an abelian variety as well. 

In Section \ref{sec:weight-monodromy}, we prove an analogue of Scholze's approximation lemma, which we use to prove Theorem \ref{thm:WMC abelian varieties}. The cohomological aspects of the proof are similar to Scholze's argument, with one addition necessary. As the map $[p]:A'\rightarrow A'$ is not a homeomorphism and doesn't induce an isomorphism of \'etale sites, there is no straightforward analogue of Scholze's map $\pi:\P^n_{K^\flat}\rightarrow\P^n_K$ at finite level. Our computations therefore have to pass through the perfectoid cover $A_\infty^\flat$ more explicitly.

\subsection{Acknowledgements}
This paper is based off of my PhD thesis at UCSD. I would like to thank my advisor, Kiran Kedlaya, for his advise and support. This project began at the 2017 Arizona Winter School. I thank the organizers for making this possible; Bhargav Bhatt and Matthew Morrow for suggesting the problem of constructing perfectoid covers of abelian varieties; and my groupmates Clifford Blakestad, Dami\'an Gvirtz, Ben Heuer, Daria Shchedrina, Koji Shimizu, and Zijian Yao. I thank Gabriel Dorfsman-Hopkins, Ben Heuer, Sean Howe, and Anwesh Ray for many helpful conversations related to this material. This paper was written with support from NSF grant DMS-1502651 and NSF RTG grant \#1840190.

%%%%%%%
%%%%%%%
%%%%%%%

\section{Background on non-archimedean geometry}\label{sec:geometry background}

In this section, we collect some results from non-archimedean geometry. Our overall goal is to take an abelian variety $A$ over a perfectoid field $K$ with residue characteristic $p$ and construct a perfectoid space $A_\infty\sim \varprojlim_{[p]} A$. To do this, we recall Raynaud's theory of formal models of rigid spaces and his uniformization of abelian varieties. We then axiomatize a method of Scholze for constructing perfectoid tilde-limits of inverse system of adic spaces via formal models. Finally, we recall a result of Shen on Galois descent of perfectoid tilde-limits that will allow us to assume that $A$ has a Raynaud uniformization. Additional exposition and references on these results can be found in \cite[Chapter 2]{Thesis}.

%%%%
%%%%
%%%%

\subsection{Inverse limits of rigid spaces via formal models}

Let $K$ be a non-archimedean field with absolute value $|\cdot|$, residue characteristic $p$, and residue field $\kappa$. For any topological ring $R$, we write $R^\circ$ for the power-bounded elements and $R^{\circ\circ}$ for the topologically nilpotent, so $K^\circ$ is the ring of integers of $K$ and $K^{\circ}/K^{\circ\circ}\cong \kappa$. The categories of rigid spaces and adic spaces over $K$ do not admit inverse limits in general. To get around this issue, we work with formal models. We quickly recall the basic construction, see \cite[Chapters 7,8]{Bos} for details. 

\begin{definition}\label{def:formal scheme}
A $K^\circ$-algebra $R^+$ is \emph{admissible} if it has no $K^\circ$-torsion and it is of topologically finite type. Fix any pseudo-uniformizer $\varpi\in K^\circ$ with $|0|<|\varpi|<1$, then $A^\circ$ is $\varpi$-adically complete.
\end{definition}

Having no $K^\circ$-torsion is equivalent to being flat over $K^\circ$.

\begin{definition}\label{def:formal analytic cover}
A \emph{formal analytic cover} of a rigid space $X$ is a cover $\{U_i\}$ of $X$ by affinoid opens $U_i=\Sp(R_i)$ such that 

	\begin{enumerate}
	\item The cover is admissible in the $G$-topology of $X$ (cf. \cite[Chapter 5]{Bos}),
	\item\label{fas1} For each algebra $R_i$, the $K^\circ$-algebra $R_i^\circ$ is admissible as in Definition \ref{def:formal scheme}, and  
	\item For each pair of opens $U_i$ and $U_j$, the intersection $U_i\cap U_j\subset U_i$ is the preimage of a Zariski open subset of $\MaxSpec(\widetilde R_i)$ under the reduction map $R_i^\circ\rightarrow \tilde{R}_i:=R^\circ/R^{\circ\circ}$. 
	\end{enumerate}
\end{definition}

We can now state the precise construction that we'll need. This is essentially the content of \cite[Remark 3.2.5]{Lut}.  

\begin{proposition}\label{prop:fas to formal scheme}\hfill
\begin{enumerate}
\item The data of a formal analytic cover of $X$ defines an admissible formal scheme $\mathfrak X$ obtained by gluing the formal schemes $\Spf(R_i^\circ)$. The formal scheme $\mathfrak X$ is a \emph{formal model} of $X$. 
\item Given any cover of $\fX$ by formal affine opens, the generic fiber is a formal analytic cover of $X$.
\item Let $\{U_i\}$ be a formal analytic cover of $X$ and $\{V_k\}$ be a formal analytic cover of $Y$, let $\mathfrak X$ and $\mathfrak Y$ be the corresponding formal models. Then a morphism of rigid spaces $\varphi:X\rightarrow Y$ extends to a morphism of formal models $\overline{\varphi}:\mathfrak X\rightarrow \mathfrak Y$ if for each $U_i$ in the cover of $X$, there is some $V_i$ in the cover of $Y$ such that $\varphi(U_i)\subset V_i$ and $\varphi(U_i\cap U_j)\subset V_i\cap V_j$ for all $i,j$. 
\end{enumerate}
\end{proposition}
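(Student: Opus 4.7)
The plan is to handle the three parts in order, building $\mathfrak{X}$ from gluing data in (1), extracting the compatibility in reverse for (2), and then showing morphisms of rigid spaces that are compatible with a cover already come from the formal level in (3). The unifying point is that, under the admissibility hypothesis (2) of Definition \ref{def:formal analytic cover}, the functor $R \mapsto \Spf(R^\circ)$ is essentially inverse to $\mathfrak{U} \mapsto \mathfrak{U}_\eta$ on the local pieces, so all the work is in tracking how intersections and morphisms behave.

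For (1), I would first use condition (3) to promote each rigid open immersion $U_i \cap U_j \hookrightarrow U_i$ to a formal open immersion. Concretely, if $U_i \cap U_j$ is the preimage of a Zariski open $W_{ij} \subset \MaxSpec(\widetilde R_i)$ under reduction, then $W_{ij}$ lifts to a formal open subscheme $\mathfrak{U}_{ij} \subset \Spf(R_i^\circ)$ whose underlying topological space is $W_{ij}$ (opens on a formal scheme are the same as opens on its special fiber). By symmetry we also get $\mathfrak{U}_{ji} \subset \Spf(R_j^\circ)$, and both have the same generic fiber $U_i \cap U_j$. Since $R_i^\circ$ and $R_j^\circ$ are both admissible, each of $\mathfrak{U}_{ij}$ and $\mathfrak{U}_{ji}$ is obtained by taking $\Spf$ of the power-bounded elements of the Tate algebra of $U_i \cap U_j$, so they are canonically isomorphic. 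This produces gluing isomorphisms $\varphi_{ij}\colon \mathfrak{U}_{ji}\xrightarrow{\sim}\mathfrak{U}_{ij}$; the cocycle condition on triple overlaps is forced by the uniqueness (both sides are $\Spf$ of $(R_{\text{triple}})^\circ$ for the same triple intersection). Gluing is then formal, and $\mathfrak{X}$ is admissible because local admissibility is preserved.

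For (2), given a formal affine cover $\{\mathfrak{V}_k = \Spf(A_k)\}$ of $\mathfrak{X}$, I would set $U_k := (\mathfrak{V}_k)_\eta = \Sp(A_k[\tfrac{1}{\varpi}])$. Admissibility in the $G$-topology is a standard consequence of the formal cover (this is part of Raynaud's dictionary, as in \cite[Chapter 8]{Bos}). For condition (\ref{fas1}) of Definition \ref{def:formal analytic cover}, one has to check $R_k^\circ$ is admissible; since $A_k$ is already flat and topologically finitely presented over $K^\circ$, and $R_k^\circ$ is a finite integral extension of $A_k$ with the same generic fiber, the topologically finite type property is inherited. The intersection condition (3) is the content of (1) run backwards: the open immersion $\mathfrak{V}_k \cap \mathfrak{V}_l \hookrightarrow \mathfrak{V}_k$ is cut out by a Zariski open in the special fiber $\Spec(\widetilde A_k)$, and that locus pulls back to exactly $U_k \cap U_l$ under the reduction map $R_k^\circ \to \widetilde R_k$.

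For (3), the morphism $\varphi\colon X \to Y$ restricts to affinoid maps $\varphi_i\colon U_i \to V_i$, which correspond to continuous $K$-algebra homomorphisms $S_i \to R_i$ of Tate algebras. Since such a map automatically sends power-bounded elements to power-bounded elements, it restricts to $S_i^\circ \to R_i^\circ$, giving a morphism of formal affines $\overline{\varphi}_i\colon \Spf(R_i^\circ) \to \Spf(S_i^\circ)$. The hypothesis $\varphi(U_i \cap U_j) \subset V_i \cap V_j$ ensures, via the Zariski-open characterization of intersections from (1) and (2), that these local morphisms agree on the overlaps $\mathfrak{U}_{ij}$, so they glue to a global morphism $\overline{\varphi}\colon \mathfrak{X} \to \mathfrak{Y}$. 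The main obstacle across all three parts is the bookkeeping needed to show that condition (3) of Definition \ref{def:formal analytic cover} is really what encodes ``open subscheme'' at the formal level, and in particular to check the gluing cocycles in part (1); everything else is either the definition of admissibility or a direct translation between Tate algebras and their rings of integral elements.
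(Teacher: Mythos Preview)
The paper does not actually prove this proposition: it merely records it as ``essentially the content of \cite[Remark 3.2.5]{Lut}'' and moves on. Your proposal is therefore not competing with any argument in the paper but rather filling in what the cited reference contains, and your outline is broadly correct and follows the standard route.

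Two technical points deserve care. In part~(1), you assert that $\mathfrak{U}_{ij}$ and $\mathfrak{U}_{ji}$ are each $\Spf$ of the power-bounded elements of the Tate algebra of $U_i\cap U_j$; but a formal open of an affine formal scheme need not itself be affine, so this identification should be made after passing to a cover by principal opens $D(f)\subset\Spf(R_i^\circ)$, where the claim does hold. In part~(2), the assertion that $R_k^\circ=(A_k[\tfrac{1}{\varpi}])^\circ$ is again admissible (topologically of finite type) is a genuine finiteness theorem rather than a formality; it is true, but one should cite it (e.g.\ the reducedness/finiteness results in \cite[\S7--8]{Bos} or the corresponding statements in \cite{Lut}) rather than absorb it into ``inherited.'' With those two caveats, your argument is the standard one and matches what the reference provides.
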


\begin{example}\label{eg:formal A1}
A formal analytic cover of rigid analytic affine space $\A_K^{1,\operatorname{an}}$ is given by fixing some $y\in K$ with $|y|=c<1$ and taking the unit disk $D(1,0)$ and the annuli $\An(c^{-n},c^{-n-1})$ for all integers $n\geq 0$, glued along the border circles. The reduction of the disk is $\A^1_{\kappa}$, and the reduction of each annulus is two copies of $\A^1_{\kappa}$ glued at the origin. These reductions are glued to form a infinite chain of $\P^1_{\kappa}$s, indexed by $\N$. 
\end{example}

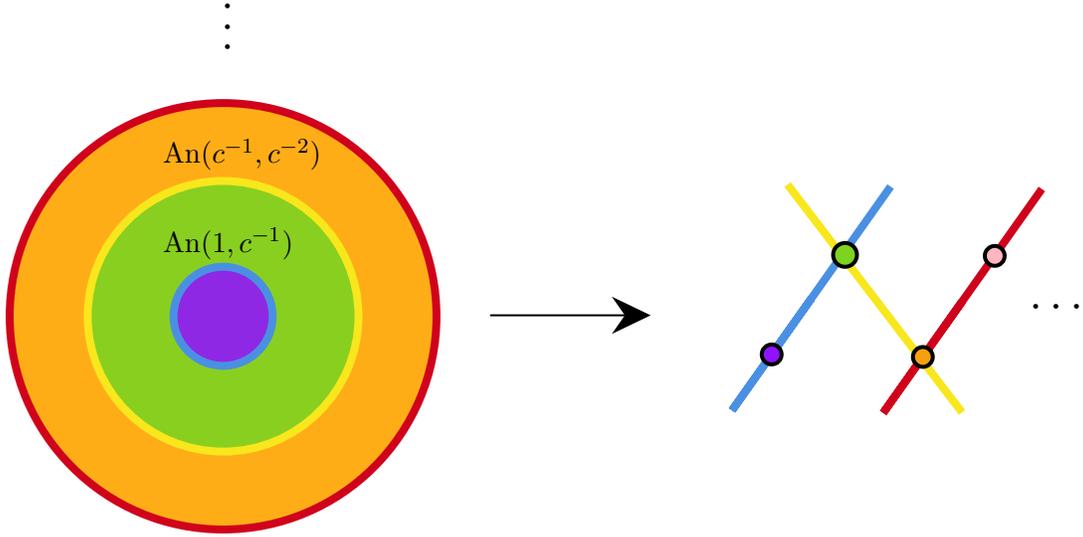
\begin{figure}

\tikzset{every picture/.style={line width=0.75pt}} %set default line width to 0.75pt        

\begin{tikzpicture}[x=0.75pt,y=0.75pt,yscale=-1,xscale=1]
%uncomment if require: \path (0,300); %set diagram left start at 0, and has height of 300

%Straight Lines [id:da7830619339746585] 
\draw [color={rgb, 255:red, 248; green, 231; blue, 28 }  ,draw opacity=1 ][line width=3]    (441.5,94.69) -- (529.5,209.69) ;
%Shape: Circle [id:dp8578513561986547] 
\draw  [color={rgb, 255:red, 208; green, 2; blue, 27 }  ,draw opacity=1 ][fill={rgb, 255:red, 255; green, 165; blue, 0 }  ,fill opacity=0.91 ][line width=3]  (49.2,161.17) .. controls (49.2,101.73) and (97.39,53.55) .. (156.83,53.55) .. controls (216.27,53.55) and (264.45,101.73) .. (264.45,161.17) .. controls (264.45,220.61) and (216.27,268.8) .. (156.83,268.8) .. controls (97.39,268.8) and (49.2,220.61) .. (49.2,161.17) -- cycle ;
%Shape: Circle [id:dp6480733831421175] 
\draw  [color={rgb, 255:red, 248; green, 231; blue, 28 }  ,draw opacity=1 ][fill={rgb, 255:red, 126; green, 211; blue, 33 }  ,fill opacity=0.91 ][line width=3]  (88.54,161.17) .. controls (88.54,123.46) and (119.11,92.88) .. (156.83,92.88) .. controls (194.54,92.88) and (225.12,123.46) .. (225.12,161.17) .. controls (225.12,198.89) and (194.54,229.46) .. (156.83,229.46) .. controls (119.11,229.46) and (88.54,198.89) .. (88.54,161.17) -- cycle ;
%Straight Lines [id:da3908018461963296] 
\draw [color={rgb, 255:red, 74; green, 144; blue, 226 }  ,draw opacity=1 ][line width=3]    (493.5,95.69) -- (413.5,208.69) -- (453.5,152.19) ;
%Shape: Circle [id:dp3432852193030571] 
\draw  [color={rgb, 255:red, 0; green, 0; blue, 0 }  ,draw opacity=1 ][fill={rgb, 255:red, 126; green, 211; blue, 33 }  ,fill opacity=1 ][line width=1.5]  (464.5,130.19) .. controls (464.5,126.87) and (467.19,124.19) .. (470.5,124.19) .. controls (473.81,124.19) and (476.5,126.87) .. (476.5,130.19) .. controls (476.5,133.5) and (473.81,136.19) .. (470.5,136.19) .. controls (467.19,136.19) and (464.5,133.5) .. (464.5,130.19) -- cycle ;
%Straight Lines [id:da9101513999922808] 
\draw    (291.5,160.69) -- (369.5,160.69) ;
\draw [shift={(372.5,160.69)}, rotate = 180] [fill={rgb, 255:red, 0; green, 0; blue, 0 }  ][line width=0.08]  [draw opacity=0] (19.65,-9.44) -- (0,0) -- (19.65,9.44) -- (13.05,0) -- cycle    ;
%Shape: Circle [id:dp4836769856024319] 
\draw  [color={rgb, 255:red, 0; green, 0; blue, 0 }  ,draw opacity=1 ][fill={rgb, 255:red, 144; green, 19; blue, 254 }  ,fill opacity=1 ][line width=1.5]  (428.5,180.44) .. controls (428.5,177.68) and (430.74,175.44) .. (433.5,175.44) .. controls (436.26,175.44) and (438.5,177.68) .. (438.5,180.44) .. controls (438.5,183.2) and (436.26,185.44) .. (433.5,185.44) .. controls (430.74,185.44) and (428.5,183.2) .. (428.5,180.44) -- cycle ;
%Shape: Circle [id:dp620793181408541] 
\draw  [color={rgb, 255:red, 74; green, 144; blue, 226 }  ,draw opacity=1 ][fill={rgb, 255:red, 144; green, 19; blue, 254 }  ,fill opacity=0.89 ][line width=3]  (131.83,161.17) .. controls (131.83,147.36) and (143.02,136.17) .. (156.83,136.17) .. controls (170.64,136.17) and (181.83,147.36) .. (181.83,161.17) .. controls (181.83,174.98) and (170.64,186.17) .. (156.83,186.17) .. controls (143.02,186.17) and (131.83,174.98) .. (131.83,161.17) -- cycle ;
%Straight Lines [id:da010589128057459263] 
\draw [color={rgb, 255:red, 208; green, 2; blue, 27 }  ,draw opacity=1 ][line width=3]    (569.75,96.94) -- (489.75,209.94) -- (529.75,153.44) ;
%Shape: Circle [id:dp8338984203386202] 
\draw  [color={rgb, 255:red, 0; green, 0; blue, 0 }  ,draw opacity=1 ][fill={rgb, 255:red, 246; green, 160; blue, 17 }  ,fill opacity=1 ][line width=1.5]  (504.75,181.69) .. controls (504.75,178.93) and (506.99,176.69) .. (509.75,176.69) .. controls (512.51,176.69) and (514.75,178.93) .. (514.75,181.69) .. controls (514.75,184.45) and (512.51,186.69) .. (509.75,186.69) .. controls (506.99,186.69) and (504.75,184.45) .. (504.75,181.69) -- cycle ;
%Shape: Circle [id:dp10036303354672915] 
\draw  [fill={rgb, 255:red, 255; green, 182; blue, 193 }  ,fill opacity=1 ][line width=1.5]  (541,130.69) .. controls (541,127.93) and (543.24,125.69) .. (546,125.69) .. controls (548.76,125.69) and (551,127.93) .. (551,130.69) .. controls (551,133.45) and (548.76,135.69) .. (546,135.69) .. controls (543.24,135.69) and (541,133.45) .. (541,130.69) -- cycle ;

% Text Node
\draw (125,115.09) node [anchor=north west][inner sep=0.75pt]    {$\An(1,c^{-1})$};
% Text Node
\draw (125,70.09) node [anchor=north west][inner sep=0.75pt]    {$\An(c^{-1},c^{-2})$};
% Text Node
\draw (165,0) node [anchor=north west][inner sep=0.75pt]  [font=\LARGE,rotate=-90]  {$\cdots $};
% Text Node
\draw (562,150.09) node [anchor=north west][inner sep=0.75pt]  [font=\LARGE]  {$\cdots $};

\end{tikzpicture}
\caption{A formal analytic cover of $\A^1_K$ with its special fiber}\label{fig:A1}
\end{figure}

\begin{example}\label{eg:formal Gm}
A key example for us will be the rigid analytification of the affine torus $\G_{m,K}$. This is just $\A^1_K$ with the origin removed, so a formal analytic cover is given by fixing some $y\in K$ with $|y|=c<1$ and taking the annuli $\An(c^{n+1},c^n)$ for all integers $n$, again glued along border circles. The reduction is now a two-sided infinite chain of $\P^1_\kappa$s, indexed by $\Z$.
\end{example}

In terms of adic spaces, the above construction corresponds to writing $X^{\ad}/K$ as the adic generic fiber of a formal scheme $\fX/K^\circ$. That is, $X^{\ad}=\fX^{\ad}_\eta:=\fX^{\ad}\times_{\Spa(K^\circ,K^\circ)}\Spa(K,K^\circ).$

In nice circumstances, the category of formal schemes admits inverse limits. 

\begin{lemma}\label{lemma:inverse limit formal schemes}
Let $\fX_i$ be an inverse system of formal schemes over $K^\circ$ with affine transition maps. Then the inverse limit $\fX=\varprojlim \fX_i$ exists in the category of formal schemes over $K^\circ$. If all the $\fX_i$ are flat over $K^\circ$, so is $\fX$.
\end{lemma}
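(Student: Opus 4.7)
The plan is to construct the inverse limit locally and then glue, exactly mirroring the standard argument for ordinary schemes with affine transition maps (EGA IV, 8.2).

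First I would reduce to the affine case. Fix any index $i_0$ and an affine formal open $\cU \subseteq \fX_{i_0}$; because all transition maps $\fX_j \to \fX_i$ are affine, the preimage $\cU_j \subseteq \fX_j$ is affine for every $j \geq i_0$, producing a compatible affine subsystem. Assuming the affine limits exist and are functorial, the open immersions between them glue via the universal property to define the total limit $\fX$. So everything reduces to the affine setting.

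Second, in the affine case I write $\fX_i = \Spf(A_i)$ with each $A_i$ an adic $K^\circ$-algebra (equipped with the $\varpi$-adic topology), the transition morphisms $\fX_j \to \fX_i$ corresponding to continuous $K^\circ$-algebra maps $A_i \to A_j$. Set
\[
A := \widehat{\varinjlim_i A_i},
\]
the $\varpi$-adic completion of the colimit in the category of $K^\circ$-algebras, topologized by the $\varpi$-adic topology. I claim $\Spf(A)$, together with the canonical projection maps induced by $A_i \to \varinjlim_i A_i \to A$, represents the inverse limit in formal schemes over $K^\circ$: a compatible family of continuous maps $A_i \to B$ factors uniquely through $\varinjlim A_i$, and any continuous map from this colimit into a $\varpi$-adically complete ring $B$ extends uniquely to $A$. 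This is purely formal given the universal properties of direct limit and completion.

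Third, for the flatness claim, flatness over the valuation ring $K^\circ$ is equivalent to $K^\circ$-torsion-freeness, and a direct limit of torsion-free modules is torsion-free, so $\varinjlim_i A_i$ is torsion-free. The nontrivial point is that $\varpi$-adic completion preserves this. I would verify it by hand: suppose $c \in K^\circ \setminus \{0\}$ and $\widehat m \in A$ satisfies $c \widehat m = 0$. Choose compatible lifts $\tilde m_n \in \varinjlim_i A_i$ representing $\widehat m$, so $c\tilde m_n \in \varpi^n \cdot \varinjlim_i A_i$, say $c\tilde m_n = \varpi^n y_n$. For any fixed $k$, pick $n$ large enough that $|\varpi|^n \leq |\varpi|^k \cdot |c|$; then $\varpi^n/c \in \varpi^k K^\circ$, and torsion-freeness of $\varinjlim_i A_i$ gives $\tilde m_n \in \varpi^k \cdot \varinjlim_i A_i$. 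Thus $\widehat m \equiv 0 \pmod{\varpi^k}$ for every $k$, and since $A$ is $\varpi$-adically separated, $\widehat m = 0$.

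The main obstacle is this last verification: completion of a flat module over a non-Noetherian valuation ring is not flat in full generality by any purely formal argument, so one has to use concretely that $K^\circ$ is a rank-one valuation ring (i.e., $|\varpi|^n \to 0$). Once that is settled, the construction and its universal property are standard categorical bookkeeping and gluing.
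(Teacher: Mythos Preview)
Your proposal is correct and follows essentially the same approach as the paper: reduce to the affine case using the affine transition maps, take $\Spf$ of the $\varpi$-adically completed direct limit, and note that flatness over $K^\circ$ is torsion-freeness. The paper's proof is a three-line sketch that simply asserts torsion-freeness ``is preserved by this process'' and defers to \cite[Tag 01YX]{Stacks}; your explicit verification that $\varpi$-adic completion preserves torsion-freeness over a rank-one valuation ring fills in precisely the detail the paper omits.
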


\begin{proof}[Sketch]
See \cite[Tag 01YX]{Stacks} for details. As the transition maps are affine, we can reduce to the case where all the $\fX_i$ are affine. Say $\fX_i=\Spf(R_i)$, then we have $\fX=\Spf(\widehat{\varinjlim R_i})$, where we are taking the $\varpi$-adic completion of the direct limit. Flatness is equivalent to being torsion-free over $K^\circ$, so it is preserved by this process.  
\end{proof}

To translate these inverse limits to the generic fiber, we recall the definition of tilde-limits from \cite[Definition 7.14]{PS} along with some useful properties. Note that a slightly different and more general definition is given in \cite[Section 2.4]{SW}, but we will not need this.

\begin{definition}[{\cite[Definition 7.14]{PS}}]\label{def:tilde limit}
Let $X_i$ be a filtered inverse system of noetherian adic spaces. Let $X$ be a perfectoid space with a compatible family of morphisms $\phi_i:X\rightarrow X_i$. Then we write $X\sim\varprojlim X_i$ and call $X$ the \emph{tilde-limit} of the inverse system if the induced map of topological spaces $|X|\rightarrow \varprojlim |X_i|$ is a homeomorphism, and if for every $x\in X$ with $x_i:=\phi_i(x)\in X_i$, the map of residue fields \[\varinjlim k(x_i)\rightarrow k(x)\] is dense.
\end{definition}

%\begin{proposition}[{\cite[Proposition 2.4.2]{SW}}]\label{prop:affinoid tilde}
%Let $(A_i, A_i^+)$ be a direct system of complete affinoid rings over $(K,K^\circ)$ with compatible rings of definition $A_{i,0}$, all with ideal of definition $(\varpi)$. Let $(A, A^+)=(\varinjlim A_i,\varinjlim A_i^+)$ be the affinoid ring with ring of definition $A_0=\varinjlim A_{i,0}$ and ideal of definition $(\varpi)$. Then \[\Spa(A,A^+)\sim\varprojlim\Spa(A_i,A_i^+).\]
%\end{proposition}

\begin{proposition}[{\cite[Proposition 7.16]{PS}}]\label{prop:open immersion tilde}
Given $X\sim\varprojlim X_i$ as in Definition \ref{def:tilde limit}, let $Y_i\hookrightarrow X_i$ be an open immersion of noetherian adic spaces for some fixed $X_i$ in the inverse system. Let $Y_j:=Y_i\times_{X_i}X_j$ for $j\geq i$ and $Y:=Y_i\times_{X_i} X$. Then $Y\sim\varprojlim_{j\geq i} Y_j$.
\end{proposition}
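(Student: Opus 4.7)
The plan is to verify the three defining properties of a tilde-limit for $Y \sim \varprojlim_{j \geq i} Y_j$: that $Y$ is perfectoid, that $|Y| \to \varprojlim_{j \geq i} |Y_j|$ is a homeomorphism, and that $\varinjlim_j k(y_j) \to k(y)$ has dense image for every $y \in Y$ with images $y_j \in Y_j$. In each case the strategy is to reduce directly to the corresponding statement for $X \sim \varprojlim X_i$, using that open immersions of adic spaces are stable under base change and do not affect either the topological subspace structure or the residue fields at points.

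Because $Y_i \hookrightarrow X_i$ is an open immersion, the base changes $Y = Y_i \times_{X_i} X \hookrightarrow X$ and $Y_j = Y_i \times_{X_i} X_j \hookrightarrow X_j$ are themselves open immersions; in particular $Y$ is an open subspace of the perfectoid space $X$ and hence perfectoid. For the topological condition, let $\phi_i : X \to X_i$ be the given projection and write $\psi_j : X_j \to X_i$ for the transition maps in the inverse system (for $j \geq i$). Then $|Y| = \phi_i^{-1}(|Y_i|) \subset |X|$ and $|Y_j| = \psi_j^{-1}(|Y_i|) \subset |X_j|$. The hypothesis $X \sim \varprojlim X_i$ gives a homeomorphism $|X| \xrightarrow{\sim} \varprojlim_j |X_j|$; since the index set is filtered, passing to the cofinal subsystem $\{j \geq i\}$ does not change the limit. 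Restricting this homeomorphism to the preimage of $|Y_i|$ under the projection to $|X_i|$ yields the desired homeomorphism $|Y| \xrightarrow{\sim} \varprojlim_{j \geq i} |Y_j|$, because a tuple $(x_j)_{j \geq i} \in \varprojlim |X_j|$ satisfies $x_i \in |Y_i|$ if and only if $x_j \in |Y_j|$ for every $j \geq i$. For the residue-field condition, the residue field at a point of an adic space is determined by the stalk of the structure sheaf, which is unchanged under the open immersions $Y \hookrightarrow X$ and $Y_j \hookrightarrow X_j$; thus the map $\varinjlim_j k(y_j) \to k(y)$ for the system $\{Y_j\}$ coincides with the analogous map for $\{X_j\}$ at the image of $y$ in $X$, and the latter has dense image by assumption.

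The main ``obstacle'' is really just bookkeeping: one must confirm that the adic-space fiber products $Y = Y_i \times_{X_i} X$ and $Y_j = Y_i \times_{X_i} X_j$ are literally the open subspaces $\phi_i^{-1}(Y_i) \subset X$ and $\psi_j^{-1}(Y_i) \subset X_j$ equipped with their inherited structures, which is simply the universal property of open immersions of adic spaces. Once this identification is in hand, each of the three tilde-limit conditions transfers from $X \sim \varprojlim X_i$ to $Y \sim \varprojlim_{j \geq i} Y_j$ with no additional analytic input required.
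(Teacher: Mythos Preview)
Your proof is correct. The paper does not give its own argument for this proposition; it simply cites \cite[Proposition~7.16]{PS}. What you have written is exactly the natural direct verification of the tilde-limit axioms via restriction from $X\sim\varprojlim X_i$, and is in substance the same argument Scholze gives.
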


To construct tilde-limits of inverse systems of rigid spaces, we construct a corresponding inverse system of well-behaved formal models of these spaces, use Lemma \ref{lemma:inverse limit formal schemes} to get the inverse limit in the category of formal schemes, then take the adic generic fiber of the limit.

\begin{lemma}\label{lemma:tilde generic fiber}
Let $\fX_i$ be a filtered inverse system of formal schemes over $K^\circ$ with affine transition maps, let $\fX$ be the inverse limit constructed in Lemma \ref{lemma:inverse limit formal schemes}. Let $X_i=(\fX_i)_\eta$ and $X=\fX_\eta$ be the adic generic fibers. Then $X\sim\varprojlim X_i$. \end{lemma}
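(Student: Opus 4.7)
The plan is to verify, at each $x\in X$, the two conditions of Definition \ref{def:tilde limit}: that $|X|\to\varprojlim|X_i|$ is a homeomorphism, and that $\varinjlim k(x_i)\to k(x)$ has dense image (the perfectoid-ness of $X$ being assumed in the context where the lemma is applied). Since the transition maps in the inverse system of formal schemes are affine, $\fX$ is constructed in Lemma \ref{lemma:inverse limit formal schemes} by gluing affine pieces, and both tilde-limit conditions are local on the target. I would therefore first handle the affine case and then patch using Proposition \ref{prop:open immersion tilde}.

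In the affine case, write $\fX_i=\Spf(R_i^\circ)$ and $\fX=\Spf(R^\circ)$ with $R^\circ=\widehat{\varinjlim R_i^\circ}$ ($\varpi$-adic completion), so that $X_i=\Spa(R_i,R_i^\circ)$ and $X=\Spa(R,R^\circ)$ where $R_i=R_i^\circ[1/\varpi]$ and $R=R^\circ[1/\varpi]$. Points of $X$ are continuous valuations on $R$ bounded by $1$ on $R^\circ$; restricting along $R_i\to R$ gives the natural map $|X|\to\varprojlim|X_i|$. To invert it, given a compatible family of continuous valuations $(v_i)$, I first extend to $\varinjlim R_i^\circ$, then to its $\varpi$-adic completion $R^\circ$ by continuity (each $v_i$ sends $\varpi$ into $(0,1)$, so Cauchy sequences are sent to Cauchy sequences in the value group), and finally invert $\varpi$. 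Checking that the two constructions are mutually inverse is formal. For the topology, both sides have a basis of rational subsets $\{|f|\leq|g|\neq 0\}$; approximating $f,g\in R^\circ$ by elements of some $R_i^\circ$ modulo a sufficiently high power of $\varpi$ shows that every rational subset of $|X|$ pulls back from one at a finite level, yielding the homeomorphism.

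For the density of residue fields, fix $x\in X$ with images $x_i\in X_i$. The completed residue field $k(x)$ is obtained from $R$ by quotienting by the support of $x$, passing to the fraction field, and completing with respect to the induced valuation; similarly for $k(x_i)$, and compatibility provides maps $k(x_i)\to k(x)$. Since $\varinjlim R_i^\circ\to R^\circ$ has dense image in the $\varpi$-adic topology and $x$ is continuous with respect to this topology on $R$, the image of $\varinjlim R_i$ is dense in $R$ for the valuation topology, hence its image is dense in $k(x)$. This produces the required density of $\varinjlim k(x_i)\to k(x)$.

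The main technical obstacle is the gluing step promoting the affine case to the general statement: given an affine cover of $\fX$, one needs to know that it is refined by covers pulled back from compatible affine covers of $\fX_{i}$ for all sufficiently large $i$. This descent of quasicompact opens to a finite level uses both the construction $R^\circ=\widehat{\varinjlim R_i^\circ}$ and the affineness of the transition maps (so that open immersions are preserved under pullback along the $\fX_j\to\fX_i$). Once this is in place, Proposition \ref{prop:open immersion tilde} applied to each member of such a cover propagates the homeomorphism and residue-field density from the affine pieces to $X$, giving $X\sim\varprojlim X_i$ globally.
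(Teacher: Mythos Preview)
Your argument is essentially correct, but it takes a longer route than the paper and misidentifies where Proposition \ref{prop:open immersion tilde} enters. The paper observes that since $\fX$ is the honest inverse limit $\varprojlim \fX_i$ in formal schemes, one already has $\fX^{\ad}\sim\varprojlim \fX_i^{\ad}$ as adic spaces over $\Spa(K^\circ,K^\circ)$ (this is the content of your affine computation, carried out over $K^\circ$ instead of $K$, and it globalizes because the conditions are pointwise/local). The decisive trick is then to recognize that taking the adic generic fiber is precisely base change along the open immersion $\Spa(K,K^\circ)\hookrightarrow\Spa(K^\circ,K^\circ)$. A single application of Proposition \ref{prop:open immersion tilde} to this open immersion immediately yields $X=\fX_\eta^{\ad}\sim\varprojlim(\fX_i)_\eta^{\ad}=\varprojlim X_i$, with no need to cover $X$ by affinoids and glue.

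By contrast, you invoke Proposition \ref{prop:open immersion tilde} for the gluing step, which is not quite its role: that proposition passes from a known tilde-limit down to an open, not from local tilde-limits up to a global one. Your gluing is still fine because the homeomorphism and residue-field conditions are local, but the citation is misplaced. The payoff of the paper's approach is that it replaces the entire local-to-global patching (your ``main technical obstacle'') with one clean structural observation about the generic fiber functor.
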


\begin{proof}
As $\fX\cong\varprojlim \fX_i$, we also have $\fX\sim \varprojlim \fX_i$. Now we can apply Proposition \ref{prop:open immersion tilde} to the open immersion $\Spa(K,K^\circ)\mapsto \Spa(K^\circ,K^\circ)$, as the inverse system $\fX_i$ lives over $\Spa(K^\circ,K^\circ)$.
\end{proof}

For our purposes, a key result is that tilde-limits behave like inverse limits on \'etale cohomology. This will let us transfer cohomological results between characteristics by passing through perfectoid tilde-limits.

\begin{proposition}[{\cite[Theorem 7.17]{PS}}]\label{prop:tilde etale}
In the situation of Definition \ref{def:tilde limit}, the \'etale topos $X^{\sim}_{\et}$ of $X$ is the projective limit of the \'etale topoi $(X_i)^\sim_{\et}$. 
\end{proposition}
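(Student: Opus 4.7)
The plan is to verify the equivalence of topoi at the level of underlying sites, showing that the 2-functor from the filtered 2-colimit of étale sites $\varinjlim (X_i)_\et$ to $X_\et$ is an equivalence in an appropriate sense. Concretely, three things must be checked: (i) every qcqs étale morphism $V \to X$ descends étale-locally to a qcqs étale $V_i \to X_i$ for some index $i$, (ii) morphisms between such étale $X$-spaces arise from morphisms at some finite level, and (iii) étale covers of $X$ can be refined by pullbacks of étale covers at finite level.

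To carry this out, I would first reduce to qcqs étale morphisms and then invoke Huber's local structure theorem for étale morphisms of analytic adic spaces: any such morphism factors locally as a composition of rational localizations and finite étale morphisms. The rational localization case exploits the homeomorphism $|X| \cong \varprojlim |X_i|$ from Definition \ref{def:tilde limit}: a rational subset of $X$ is cut out by finitely many inequalities of the form $|f_j| \leq |g| \neq 0$ on an affinoid open, and the density condition $\varinjlim k(x_i) \to k(x)$ on residue fields, combined with the corresponding density of sections on affinoid perfectoid opens, lets me approximate the defining functions by pullbacks from some $X_i$ closely enough that the same rational subset is the pullback of a rational subset at finite level. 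Proposition \ref{prop:open immersion tilde} then guarantees that the tilde-limit property persists under the open immersions that arise. The finite étale case requires the perfectoid-theoretic input that for an affinoid perfectoid $X \sim \varprojlim X_i$ with $X = \Spa(R,R^+)$ and $X_i = \Spa(R_i,R_i^+)$, the finite étale site of $R$ is the filtered 2-colimit of the finite étale sites of the $R_i$; this rests on the density of $\varinjlim R_i$ in $R$ together with Faltings' almost purity for perfectoid algebras.

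Full faithfulness of the site-level comparison then follows from essentially the same density arguments applied to morphisms rather than to objects, and compatibility of covers follows from quasi-compactness of the objects involved together with the topological descent of Step one. Passing to categories of sheaves gives the equivalence of topoi. The main obstacle is the finite étale approximation: this is precisely where genuine perfectoid input (almost purity) enters rather than formal limit manipulations, and it must be combined carefully with the rational-localization descent so that the two steps compose correctly on overlapping affinoid perfectoid charts. By contrast, the rational-localization piece is essentially topological once the homeomorphism $|X| \cong \varprojlim |X_i|$ is granted, and the bookkeeping needed to glue local affinoid perfectoid descents into a global statement is standard in the framework set up by Lemmas \ref{lemma:inverse limit formal schemes} and \ref{lemma:tilde generic fiber}.
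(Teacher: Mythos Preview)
The paper does not supply its own proof of this proposition: it is stated with a direct citation to \cite[Theorem 7.17]{PS} and treated as a black box from Scholze's foundational work. There is therefore no ``paper's own proof'' to compare against.

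That said, your sketch is a faithful outline of the strategy Scholze actually uses in the cited reference. The reduction to rational localizations plus finite \'etale maps via Huber's structure theorem, the approximation of rational subsets by descent of defining inequalities (this is \cite[Lemma 6.5 and Corollary 6.8]{PS}), and the finite \'etale 2-colimit statement (which in \cite{PS} is packaged as Lemma 7.5, resting on almost purity) are exactly the ingredients. One small caution: the density statement you invoke is not literally the residue-field density in Definition~\ref{def:tilde limit} but rather density of $\varinjlim R_i$ in $R$ for affinoid perfectoid $\Spa(R,R^+)\sim\varprojlim\Spa(R_i,R_i^+)$, which is a consequence of the tilde-limit condition but requires a short argument. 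Beyond that, your outline is correct and matches the cited source.
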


Unpacking this statement a bit more concretely, we have

\begin{corollary}[{\cite[Corollary 7.18]{PS}}]\label{cor:tilde etale}
In the situation of Definition \ref{def:tilde limit}, if $F_i$ is a sheaf of abelian groups on $X_i$, $F_j$ the preimage on $X_j$ for $j\geq i$, and $F$ the preimage on $X$, then we have bijections
\[\varinjlim_j H^n(X_{j,\et},F_j)\rightarrow H^n(X_{\et},F)\] for all $n\geq 0$.
\end{corollary}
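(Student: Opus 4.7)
The plan is to deduce the corollary from Proposition \ref{prop:tilde etale} via the standard topos-theoretic principle that étale cohomology of a ``coherent'' projective limit of sites, with coefficients in a pullback sheaf, is computed by the filtered colimit of cohomologies at finite level. Concretely, Proposition \ref{prop:tilde etale} identifies $X_{\et}^\sim$ with the 2-categorical inverse limit $\varprojlim_j (X_j)_{\et}^\sim$, and $F$ is by construction the pullback to this limit topos of the compatible system $\{F_j\}$. The work is then to promote this identification of topoi and sheaves into the cohomological statement.

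First I would handle the case $n = 0$. By Proposition \ref{prop:tilde etale}, a global section of $F$ on $X$ corresponds to a compatible choice of sections of $F_j$ on each $X_j$ modulo the equivalence that identifies two sections if they agree after pulling back to some further $X_{j'}$; because the system is filtered, this is precisely $\varinjlim_j \Gamma(X_{j,\et}, F_j)$. The essential input here is that the noetherian hypothesis on the $X_j$ makes the étale topoi coherent (in the sense of SGA 4 VI), so filtered colimits of sheaves commute with global sections on each $X_j$ and compatible systems of sections descend to sections of the limit.

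For higher $n$, I would use a universal $\delta$-functor argument. Both $F \mapsto H^n(X_{\et}, F)$ and $F \mapsto \varinjlim_j H^n(X_{j,\et}, F_j)$ are cohomological $\delta$-functors on the category of sheaves on $X_i$ (for fixed base index $i$, with $F_j$ defined as the pullback to $X_j$); they agree in degree $0$ by the previous paragraph, so it suffices to show each is effaceable. For the left-hand side this is standard; for the right-hand side one embeds $F_i$ into an injective $I$, pulls back to the system $I_j$, and uses that the filtered colimit of the $H^n(X_{j,\et}, I_j)$ (which vanish for $n > 0$ at each level by injectivity of $I_j$ on the noetherian adic space $X_j$) is zero. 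Commuting filtered colimits past cohomology on each $X_j$ again uses the noetherianness hypothesis.

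The genuinely nontrivial input is Proposition \ref{prop:tilde etale}, which is assumed; everything else is formal topos theory, and the main subtlety to check carefully is the coherence/quasi-compactness assumptions needed to commute filtered colimits with cohomology at each level of the tower. Once this is in place, the effaceability argument lifts the isomorphism from $H^0$ to all $H^n$ and yields the stated bijections.
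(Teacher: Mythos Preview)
The paper does not give its own proof of this corollary: it is stated as a citation to \cite[Corollary 7.18]{PS} and used as a black box. Your overall strategy---deduce it formally from Proposition~\ref{prop:tilde etale} via SGA~4--style topos theory---is exactly what Scholze does there, so in spirit you are aligned with the intended argument.

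However, your effaceability step contains a genuine gap. You write that after embedding $F_i$ into an injective $I$ on $X_i$ and pulling back to $I_j$ on $X_j$, the groups $H^n(X_{j,\et}, I_j)$ vanish for $n>0$ ``by injectivity of $I_j$.'' But pullback along a geometric morphism does \emph{not} preserve injectives: it is the direct image $f_*$ that preserves injectives (because its left adjoint $f^*$ is exact), not $f^*$ itself. So there is no reason for $I_j$ to be injective or even acyclic on $X_j$, and the vanishing you need is unjustified.

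The standard repair (as in SGA~4, Exp.~VI, \S8.7, or Exp.~VII, 5.7--5.8 for schemes) does not go through injective resolutions at level $i$. Instead one uses the coherence of the topoi to argue via covers: any \'etale cover of the limit space $X$ descends to a cover at some finite level $X_j$ (this is where noetherianness/quasi-compactness enters), so \v{C}ech cohomology of $X$ for any cover is a filtered colimit of \v{C}ech cohomologies at finite level, and one concludes via the \v{C}ech-to-derived spectral sequence. Alternatively, one invokes directly that cohomology on a coherent topos commutes with filtered colimits of sheaves, which is the form in which SGA~4 packages the result. Either way, the key point you are missing is that the passage from $H^0$ to $H^n$ goes through descent of covers rather than through preservation of injectives.
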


%%%
%%%
%%%

\subsection{Raynaud extensions}\label{sub:Raynaud extensions}

%%%
%%%
%%%

Any abelian variety over $\C$ of dimension $d$ can be uniformized as a quotient of $\C^d$ by a lattice. Building off of work of Tate, Raynaud gave an analogous result in the non-archimedean case. Let $A$ be an abelian variety over a non-archimedean field $K$, considered as an adic space. In this section, we recall Raynaud's uniformization of $A$ as a quotient of a semi-abelian variety $E$ with good reduction by a lattice. We refer to \cite[Section 6]{Lut} for a nice exposition of the full construction; the original paper is \cite{BL}.

We first need a bit of background on rigid and formal tori. 

\begin{notation}\label{not:tori}
The $r$-dimensional split rigid torus is $(\G_m^{an})^r$ and will generally be denoted $T$. Taking products of Example \ref{eg:formal Gm}, we get an example of a formal model $\fT$ of $T$: a formal scheme over $K^\circ$ with $\fT_\eta\cong T$. In general, we will use normal letters for spaces over $K$, and fractur letters for formal models of spaces over $K$. We emphasize that $\fT$ is not a formal torus. The one-dimensional formal torus is $\overline{\G}_{m,K^\circ}=\Spf(K^\circ\langle T,T^{-1}\rangle)$. Its generic fiber is the unit circle $\An(1,1)$, so in general we have an embedding $(\overline T)_\eta\hookrightarrow T$. We denote formal tori by $\overline T$. Spaces with straight lines over them will be the ones that live ``naturally" over $K^\circ$. As we have seen, the special fiber $\widetilde{\fT}$ is very different from the special fiber $\widetilde{T}$. In the one-dimensional case, $\widetilde{\fT}$ is an infinite chain of $\P^1$s, while $\widetilde{T}=\Spec(\widetilde K[T,T^{-1}])=\G_{m,\widetilde K}.$ Special fibers will always be denoted with a tilde on top. 
\end{notation}

Let $\overline{B}$ be a formal abelian scheme over $K^\circ$, so 
\[\overline B:=\varinjlim_{n\in \N} \widetilde B_n\] where $\widetilde B_n=\overline B\times_{K^\circ} K^\circ/\varpi^{n+1}$ is an abelian scheme over $K^\circ/\varpi^{n+1}$. Fix a strict exact sequence of formal groups over $K^\circ$ 
\begin{equation}\label{eq:formal Raynaud sequence}
1\rightarrow\overline{T}\rightarrow\overline{E}\xrightarrow{\overline\pi}\overline{B}\rightarrow 1
\end{equation}
(strict means that the induced sequence of tangent bundles is also exact, but we will not be using this notion explicitly). 

Let $E$ be the pushforward of $\overline{E}_\eta$ by the embedding $\overline T_\eta\hookrightarrow T$. We get a commutative diagram of short exact sequences of rigid groups.

	\begin{center}
		\begin{equation}\label{eq:Raynaud diagram}
		\begin{tikzcd}
			1 \arrow[r] & \overline{T}_\eta \arrow[d, hook] \arrow[r] & \overline{E}_\eta \arrow[d, hook] \arrow[r, "\overline\pi_\eta"] & \overline{B}_\eta \arrow[d,equal] \arrow[r] & 1 \\
			1 \arrow[r] & T \arrow[r] & E \arrow[r, "\pi"] & B \arrow[r] & 1
		\end{tikzcd}
		\end{equation}
	\end{center}

\begin{definition}\label{def:Raynaud extension}
The rigid analytic group variety $E$ in Diagram \ref{eq:Raynaud diagram} is a \emph{Raynaud extension}. 
\end{definition}

Just as complex abelian varieties are uniformized by taking the quotient of $\C^r$ by a lattice, rigid abelian varieties are uniformized by taking the quotient of some Raynaud extension $E$ by a lattice. To make this more precise, we define lattices in $E$. This will require a bit more information about the geometry of the above short exact sequences. 

The map $\overline\pi$ in (\ref{eq:formal Raynaud sequence}) admits local sections $\overline s: \overline{V_i} \rightarrow \overline E$ for some open cover $\{\overline V_i\}$ of $\overline B$, so we can cover $\overline E$ by formal open subschemes of the form $\overline T\times \overline V_i$. These formal opens are glued on the overlaps by maps $$\overline\phi_{ij}:\overline T\times (\overline V_i\cap \overline V_j)\rightarrow\overline T\times (\overline V_i\cap \overline V_j)$$ such that $\overline\phi_{ij}(t,v) = (\overline\psi_{ij}(v)t,v)$ for some maps $\overline\psi_{ij}:(\overline V_i\cap \overline V_j)\rightarrow \overline T$. 

The local sections of $\overline \pi$ extend to local sections of $\pi$, so $E$ can be covered by opens isomorphic to $T\times V_i$ where $V_i:=(\overline V_i)_\eta$. The gluing data is equivalent to the maps $$\psi_{ij}:V_i\cap V_j=(\overline V_i\cap \overline V_j)_\eta\rightarrow \overline T_\eta.$$ 

The valuation on $K$ gives a group homomorphism $\ell:T(K)\cong(K^\times)^r\rightarrow \R^r$ sending the point $(x_1,\dots,x_r)$ to $(v(x_1),\dots,v(x_r))=(-\log(|x_1|),\dots,-\log(|x_r|))$. We extend the map $\ell:T(K)\rightarrow \R^r$ to a map $\ell:E(K)\rightarrow \R^r$ as follows. Locally on $K$-points, this is the map $T\times V_i\xrightarrow{pr_1} T\xrightarrow{\ell} \R^r$. On intersections the gluing maps come from the $\overline T_\eta$-action on $T$. The map $\ell$ is invariant for this action as $\ell$ sends $\overline T_\eta$ to the origin in $\R$. We can therefore can glue the local maps together to get a group homomorphism $\ell: E(K)\rightarrow \R^r$ as desired.

\begin{definition}\label{def:rigid lattice}
A \emph{lattice} in $E$ is a subgroup $M\subset E(K)$ such that $\ell$ restricts to a bijection from $M$ to a lattice $\Lambda$ in $\R^r$. The \emph{rank} of $M$ is the rank of $\Lambda$, $M$ has \emph{full rank} if it has rank $r$. %Note that these definitions all specialize to the case $E=T$.
\end{definition}

\begin{theorem}[{\cite[Theorem 6.4.8]{Lut}}]\label{thm:Raynaud uniformization}
Let $A$ be an abelian variety over $K$. After a suitable finite separable extension $L/K$ there is a unique Raynaud extension $E$ over $L$ and full rank lattice $M$ in $E$ such that $A_L\cong E/M$.
\end{theorem}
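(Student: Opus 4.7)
The plan is to leverage two deep classical inputs: Grothendieck's semistable reduction theorem for abelian varieties, and his theorem (SGA 7) that extensions of a formal abelian scheme by a formal torus lift uniquely from the special fiber. Given these, the construction of $E$ and $M$ becomes organized bookkeeping with rigid and formal models.

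First I would pass, via the semistable reduction theorem, to a finite separable extension $L/K$ over which $A$ acquires semistable reduction, meaning its N\'eron model $\mathcal{A}$ over $L^\circ$ has identity component $\mathcal{A}^0$ whose special fiber $\widetilde{\mathcal{A}}^0$ fits in an extension
\[1\to \widetilde T_0\to \widetilde{\mathcal{A}}^0\to \widetilde B\to 1\]
with $\widetilde T_0$ a torus of some rank $r$ and $\widetilde B$ an abelian variety. Taking the formal completion of $\mathcal{A}^0$ along its special fiber gives a formal group $\overline E$ over $L^\circ$. By Grothendieck's lifting theorem for such extensions, the above sequence lifts uniquely to a strict exact sequence
\[1\to \overline T\to \overline E\to \overline B\to 1\]
of formal group schemes over $L^\circ$, with $\overline T$ a formal torus and $\overline B$ a formal abelian scheme. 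Pushing out the generic fiber of this sequence along $\overline T_\eta\hookrightarrow T$ produces the Raynaud extension $E$ of Diagram \ref{eq:Raynaud diagram}.

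Next I would construct the uniformization morphism $E\to A$ and identify its kernel $M$. The tautological inclusion $\mathcal{A}^0\hookrightarrow \mathcal{A}$ together with the comparison of rigid and formal generic fibers gives a map $\overline E_\eta\to A$ whose image is the open subgroup reducing to $\widetilde{\mathcal{A}}^0$. To extend this across the pushout defining $E$, I would use translation by $L$-points of $T$: each class in $T(L)/\overline T_\eta(L)$ defines, via the logarithm map $\ell$ from the excerpt, a translation on the (finite, by semistability) component group $\widetilde{\mathcal{A}}/\widetilde{\mathcal{A}}^0$, and this translation lifts to an isomorphism of the corresponding analytic components of $A$ by the rigid-analytic group structure. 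Gluing these translates produces the desired homomorphism $E\to A$; its kernel $M$ is discrete in $E$ and is sent injectively by $\ell$ to a subgroup $\Lambda\subset\R^r$.

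The main obstacle, as I see it, is verifying that $\Lambda$ has full rank $r$ and that the induced map $E/M\to A$ is an isomorphism. Since $A$ is proper and $T$ is not, the quotient $E/M$ can only be proper if $M$ kills off all the non-compact directions of $T$, which forces $\Lambda$ to span $\R^r$. Making this precise requires comparing the component group of the special fiber of $\mathcal{A}$ (whose rank is exactly $r$ under the semistable reduction hypothesis, via Grothendieck's monodromy pairing) with the image of $\ell|_M$, and then invoking a properness/compactness criterion on the generic fiber to conclude that $E/M$ and $A$ have the same points. Uniqueness of $(E,M)$ up to unique isomorphism then follows from the uniqueness of the N\'eron model together with the uniqueness in Grothendieck's lifting theorem: any alternative uniformization would yield another formal model of the identity component of the N\'eron model, and these must agree on the special fiber and hence on the entire formal completion.
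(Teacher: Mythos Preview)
The paper does not prove this theorem; it is quoted as a black box from \cite[Theorem 6.4.8]{Lut}, so there is no proof in the paper to compare against. Your outline is in the spirit of the classical Bosch--L\"utkebohmert/Raynaud argument (semistable reduction, formal completion of the N\'eron identity component, Grothendieck's rigidity of torus extensions, then passage to the rigid generic fiber), and that is indeed the right scaffolding.

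That said, two points in your sketch are not correct as written and would derail the argument if you tried to make it precise. First, the component group $\Phi=\widetilde{\mathcal A}/\widetilde{\mathcal A}^0$ of the special fiber of the N\'eron model is a \emph{finite} group, not a group of rank $r$; Grothendieck's monodromy pairing is a pairing on the rank-$r$ character lattices of the toric parts of $A$ and $A^\vee$, and $\Phi$ is the cokernel of the induced map between these lattices. So you cannot read off ``$\Lambda$ has rank $r$'' directly from the rank of $\Phi$. Second, your mechanism for extending $\overline E_\eta\to A$ to $E\to A$ via ``each class in $T(L)/\overline T_\eta(L)$ translates the component group'' cannot work as stated: $T(L)/\overline T_\eta(L)$ surjects onto $\Z^r$, which is infinite, while $\Phi$ is finite, so most classes would have to act trivially and you get no well-defined extension this way. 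In the actual construction one instead builds $E$ as the rigid-analytic universal cover of $A$ (using that $\overline E_\eta\to A$ is an open immersion onto the subgroup of points with reduction in $\widetilde{\mathcal A}^0$, then covering $A$ by its translates and lifting the gluing to $E$); the lattice $M$ then appears as the deck group, and full rank comes from properness of $A$ together with the explicit description of $E$ as a $T$-bundle over $B$. Your concluding uniqueness argument via the N\'eron model is fine in outline.
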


In the category of rigid spaces, $E$ is topologically simply connected (See \cite[Example 7.4.9(1)]{FvdP} or \cite[Theorem 1.2]{BL}), so this result is as strong as possible. 

More generally, define an abeloid variety to be a proper, smooth, connected, rigid group variety. This is the non-archimedean analogue of a complex torus.

\begin{theorem}[{\cite[Theorem 7.6.4]{Lut}}]\label{thm:Raynaud abeloid}
Any abeloid variety over $K$ admits a Raynaud uniformization after a suitable finite separable extension of $K$.
\end{theorem}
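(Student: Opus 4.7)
The plan is to adapt the strategy used to prove Theorem \ref{thm:Raynaud uniformization} for abelian varieties: reduce to a semistable situation after a finite separable extension, then read off the Raynaud extension from the identity component of the special fiber of a well-chosen formal model and cut it out by a lattice of full rank.

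First I would establish a semistable reduction statement: after a finite separable extension of $K$, the abeloid $A$ admits a formal model $\mathfrak A$ over $K^\circ$ whose special fiber has identity component equal to a semi-abelian variety $\widetilde G$ sitting in a short exact sequence $1\to \widetilde T_0\to \widetilde G\to \widetilde B\to 1$ of group schemes over $\kappa$, with $\widetilde T_0$ a torus and $\widetilde B$ an abelian variety. For abelian varieties this is Grothendieck's semistable reduction theorem, proved via the N\'eron model; for a general abeloid one has to argue directly with formal models of proper smooth rigid groups, which is the main input from \cite{Lut}.

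Once semistability is in hand, the second step is to lift the special-fiber data to $K^\circ$. Formal tori and formal abelian schemes deform uniquely, so $\widetilde T_0$ lifts to a formal torus $\overline T$ and $\widetilde B$ lifts to a formal abelian scheme $\overline B$. The extension class of $\widetilde G$ is a character $X^\ast(\widetilde T_0)\to \widetilde B^{\mathrm t}(\kappa)$ into the dual; lifting it via the surjection $\overline B^{\mathrm t}(K^\circ)\twoheadrightarrow \widetilde B^{\mathrm t}(\kappa)$ and the identification $X^\ast(\overline T)\cong X^\ast(\widetilde T_0)$ produces a strict exact sequence $1\to\overline T\to\overline E\to\overline B\to 1$ of formal groups over $K^\circ$. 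Pushing out along the embedding $\overline T_\eta\hookrightarrow T$ then produces the Raynaud extension $E$ fitting into Diagram \ref{eq:Raynaud diagram}.

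Finally, I would construct the lattice $M\subset E(K)$ and show $A\cong E/M$. The formal model identifies $\overline E_\eta$ with the open subgroup of $A$ corresponding to the identity component of the special fiber; extending by the group law produces a surjection $E\to A$ whose kernel $M$ is discrete. Applying the valuation map $\ell\colon E(K)\to\R^r$ introduced before Definition \ref{def:rigid lattice}, the kernel $M$ maps to a subgroup $\Lambda\subset \R^r$, and the properness (equivalently, compactness) of $A$ forces $\Lambda$ to be a full-rank lattice, since otherwise the quotient $E/M$ would retain an unbounded direction in the toric part. This yields the desired uniformization $A\cong E/M$.

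The main obstacle is the first step: producing a formal model with semi-abelian identity component on the special fiber. For abelian varieties one has the N\'eron model to lean on, but an abeloid is a purely rigid-analytic object, so the relevant structure must be extracted from a careful analysis of formal models of proper smooth rigid groups, controlling how the reduction transforms under admissible formal blow-ups and finite base changes. Once this is achieved, the remaining steps (lifting, pushout, and construction of the lattice) are essentially formal and parallel the abelian variety case.
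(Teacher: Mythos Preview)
The paper does not give a proof of this theorem: it is stated as background and attributed directly to \cite[Theorem 7.6.4]{Lut}, with no argument supplied. So there is nothing in the paper to compare your proposal against.

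That said, your outline is a reasonable high-level sketch of the strategy actually carried out in \cite{Lut}: obtain a semistable formal model after a finite separable extension, lift the semi-abelian identity component of the special fiber to a Raynaud extension $E$, and identify the kernel of $E\to A$ as a full-rank lattice via the valuation map $\ell$. You correctly identify the main difficulty as the first step, since abeloids have no N\'eron model to fall back on; L\"utkebohmert's treatment of this point is substantial and occupies much of the relevant chapter. Your sketch is accurate at the level of a roadmap, but the paper itself treats the result as a black box.
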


\begin{example}\label{eg:Tate curve}
Fix an element $q\in K^\times$ with $|q|=c<1$. The quotient $\G_{m,K}^{an}/q^\Z$ is the analytification of an elliptic curve with bad reduction, called a Tate curve. Extending the base field if needed, we assume that $q$ has a square root in $K$. We can then glue the annuli $\An(c^{1/2},1)$ and $\An(c,c^{1/2})$ along their boundary circles. The inner circles $\An(c/2,c/2)$ are glued via the identity, while the outer circles are glued along the map $\An(1,1)\rightarrow \An(c,c)$ given on points by multiplication by $q$. 

This is a formal analytic cover, so it gives rise to a formal model of the Tate curve. More generally, we get a formal model from any decomposition of the interval $[c,1]\subset \R$ into closed intervals with endpoints in the value group of $K$. See Figure \ref{fig:Tate model} for a picture of two such models.

\begin{figure}

\tikzset{every picture/.style={line width=0.75pt}} %set default line width to 0.75pt        

\begin{tikzpicture}[x=0.75pt,y=0.75pt,yscale=-.9,xscale=.9]
%uncomment if require: \path (0,771); %set diagram left start at 0, and has height of 771

%Shape: Circle [id:dp8578513561986547] 
\draw  [color={rgb, 255:red, 208; green, 2; blue, 27 }  ,draw opacity=1 ][fill={rgb, 255:red, 255; green, 165; blue, 0 }  ,fill opacity=0.91 ][line width=3]  (204.2,635.17) .. controls (204.2,575.73) and (252.39,527.55) .. (311.83,527.55) .. controls (371.27,527.55) and (419.45,575.73) .. (419.45,635.17) .. controls (419.45,694.61) and (371.27,742.8) .. (311.83,742.8) .. controls (252.39,742.8) and (204.2,694.61) .. (204.2,635.17) -- cycle ;
%Shape: Circle [id:dp6480733831421175] 
\draw  [color={rgb, 255:red, 248; green, 231; blue, 28 }  ,draw opacity=1 ][fill={rgb, 255:red, 126; green, 211; blue, 33 }  ,fill opacity=0.91 ][line width=3]  (243.54,635.17) .. controls (243.54,597.46) and (274.11,566.88) .. (311.83,566.88) .. controls (349.54,566.88) and (380.12,597.46) .. (380.12,635.17) .. controls (380.12,672.89) and (349.54,703.46) .. (311.83,703.46) .. controls (274.11,703.46) and (243.54,672.89) .. (243.54,635.17) -- cycle ;
%Straight Lines [id:da9101513999922808] 
\draw    (437.5,628.69) -- (470.5,628.69) ;
\draw [shift={(473.5,628.69)}, rotate = 180] [fill={rgb, 255:red, 0; green, 0; blue, 0 }  ][line width=0.08]  [draw opacity=0] (12.5,-6.01) -- (0,0) -- (12.5,6.01) -- (8.3,0) -- cycle    ;
%Shape: Circle [id:dp620793181408541] 
\draw  [color={rgb, 255:red, 74; green, 144; blue, 226 }  ,draw opacity=1 ][fill={rgb, 255:red, 255; green, 255; blue, 255 }  ,fill opacity=1 ][line width=3]  (286.83,635.17) .. controls (286.83,621.36) and (298.02,610.17) .. (311.83,610.17) .. controls (325.64,610.17) and (336.83,621.36) .. (336.83,635.17) .. controls (336.83,648.98) and (325.64,660.17) .. (311.83,660.17) .. controls (298.02,660.17) and (286.83,648.98) .. (286.83,635.17) -- cycle ;
%Curve Lines [id:da6500598663608426] 
\draw [color={rgb, 255:red, 248; green, 231; blue, 28 }  ,draw opacity=1 ][line width=3]    (491.5,643.69) .. controls (529.5,589.69) and (611.5,584.69) .. (650.5,644.69) ;
%Curve Lines [id:da8268622541172419] 
\draw [color={rgb, 255:red, 144; green, 19; blue, 254 }  ,draw opacity=1 ][line width=3]    (493.5,612.69) .. controls (521.5,688.69) and (621.5,688.69) .. (647.5,613.69) ;
%Shape: Circle [id:dp7910441324915121] 
\draw  [color={rgb, 255:red, 0; green, 0; blue, 0 }  ,draw opacity=1 ][fill={rgb, 255:red, 246; green, 160; blue, 17 }  ,fill opacity=1 ][line width=1.5]  (633.75,630.69) .. controls (633.75,627.93) and (635.99,625.69) .. (638.75,625.69) .. controls (641.51,625.69) and (643.75,627.93) .. (643.75,630.69) .. controls (643.75,633.45) and (641.51,635.69) .. (638.75,635.69) .. controls (635.99,635.69) and (633.75,633.45) .. (633.75,630.69) -- cycle ;
%Shape: Circle [id:dp7405919838845076] 
\draw  [color={rgb, 255:red, 0; green, 0; blue, 0 }  ,draw opacity=1 ][fill={rgb, 255:red, 126; green, 211; blue, 33 }  ,fill opacity=1 ][line width=1.5]  (497.5,631.19) .. controls (497.5,627.87) and (500.19,625.19) .. (503.5,625.19) .. controls (506.81,625.19) and (509.5,627.87) .. (509.5,631.19) .. controls (509.5,634.5) and (506.81,637.19) .. (503.5,637.19) .. controls (500.19,637.19) and (497.5,634.5) .. (497.5,631.19) -- cycle ;
%Shape: Circle [id:dp7760062497460309] 
\draw  [color={rgb, 255:red, 208; green, 2; blue, 27 }  ,draw opacity=1 ][fill={rgb, 255:red, 255; green, 165; blue, 0 }  ,fill opacity=0.91 ][line width=3]  (202.2,334.17) .. controls (202.2,274.73) and (250.39,226.55) .. (309.83,226.55) .. controls (369.27,226.55) and (417.45,274.73) .. (417.45,334.17) .. controls (417.45,393.61) and (369.27,441.8) .. (309.83,441.8) .. controls (250.39,441.8) and (202.2,393.61) .. (202.2,334.17) -- cycle ;
%Shape: Circle [id:dp6269155678414935] 
\draw  [color={rgb, 255:red, 248; green, 231; blue, 28 }  ,draw opacity=1 ][fill={rgb, 255:red, 126; green, 211; blue, 33 }  ,fill opacity=0.91 ][line width=3]  (241.54,334.17) .. controls (241.54,296.46) and (272.11,265.88) .. (309.83,265.88) .. controls (347.54,265.88) and (378.12,296.46) .. (378.12,334.17) .. controls (378.12,371.89) and (347.54,402.46) .. (309.83,402.46) .. controls (272.11,402.46) and (241.54,371.89) .. (241.54,334.17) -- cycle ;
%Shape: Circle [id:dp5435244952996712] 
\draw  [color={rgb, 255:red, 74; green, 144; blue, 226 }  ,draw opacity=1 ][fill={rgb, 255:red, 255; green, 255; blue, 255 }  ,fill opacity=1 ][line width=3]  (284.83,334.17) .. controls (284.83,320.36) and (296.02,309.17) .. (309.83,309.17) .. controls (323.64,309.17) and (334.83,320.36) .. (334.83,334.17) .. controls (334.83,347.98) and (323.64,359.17) .. (309.83,359.17) .. controls (296.02,359.17) and (284.83,347.98) .. (284.83,334.17) -- cycle ;

%Shape: Circle [id:dp4909638011621671] 
\draw  [line width=1.5]  (273.46,334.17) .. controls (273.46,314.08) and (289.74,297.8) .. (309.83,297.8) .. controls (329.92,297.8) and (346.2,314.08) .. (346.2,334.17) .. controls (346.2,354.26) and (329.92,370.54) .. (309.83,370.54) .. controls (289.74,370.54) and (273.46,354.26) .. (273.46,334.17) -- cycle ;
%Shape: Circle [id:dp4594439571289404] 
\draw  [line width=1.5]  (258.49,334.17) .. controls (258.49,305.82) and (281.47,282.83) .. (309.83,282.83) .. controls (338.18,282.83) and (361.17,305.82) .. (361.17,334.17) .. controls (361.17,362.53) and (338.18,385.51) .. (309.83,385.51) .. controls (281.47,385.51) and (258.49,362.53) .. (258.49,334.17) -- cycle ;
%Shape: Circle [id:dp39029182221421843] 
\draw  [line width=1.5]  (228.75,334.17) .. controls (228.75,289.39) and (265.05,253.09) .. (309.83,253.09) .. controls (354.61,253.09) and (390.91,289.39) .. (390.91,334.17) .. controls (390.91,378.95) and (354.61,415.25) .. (309.83,415.25) .. controls (265.05,415.25) and (228.75,378.95) .. (228.75,334.17) -- cycle ;
%Shape: Circle [id:dp268872031189368] 
\draw  [line width=1.5]  (212.56,334.17) .. controls (212.56,280.45) and (256.11,236.91) .. (309.83,236.91) .. controls (363.55,236.91) and (407.09,280.45) .. (407.09,334.17) .. controls (407.09,387.89) and (363.55,431.44) .. (309.83,431.44) .. controls (256.11,431.44) and (212.56,387.89) .. (212.56,334.17) -- cycle ;
%Straight Lines [id:da7807225392215289] 
\draw    (437.5,327.69) -- (470.5,327.69) ;
\draw [shift={(473.5,327.69)}, rotate = 180] [fill={rgb, 255:red, 0; green, 0; blue, 0 }  ][line width=0.08]  [draw opacity=0] (12.5,-6.01) -- (0,0) -- (12.5,6.01) -- (8.3,0) -- cycle    ;
%Straight Lines [id:da6566840960059004] 
\draw [color={rgb, 255:red, 248; green, 231; blue, 28 }  ,draw opacity=1 ][line width=2.25]    (510.5,266.75) -- (630,266.75) ;
%Straight Lines [id:da11987965892540697] 
\draw [line width=2.25]    (588.5,246.69) -- (640.5,347.75) ;
%Straight Lines [id:da12399842100359093] 
\draw [line width=2.25]    (589,407.25) -- (640.5,304.69) ;
%Straight Lines [id:da692374442961484] 
\draw [color={rgb, 255:red, 144; green, 19; blue, 254 }  ,draw opacity=1 ][line width=2.25]    (508.5,385.69) -- (629.5,385.75) ;
%Straight Lines [id:da8298915156986257] 
\draw [line width=2.25]    (548.5,408.19) -- (498.5,306.25) ;
%Straight Lines [id:da7446058344571891] 
\draw [line width=2.25]    (499.5,347.19) -- (550,246.25) ;
%Shape: Circle [id:dp36537373119463545] 
\draw  [color={rgb, 255:red, 0; green, 0; blue, 0 }  ,draw opacity=1 ][fill={rgb, 255:red, 126; green, 211; blue, 33 }  ,fill opacity=1 ][line width=1.5]  (533,386.19) .. controls (533,382.87) and (535.69,380.19) .. (539,380.19) .. controls (542.31,380.19) and (545,382.87) .. (545,386.19) .. controls (545,389.5) and (542.31,392.19) .. (539,392.19) .. controls (535.69,392.19) and (533,389.5) .. (533,386.19) -- cycle ;
%Shape: Circle [id:dp8008673088381091] 
\draw  [color={rgb, 255:red, 0; green, 0; blue, 0 }  ,draw opacity=1 ][fill={rgb, 255:red, 126; green, 211; blue, 33 }  ,fill opacity=1 ][line width=1.5]  (534,268.69) .. controls (534,265.37) and (536.69,262.69) .. (540,262.69) .. controls (543.31,262.69) and (546,265.37) .. (546,268.69) .. controls (546,272) and (543.31,274.69) .. (540,274.69) .. controls (536.69,274.69) and (534,272) .. (534,268.69) -- cycle ;
%Shape: Circle [id:dp7373016831163854] 
\draw  [color={rgb, 255:red, 0; green, 0; blue, 0 }  ,draw opacity=1 ][fill={rgb, 255:red, 126; green, 211; blue, 33 }  ,fill opacity=1 ][line width=1.5]  (503.5,326.19) .. controls (503.5,322.87) and (506.19,320.19) .. (509.5,320.19) .. controls (512.81,320.19) and (515.5,322.87) .. (515.5,326.19) .. controls (515.5,329.5) and (512.81,332.19) .. (509.5,332.19) .. controls (506.19,332.19) and (503.5,329.5) .. (503.5,326.19) -- cycle ;
%Shape: Circle [id:dp3148773348514664] 
\draw  [color={rgb, 255:red, 0; green, 0; blue, 0 }  ,draw opacity=1 ][fill={rgb, 255:red, 246; green, 160; blue, 17 }  ,fill opacity=1 ][line width=1.5]  (594.75,268.19) .. controls (594.75,265.43) and (596.99,263.19) .. (599.75,263.19) .. controls (602.51,263.19) and (604.75,265.43) .. (604.75,268.19) .. controls (604.75,270.95) and (602.51,273.19) .. (599.75,273.19) .. controls (596.99,273.19) and (594.75,270.95) .. (594.75,268.19) -- cycle ;
%Shape: Circle [id:dp30778008888263453] 
\draw  [color={rgb, 255:red, 0; green, 0; blue, 0 }  ,draw opacity=1 ][fill={rgb, 255:red, 246; green, 160; blue, 17 }  ,fill opacity=1 ][line width=1.5]  (624.25,326.69) .. controls (624.25,323.93) and (626.49,321.69) .. (629.25,321.69) .. controls (632.01,321.69) and (634.25,323.93) .. (634.25,326.69) .. controls (634.25,329.45) and (632.01,331.69) .. (629.25,331.69) .. controls (626.49,331.69) and (624.25,329.45) .. (624.25,326.69) -- cycle ;
%Shape: Circle [id:dp921776398039517] 
\draw  [color={rgb, 255:red, 0; green, 0; blue, 0 }  ,draw opacity=1 ][fill={rgb, 255:red, 246; green, 160; blue, 17 }  ,fill opacity=1 ][line width=1.5]  (594.25,386.19) .. controls (594.25,383.43) and (596.49,381.19) .. (599.25,381.19) .. controls (602.01,381.19) and (604.25,383.43) .. (604.25,386.19) .. controls (604.25,388.95) and (602.01,391.19) .. (599.25,391.19) .. controls (596.49,391.19) and (594.25,388.95) .. (594.25,386.19) -- cycle ;

\end{tikzpicture}
\caption[Two formal analytic covers of a Tate curve]{Gluing the red and blue circles gives a formal analytic cover of the Tate curve. The special fiber of the corresponding formal model is two $\P^1$s intersecting twice. Refining the formal analytic cover corresponds to blowing up the special fiber.}\label{fig:Tate model}
\end{figure}
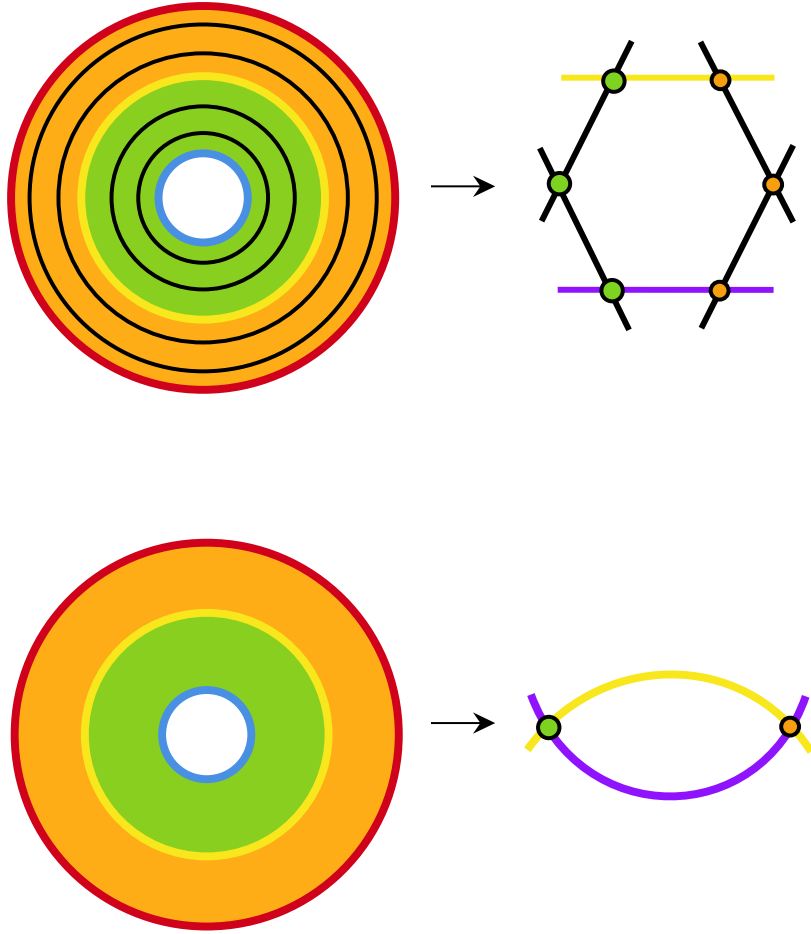

\end{example}

\subsection{Background on perfectoid spaces}\label{sec:perfectoid background}

%%%%%%%
%%%%%%%
%%%%%%%
%%%%%%%
%%%%%%%
%%%%%%%

The original reference for this material is \cite{PS}. Other good introductions include \cite{AWS_book} and \cite{Bhatt_notes}. Fix a prime $p$. 

\begin{definition}\label{def:perfectoid field}
Let $K$ be a non-archimedean field with residue characteristic $p$, complete with respect to the norm $|\cdot|$. Fix a \emph{pseudo-uniformizer} $\varpi\in K$ with $|p|<|\varpi|<|1|$. The field $K$ is \emph{perfectoid} if $|\cdot|$ is not discrete and the Frobenius map on $K^\circ/\varpi$ is surjective. 
\end{definition}

\begin{example}\label{eg:pth roots perfectoid field}
Examples of perfectoid fields include $\widehat{\Q_p(p^{1/p^\infty})}$ completed with respect to the $p$-adic valuation and $\widehat{\F_p((t^{1/p^\infty}))}$ completed with respect to the $t$-adic valuation. Any algebraically closed non-archimedean field is perfectoid. In characteristic $p$, perfectoid fields are exactly the non-discrete, perfect fields.
\end{example}

Given a perfectoid field, the \emph{tilt} is defined as $K^\flat:=\varprojlim_{x\mapsto x^p} K$, so elements of $K^\flat$ are sequences $(x_0,x_1,\dots)$ of elements of $K$ with $x_n^p=x_{n-1}$. This is a characteristic $p$ perfectoid field, where the multiplication law is given by component-wise multiplication, and the addition law is given by $(x_n)+(y_n)=(z_n)$, where \[z_n=\displaystyle\lim_{m\rightarrow\infty} (x_{m+n}+y_{m+n})^{p^m}.\] As perfectoid fields in characteristic $p$ are perfect, we have $K^\flat=K$ for such fields.

There is a projection map $\sharp:K^\flat\rightarrow K$ sending $(x_n)\mapsto x_0$: this is multiplicative but not additive. Its image is the elements of $K$ which have $p^n$th roots for all $n$. The absolute value on $K^\flat$ is given by $|x|_{K^\flat}=|x^\sharp|_{K}.$ The fields $K$ and $K^\flat$ have the same value group. We can therefore choose our pseudo-uniformizer $\varpi$ of $K$ to be in the image of $\sharp$ and choose our pseudo-uniformizer of $K^\flat$ to be an element $\varpi^\flat$ of $K^\flat$ with $(\varpi^\flat)^\sharp=\varpi$. A key relation between $K$ and $K^\flat$ is the isomorphism 
\begin{equation}\label{eq:field special fiber} 
K^\circ/\varpi\cong K^{\flat\circ}/\varpi^\flat
\end{equation}

\begin{example}\label{eg:pth roots field tilt}
The tilt of $K=\widehat{\Q_p(p^{1/p^\infty})}$ is isomorphic to $\widehat{\F_p((t^{1/p^\infty}))}$, where the element $(p,p^{1/p},\dots)\in K^\flat$ is identified with the pseudo-uniformizer $t$. Then $t^\sharp=p$, and we have \[K^{\flat\circ}/\varpi^\flat\cong \widehat{\F_p[[t^{1/p^\infty}]]}/t\cong \F_p[t^{\frac{1}{p^\infty}}]/t\cong K^\circ/\varpi\] as expected.
\end{example}

For the purposes of this paper, we will not need the full generality of perfectoid rings and adic spaces. Our goal is to work with explicit limits of well-behaved spaces coming from rigid geometry. We can therefore fix a perfectoid field $K$ and restrict our attention to the following nice situation.

\begin{definition}\label{def:perfectoid space}
A \emph{perfectoid $K$-algebra} is a complete $K$-algebra $R$ such that $R^\circ$ is open and bounded, and the Frobenius map on $R^\circ/\varpi$ is surjective. An \emph{affinoid perfectoid} space is an adic space $\Spa(R,R^+)$ with $R$ perfectoid. A \emph{perfectoid space} over $K$ is an adic space which is locally affinoid perfectoid.  
\end{definition}

Perfectoid $K$-algebras and affinoid perfectoid spaces can again be tilted, with 
\[(R,R^+)\mapsto (R^\flat,R^{\flat +}):=(\varprojlim_{x\mapsto x^p} R,\varprojlim_{x\mapsto x^p} R^+).\] The map $\sharp:R^\flat\rightarrow R$ and topology on $R^\flat$ are defined analogously to the case of fields, and there is again a crucial isomorphism $R^+/\varpi\cong R^{\flat +}/\varpi^\flat$. This tilting process glues, so perfectoid spaces in general tilt.

\begin{example}\label{eg:perfectoid tate algebra}
The \emph{perfectoid Tate algebra} $K\langle T_1^{1/p^\infty},\dots,T_n^{1/p^\infty}\rangle$ is a perfectoid $K$-algebra with power bounded elements $K^\circ\langle T_1^{1/p^\infty},\dots,T_n^{1/p^\infty}\rangle$ and special fiber $K^\circ/\varpi[T_1^{1/p^\infty},\dots,T_n^{1/p^\infty}].$ The affinoid perfectoid space $\Spa(K\langle T_1^{1/p^\infty},\dots,T_n^{1/p^\infty}\rangle,K^\circ\langle T_1^{1/p^\infty},\dots,T_n^{1/p^\infty}\rangle)$ is called the $n$-dimensional perfectoid closed disk. The tilt of the perfectoid Tate algebra over $K$ is the perfectoid Tate algebra over $K^\flat$.
\end{example} 

\begin{theorem}[{\cite{PS}}]\label{thm:tilting}
Tilting induces an equivalence between the following pairs of categories:
\begin{enumerate}
\item Perfectoid $K$-algebras and perfectoid $K^\flat$-algebras;
\item Affinoid perfectoid spaces over $K$ and $K^\flat$; 
\item Finite \'etale algebras over a perfectoid $K$-algebra $R$ and finite \'etale algebras over $R^\flat$;
\item Perfectoid spaces over $\Spa(K,K^\circ)$ and $\Spa(K^\flat,K^{\flat\circ})$;
\item (Finite) \'etale covers of a perfectoid space $X$ and of its tilt $X^\flat$.
\end{enumerate}
\end{theorem}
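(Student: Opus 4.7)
The plan is to prove the equivalences in the order stated, with (1) providing the foundation and (3), the almost purity theorem, being the main obstacle.

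For (1), the tilt functor $R \mapsto R^\flat = \varprojlim_{x \mapsto x^p} R$ is straightforward; the work is in constructing an inverse untilt. Given a perfectoid $K^\flat$-algebra $S$, I would form the Witt vectors $W(S^\circ)$ and use a Fontaine-style theta map $\theta : W(K^{\flat\circ}) \to K^\circ$ (sending Teichm\"uller lifts $[x]$ to $x^\sharp$) to define $R^\circ := W(S^\circ) \otimes_{W(K^{\flat\circ}),\theta} K^\circ$ and $R := R^\circ[1/\varpi]$. The key verification is that Frobenius is surjective on $R^\circ/\varpi$, which follows from the analogous statement for $S^\circ/\varpi^\flat$ via the isomorphism $R^\circ/\varpi \cong S^\circ/\varpi^\flat$ built into the construction. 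That these functors are mutually inverse reduces to the fact that a perfectoid $K$-algebra is determined by $R^\circ/\varpi$ together with a compatible system of $p$-power roots of $\varpi$, which is exactly the data captured by $R^\flat$. Part (2) is then formal: affinoid perfectoids are $\Spa(R,R^+)$ for perfectoid $(R,R^+)$, and one must only check that rational subsets and their defining algebras tilt compatibly.

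The main obstacle is (3). I would first prove the characteristic $p$ case, where perfectness and Frobenius bijectivity allow a direct argument: for a finite \'etale $R$-algebra $S$, the trace pairing shows $S$ is perfectoid and $S^\circ$ is almost finite \'etale over $R^\circ$. To transfer to characteristic zero, reduce modulo $\varpi$, pass across the isomorphism $R^\circ/\varpi \cong R^{\flat\circ}/\varpi^\flat$, apply the characteristic $p$ case to produce an almost finite \'etale cover of $R^{\flat\circ}$, and untilt. The delicate step is proving that the untilted algebra is genuinely (not merely almost) finite \'etale over $R$; this requires deformation-theoretic control on the cotangent complex to rule out boundary contributions to the discriminant, and is where the machinery of almost mathematics does the heavy lifting.

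Parts (4) and (5) then follow by gluing. For (4), cover a perfectoid space by affinoid perfectoids, tilt each piece using (2), and glue along rational intersections which tilt compatibly by the rational-subset compatibility established in (2). For (5), use that an \'etale morphism of perfectoid spaces factors locally as a composition of rational embeddings (transferred by (4)) and finite \'etale maps (transferred by (3)); functoriality of tilting then upgrades these local equivalences to the desired global equivalence of \'etale sites.
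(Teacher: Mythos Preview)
The paper does not give its own proof of this theorem: it is stated as background and attributed to \cite{PS}, with no argument supplied. So there is nothing in the paper to compare your proposal against beyond the citation itself.

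That said, your outline is a reasonable high-level sketch of Scholze's original strategy in \cite{PS}. A few places where your account is loose compared to what the actual proof requires: the untilt construction in (1) is cleaner when routed through the chain of equivalences between perfectoid $K$-algebras, perfectoid $K^{\circ a}$-algebras, and perfectoid $K^{\circ a}/\varpi$-algebras (this is how the paper uses it in Lemma~\ref{cor:pF special fiber unique}), rather than directly via a Witt-vector tensor product, which needs completion and more care to make precise. In (3), your reduction to characteristic $p$ and appeal to the cotangent complex is the right shape, but ``rule out boundary contributions to the discriminant'' understates what is needed; Scholze's argument passes through Faltings-style almost purity and a substantial amount of almost commutative algebra to show that finite \'etale covers of $R$ deform from $R^\circ/\varpi$. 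For (4), the gluing requires knowing that the structure presheaf on an affinoid perfectoid is a sheaf and that rational subsets of affinoid perfectoids are again affinoid perfectoid, which is itself a nontrivial theorem in \cite{PS}. None of this is a genuine gap in your plan, but each step hides real work that a full proof would have to supply.
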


%%%
%%%
%%%

\subsection{Constructing perfectoid covers via $F$-towers}\label{sub:pF towers}

Our overall goal is to transfer information about the \'etale cohomology of varieties from characteristic $p$ to characteristic 0. Theorem \ref{thm:tilting} lets us make this transfer for perfectoid spaces, so we need to relate the cohomology of varieties to that of perfectoid spaces. Proposition \ref{prop:tilde etale} says that tilde-limits act like inverse limits on \'etale topoi, and therefore \'etale cohomology. Following Scholze, our approach is to construct inverse systems of varieties with a perfectoid tilde-limit. In \cite{PS}, Scholze does this for inverse systems built by iterating Frobenius lifts on toric varieties. As we typically don't have Frobenius lifts for abelian varieties, we need a more general setup. Roughly speaking, we need to make sure that the rings at the top of the tower will have $p^n$th power roots mod $\varpi$ for all $n$. At each level of the tower, we add more $p$th roots by requiring that our maps factor through relative Frobenius. 

The following formalism is modeled off of Scholze's construction of perfectoid covers of Shimura varieties in \cite[III.2]{Torsion}.

\begin{definition}\label{def:pF towers}
Let $X$ be a rigid analytic space over $K$. An $F$-tower over $X$ consists of an inverse system of formal schemes $\{\fX_n\}_{n\geq 1}$ satisfying the following conditions:
\begin{enumerate}
\item The generic fiber of $\fX_1$ is $X$,
\item The formal schemes $\fX_n$ are all flat over $K^\circ$,
\item\label{pf3} The transition morphisms $\fX_n\rightarrow\fX_{n-1}$ are affine and on the mod $\varpi$ special fiber factor through the relative Frobenius map \[F_{\widetilde{\fX}_n/(K^\circ/\varpi)}:\widetilde{\fX}_n\rightarrow\widetilde{\fX}_n^{(p)}.\] To ease notation, we write $\widetilde F_n$ for this relative Frobenius map from now on.
\end{enumerate}

We write $\{X_n\}$ for the adic generic fiber of the $F$-tower. If the formal schemes are all admissible (as in Definition \ref{def:formal scheme}), this is an inverse system of rigid spaces over $X$, all considered as adic spaces.
\end{definition}

\begin{proposition}\label{prop:pF tower}
Given an $F$-tower $\{\fX_n\}$ of a rigid space $X$, there is a unique perfectoid space $X_\infty\sim\varprojlim X_n$. We have $X_\infty=(\varprojlim \fX_n)_\eta$.
\end{proposition}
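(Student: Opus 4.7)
The plan is to realize $X_\infty$ as the adic generic fiber of the inverse limit of the formal models. By Lemma \ref{lemma:inverse limit formal schemes}, $\fX_\infty := \varprojlim_n \fX_n$ exists as a flat formal scheme over $K^\circ$, since the transition maps are assumed affine. Set $X_\infty := (\fX_\infty)_\eta$; Lemma \ref{lemma:tilde generic fiber} then yields $X_\infty \sim \varprojlim X_n$ for free. The substantive content is therefore that $X_\infty$ is perfectoid, after which uniqueness is formal: a perfectoid tilde-limit is pinned down by its topological space $\varprojlim |X_n|$, the colimit of its residue fields, and the perfectoid property.

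To verify that $X_\infty$ is perfectoid, I would argue locally. Fix an affine open $\Spf R_1^+ \subset \fX_1$, let $\Spf R_n^+$ denote its pullback to $\fX_n$, and use affineness of the transition maps to identify $\fX_\infty$ over this chart with $\Spf R_\infty^+$, where $R_\infty^+ := \widehat{\varinjlim R_n^+}$ is the $\varpi$-adic completion. Set $R_\infty := R_\infty^+[1/\varpi]$, so the generic fiber of this chart is $\Spa(R_\infty, R_\infty^+)$. Flatness of each $R_n^+$ over $K^\circ$ passes to the colimit and to the $\varpi$-adic completion, so $R_\infty^+$ is $\varpi$-torsion-free and $\varpi$-adically complete, whence it is an open bounded subring of $R_\infty$. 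What remains is Frobenius-surjectivity on $R_\infty^+/\varpi$.

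Here the relative-Frobenius condition does all the work. Reduction mod $\varpi$ commutes with colimits, so $R_\infty^+/\varpi = \varinjlim_n \widetilde R_n$ with $\widetilde R_n := R_n^+/\varpi$. Fix $x \in \widetilde R_n$; by condition (\ref{pf3}) of Definition \ref{def:pF towers}, the ring map $\widetilde R_n \to \widetilde R_{n+1}$ factors through the relative-Frobenius ring map $\widetilde F_{n+1}^\# : \widetilde R_{n+1}^{(p)} \to \widetilde R_{n+1}$, which sends $y \otimes c \mapsto c\, y^p$. Hence the image of $x$ in $\widetilde R_{n+1}$ can be written as $\sum_i c_i y_i^p$ with $y_i \in \widetilde R_{n+1}$ and $c_i \in K^\circ/\varpi$. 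Because $K$ is perfectoid, Frobenius on $K^\circ/\varpi$ is surjective, so each $c_i = d_i^p$; and in characteristic $p$ one has $\sum_i (d_i y_i)^p = \bigl(\sum_i d_i y_i\bigr)^p$. Therefore $x$ maps to a $p$-th power in $\widetilde R_{n+1}$, and a fortiori in $R_\infty^+/\varpi$.

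The local construction glues automatically: inverse limits of formal schemes glue, the adic generic fiber functor respects this gluing, and perfectoidness is local on adic spaces. The main obstacle in the argument is pinning down the relative-Frobenius factorization and combining it with both the perfectoid hypothesis on $K$ and the characteristic-$p$ identity $(a+b)^p = a^p + b^p$; once this is in hand, everything else is bookkeeping.
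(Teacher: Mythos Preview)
Your proof is correct and follows essentially the same route as the paper's: construct $\fX_\infty$ as an inverse limit of formal schemes, take its adic generic fiber, invoke Lemma~\ref{lemma:tilde generic fiber} for the tilde-limit property, and then verify perfectoidness locally by combining the relative-Frobenius factorization with Frobenius-surjectivity on $K^\circ/\varpi$. The only difference is presentational---you unpack the passage from relative to absolute Frobenius explicitly via $\sum c_i y_i^p = (\sum d_i y_i)^p$, whereas the paper packages this step into the commutative diagram~(\ref{eq:rel Frob})---and for uniqueness you give an informal justification where the paper cites \cite[Proposition 2.4.5]{SW}.
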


\begin{proof}
The inverse limit $\fX_\infty:=\varprojlim \fX_n$ exists in the category of formal schemes by Lemma \ref{lemma:inverse limit formal schemes}, and the generic fiber $X_\infty$ is a tilde-limit for $\{X_n\}$ by Lemma \ref{lemma:tilde generic fiber}. 

We check perfectoidness of $X_\infty$ locally. It is enough to cover $\fX_\infty$ by affine formal schemes $\Spf(R_\infty^+)$ for flat $K^\circ$ algebras $R_\infty^+$ such that the absolute Frobenius map $R_\infty^+/\varpi \rightarrow R_\infty^+/\varpi$ is surjective. Take a formal affine open $\Spf(R^+_1)\subset\fX_1$. As the transition maps of our $F$-tower are affine and all the formal schemes are flat, the preimage in $\fX_n$ (respectively, $\fX_\infty$) is a flat formal affine open $\Spf(R^+_n)$ (respectively, $\Spf(R^+_\infty)$). We have the following diagram defining relative Frobenius in terms of absolute Frobenius
\begin{equation}\label{eq:rel Frob}
\begin{tikzcd}
R^+_\infty/\varpi  & & \\
& (R^+_\infty/\varpi)^{(p)} \arrow[ul, "F_{rel}"]  & R^+_\infty/\varpi \arrow[l]  \arrow[ull, bend right=20, "\Frob"] \\
& K^\circ/\varpi \arrow[u]\arrow[uul, bend left=20] & \arrow[l, "\Frob"] \arrow[u]  K^\circ/\varpi 
\end{tikzcd}
\end{equation}
where the square is cartesian.

By condition \ref{def:pF towers}(\ref{pf3}), we have a diagram
\begin{center}
\begin{tikzcd}
&  & (R^+_3/\varpi)^{(p)} \arrow[dl, "\widetilde F_3"] & &  (R^+_2/\varpi)^{(p)} \arrow[dl, "\widetilde F_2"] & \\
\cdots & R^+_3/\varpi \arrow[l] & & R^+_2/\varpi \arrow[ll] \arrow[ul, "\widetilde V_3"] & & R^+_1/\varpi.\arrow[ll] \arrow[ul, "\widetilde V_2"] 
\end{tikzcd}
\end{center}
where the maps $\widetilde F_n$ are relative Frobenius maps and the direct limit of the $R_n^+/\varpi$ is $R_\infty^+/\varpi$. By diagram (\ref{eq:rel Frob}), any element of $R_n^+/\varpi$ in the image of $\widetilde F_n$ is also in the image of the absolute Frobenius map $R_n^+/\varpi\rightarrow R_n^+/\varpi$, so it has a $p$th root. Every element of $R_\infty^+/\varpi$ comes from finite level, so they all factor through some relative Frobenius map. They therefore all have $p$th roots and so absolute Frobenius is surjective as desired.

By \cite[Proposition 2.4.5]{SW}, any inverse system has at most one perfectoid tilde-limit (even though it could have multiple tilde-limits), so $X_\infty$ is the unique perfectoid tilde-limit of the $F$-tower $\{\fX_n\}$.
\end{proof}

\begin{example}\label{eg:pF P1}
Let $\P^1_K$ be the analytification of the projective line. We get a formal model by gluing $\Spf(K^\circ\langle T\rangle)$ to $\Spf(K^\circ\langle T^{-1}\rangle)$ along the boundary circles. The map of formal models sending $T\mapsto T^p$ and $T^{-1}\mapsto T^{-p}$ reduces to relative Frobenius mod $\varpi$ and satisfies all the conditions of Definition \ref{def:pF towers}. We can therefore iterate this map to get an $F$-tower of $\P^1_K$. This corresponds to gluing two perfectoid closed unit disks $\Spa(K\langle T^{1/p^\infty}\rangle,K^\circ\langle T^{1/p^\infty}\rangle)$ and $\Spa(K\langle T^{-1/p^\infty}\rangle,K^\circ\langle T^{-1/p^\infty}\rangle)$ (as in Example \ref{eg:perfectoid tate algebra}) along the perfectoid unit circle. On points, this is iterating the map $(z_0:z_1)\mapsto (z_0^p:z_1^p)$. This construction works more generally for toric varieties, as described in \cite[Section 8]{PS}.
\end{example}

\begin{notation}
In this paper, we will be applying this construction to construct $F$-towers of various rigid analytic groups $G$. In this case, on the generic fiber our inverse system will always come from iterating the multiplication map $[p]:G\rightarrow G$. We therefore denote the $n$th transition map of the tower as $[\fp]_n:\fG_{n+1} \rightarrow \fG_n$. Note however that the formal models will generally not be group schemes.
\end{notation}

We can apply this formalism when $G$ is an abelian variety with good reduction.

\begin{example}[{\cite[Lemme A.16]{P-S}}]\label{eg:good reduction}
Let $B$ be (the analytification of) an abelian variety with good reduction over $K$, so there is a formal abelian scheme $\overline B/K^\circ$ with rigid generic fiber $B$. Iterating the multiplication map $\overline{[p]}:\overline B\rightarrow \overline B$ gives an $F$-tower. In particular, the mod $\varpi$ special fiber is an abelian scheme in characteristic $p$, so the map $\widetilde{[p]}$ factors as Frobenius and Verschiebung. We therefore get a perfectoid cover $B_\infty\sim\varprojlim_{[p]} B$.
\end{example}

We note that this construction is functorial, and therefore that $G_\infty$ is a perfectoid group.

\begin{proposition}[{\cite[Lemma 2.10]{AWS}}]\label{prop:G_inf a group}
Let $G$ be a rigid commutative group with a perfectoid cover $G_\infty\sim\varprojlim_{[p]} G$. Then there is a unique way to endow $G_\infty$ with the structure of a group object in the category of perfectoid spaces such that all projections $G_\infty\rightarrow G$ are group homomorphisms. Given another rigid group $H$ with perfectoid tilde-limit $H_\infty\sim\varprojlim_{[p]}H$ and a group homomorphism $H\rightarrow G$, there is a unique group homomorphism $H_\infty\rightarrow G_\infty$ commuting with all projection maps.
\end{proposition}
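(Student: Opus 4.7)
The plan is to define the group structure on $G_\infty$ by transferring it from $G$ via the characterization of $G_\infty$ as the perfectoid tilde-limit $\varprojlim_{[p]} G$. First, I would verify that $G_\infty \times_K G_\infty$ is itself a perfectoid tilde-limit of the inverse system $\{G \times G\}$ under $[p] \times [p]$. Products of perfectoid spaces over $K$ are perfectoid, and the underlying-topological-space and residue-field-density conditions for tilde-limits propagate through products when the transition maps are affine; compatibility with the formal-model construction of $G_\infty$ from Proposition~\ref{prop:pF tower} (taking products of the $F$-towers level by level) makes this straightforward.

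Next, since $[p]: G \to G$ is a group homomorphism, multiplication $m: G \times G \to G$ satisfies $[p] \circ m = m \circ ([p] \times [p])$, exhibiting $m$ as a morphism of the inverse systems $\{G \times G\} \to \{G\}$. I would then invoke the uniqueness of perfectoid tilde-limits (\cite[Proposition 2.4.5]{SW}) to produce a unique morphism of perfectoid spaces $m_\infty: G_\infty \times_K G_\infty \to G_\infty$ with $\pi_n \circ m_\infty = m \circ (\pi_n \times \pi_n)$ for every projection $\pi_n: G_\infty \to G$ in the tower. The same procedure lifts inversion and the identity section of $G$ to an inversion $\mathrm{inv}_\infty: G_\infty \to G_\infty$ and an identity section $e_\infty: \Spa(K,K^\circ) \to G_\infty$.

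The group axioms then follow from the uniqueness built into this construction. For instance, $m_\infty \circ (m_\infty \times \id)$ and $m_\infty \circ (\id \times m_\infty)$ are two maps $G_\infty^3 \to G_\infty$ which, at every finite level $n$, project to the common associativity-witnessing map $G^3 \to G$, so they agree; unit and inverse axioms are analogous. Uniqueness of the group structure on $G_\infty$ compatible with the projections is immediate from the same principle. For the functorial statement, a homomorphism $\phi: H \to G$ satisfies $[p]_G \circ \phi = \phi \circ [p]_H$, hence lifts uniquely to $\phi_\infty: H_\infty \to G_\infty$ commuting with all projections, and $\phi_\infty$ is a group homomorphism because $m_\infty \circ (\phi_\infty \times \phi_\infty)$ and $\phi_\infty \circ m_{H,\infty}$ both project to $\phi \circ m_H = m_G \circ (\phi \times \phi)$ at every level.

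The main obstacle is the lifting step that produces $m_\infty$ (and $\phi_\infty$) from a compatible family of maps to the finite levels, since tilde-limits are not categorical inverse limits and do not carry a universal property in the strict $1$-categorical sense. In practice, one realizes the lift on the formal models underlying the $F$-towers: the rings at infinity are completed colimits of the finite-level coordinate rings, and a compatible family of maps between such colimits descends to a unique map in the limit. Carefully verifying that the resulting morphism is well-defined on perfectoid spaces (independent of choices of formal models used in the $F$-towers) and agrees with any other candidate lift is where the technical care is concentrated.
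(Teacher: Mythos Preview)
Your approach is correct and matches the paper's one-line proof (``this follows directly from functoriality of the construction and uniqueness of perfectoid tilde-limits''), which you have simply unpacked in detail. Your worry at the end is unnecessary: \cite[Proposition 2.4.5]{SW} does provide a genuine universal property for perfectoid tilde-limits among perfectoid test objects, so once $G_\infty \times_K G_\infty$ is known to be perfectoid (e.g.\ via Lemma~\ref{lemma:product tilde}), the compatible family of multiplication maps lifts uniquely to $m_\infty$ without any recourse to formal models.
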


\begin{proof}
This follows directly from functoriality of the construction and uniqueness of perfectoid tilde-limits.
\end{proof}

To transfer information at finite level between characteristics 0 and $p$, we need to understand how $F$-towers interact with tilting. By Theorem \ref{thm:tilting}, we have the following:

\begin{lemma}\label{cor:pF special fiber unique}
Let $\{\fX_n\}$ be an $F$-tower for a rigid space $X$, let $X_\infty$ be the perfectoid cover of $X$ given by Proposition \ref{prop:pF tower}. Then $X_\infty$ can be reconstructed from the mod $\varpi$ special fiber $\widetilde{\fX}_\infty$ of the $F$-tower. 
\end{lemma}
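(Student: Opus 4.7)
The plan is to combine the tilting equivalence of Theorem \ref{thm:tilting} with the fundamental identification $R^+/\varpi \cong R^{\flat+}/\varpi^\flat$ from (\ref{eq:field special fiber}). By Theorem \ref{thm:tilting}(4), the perfectoid space $X_\infty$ is determined up to canonical isomorphism by its tilt $X_\infty^\flat$, so it suffices to show that $\widetilde{\fX}_\infty$ determines the perfectoid space $X_\infty^\flat$ over $K^\flat$.

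To recover $X_\infty^\flat$ from $\widetilde{\fX}_\infty$, I would work locally. Cover $\fX_\infty$ by affine opens $\Spf(R_\infty^+)$ obtained as in the proof of Proposition \ref{prop:pF tower} from the preimages of an affine open $\Spf(R_1^+) \subset \fX_1$; these opens give a cover of $X_\infty$ by affinoid perfectoids $\Spa(R_\infty,R_\infty^+)$ with $R_\infty := R_\infty^+[1/\varpi]$. The corresponding cover of $\widetilde{\fX}_\infty$ is given by $\Spec(R_\infty^+/\varpi)$, and by the proof of Proposition \ref{prop:pF tower} the absolute Frobenius is surjective on $R_\infty^+/\varpi$. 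Now set
\[
R_\infty^{\flat+} \;:=\; \varprojlim_{x\mapsto x^p} R_\infty^+/\varpi,
\]
which is a perfect $K^{\flat\circ}/\varpi^\flat$-algebra (identifying $K^{\flat\circ}/\varpi^\flat \cong K^\circ/\varpi$), and give it the $\varpi^\flat$-adic topology. Then $R_\infty^\flat := R_\infty^{\flat+}[1/\varpi^\flat]$ is a perfectoid $K^\flat$-algebra, since $R_\infty^{\flat+}/\varpi^\flat \cong R_\infty^+/\varpi$ and Frobenius is an isomorphism on the perfect ring $R_\infty^{\flat+}$. This matches the standard description of the tilt of $R_\infty$ (see \cite[Section 5]{PS}), so up to canonical isomorphism we have recovered the affinoid perfectoid $\Spa(R_\infty^\flat, R_\infty^{\flat+})$ from the ring $R_\infty^+/\varpi$ alone.

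To finish, I would observe that the above construction is functorial in $R_\infty^+/\varpi$, so it glues along the overlaps determined by $\widetilde{\fX}_\infty$ to produce a perfectoid space over $K^\flat$, which must agree with $X_\infty^\flat$ by uniqueness of the tilt. Untilting via Theorem \ref{thm:tilting}(4) then recovers $X_\infty$, completing the reconstruction. The main (mild) obstacle is just bookkeeping: checking that the affine opens coming from $\fX_\infty$ really are the ones whose power-bounded subrings are $R_\infty^+$ and that the $\varpi^\flat$-adic topology on $R_\infty^{\flat+}$ agrees with the one inherited from the inverse limit; both facts are immediate from the fact that each $R_n^+$ is a $\varpi$-adically complete flat $K^\circ$-algebra and that the inverse limit commutes with reduction mod $\varpi$.
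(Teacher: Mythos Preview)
Your proposal is correct and close in spirit to the paper's argument, but the path is slightly different. The paper invokes directly the equivalence of categories between perfectoid $K$-algebras and perfectoid $K^{\circ a}/\varpi$-algebras from \cite[Theorem~5.2]{PS} (the ``$a$'' signaling almost mathematics), then cites \cite[Proposition~6.17]{PS} for gluing. You instead pass through the tilt: from $R_\infty^+/\varpi$ you rebuild $R_\infty^{\flat+}$ via the inverse-limit-along-Frobenius formula, and then untilt using Theorem~\ref{thm:tilting}(4). This is essentially unpacking one half of the equivalence the paper cites, so the two arguments compose the same ingredients in a different order. The paper's version is shorter and is explicit about where almost mathematics enters (the reconstruction of the integral model is only determined up to almost isomorphism, which suffices to pin down the generic fiber); your version hides that subtlety inside the tilting equivalence, but has the virtue of giving a concrete formula for the reconstruction and of making visible why the very next lemma (Lemma~\ref{lemma:pF tilt}) follows.
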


\begin{proof}
Part of the tilting equivalence \cite[Theorem 5.2]{PS} gives an equivalence of categories between perfectoid $K$-algebras and perfectoid $K^{\circ a}/\varpi$-algebras, where the $a$ means that we are working in the world of almost mathematics as in \cite{GR}. The actual $K^\circ/\varpi$-algebras used to build $\widetilde{\fX}_\infty$ give $K^{\circ a}/\varpi$-algebras, which lift to $K$-algebras under the equivalence of categories. This process glues by \cite[Proposition 6.17]{PS}, so we get the desired reconstruction.
\end{proof}

This implies that different abelian varieties will have the same perfectoid cover, a fact which we will exploit in Section \ref{sec:line bundles} when we tilt line bundles. This is explored in more detail in \cite{Heuer_uniform}.

\begin{example}\label{eg:good reduction same fiber}
Let $B$ and $B'$ be abelian varieties with good reduction over $K$, let $\overline B$ and $\overline B'$ be the corresponding formal abelian schemes over $K^\circ$, let $B_\infty$ and $B'_\infty$ be the perfectoid covers constructed in Example \ref{eg:good reduction}. If $\overline B\times_{K^\circ}K^\circ/\varpi\simeq\overline B'\times_{K^\circ}K^\circ/\varpi$, then we have $B_\infty\simeq B'_\infty$. 
\end{example}

This allows us to understand the tilt of a perfectoid cover of $X$ by constructing a suitable $F$-tower over $K^\flat$.

\begin{lemma}\label{lemma:pF tilt}
Let $\{\fX_n\}$ be an $F$-tower for a rigid space $X$, let  $\{\widetilde\fX_n\}$ denote the mod $\varpi$ special fiber of the tower, let $X_\infty$ be the perfectoid cover of $X$ given by Proposition \ref{prop:pF tower}. Let $\{\fX'_n\}$ be an $F$-tower for a rigid space $X'$ over $K^\flat$, let  $\{\widetilde\fX'_n\}$ denote the mod $\varpi^\flat$ special fiber of the tower, let $X'_\infty$ be the perfectoid cover of $X'$ given by Proposition \ref{prop:pF tower}. If the isomorphism $K^\circ/\varpi\cong K^{\flat\circ}/\varpi^\flat$ induces an isomorphism of the special fibers of the two $F$-towers, then $(X_\infty)^\flat\cong(X'_\infty)$.
\end{lemma}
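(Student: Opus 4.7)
The plan is to reduce the statement to a direct application of the tilting equivalence applied to the perfectoid covers, using the reconstruction principle recorded in Lemma \ref{cor:pF special fiber unique}. Concretely, by Proposition \ref{prop:pF tower} we have $X_\infty = (\varprojlim \fX_n)_\eta$ and $X'_\infty = (\varprojlim \fX'_n)_\eta$, and both are perfectoid. Locally on a compatible choice of affine formal opens $\Spf(R_n^+) \subset \fX_n$ and $\Spf(R_n^{\prime +}) \subset \fX_n^\prime$, set $R_\infty^+ := \widehat{\varinjlim R_n^+}$ and $R_\infty^{\prime +} := \widehat{\varinjlim R_n^{\prime +}}$ (completed $\varpi$-adically and $\varpi^\flat$-adically respectively). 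The proof of Proposition \ref{prop:pF tower} shows both are perfectoid, so $X_\infty$ is locally $\Spa(R_\infty^+[1/\varpi], R_\infty^+)$ and similarly for $X'_\infty$.

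Next, I would invoke the tilting equivalence of \cite[Theorem 5.2]{PS} in the sharper form used in Lemma \ref{cor:pF special fiber unique}: the category of perfectoid $K$-algebras is equivalent (via reduction mod $\varpi$) to the category of perfectoid $K^{\circ a}/\varpi$-algebras, and the analogous equivalence holds for $K^\flat$. Under the isomorphism $K^\circ/\varpi \cong K^{\flat \circ}/\varpi^\flat$, the assumed isomorphism of special fibers $\widetilde{\fX}_\infty \cong \widetilde{\fX}'_\infty$ translates locally into an isomorphism of $K^{\circ a}/\varpi$-algebras $R_\infty^+/\varpi \cong R_\infty^{\prime +}/\varpi^\flat$. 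Applying the tilting equivalence to the perfectoid $K$-algebra $R_\infty^+[1/\varpi]$ shows that its tilt is the perfectoid $K^\flat$-algebra whose mod $\varpi^\flat$ reduction recovers $R_\infty^+/\varpi \cong R_\infty^{\prime +}/\varpi^\flat$; by the uniqueness half of the equivalence, this tilt is canonically $R_\infty^{\prime +}[1/\varpi^\flat]$. Thus $(X_\infty)^\flat$ and $X_\infty^\prime$ agree as affinoid perfectoids on each piece of the cover.

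Finally, I would globalize by invoking that the tilting correspondence is compatible with the gluing of perfectoid spaces, as stated in \cite[Proposition 6.17]{PS} and already used in the proof of Lemma \ref{cor:pF special fiber unique}. Since the special-fiber isomorphism $\widetilde{\fX}_\infty \cong \widetilde{\fX}'_\infty$ is by assumption a global isomorphism of formal schemes, the local tilting identifications are compatible on overlaps, and we obtain a global isomorphism $(X_\infty)^\flat \cong X'_\infty$ of perfectoid spaces over $\Spa(K^\flat, K^{\flat \circ})$.

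The main obstacle I anticipate is really just bookkeeping, namely verifying that the local affine opens $\Spf(R_n^+)$ and $\Spf(R_n^{\prime +})$ can be chosen compatibly across the $F$-tower and with the special-fiber identification, so that the tilting equivalence assembles correctly. This is precisely what \cite[Proposition 6.17]{PS} is designed to handle, so once the affine case is in place the gluing is automatic. There is no genuinely new analytic content beyond Lemma \ref{cor:pF special fiber unique}; the lemma is essentially repackaging its content in the symmetric form suitable for comparing two $F$-towers on opposite sides of tilting.
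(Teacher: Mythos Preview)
Your proposal is correct and follows essentially the same approach as the paper, which simply says the result follows directly from Theorem \ref{thm:tilting} and Lemma \ref{cor:pF special fiber unique}. You have merely unpacked what this means in more detail, and your own final paragraph accurately recognizes that there is no genuinely new content beyond Lemma \ref{cor:pF special fiber unique}.
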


\begin{proof}
This follows directly from Theorem \ref{thm:tilting} and Lemma \ref{cor:pF special fiber unique}. See \cite[Section III.2.3]{Torsion} for an example of this method in the case of Shimura varieties.
\end{proof}

\begin{question}
It is not clear how common it is for rigid spaces $X$ to admit $F$-towers. It is also not clear, given an $F$-tower over $K$, if it should be possible in general to construct an $F$-tower over $K^\flat$ with isomorphic special fiber. In this paper, we answer these questions for abelian varieties (and some other group schemes along the way). Answering these questions for more general varieties should make it possible to prove more cases of weight-monodromy. 
\end{question}

We'll need two more results later on:

\begin{lemma}[{\cite[Lemma 2.8]{AWS}}]\label{lemma:product tilde}
Let $(A_i,A_i^+)$ and $(B_i,B_i^+)$ be direct systems of affinoids over $(K,K^\circ)$ with compatible rings of definition carrying the $\varpi$-adic topology. If there are perfectoid tilde-limits $\Spa(A,A^+)\sim\varprojlim\Spa(A_i,A_i^+)$ and $\Spa(B,B^+)\sim\varprojlim\Spa(B_i,B_i^+)$, then \[\Spa(A,A^+)\times_{\Spa(K,K^\circ)} \Spa(B,B^+)\sim \varprojlim (\Spa(A_i,A_i^+)\times_{\Spa(K,K^\circ)} \Spa(B_i,B_i^+))\] is also a perfectoid tilde-limit.
\end{lemma}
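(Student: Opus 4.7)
The obvious candidate for the perfectoid tilde-limit on the left is the fiber product $\Spa(A,A^+)\times_{\Spa(K,K^\circ)}\Spa(B,B^+) = \Spa(C,C^+)$, where $C = A\hat\otimes_K B$ and $C^+$ is the integral closure of $A^+\hat\otimes_{K^\circ} B^+$ in $C$. The plan is to verify that this is perfectoid and then check the two conditions of Definition \ref{def:tilde limit}.

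First, I would verify that $C$ is perfectoid: completed tensor products of perfectoid $K$-algebras are perfectoid, which follows from the tilting equivalence of Theorem \ref{thm:tilting}, since in characteristic $p$ perfectness is preserved by completed tensor products of perfect rings. Second, I would identify the rings of definition with a limit construction. By hypothesis, the rings of definition carry the $\varpi$-adic topology, and the tilde-limit conditions on $X$ and $Y$ together with filtered colimits commuting with tensor product give
\[A^+\hat\otimes_{K^\circ} B^+ \;=\; \bigl(\varinjlim_i A_i^+\otimes_{K^\circ} B_i^+\bigr)^{\wedge_\varpi}.\]
In particular, $C$ is (after inverting $\varpi$) the completion of $\varinjlim_i (A_i\otimes_K B_i)$. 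This is the analogue of the input used in Proposition \ref{prop:open immersion tilde} and Lemma \ref{lemma:tilde generic fiber} and is the substantive content.

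Next, for the homeomorphism $|\Spa(C,C^+)|\to\varprojlim|\Spa(A_i\hat\otimes_K B_i,\ldots)|$: a continuous valuation on $C$ restricts to a compatible system of valuations on the $A_i\otimes_K B_i$; conversely, a compatible system gives a valuation on $\varinjlim(A_i\otimes_K B_i)$ which extends uniquely to the $\varpi$-adic completion by continuity. Rational opens on each side pull back to rational opens at finite level and conversely are generated by them, yielding a homeomorphism. For the residue field condition, a point $z\in\Spa(C,C^+)$ projects to $x\in X$ and $y\in Y$; the residue field $k(z)$ is obtained from a completion of a tensor product involving $k(x)$ and $k(y)$. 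Since the transition images $\varinjlim k(x_i)\to k(x)$ and $\varinjlim k(y_i)\to k(y)$ are dense by the tilde-limit hypotheses on $X$ and $Y$, and since $\varinjlim k(z_i)$ contains the image of $\varinjlim k(x_i)\otimes_K\varinjlim k(y_i)=\varinjlim(k(x_i)\otimes_K k(y_i))$, density in $k(z)$ follows.

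The main technical point is the topological step: ensuring that one can reconstruct valuations on the completed tensor product from compatible valuations at finite level. This is manageable because the affinoid setting provides a clean description in terms of power-bounded elements, and the hypothesis on the $\varpi$-adic topology of the rings of definition is exactly what allows the completion step to go through. Uniqueness of the perfectoid tilde-limit (cf.\ the end of the proof of Proposition \ref{prop:pF tower}) then shows that no other perfectoid space can serve as tilde-limit, so this identification is the right one.
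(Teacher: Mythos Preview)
The paper does not supply its own proof of this lemma: it is simply quoted from \cite[Lemma 2.8]{AWS} and used as a black box. So there is no in-paper argument to compare against.

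Your outline is essentially the argument one would expect and is broadly correct. One point deserves more care: you write down the identification
\[A^+\hat\otimes_{K^\circ} B^+ \;=\; \bigl(\varinjlim_i A_i^+\otimes_{K^\circ} B_i^+\bigr)^{\wedge_\varpi}\]
as if it follows from the tilde-limit hypothesis alone. But Definition~\ref{def:tilde limit} only records conditions on underlying spaces and residue fields; it does not literally say $A^+=(\varinjlim A_i^+)^{\wedge}$. What makes this true is the combination of the hypothesis on compatible $\varpi$-adic rings of definition together with uniqueness of perfectoid tilde-limits (\cite[Proposition~2.4.5]{SW}): the completion of the colimit is a perfectoid affinoid which is manifestly a tilde-limit, hence is $\Spa(A,A^+)$. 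You implicitly invoke this at the very end, but it should appear before you use the displayed identity, since that identity is the crux of the whole argument. Once you have $A^+=(\varinjlim A_i^+)^{\wedge}$ and $B^+=(\varinjlim B_i^+)^{\wedge}$, commutation of filtered colimits with tensor products and then completing gives the displayed formula, and the rest of your argument (perfectoidness of completed tensor products, the homeomorphism, and residue field density) goes through as you describe.
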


\begin{proposition}[{\cite[Corollary 2.3.5]{Shen}}]\label{prop:Galois descent}
Let $L/K$ be a finite Galois extension of perfectoid fields with Galois group $G$. Let $X_i$ be a filtered inverse system of quasi-compact adic spaces over $\Spa(K, K^\circ)$ with finite transition maps, and let $X_{L,i}$ be the base change of $X_i$ to $\Spa(L, L^\circ)$. Assume that there is a perfectoid space $X_{L,\infty}$ over $\Spa(L, L^\circ)$ such that $X_{L, \infty}\sim \varprojlim X_{L, i}$. Then there is a perfectoid space $X_\infty$ over $\Spa(K, K^\circ)$ such that $X_\infty\sim\varprojlim X_i$.
\end{proposition}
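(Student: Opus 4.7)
The plan is to realize $X_\infty$ as the $G$-quotient of $X_{L,\infty}$. For each $i$ the map $X_{L,i}\to X_i$ is the base change of $\Spa(L,L^\circ)\to\Spa(K,K^\circ)$, hence is a finite étale $G$-torsor; in particular $G$ acts on each $X_{L,i}$ over $X_i$, compatibly with the transition maps. By the uniqueness of perfectoid tilde-limits (the same input used in Proposition \ref{prop:pF tower}), each $g\in G$ lifts to an automorphism of $X_{L,\infty}$, and these automorphisms assemble into a continuous $G$-action on $X_{L,\infty}$ commuting with the projections to the $X_{L,i}$.

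Next, I would construct $X_\infty$ locally by taking invariants. Since $X_{L,\infty}$ is perfectoid and $G$ is finite, cover it by $G$-stable affinoid perfectoid opens $U=\Spa(R,R^+)$ (averaging an arbitrary affinoid perfectoid cover over $G$ and using that rational subsets are stable under finite intersection). On each such $U$, the candidate quotient is $\Spa(R^G,(R^+)^G)$, and the key point is that this is again affinoid perfectoid over $(K,K^\circ)$. For this, I would invoke almost purity for the finite Galois extension of perfectoid fields $L/K$: it makes $L^{\circ a}$ almost finite étale over $K^{\circ a}$, and combined with the semi-linear $G$-action on $R$ this shows that $R^{\circ a}$ is almost finite étale over $(R^G)^{\circ a}$, with a trace form witnessing that $R^\circ$ is almost finitely presented projective as an $(R^G)^\circ$-module. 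The surjectivity of Frobenius on $R^\circ/\varpi$ then descends to almost surjectivity on $(R^G)^\circ/\varpi$, so $R^G$ is a perfectoid $K$-algebra. Gluing these local quotients along the $G$-stable cover yields an adic space $X_\infty$ over $\Spa(K,K^\circ)$ with a finite map $X_{L,\infty}\to X_\infty$ exhibiting $X_\infty$ as the categorical quotient by $G$.

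Finally, I would check the two conditions of Definition \ref{def:tilde limit}. On topological spaces, each $|X_{L,i}|\to|X_i|$ is the quotient by the finite group $G$, and $G$-quotients commute with cofiltered inverse limits of spectral spaces with fiber-wise action, giving
\[
|X_\infty|\;=\;|X_{L,\infty}|/G\;=\;\bigl(\varprojlim |X_{L,i}|\bigr)/G\;=\;\varprojlim |X_i|.
\]
For the density of residue fields, given $x\in X_\infty$ with preimage $\tilde x\in X_{L,\infty}$ and images $x_i\in X_i$, $\tilde x_i\in X_{L,i}$, the density of $\varinjlim k(\tilde x_i)$ in $k(\tilde x)$ together with the identifications $k(x_i)=k(\tilde x_i)^{G_{\tilde x_i}}$ (as $X_{L,i}\to X_i$ is a $G$-torsor) yields density of $\varinjlim k(x_i)$ in $k(x)$ after taking $G_{\tilde x}$-invariants in the colimit.

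The main obstacle is the perfectoidness of $R^G$: the usual definition of perfectoid algebras is not obviously stable under $G$-invariants, and the argument really does seem to require almost mathematics and almost purity to descend the perfectoid condition along the almost finite étale cover $(R^G)^{\circ a}\to R^{\circ a}$. Everything else in the argument — lifting the $G$-action to $X_{L,\infty}$, producing a $G$-stable affinoid perfectoid cover, gluing, and verifying the tilde-limit properties — is formal once this local input is in hand.
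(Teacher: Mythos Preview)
The paper does not give its own proof of this proposition; it simply cites \cite[Corollary 2.3.5]{Shen} and uses the result as a black box. So there is no in-paper argument to compare against.

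That said, your outline is essentially the argument Shen gives: lift the $G$-action to $X_{L,\infty}$ by uniqueness of perfectoid tilde-limits, form the quotient locally by taking $G$-invariants of affinoid perfectoid pieces, and then verify the tilde-limit axioms. You have correctly located the only nontrivial step, namely that $(R^G,(R^+)^G)$ is again a perfectoid affinoid $K$-algebra. One clean way to organize this (and the way it is usually done) is to observe first that $R^G\otimes_K L\cong R$ by ordinary Galois descent, so $R$ is finite \'etale Galois over $R^G$ with group $G$; then the fact that a ring admitting a finite \'etale perfectoid cover is itself perfectoid (a consequence of almost purity, as in \cite{PS} or Kedlaya--Liu) gives perfectoidness of $R^G$ directly, without having to descend Frobenius surjectivity by hand. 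Your sketch of the topological and residue-field checks is fine. The one place to be a bit more careful is the existence of a $G$-stable affinoid perfectoid cover: intersecting the $G$-translates of an affinoid perfectoid open need not be affinoid perfectoid in general, but here one can first pull back an affinoid cover of some $X_i$ (which is $G$-stable by construction) and use that preimages of affinoids under the projection $X_{L,\infty}\to X_{L,i}$ are affinoid perfectoid.
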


\section{Perfectoid covers of abeloids}\label{sec:perfectoid covers}

Let $K$ be a perfectoid field with value group $\Gamma=|K^\times|$ such that $\Gamma\subset \Q$. For example, this holds whenever $K$ is a perfectoid subfield of $\C_p$. In this section, we give an alternate proof of the main theorem of \cite{AWS}, constructing for any abeloid $A$ over $K$ a perfectoid space $A_\infty\sim\varprojlim_{[p]} A$. We give a sketch of the proof here.

After a finite extension of $K$, we can uniformize $A$ as a quotient $E/M$ for $E$ a Raynaud extension by Theorem \ref{thm:Raynaud uniformization}. By Proposition \ref{prop:Galois descent}, we can construct our perfectoid cover after this extension, then descend to get the desired cover. We construct an $F$-tower as in Definition \ref{def:pF towers} for the torus part $T$ of $E$, then use this to construct an $F$-tower for $E$, giving a perfectoid space $E_\infty\sim\varprojlim_{[p]} E$. We choose formal models that all have compatible actions of $M$, which allows us to take the quotient by this action to get formal models of $A$, which we show form an $F$-tower for $A$.

To construct our models for a split torus $T$ of rank $r$, we use formal analytic covers made up of products of annuli, extending Example \ref{eg:formal Gm}. The condition $\Gamma\subset \Q$ ensures that by choosing sufficiently small annuli, we can extend the action of $M$ on $T$ to the formal analytic cover. To move from models of $T$ to models of $E$, we take the pushout as in Diagram \ref{eq:Raynaud diagram}. When we do this, the lattice action is retained, allowing us to quotient out by the action of $M$ to get the desired models of $A$. 

%To construct our models for a torus $T$ of rank $r$, we use techniques from tropical geometry: the theory of polytopal domains in rigid tori as described in \cite{Gub}. As we described in Section \ref{sub:Raynaud extensions}, we have a log map $\ell: T(K)\rightarrow \Gamma^r\subset\R^r$. The preimage of any rational hypercube in $\R^r$ gives the $K$-points of an affinoid subspace of $T$. We can therefore construct formal analytic covers of $T$ as in Definition \ref{def:formal analytic cover} out of hypercube decompositions of $\R^r$. Given a lattice $\Lambda$ in $\R^r$, if we take $\Lambda$-invariant hypercube decompositions, we get formal models with a lattice action. We are able to get by with relatively simple decompositions made entirely of hypercubes, see  \cite[Chapter 3]{Thesis} for a more general construction using polytopes. Such models were first considered by Mumford in \cite{Mum} and have applications in tropical geometry, see \cite{EKL}, \cite{Gub}, \cite{Rab} for details.

We remark that one can use techniques from tropical geometry - the theory of polytopal domains in rigid tori as described in \cite{Gub} - to get more general models of tori (and therefore abelian varieties). See \cite[Chapter 3]{Thesis} for more references and details on this construction. One can combine the theory of polytope domains with the proof of \cite{AWS} to construct perfectoid covers of abelian varieties over any perfectoid field $K$. This will not result in formal models of the $A$, which will be necessary for Section \ref{sec:tilting abeloids}, so we do not go into more detail here. 

\subsection{Formal models of tori}\label{sub:torus models}

Let $T$ be a split rigid analytic torus of rank $r$ over $K$, so $T\cong (\G_{m,an})^r$. Let $M$ be a lattice in $T$ with rank $r$ as in Definition \ref{def:rigid lattice}: that is, the log map $\ell:T(K)\rightarrow \R^r$ sends $M$ bijectively to a lattice $\Lambda$. Fix coordinates $x_1,\dots,x_r$ on $T=\mathbb G_{m,an}^r$ and $u_1,\dots,u_r$ on $\R^r$ so that the log map sends $x_i$ to $u_i$.

In this subsection, we construct an $F$-tower $\{\fT_n\}$  of $T$ such that the translation action of $M$ on $T$ extends to an action on each $\fT_n$, and such that these actions are compatible with the morphisms $[\fp]_n$. This will allow us to construct an $F$-tower for $T/M$, though we will wait until next section to do this in the more general setting of Raynaud extensions. 

\begin{definition}\label{def:hypercube models}
For any $y\in K^\times$, let $c:=|y|$ and $\alpha=:=\ell(y):=-\log(c)>0$. We get a formal model $\fT_\alpha$ of $\G_{m,K}^r$ by taking $r$ products of the formal model from Example \ref{eg:formal Gm}. The corresponding formal analytic cover corresponds to the decomposition of $\R^r$ into hypercubes with side length $\alpha$. 

More precisely, let $\Delta_\alpha=[0,\alpha]^r\subset \R^r$. The preimage $\ell^{-1}(\Delta_\alpha)$ is $\An(c,1)^r$. For any tuple ${\bf e}=(e_1,\dots,e_r)\in\Z^r$, the translation ${\bf e}\Delta_\alpha\subset\R^r$ has preimage $U_{{\bf e}\Delta_\alpha}:=\An(c^{e_1+1},c^{e_1})\times\cdots\times\An(c^{e_r+1},c^{e_r}).$ These poly-annuli form the formal analytic cover of $\G_{m,K}^r$ giving rise to the formal model $\fT_\alpha$.
\end{definition}

In our case, we want formal models of $T$ such that the translation morphisms $\tau_m:T\rightarrow T$ extend to the formal model for all lattice elements $m\in M$. On $\R^r$, these maps induce the translation maps $+\lambda:\R^r\rightarrow \R^r$ for $\lambda\in\Lambda$. We therefore restrict our attention to hypercube decompositions of $\R^r$ which are preserved by these translation actions.

\begin{definition}\label{def:lattice division}
Given a lattice $\Lambda\subset\R^r$, we say that $\alpha\in \R$ divides $\Lambda$ if $\Lambda\subset \alpha\Z^r$. 
\end{definition}

If $\Lambda\subset\Q^r$, we can always find $\alpha$ dividing $\Lambda$ by letting $\alpha$ be the reciprocal of the gcd of the denominators of any set of generators of $\Lambda$. See Figure \ref{fig:decomp} for a visualization of this process.

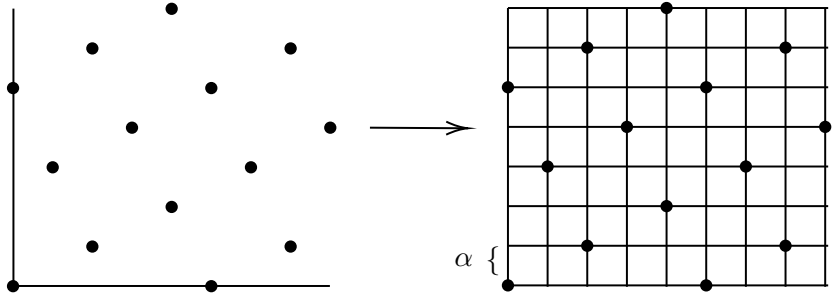
\begin{figure}

\tikzset{every picture/.style={line width=0.75pt}} %set default line width to 0.75pt        

\begin{tikzpicture}[x=0.75pt,y=0.75pt,yscale=-1,xscale=1]
%uncomment if require: \path (0,771); %set diagram left start at 0, and has height of 771

%Shape: Circle [id:dp40399629450306485] 
\draw  [fill={rgb, 255:red, 0; green, 0; blue, 0 }  ,fill opacity=1 ] (77.41,230) .. controls (77.41,228.57) and (78.57,227.41) .. (80,227.41) .. controls (81.43,227.41) and (82.59,228.57) .. (82.59,230) .. controls (82.59,231.43) and (81.43,232.59) .. (80,232.59) .. controls (78.57,232.59) and (77.41,231.43) .. (77.41,230) -- cycle ;
%Shape: Circle [id:dp29258654917972926] 
\draw  [fill={rgb, 255:red, 0; green, 0; blue, 0 }  ,fill opacity=1 ] (77.41,130) .. controls (77.41,128.57) and (78.57,127.41) .. (80,127.41) .. controls (81.43,127.41) and (82.59,128.57) .. (82.59,130) .. controls (82.59,131.43) and (81.43,132.59) .. (80,132.59) .. controls (78.57,132.59) and (77.41,131.43) .. (77.41,130) -- cycle ;
%Shape: Circle [id:dp9776488811848936] 
\draw  [fill={rgb, 255:red, 0; green, 0; blue, 0 }  ,fill opacity=1 ] (237.41,150) .. controls (237.41,148.57) and (238.57,147.41) .. (240,147.41) .. controls (241.43,147.41) and (242.59,148.57) .. (242.59,150) .. controls (242.59,151.43) and (241.43,152.59) .. (240,152.59) .. controls (238.57,152.59) and (237.41,151.43) .. (237.41,150) -- cycle ;
%Shape: Circle [id:dp8720631513977635] 
\draw  [fill={rgb, 255:red, 0; green, 0; blue, 0 }  ,fill opacity=1 ] (217.41,210) .. controls (217.41,208.57) and (218.57,207.41) .. (220,207.41) .. controls (221.43,207.41) and (222.59,208.57) .. (222.59,210) .. controls (222.59,211.43) and (221.43,212.59) .. (220,212.59) .. controls (218.57,212.59) and (217.41,211.43) .. (217.41,210) -- cycle ;
%Shape: Circle [id:dp8643387421934041] 
\draw  [fill={rgb, 255:red, 0; green, 0; blue, 0 }  ,fill opacity=1 ] (177.41,230) .. controls (177.41,228.57) and (178.57,227.41) .. (180,227.41) .. controls (181.43,227.41) and (182.59,228.57) .. (182.59,230) .. controls (182.59,231.43) and (181.43,232.59) .. (180,232.59) .. controls (178.57,232.59) and (177.41,231.43) .. (177.41,230) -- cycle ;
%Shape: Circle [id:dp5877622856144524] 
\draw  [fill={rgb, 255:red, 0; green, 0; blue, 0 }  ,fill opacity=1 ] (217.41,110) .. controls (217.41,108.57) and (218.57,107.41) .. (220,107.41) .. controls (221.43,107.41) and (222.59,108.57) .. (222.59,110) .. controls (222.59,111.43) and (221.43,112.59) .. (220,112.59) .. controls (218.57,112.59) and (217.41,111.43) .. (217.41,110) -- cycle ;
%Shape: Circle [id:dp533800260084119] 
\draw  [fill={rgb, 255:red, 0; green, 0; blue, 0 }  ,fill opacity=1 ] (197.41,170) .. controls (197.41,168.57) and (198.57,167.41) .. (200,167.41) .. controls (201.43,167.41) and (202.59,168.57) .. (202.59,170) .. controls (202.59,171.43) and (201.43,172.59) .. (200,172.59) .. controls (198.57,172.59) and (197.41,171.43) .. (197.41,170) -- cycle ;
%Shape: Circle [id:dp010526997611492561] 
\draw  [fill={rgb, 255:red, 0; green, 0; blue, 0 }  ,fill opacity=1 ] (177.41,130) .. controls (177.41,128.57) and (178.57,127.41) .. (180,127.41) .. controls (181.43,127.41) and (182.59,128.57) .. (182.59,130) .. controls (182.59,131.43) and (181.43,132.59) .. (180,132.59) .. controls (178.57,132.59) and (177.41,131.43) .. (177.41,130) -- cycle ;
%Shape: Circle [id:dp4961984977377527] 
\draw  [fill={rgb, 255:red, 0; green, 0; blue, 0 }  ,fill opacity=1 ] (157.41,190) .. controls (157.41,188.57) and (158.57,187.41) .. (160,187.41) .. controls (161.43,187.41) and (162.59,188.57) .. (162.59,190) .. controls (162.59,191.43) and (161.43,192.59) .. (160,192.59) .. controls (158.57,192.59) and (157.41,191.43) .. (157.41,190) -- cycle ;
%Shape: Circle [id:dp23902011942564716] 
\draw  [fill={rgb, 255:red, 0; green, 0; blue, 0 }  ,fill opacity=1 ] (157.41,90) .. controls (157.41,88.57) and (158.57,87.41) .. (160,87.41) .. controls (161.43,87.41) and (162.59,88.57) .. (162.59,90) .. controls (162.59,91.43) and (161.43,92.59) .. (160,92.59) .. controls (158.57,92.59) and (157.41,91.43) .. (157.41,90) -- cycle ;
%Shape: Circle [id:dp924625113565892] 
\draw  [fill={rgb, 255:red, 0; green, 0; blue, 0 }  ,fill opacity=1 ] (137.41,150) .. controls (137.41,148.57) and (138.57,147.41) .. (140,147.41) .. controls (141.43,147.41) and (142.59,148.57) .. (142.59,150) .. controls (142.59,151.43) and (141.43,152.59) .. (140,152.59) .. controls (138.57,152.59) and (137.41,151.43) .. (137.41,150) -- cycle ;
%Shape: Circle [id:dp2997744357742276] 
\draw  [fill={rgb, 255:red, 0; green, 0; blue, 0 }  ,fill opacity=1 ] (117.41,210) .. controls (117.41,208.57) and (118.57,207.41) .. (120,207.41) .. controls (121.43,207.41) and (122.59,208.57) .. (122.59,210) .. controls (122.59,211.43) and (121.43,212.59) .. (120,212.59) .. controls (118.57,212.59) and (117.41,211.43) .. (117.41,210) -- cycle ;
%Shape: Circle [id:dp7616653805796136] 
\draw  [fill={rgb, 255:red, 0; green, 0; blue, 0 }  ,fill opacity=1 ] (117.41,110) .. controls (117.41,108.57) and (118.57,107.41) .. (120,107.41) .. controls (121.43,107.41) and (122.59,108.57) .. (122.59,110) .. controls (122.59,111.43) and (121.43,112.59) .. (120,112.59) .. controls (118.57,112.59) and (117.41,111.43) .. (117.41,110) -- cycle ;
%Shape: Circle [id:dp7119875257935679] 
\draw  [fill={rgb, 255:red, 0; green, 0; blue, 0 }  ,fill opacity=1 ] (97.41,170) .. controls (97.41,168.57) and (98.57,167.41) .. (100,167.41) .. controls (101.43,167.41) and (102.59,168.57) .. (102.59,170) .. controls (102.59,171.43) and (101.43,172.59) .. (100,172.59) .. controls (98.57,172.59) and (97.41,171.43) .. (97.41,170) -- cycle ;
%Shape: Grid [id:dp8119301666258911] 
\draw  [draw opacity=0] (329.6,89.6) -- (491.1,89.6) -- (491.1,230.29) -- (329.6,230.29) -- cycle ; \draw   (329.6,89.6) -- (329.6,230.29)(349.6,89.6) -- (349.6,230.29)(369.6,89.6) -- (369.6,230.29)(389.6,89.6) -- (389.6,230.29)(409.6,89.6) -- (409.6,230.29)(429.6,89.6) -- (429.6,230.29)(449.6,89.6) -- (449.6,230.29)(469.6,89.6) -- (469.6,230.29)(489.6,89.6) -- (489.6,230.29) ; \draw   (329.6,89.6) -- (491.1,89.6)(329.6,109.6) -- (491.1,109.6)(329.6,129.6) -- (491.1,129.6)(329.6,149.6) -- (491.1,149.6)(329.6,169.6) -- (491.1,169.6)(329.6,189.6) -- (491.1,189.6)(329.6,209.6) -- (491.1,209.6)(329.6,229.6) -- (491.1,229.6) ; \draw    ;
%Shape: Circle [id:dp7804824380405226] 
\draw  [fill={rgb, 255:red, 0; green, 0; blue, 0 }  ,fill opacity=1 ] (327.01,229.6) .. controls (327.01,228.17) and (328.17,227.01) .. (329.6,227.01) .. controls (331.03,227.01) and (332.19,228.17) .. (332.19,229.6) .. controls (332.19,231.03) and (331.03,232.19) .. (329.6,232.19) .. controls (328.17,232.19) and (327.01,231.03) .. (327.01,229.6) -- cycle ;
%Shape: Circle [id:dp6200050817273468] 
\draw  [fill={rgb, 255:red, 0; green, 0; blue, 0 }  ,fill opacity=1 ] (327.01,129.6) .. controls (327.01,128.17) and (328.17,127.01) .. (329.6,127.01) .. controls (331.03,127.01) and (332.19,128.17) .. (332.19,129.6) .. controls (332.19,131.03) and (331.03,132.19) .. (329.6,132.19) .. controls (328.17,132.19) and (327.01,131.03) .. (327.01,129.6) -- cycle ;
%Shape: Circle [id:dp8374353121454365] 
\draw  [fill={rgb, 255:red, 0; green, 0; blue, 0 }  ,fill opacity=1 ] (487.01,149.6) .. controls (487.01,148.17) and (488.17,147.01) .. (489.6,147.01) .. controls (491.03,147.01) and (492.19,148.17) .. (492.19,149.6) .. controls (492.19,151.03) and (491.03,152.19) .. (489.6,152.19) .. controls (488.17,152.19) and (487.01,151.03) .. (487.01,149.6) -- cycle ;
%Shape: Circle [id:dp6403553389045109] 
\draw  [fill={rgb, 255:red, 0; green, 0; blue, 0 }  ,fill opacity=1 ] (467.01,209.6) .. controls (467.01,208.17) and (468.17,207.01) .. (469.6,207.01) .. controls (471.03,207.01) and (472.19,208.17) .. (472.19,209.6) .. controls (472.19,211.03) and (471.03,212.19) .. (469.6,212.19) .. controls (468.17,212.19) and (467.01,211.03) .. (467.01,209.6) -- cycle ;
%Shape: Circle [id:dp22209802891150843] 
\draw  [fill={rgb, 255:red, 0; green, 0; blue, 0 }  ,fill opacity=1 ] (427.01,229.6) .. controls (427.01,228.17) and (428.17,227.01) .. (429.6,227.01) .. controls (431.03,227.01) and (432.19,228.17) .. (432.19,229.6) .. controls (432.19,231.03) and (431.03,232.19) .. (429.6,232.19) .. controls (428.17,232.19) and (427.01,231.03) .. (427.01,229.6) -- cycle ;
%Shape: Circle [id:dp8405111190491557] 
\draw  [fill={rgb, 255:red, 0; green, 0; blue, 0 }  ,fill opacity=1 ] (467.01,109.6) .. controls (467.01,108.17) and (468.17,107.01) .. (469.6,107.01) .. controls (471.03,107.01) and (472.19,108.17) .. (472.19,109.6) .. controls (472.19,111.03) and (471.03,112.19) .. (469.6,112.19) .. controls (468.17,112.19) and (467.01,111.03) .. (467.01,109.6) -- cycle ;
%Shape: Circle [id:dp37079282901850275] 
\draw  [fill={rgb, 255:red, 0; green, 0; blue, 0 }  ,fill opacity=1 ] (447.01,169.6) .. controls (447.01,168.17) and (448.17,167.01) .. (449.6,167.01) .. controls (451.03,167.01) and (452.19,168.17) .. (452.19,169.6) .. controls (452.19,171.03) and (451.03,172.19) .. (449.6,172.19) .. controls (448.17,172.19) and (447.01,171.03) .. (447.01,169.6) -- cycle ;
%Shape: Circle [id:dp7594963664485419] 
\draw  [fill={rgb, 255:red, 0; green, 0; blue, 0 }  ,fill opacity=1 ] (427.01,129.6) .. controls (427.01,128.17) and (428.17,127.01) .. (429.6,127.01) .. controls (431.03,127.01) and (432.19,128.17) .. (432.19,129.6) .. controls (432.19,131.03) and (431.03,132.19) .. (429.6,132.19) .. controls (428.17,132.19) and (427.01,131.03) .. (427.01,129.6) -- cycle ;
%Shape: Circle [id:dp9631459014834243] 
\draw  [fill={rgb, 255:red, 0; green, 0; blue, 0 }  ,fill opacity=1 ] (407.01,189.6) .. controls (407.01,188.17) and (408.17,187.01) .. (409.6,187.01) .. controls (411.03,187.01) and (412.19,188.17) .. (412.19,189.6) .. controls (412.19,191.03) and (411.03,192.19) .. (409.6,192.19) .. controls (408.17,192.19) and (407.01,191.03) .. (407.01,189.6) -- cycle ;
%Shape: Circle [id:dp7828770860002974] 
\draw  [fill={rgb, 255:red, 0; green, 0; blue, 0 }  ,fill opacity=1 ] (407.01,89.6) .. controls (407.01,88.17) and (408.17,87.01) .. (409.6,87.01) .. controls (411.03,87.01) and (412.19,88.17) .. (412.19,89.6) .. controls (412.19,91.03) and (411.03,92.19) .. (409.6,92.19) .. controls (408.17,92.19) and (407.01,91.03) .. (407.01,89.6) -- cycle ;
%Shape: Circle [id:dp026870396037324884] 
\draw  [fill={rgb, 255:red, 0; green, 0; blue, 0 }  ,fill opacity=1 ] (387.01,149.6) .. controls (387.01,148.17) and (388.17,147.01) .. (389.6,147.01) .. controls (391.03,147.01) and (392.19,148.17) .. (392.19,149.6) .. controls (392.19,151.03) and (391.03,152.19) .. (389.6,152.19) .. controls (388.17,152.19) and (387.01,151.03) .. (387.01,149.6) -- cycle ;
%Shape: Circle [id:dp3023293632709978] 
\draw  [fill={rgb, 255:red, 0; green, 0; blue, 0 }  ,fill opacity=1 ] (367.01,209.6) .. controls (367.01,208.17) and (368.17,207.01) .. (369.6,207.01) .. controls (371.03,207.01) and (372.19,208.17) .. (372.19,209.6) .. controls (372.19,211.03) and (371.03,212.19) .. (369.6,212.19) .. controls (368.17,212.19) and (367.01,211.03) .. (367.01,209.6) -- cycle ;
%Shape: Circle [id:dp40632786019830913] 
\draw  [fill={rgb, 255:red, 0; green, 0; blue, 0 }  ,fill opacity=1 ] (367.01,109.6) .. controls (367.01,108.17) and (368.17,107.01) .. (369.6,107.01) .. controls (371.03,107.01) and (372.19,108.17) .. (372.19,109.6) .. controls (372.19,111.03) and (371.03,112.19) .. (369.6,112.19) .. controls (368.17,112.19) and (367.01,111.03) .. (367.01,109.6) -- cycle ;
%Shape: Circle [id:dp7418459172074623] 
\draw  [fill={rgb, 255:red, 0; green, 0; blue, 0 }  ,fill opacity=1 ] (347.01,169.6) .. controls (347.01,168.17) and (348.17,167.01) .. (349.6,167.01) .. controls (351.03,167.01) and (352.19,168.17) .. (352.19,169.6) .. controls (352.19,171.03) and (351.03,172.19) .. (349.6,172.19) .. controls (348.17,172.19) and (347.01,171.03) .. (347.01,169.6) -- cycle ;
%Shape: Right Angle [id:dp5045877834006907] 
\draw   (239.8,229.9) -- (80.2,229.9) -- (80.2,89.9) ;
%Straight Lines [id:da6881654074620274] 
\draw    (260,150) -- (307.4,150.29) ;
\draw [shift={(309.4,150.3)}, rotate = 180.35] [color={rgb, 255:red, 0; green, 0; blue, 0 }  ][line width=0.75]    (10.93,-3.29) .. controls (6.95,-1.4) and (3.31,-0.3) .. (0,0) .. controls (3.31,0.3) and (6.95,1.4) .. (10.93,3.29)   ;

% Text Node
\draw (301.19,212) node [anchor=north west][inner sep=0.75pt]    {$\alpha $};
% Text Node
\draw (317,208.4) node [anchor=north west][inner sep=0.75pt]    {$\{$};

\end{tikzpicture}
\caption[A hypercube decomposition in $\R^2$]{Constructing a hypercube decomposition dividing a lattice in $\R^2$.}\label{fig:decomp}
\end{figure}

We conclude that:

\begin{proposition}\label{prop:formal analytic T from M} 
The action of $M$ on $T$ extends to the formal model $\fT_\alpha$ whenever $\alpha$ divides the lattice $\ell(M)=\Lambda\subset\R^r$.
\end{proposition}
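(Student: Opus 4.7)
The plan is to apply Proposition \ref{prop:fas to formal scheme}(3) to each translation morphism $\tau_m:T\to T$ for $m\in M$. That proposition reduces the problem to a combinatorial compatibility between $\tau_m$ and the formal analytic cover $\{U_{{\bf e}\Delta_\alpha}\}_{{\bf e}\in\Z^r}$: it suffices to exhibit, for each ${\bf e}$, a member of the cover containing $\tau_m(U_{{\bf e}\Delta_\alpha})$, with the analogous condition on pairwise intersections. Thus the question becomes: does translation by $m$ permute the polyannuli in the cover?

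The key step is to pass to log coordinates. Given $m\in M$, set $\lambda := \ell(m)\in\Lambda$. The hypothesis that $\alpha$ divides $\Lambda$ produces a unique ${\bf f}\in\Z^r$ with $\lambda = \alpha{\bf f}$. Since $\ell$ is a group homomorphism, multiplication by $m$ on $T(K)$ corresponds under $\ell$ to translation by $\lambda = \alpha{\bf f}$ on $\R^r$, and this translation sends the hypercube ${\bf e}\Delta_\alpha$ bijectively onto $({\bf e}+{\bf f})\Delta_\alpha$. Transporting back through $\ell$, this means precisely that
\[\tau_m(U_{{\bf e}\Delta_\alpha}) = U_{({\bf e}+{\bf f})\Delta_\alpha},\]
which is again a member of the formal analytic cover. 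The same formula immediately handles intersections, so Proposition \ref{prop:fas to formal scheme}(3) yields an extension $\overline\tau_m:\fT_\alpha\to\fT_\alpha$.

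Finally, to upgrade this to an action of $M$ on $\fT_\alpha$, I would observe that the extension is uniquely determined by its generic fiber: locally on a formal affine open $\Spf R_i^\circ$, the morphism is specified by an algebra map $R_j^\circ\to R_i^\circ$, which is uniquely cut out by the corresponding map $R_j\to R_i$ on the generic fiber because $R^\circ$ embeds in $R$ for each admissible $R_i$. The relations $\tau_{m+m'} = \tau_m\circ\tau_{m'}$ and $\tau_0 = \id_T$ on the generic fiber therefore lift to the same relations for $\overline\tau$, producing the desired $M$-action.

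The argument has no real obstacle; the only care required is in sign conventions for $\ell$, to check that an element $m$ with $|m_i|=c^{f_i}$ indeed maps to the vector $\alpha{\bf f}$ under $\ell$ (since $\alpha = -\log c$), so that translation on $\R^r$ lines up with the hypercube labels used in Definition \ref{def:hypercube models}.
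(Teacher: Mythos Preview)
Your proposal is correct and matches the approach the paper leaves implicit. The paper does not give an explicit proof of this proposition; it simply states it as a conclusion after explaining that the hypercube decomposition of $\R^r$ into $\alpha$-cubes is preserved by translation by any $\lambda\in\Lambda\subset\alpha\Z^r$. Your argument makes this precise by invoking Proposition~\ref{prop:fas to formal scheme}(3) to extend each $\tau_m$ and then using injectivity of $R^\circ\hookrightarrow R$ to verify the group-action relations on the formal model, which is exactly the intended mechanism.
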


Any such model leads to a model of $T/M$ by identifying any open $U_{\Delta}$ in our cover with the opens $U_{\lambda+\Delta}$ for all $\lambda\in\Lambda$. See Figure \ref{fig:TM model} for an example.

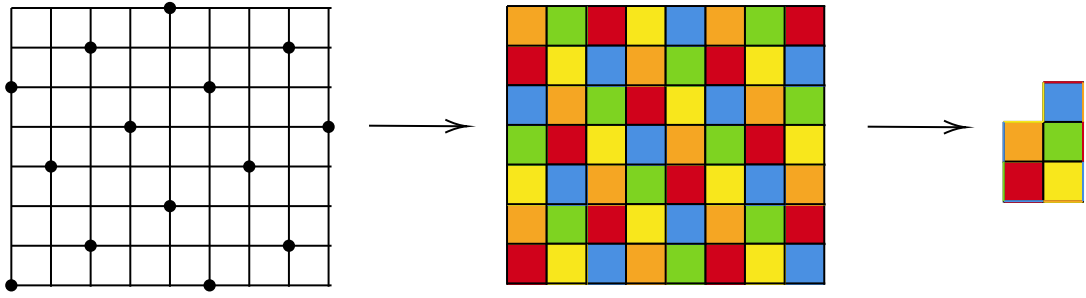
\begin{figure}

\tikzset{every picture/.style={line width=0.75pt}} %set default line width to 0.75pt        

\begin{tikzpicture}[x=0.75pt,y=0.75pt,yscale=-1,xscale=1]
%uncomment if require: \path (0,771); %set diagram left start at 0, and has height of 771

%Shape: Circle [id:dp6346484540382165] 
\draw  [fill={rgb, 255:red, 0; green, 0; blue, 0 }  ,fill opacity=1 ] (438.01,221.6) .. controls (438.01,220.17) and (439.17,219.01) .. (440.6,219.01) .. controls (442.03,219.01) and (443.19,220.17) .. (443.19,221.6) .. controls (443.19,223.03) and (442.03,224.19) .. (440.6,224.19) .. controls (439.17,224.19) and (438.01,223.03) .. (438.01,221.6) -- cycle ;
%Shape: Circle [id:dp8476126265953594] 
\draw  [fill={rgb, 255:red, 0; green, 0; blue, 0 }  ,fill opacity=1 ] (438.01,121.6) .. controls (438.01,120.17) and (439.17,119.01) .. (440.6,119.01) .. controls (442.03,119.01) and (443.19,120.17) .. (443.19,121.6) .. controls (443.19,123.03) and (442.03,124.19) .. (440.6,124.19) .. controls (439.17,124.19) and (438.01,123.03) .. (438.01,121.6) -- cycle ;
%Shape: Circle [id:dp05555146408339118] 
\draw  [fill={rgb, 255:red, 0; green, 0; blue, 0 }  ,fill opacity=1 ] (418.01,181.6) .. controls (418.01,180.17) and (419.17,179.01) .. (420.6,179.01) .. controls (422.03,179.01) and (423.19,180.17) .. (423.19,181.6) .. controls (423.19,183.03) and (422.03,184.19) .. (420.6,184.19) .. controls (419.17,184.19) and (418.01,183.03) .. (418.01,181.6) -- cycle ;
%Shape: Circle [id:dp2694644416884464] 
\draw  [fill={rgb, 255:red, 0; green, 0; blue, 0 }  ,fill opacity=1 ] (398.01,141.6) .. controls (398.01,140.17) and (399.17,139.01) .. (400.6,139.01) .. controls (402.03,139.01) and (403.19,140.17) .. (403.19,141.6) .. controls (403.19,143.03) and (402.03,144.19) .. (400.6,144.19) .. controls (399.17,144.19) and (398.01,143.03) .. (398.01,141.6) -- cycle ;
%Shape: Circle [id:dp8228475565065845] 
\draw  [fill={rgb, 255:red, 0; green, 0; blue, 0 }  ,fill opacity=1 ] (378.01,201.6) .. controls (378.01,200.17) and (379.17,199.01) .. (380.6,199.01) .. controls (382.03,199.01) and (383.19,200.17) .. (383.19,201.6) .. controls (383.19,203.03) and (382.03,204.19) .. (380.6,204.19) .. controls (379.17,204.19) and (378.01,203.03) .. (378.01,201.6) -- cycle ;
%Shape: Circle [id:dp3799320418016725] 
\draw  [fill={rgb, 255:red, 0; green, 0; blue, 0 }  ,fill opacity=1 ] (358.01,161.6) .. controls (358.01,160.17) and (359.17,159.01) .. (360.6,159.01) .. controls (362.03,159.01) and (363.19,160.17) .. (363.19,161.6) .. controls (363.19,163.03) and (362.03,164.19) .. (360.6,164.19) .. controls (359.17,164.19) and (358.01,163.03) .. (358.01,161.6) -- cycle ;
%Shape: Circle [id:dp5039272545330058] 
\draw  [fill={rgb, 255:red, 0; green, 0; blue, 0 }  ,fill opacity=1 ] (338.01,221.6) .. controls (338.01,220.17) and (339.17,219.01) .. (340.6,219.01) .. controls (342.03,219.01) and (343.19,220.17) .. (343.19,221.6) .. controls (343.19,223.03) and (342.03,224.19) .. (340.6,224.19) .. controls (339.17,224.19) and (338.01,223.03) .. (338.01,221.6) -- cycle ;
%Shape: Circle [id:dp6872107161600742] 
\draw  [fill={rgb, 255:red, 0; green, 0; blue, 0 }  ,fill opacity=1 ] (338.01,121.6) .. controls (338.01,120.17) and (339.17,119.01) .. (340.6,119.01) .. controls (342.03,119.01) and (343.19,120.17) .. (343.19,121.6) .. controls (343.19,123.03) and (342.03,124.19) .. (340.6,124.19) .. controls (339.17,124.19) and (338.01,123.03) .. (338.01,121.6) -- cycle ;
%Shape: Circle [id:dp6127122278390456] 
\draw  [fill={rgb, 255:red, 0; green, 0; blue, 0 }  ,fill opacity=1 ] (318.01,181.6) .. controls (318.01,180.17) and (319.17,179.01) .. (320.6,179.01) .. controls (322.03,179.01) and (323.19,180.17) .. (323.19,181.6) .. controls (323.19,183.03) and (322.03,184.19) .. (320.6,184.19) .. controls (319.17,184.19) and (318.01,183.03) .. (318.01,181.6) -- cycle ;
%Straight Lines [id:da34060351738012096] 
\draw    (231,162) -- (278.4,162.29) ;
\draw [shift={(280.4,162.3)}, rotate = 180.35] [color={rgb, 255:red, 0; green, 0; blue, 0 }  ][line width=0.75]    (10.93,-3.29) .. controls (6.95,-1.4) and (3.31,-0.3) .. (0,0) .. controls (3.31,0.3) and (6.95,1.4) .. (10.93,3.29)   ;
%Shape: Grid [id:dp8868725490193448] 
\draw  [draw opacity=0] (50.6,102.6) -- (212.1,102.6) -- (212.1,243.29) -- (50.6,243.29) -- cycle ; \draw   (50.6,102.6) -- (50.6,243.29)(70.6,102.6) -- (70.6,243.29)(90.6,102.6) -- (90.6,243.29)(110.6,102.6) -- (110.6,243.29)(130.6,102.6) -- (130.6,243.29)(150.6,102.6) -- (150.6,243.29)(170.6,102.6) -- (170.6,243.29)(190.6,102.6) -- (190.6,243.29)(210.6,102.6) -- (210.6,243.29) ; \draw   (50.6,102.6) -- (212.1,102.6)(50.6,122.6) -- (212.1,122.6)(50.6,142.6) -- (212.1,142.6)(50.6,162.6) -- (212.1,162.6)(50.6,182.6) -- (212.1,182.6)(50.6,202.6) -- (212.1,202.6)(50.6,222.6) -- (212.1,222.6)(50.6,242.6) -- (212.1,242.6) ; \draw    ;
%Shape: Circle [id:dp498520588335958] 
\draw  [fill={rgb, 255:red, 0; green, 0; blue, 0 }  ,fill opacity=1 ] (48.01,242.6) .. controls (48.01,241.17) and (49.17,240.01) .. (50.6,240.01) .. controls (52.03,240.01) and (53.19,241.17) .. (53.19,242.6) .. controls (53.19,244.03) and (52.03,245.19) .. (50.6,245.19) .. controls (49.17,245.19) and (48.01,244.03) .. (48.01,242.6) -- cycle ;
%Shape: Circle [id:dp06128437218470628] 
\draw  [fill={rgb, 255:red, 0; green, 0; blue, 0 }  ,fill opacity=1 ] (48.01,142.6) .. controls (48.01,141.17) and (49.17,140.01) .. (50.6,140.01) .. controls (52.03,140.01) and (53.19,141.17) .. (53.19,142.6) .. controls (53.19,144.03) and (52.03,145.19) .. (50.6,145.19) .. controls (49.17,145.19) and (48.01,144.03) .. (48.01,142.6) -- cycle ;
%Shape: Circle [id:dp0797623862820871] 
\draw  [fill={rgb, 255:red, 0; green, 0; blue, 0 }  ,fill opacity=1 ] (208.01,162.6) .. controls (208.01,161.17) and (209.17,160.01) .. (210.6,160.01) .. controls (212.03,160.01) and (213.19,161.17) .. (213.19,162.6) .. controls (213.19,164.03) and (212.03,165.19) .. (210.6,165.19) .. controls (209.17,165.19) and (208.01,164.03) .. (208.01,162.6) -- cycle ;
%Shape: Circle [id:dp32089725601921204] 
\draw  [fill={rgb, 255:red, 0; green, 0; blue, 0 }  ,fill opacity=1 ] (188.01,222.6) .. controls (188.01,221.17) and (189.17,220.01) .. (190.6,220.01) .. controls (192.03,220.01) and (193.19,221.17) .. (193.19,222.6) .. controls (193.19,224.03) and (192.03,225.19) .. (190.6,225.19) .. controls (189.17,225.19) and (188.01,224.03) .. (188.01,222.6) -- cycle ;
%Shape: Circle [id:dp5444528819871048] 
\draw  [fill={rgb, 255:red, 0; green, 0; blue, 0 }  ,fill opacity=1 ] (148.01,242.6) .. controls (148.01,241.17) and (149.17,240.01) .. (150.6,240.01) .. controls (152.03,240.01) and (153.19,241.17) .. (153.19,242.6) .. controls (153.19,244.03) and (152.03,245.19) .. (150.6,245.19) .. controls (149.17,245.19) and (148.01,244.03) .. (148.01,242.6) -- cycle ;
%Shape: Circle [id:dp05991456940309181] 
\draw  [fill={rgb, 255:red, 0; green, 0; blue, 0 }  ,fill opacity=1 ] (188.01,122.6) .. controls (188.01,121.17) and (189.17,120.01) .. (190.6,120.01) .. controls (192.03,120.01) and (193.19,121.17) .. (193.19,122.6) .. controls (193.19,124.03) and (192.03,125.19) .. (190.6,125.19) .. controls (189.17,125.19) and (188.01,124.03) .. (188.01,122.6) -- cycle ;
%Shape: Circle [id:dp5649741679406401] 
\draw  [fill={rgb, 255:red, 0; green, 0; blue, 0 }  ,fill opacity=1 ] (168.01,182.6) .. controls (168.01,181.17) and (169.17,180.01) .. (170.6,180.01) .. controls (172.03,180.01) and (173.19,181.17) .. (173.19,182.6) .. controls (173.19,184.03) and (172.03,185.19) .. (170.6,185.19) .. controls (169.17,185.19) and (168.01,184.03) .. (168.01,182.6) -- cycle ;
%Shape: Circle [id:dp9887106888699579] 
\draw  [fill={rgb, 255:red, 0; green, 0; blue, 0 }  ,fill opacity=1 ] (148.01,142.6) .. controls (148.01,141.17) and (149.17,140.01) .. (150.6,140.01) .. controls (152.03,140.01) and (153.19,141.17) .. (153.19,142.6) .. controls (153.19,144.03) and (152.03,145.19) .. (150.6,145.19) .. controls (149.17,145.19) and (148.01,144.03) .. (148.01,142.6) -- cycle ;
%Shape: Circle [id:dp40166175897588796] 
\draw  [fill={rgb, 255:red, 0; green, 0; blue, 0 }  ,fill opacity=1 ] (128.01,202.6) .. controls (128.01,201.17) and (129.17,200.01) .. (130.6,200.01) .. controls (132.03,200.01) and (133.19,201.17) .. (133.19,202.6) .. controls (133.19,204.03) and (132.03,205.19) .. (130.6,205.19) .. controls (129.17,205.19) and (128.01,204.03) .. (128.01,202.6) -- cycle ;
%Shape: Circle [id:dp010307741083352528] 
\draw  [fill={rgb, 255:red, 0; green, 0; blue, 0 }  ,fill opacity=1 ] (128.01,102.6) .. controls (128.01,101.17) and (129.17,100.01) .. (130.6,100.01) .. controls (132.03,100.01) and (133.19,101.17) .. (133.19,102.6) .. controls (133.19,104.03) and (132.03,105.19) .. (130.6,105.19) .. controls (129.17,105.19) and (128.01,104.03) .. (128.01,102.6) -- cycle ;
%Shape: Circle [id:dp7755902572497344] 
\draw  [fill={rgb, 255:red, 0; green, 0; blue, 0 }  ,fill opacity=1 ] (108.01,162.6) .. controls (108.01,161.17) and (109.17,160.01) .. (110.6,160.01) .. controls (112.03,160.01) and (113.19,161.17) .. (113.19,162.6) .. controls (113.19,164.03) and (112.03,165.19) .. (110.6,165.19) .. controls (109.17,165.19) and (108.01,164.03) .. (108.01,162.6) -- cycle ;
%Shape: Circle [id:dp6358037474182694] 
\draw  [fill={rgb, 255:red, 0; green, 0; blue, 0 }  ,fill opacity=1 ] (88.01,222.6) .. controls (88.01,221.17) and (89.17,220.01) .. (90.6,220.01) .. controls (92.03,220.01) and (93.19,221.17) .. (93.19,222.6) .. controls (93.19,224.03) and (92.03,225.19) .. (90.6,225.19) .. controls (89.17,225.19) and (88.01,224.03) .. (88.01,222.6) -- cycle ;
%Shape: Circle [id:dp7853561254527708] 
\draw  [fill={rgb, 255:red, 0; green, 0; blue, 0 }  ,fill opacity=1 ] (88.01,122.6) .. controls (88.01,121.17) and (89.17,120.01) .. (90.6,120.01) .. controls (92.03,120.01) and (93.19,121.17) .. (93.19,122.6) .. controls (93.19,124.03) and (92.03,125.19) .. (90.6,125.19) .. controls (89.17,125.19) and (88.01,124.03) .. (88.01,122.6) -- cycle ;
%Shape: Circle [id:dp806378918539828] 
\draw  [fill={rgb, 255:red, 0; green, 0; blue, 0 }  ,fill opacity=1 ] (68.01,182.6) .. controls (68.01,181.17) and (69.17,180.01) .. (70.6,180.01) .. controls (72.03,180.01) and (73.19,181.17) .. (73.19,182.6) .. controls (73.19,184.03) and (72.03,185.19) .. (70.6,185.19) .. controls (69.17,185.19) and (68.01,184.03) .. (68.01,182.6) -- cycle ;
%Shape: Grid [id:dp7796206180773031] 
\draw  [draw opacity=0][fill={rgb, 255:red, 208; green, 2; blue, 27 }  ,fill opacity=1 ] (300.6,221.6) -- (321.1,221.6) -- (321.1,242.29) -- (300.6,242.29) -- cycle ; \draw   (300.6,221.6) -- (300.6,242.29)(320.6,221.6) -- (320.6,242.29) ; \draw   (300.6,221.6) -- (321.1,221.6)(300.6,241.6) -- (321.1,241.6) ; \draw    ;
%Shape: Grid [id:dp15914942383447972] 
\draw  [draw opacity=0][fill={rgb, 255:red, 208; green, 2; blue, 27 }  ,fill opacity=1 ] (320.6,161.6) -- (341.1,161.6) -- (341.1,182.29) -- (320.6,182.29) -- cycle ; \draw   (320.6,161.6) -- (320.6,182.29)(340.6,161.6) -- (340.6,182.29) ; \draw   (320.6,161.6) -- (341.1,161.6)(320.6,181.6) -- (341.1,181.6) ; \draw    ;
%Shape: Grid [id:dp19997463938999172] 
\draw  [draw opacity=0][fill={rgb, 255:red, 208; green, 2; blue, 27 }  ,fill opacity=1 ] (340.6,101.6) -- (361.1,101.6) -- (361.1,122.29) -- (340.6,122.29) -- cycle ; \draw   (340.6,101.6) -- (340.6,122.29)(360.6,101.6) -- (360.6,122.29) ; \draw   (340.6,101.6) -- (361.1,101.6)(340.6,121.6) -- (361.1,121.6) ; \draw    ;
%Shape: Grid [id:dp4956301923050359] 
\draw  [draw opacity=0][fill={rgb, 255:red, 208; green, 2; blue, 27 }  ,fill opacity=1 ] (340.6,201.6) -- (361.1,201.6) -- (361.1,222.29) -- (340.6,222.29) -- cycle ; \draw   (340.6,201.6) -- (340.6,222.29)(360.6,201.6) -- (360.6,222.29) ; \draw   (340.6,201.6) -- (361.1,201.6)(340.6,221.6) -- (361.1,221.6) ; \draw    ;
%Shape: Grid [id:dp27003765522568957] 
\draw  [draw opacity=0][fill={rgb, 255:red, 208; green, 2; blue, 27 }  ,fill opacity=1 ] (360.6,141.6) -- (381.1,141.6) -- (381.1,162.29) -- (360.6,162.29) -- cycle ; \draw   (360.6,141.6) -- (360.6,162.29)(380.6,141.6) -- (380.6,162.29) ; \draw   (360.6,141.6) -- (381.1,141.6)(360.6,161.6) -- (381.1,161.6) ; \draw    ;
%Shape: Grid [id:dp36656193718900365] 
\draw  [draw opacity=0][fill={rgb, 255:red, 208; green, 2; blue, 27 }  ,fill opacity=1 ] (380.6,181.6) -- (401.1,181.6) -- (401.1,202.29) -- (380.6,202.29) -- cycle ; \draw   (380.6,181.6) -- (380.6,202.29)(400.6,181.6) -- (400.6,202.29) ; \draw   (380.6,181.6) -- (401.1,181.6)(380.6,201.6) -- (401.1,201.6) ; \draw    ;
%Shape: Grid [id:dp25558106303650163] 
\draw  [draw opacity=0][fill={rgb, 255:red, 208; green, 2; blue, 27 }  ,fill opacity=1 ] (400.6,221.6) -- (421.1,221.6) -- (421.1,242.29) -- (400.6,242.29) -- cycle ; \draw   (400.6,221.6) -- (400.6,242.29)(420.6,221.6) -- (420.6,242.29) ; \draw   (400.6,221.6) -- (421.1,221.6)(400.6,241.6) -- (421.1,241.6) ; \draw    ;
%Shape: Grid [id:dp5954278133999169] 
\draw  [draw opacity=0][fill={rgb, 255:red, 208; green, 2; blue, 27 }  ,fill opacity=1 ] (440.6,201.6) -- (461.1,201.6) -- (461.1,222.29) -- (440.6,222.29) -- cycle ; \draw   (440.6,201.6) -- (440.6,222.29)(460.6,201.6) -- (460.6,222.29) ; \draw   (440.6,201.6) -- (461.1,201.6)(440.6,221.6) -- (461.1,221.6) ; \draw    ;
%Shape: Grid [id:dp05737147838180112] 
\draw  [draw opacity=0][fill={rgb, 255:red, 208; green, 2; blue, 27 }  ,fill opacity=1 ] (420.6,161.6) -- (441.1,161.6) -- (441.1,182.29) -- (420.6,182.29) -- cycle ; \draw   (420.6,161.6) -- (420.6,182.29)(440.6,161.6) -- (440.6,182.29) ; \draw   (420.6,161.6) -- (441.1,161.6)(420.6,181.6) -- (441.1,181.6) ; \draw    ;
%Shape: Grid [id:dp3938494855847958] 
\draw  [draw opacity=0][fill={rgb, 255:red, 208; green, 2; blue, 27 }  ,fill opacity=1 ] (300.6,121.6) -- (321.1,121.6) -- (321.1,142.29) -- (300.6,142.29) -- cycle ; \draw   (300.6,121.6) -- (300.6,142.29)(320.6,121.6) -- (320.6,142.29) ; \draw   (300.6,121.6) -- (321.1,121.6)(300.6,141.6) -- (321.1,141.6) ; \draw    ;
%Shape: Grid [id:dp04214124414515408] 
\draw  [draw opacity=0][fill={rgb, 255:red, 208; green, 2; blue, 27 }  ,fill opacity=1 ] (400.6,121.6) -- (421.1,121.6) -- (421.1,142.29) -- (400.6,142.29) -- cycle ; \draw   (400.6,121.6) -- (400.6,142.29)(420.6,121.6) -- (420.6,142.29) ; \draw   (400.6,121.6) -- (421.1,121.6)(400.6,141.6) -- (421.1,141.6) ; \draw    ;
%Shape: Grid [id:dp6435222377321972] 
\draw  [draw opacity=0][fill={rgb, 255:red, 208; green, 2; blue, 27 }  ,fill opacity=1 ] (440.6,101.6) -- (461.1,101.6) -- (461.1,122.29) -- (440.6,122.29) -- cycle ; \draw   (440.6,101.6) -- (440.6,122.29)(460.6,101.6) -- (460.6,122.29) ; \draw   (440.6,101.6) -- (461.1,101.6)(440.6,121.6) -- (461.1,121.6) ; \draw    ;
%Shape: Grid [id:dp5949992193592533] 
\draw  [draw opacity=0][fill={rgb, 255:red, 245; green, 166; blue, 35 }  ,fill opacity=1 ] (300.6,201.6) -- (321.1,201.6) -- (321.1,222.29) -- (300.6,222.29) -- cycle ; \draw   (300.6,201.6) -- (300.6,222.29)(320.6,201.6) -- (320.6,222.29) ; \draw   (300.6,201.6) -- (321.1,201.6)(300.6,221.6) -- (321.1,221.6) ; \draw    ;
%Shape: Grid [id:dp9029789994342401] 
\draw  [draw opacity=0][fill={rgb, 255:red, 245; green, 166; blue, 35 }  ,fill opacity=1 ] (320.6,141.6) -- (341.1,141.6) -- (341.1,162.29) -- (320.6,162.29) -- cycle ; \draw   (320.6,141.6) -- (320.6,162.29)(340.6,141.6) -- (340.6,162.29) ; \draw   (320.6,141.6) -- (341.1,141.6)(320.6,161.6) -- (341.1,161.6) ; \draw    ;
%Shape: Grid [id:dp5808038617684554] 
\draw  [draw opacity=0][fill={rgb, 255:red, 245; green, 166; blue, 35 }  ,fill opacity=1 ] (340.6,181.6) -- (361.1,181.6) -- (361.1,202.29) -- (340.6,202.29) -- cycle ; \draw   (340.6,181.6) -- (340.6,202.29)(360.6,181.6) -- (360.6,202.29) ; \draw   (340.6,181.6) -- (361.1,181.6)(340.6,201.6) -- (361.1,201.6) ; \draw    ;
%Shape: Grid [id:dp27707996471259766] 
\draw  [draw opacity=0][fill={rgb, 255:red, 245; green, 166; blue, 35 }  ,fill opacity=1 ] (360.6,121.6) -- (381.1,121.6) -- (381.1,142.29) -- (360.6,142.29) -- cycle ; \draw   (360.6,121.6) -- (360.6,142.29)(380.6,121.6) -- (380.6,142.29) ; \draw   (360.6,121.6) -- (381.1,121.6)(360.6,141.6) -- (381.1,141.6) ; \draw    ;
%Shape: Grid [id:dp30401678363952334] 
\draw  [draw opacity=0][fill={rgb, 255:red, 245; green, 166; blue, 35 }  ,fill opacity=1 ] (300.6,101.6) -- (321.1,101.6) -- (321.1,122.29) -- (300.6,122.29) -- cycle ; \draw   (300.6,101.6) -- (300.6,122.29)(320.6,101.6) -- (320.6,122.29) ; \draw   (300.6,101.6) -- (321.1,101.6)(300.6,121.6) -- (321.1,121.6) ; \draw    ;
%Shape: Grid [id:dp5028565579587918] 
\draw  [draw opacity=0][fill={rgb, 255:red, 245; green, 166; blue, 35 }  ,fill opacity=1 ] (380.6,161.6) -- (401.1,161.6) -- (401.1,182.29) -- (380.6,182.29) -- cycle ; \draw   (380.6,161.6) -- (380.6,182.29)(400.6,161.6) -- (400.6,182.29) ; \draw   (380.6,161.6) -- (401.1,161.6)(380.6,181.6) -- (401.1,181.6) ; \draw    ;
%Shape: Grid [id:dp03671309539890322] 
\draw  [draw opacity=0][fill={rgb, 255:red, 245; green, 166; blue, 35 }  ,fill opacity=1 ] (400.6,201.6) -- (421.1,201.6) -- (421.1,222.29) -- (400.6,222.29) -- cycle ; \draw   (400.6,201.6) -- (400.6,222.29)(420.6,201.6) -- (420.6,222.29) ; \draw   (400.6,201.6) -- (421.1,201.6)(400.6,221.6) -- (421.1,221.6) ; \draw    ;
%Shape: Grid [id:dp7947154355625723] 
\draw  [draw opacity=0][fill={rgb, 255:red, 245; green, 166; blue, 35 }  ,fill opacity=1 ] (420.6,141.6) -- (441.1,141.6) -- (441.1,162.29) -- (420.6,162.29) -- cycle ; \draw   (420.6,141.6) -- (420.6,162.29)(440.6,141.6) -- (440.6,162.29) ; \draw   (420.6,141.6) -- (441.1,141.6)(420.6,161.6) -- (441.1,161.6) ; \draw    ;
%Shape: Grid [id:dp015148968415500175] 
\draw  [draw opacity=0][fill={rgb, 255:red, 245; green, 166; blue, 35 }  ,fill opacity=1 ] (400.6,101.6) -- (421.1,101.6) -- (421.1,122.29) -- (400.6,122.29) -- cycle ; \draw   (400.6,101.6) -- (400.6,122.29)(420.6,101.6) -- (420.6,122.29) ; \draw   (400.6,101.6) -- (421.1,101.6)(400.6,121.6) -- (421.1,121.6) ; \draw    ;
%Shape: Grid [id:dp9520229376595866] 
\draw  [draw opacity=0][fill={rgb, 255:red, 245; green, 166; blue, 35 }  ,fill opacity=1 ] (440.6,181.6) -- (461.1,181.6) -- (461.1,202.29) -- (440.6,202.29) -- cycle ; \draw   (440.6,181.6) -- (440.6,202.29)(460.6,181.6) -- (460.6,202.29) ; \draw   (440.6,181.6) -- (461.1,181.6)(440.6,201.6) -- (461.1,201.6) ; \draw    ;
%Shape: Grid [id:dp4969019365444536] 
\draw  [draw opacity=0][fill={rgb, 255:red, 248; green, 231; blue, 28 }  ,fill opacity=1 ] (300.6,181.6) -- (321.1,181.6) -- (321.1,202.29) -- (300.6,202.29) -- cycle ; \draw   (300.6,181.6) -- (300.6,202.29)(320.6,181.6) -- (320.6,202.29) ; \draw   (300.6,181.6) -- (321.1,181.6)(300.6,201.6) -- (321.1,201.6) ; \draw    ;
%Shape: Grid [id:dp35858486188090377] 
\draw  [draw opacity=0][fill={rgb, 255:red, 248; green, 231; blue, 28 }  ,fill opacity=1 ] (320.6,121.6) -- (341.1,121.6) -- (341.1,142.29) -- (320.6,142.29) -- cycle ; \draw   (320.6,121.6) -- (320.6,142.29)(340.6,121.6) -- (340.6,142.29) ; \draw   (320.6,121.6) -- (341.1,121.6)(320.6,141.6) -- (341.1,141.6) ; \draw    ;
%Shape: Grid [id:dp3482536663649691] 
\draw  [draw opacity=0][fill={rgb, 255:red, 248; green, 231; blue, 28 }  ,fill opacity=1 ] (340.6,161.6) -- (361.1,161.6) -- (361.1,182.29) -- (340.6,182.29) -- cycle ; \draw   (340.6,161.6) -- (340.6,182.29)(360.6,161.6) -- (360.6,182.29) ; \draw   (340.6,161.6) -- (361.1,161.6)(340.6,181.6) -- (361.1,181.6) ; \draw    ;
%Shape: Grid [id:dp9637955372752305] 
\draw  [draw opacity=0][fill={rgb, 255:red, 248; green, 231; blue, 28 }  ,fill opacity=1 ] (360.6,101.6) -- (381.1,101.6) -- (381.1,122.29) -- (360.6,122.29) -- cycle ; \draw   (360.6,101.6) -- (360.6,122.29)(380.6,101.6) -- (380.6,122.29) ; \draw   (360.6,101.6) -- (381.1,101.6)(360.6,121.6) -- (381.1,121.6) ; \draw    ;
%Shape: Grid [id:dp91848400455996] 
\draw  [draw opacity=0][fill={rgb, 255:red, 248; green, 231; blue, 28 }  ,fill opacity=1 ] (380.6,141.6) -- (401.1,141.6) -- (401.1,162.29) -- (380.6,162.29) -- cycle ; \draw   (380.6,141.6) -- (380.6,162.29)(400.6,141.6) -- (400.6,162.29) ; \draw   (380.6,141.6) -- (401.1,141.6)(380.6,161.6) -- (401.1,161.6) ; \draw    ;
%Shape: Grid [id:dp562278696532251] 
\draw  [draw opacity=0][fill={rgb, 255:red, 248; green, 231; blue, 28 }  ,fill opacity=1 ] (400.6,181.6) -- (421.1,181.6) -- (421.1,202.29) -- (400.6,202.29) -- cycle ; \draw   (400.6,181.6) -- (400.6,202.29)(420.6,181.6) -- (420.6,202.29) ; \draw   (400.6,181.6) -- (421.1,181.6)(400.6,201.6) -- (421.1,201.6) ; \draw    ;
%Shape: Grid [id:dp7916812517125231] 
\draw  [draw opacity=0][fill={rgb, 255:red, 248; green, 231; blue, 28 }  ,fill opacity=1 ] (420.6,121.6) -- (441.1,121.6) -- (441.1,142.29) -- (420.6,142.29) -- cycle ; \draw   (420.6,121.6) -- (420.6,142.29)(440.6,121.6) -- (440.6,142.29) ; \draw   (420.6,121.6) -- (441.1,121.6)(420.6,141.6) -- (441.1,141.6) ; \draw    ;
%Shape: Grid [id:dp22698062623683835] 
\draw  [draw opacity=0][fill={rgb, 255:red, 248; green, 231; blue, 28 }  ,fill opacity=1 ] (440.6,161.6) -- (461.1,161.6) -- (461.1,182.29) -- (440.6,182.29) -- cycle ; \draw   (440.6,161.6) -- (440.6,182.29)(460.6,161.6) -- (460.6,182.29) ; \draw   (440.6,161.6) -- (461.1,161.6)(440.6,181.6) -- (461.1,181.6) ; \draw    ;
%Shape: Grid [id:dp63543907651299] 
\draw  [draw opacity=0][fill={rgb, 255:red, 126; green, 211; blue, 33 }  ,fill opacity=1 ] (300.6,161.6) -- (321.1,161.6) -- (321.1,182.29) -- (300.6,182.29) -- cycle ; \draw   (300.6,161.6) -- (300.6,182.29)(320.6,161.6) -- (320.6,182.29) ; \draw   (300.6,161.6) -- (321.1,161.6)(300.6,181.6) -- (321.1,181.6) ; \draw    ;
%Shape: Grid [id:dp18917473072867685] 
\draw  [draw opacity=0][fill={rgb, 255:red, 126; green, 211; blue, 33 }  ,fill opacity=1 ] (320.6,101.6) -- (341.1,101.6) -- (341.1,122.29) -- (320.6,122.29) -- cycle ; \draw   (320.6,101.6) -- (320.6,122.29)(340.6,101.6) -- (340.6,122.29) ; \draw   (320.6,101.6) -- (341.1,101.6)(320.6,121.6) -- (341.1,121.6) ; \draw    ;
%Shape: Grid [id:dp9666117903297242] 
\draw  [draw opacity=0][fill={rgb, 255:red, 126; green, 211; blue, 33 }  ,fill opacity=1 ] (340.6,141.6) -- (361.1,141.6) -- (361.1,162.29) -- (340.6,162.29) -- cycle ; \draw   (340.6,141.6) -- (340.6,162.29)(360.6,141.6) -- (360.6,162.29) ; \draw   (340.6,141.6) -- (361.1,141.6)(340.6,161.6) -- (361.1,161.6) ; \draw    ;
%Shape: Grid [id:dp6748304869393917] 
\draw  [draw opacity=0][fill={rgb, 255:red, 126; green, 211; blue, 33 }  ,fill opacity=1 ] (380.6,121.6) -- (401.1,121.6) -- (401.1,142.29) -- (380.6,142.29) -- cycle ; \draw   (380.6,121.6) -- (380.6,142.29)(400.6,121.6) -- (400.6,142.29) ; \draw   (380.6,121.6) -- (401.1,121.6)(380.6,141.6) -- (401.1,141.6) ; \draw    ;
%Shape: Grid [id:dp18024771920855898] 
\draw  [draw opacity=0][fill={rgb, 255:red, 126; green, 211; blue, 33 }  ,fill opacity=1 ] (400.6,161.6) -- (421.1,161.6) -- (421.1,182.29) -- (400.6,182.29) -- cycle ; \draw   (400.6,161.6) -- (400.6,182.29)(420.6,161.6) -- (420.6,182.29) ; \draw   (400.6,161.6) -- (421.1,161.6)(400.6,181.6) -- (421.1,181.6) ; \draw    ;
%Shape: Grid [id:dp018096509514622605] 
\draw  [draw opacity=0][fill={rgb, 255:red, 126; green, 211; blue, 33 }  ,fill opacity=1 ] (440.6,141.6) -- (461.1,141.6) -- (461.1,162.29) -- (440.6,162.29) -- cycle ; \draw   (440.6,141.6) -- (440.6,162.29)(460.6,141.6) -- (460.6,162.29) ; \draw   (440.6,141.6) -- (461.1,141.6)(440.6,161.6) -- (461.1,161.6) ; \draw    ;
%Shape: Grid [id:dp7890782209658296] 
\draw  [draw opacity=0][fill={rgb, 255:red, 126; green, 211; blue, 33 }  ,fill opacity=1 ] (320.6,201.6) -- (341.1,201.6) -- (341.1,222.29) -- (320.6,222.29) -- cycle ; \draw   (320.6,201.6) -- (320.6,222.29)(340.6,201.6) -- (340.6,222.29) ; \draw   (320.6,201.6) -- (341.1,201.6)(320.6,221.6) -- (341.1,221.6) ; \draw    ;
%Shape: Grid [id:dp1042624507397818] 
\draw  [draw opacity=0][fill={rgb, 255:red, 126; green, 211; blue, 33 }  ,fill opacity=1 ] (360.6,181.6) -- (381.1,181.6) -- (381.1,202.29) -- (360.6,202.29) -- cycle ; \draw   (360.6,181.6) -- (360.6,202.29)(380.6,181.6) -- (380.6,202.29) ; \draw   (360.6,181.6) -- (381.1,181.6)(360.6,201.6) -- (381.1,201.6) ; \draw    ;
%Shape: Grid [id:dp5385432659757505] 
\draw  [draw opacity=0][fill={rgb, 255:red, 126; green, 211; blue, 33 }  ,fill opacity=1 ] (420.6,201.6) -- (441.1,201.6) -- (441.1,222.29) -- (420.6,222.29) -- cycle ; \draw   (420.6,201.6) -- (420.6,222.29)(440.6,201.6) -- (440.6,222.29) ; \draw   (420.6,201.6) -- (441.1,201.6)(420.6,221.6) -- (441.1,221.6) ; \draw    ;
%Shape: Grid [id:dp8585940937656609] 
\draw  [draw opacity=0][fill={rgb, 255:red, 126; green, 211; blue, 33 }  ,fill opacity=1 ] (420.6,101.6) -- (441.1,101.6) -- (441.1,122.29) -- (420.6,122.29) -- cycle ; \draw   (420.6,101.6) -- (420.6,122.29)(440.6,101.6) -- (440.6,122.29) ; \draw   (420.6,101.6) -- (441.1,101.6)(420.6,121.6) -- (441.1,121.6) ; \draw    ;
%Shape: Grid [id:dp02203623564339585] 
\draw  [draw opacity=0][fill={rgb, 255:red, 126; green, 211; blue, 33 }  ,fill opacity=1 ] (380.6,221.6) -- (401.1,221.6) -- (401.1,242.29) -- (380.6,242.29) -- cycle ; \draw   (380.6,221.6) -- (380.6,242.29)(400.6,221.6) -- (400.6,242.29) ; \draw   (380.6,221.6) -- (401.1,221.6)(380.6,241.6) -- (401.1,241.6) ; \draw    ;
%Shape: Grid [id:dp2756869313234078] 
\draw  [draw opacity=0][fill={rgb, 255:red, 74; green, 144; blue, 226 }  ,fill opacity=1 ] (300.6,141.6) -- (321.1,141.6) -- (321.1,162.29) -- (300.6,162.29) -- cycle ; \draw   (300.6,141.6) -- (300.6,162.29)(320.6,141.6) -- (320.6,162.29) ; \draw   (300.6,141.6) -- (321.1,141.6)(300.6,161.6) -- (321.1,161.6) ; \draw    ;
%Shape: Grid [id:dp6854914335039963] 
\draw  [draw opacity=0][fill={rgb, 255:red, 74; green, 144; blue, 226 }  ,fill opacity=1 ] (320.6,181.6) -- (341.1,181.6) -- (341.1,202.29) -- (320.6,202.29) -- cycle ; \draw   (320.6,181.6) -- (320.6,202.29)(340.6,181.6) -- (340.6,202.29) ; \draw   (320.6,181.6) -- (341.1,181.6)(320.6,201.6) -- (341.1,201.6) ; \draw    ;
%Shape: Grid [id:dp4625476149499763] 
\draw  [draw opacity=0][fill={rgb, 255:red, 74; green, 144; blue, 226 }  ,fill opacity=1 ] (340.6,221.6) -- (361.1,221.6) -- (361.1,242.29) -- (340.6,242.29) -- cycle ; \draw   (340.6,221.6) -- (340.6,242.29)(360.6,221.6) -- (360.6,242.29) ; \draw   (340.6,221.6) -- (361.1,221.6)(340.6,241.6) -- (361.1,241.6) ; \draw    ;
%Shape: Grid [id:dp7929021212877836] 
\draw  [draw opacity=0][fill={rgb, 255:red, 74; green, 144; blue, 226 }  ,fill opacity=1 ] (340.6,121.6) -- (361.1,121.6) -- (361.1,142.29) -- (340.6,142.29) -- cycle ; \draw   (340.6,121.6) -- (340.6,142.29)(360.6,121.6) -- (360.6,142.29) ; \draw   (340.6,121.6) -- (361.1,121.6)(340.6,141.6) -- (361.1,141.6) ; \draw    ;
%Shape: Grid [id:dp9207117963264706] 
\draw  [draw opacity=0][fill={rgb, 255:red, 74; green, 144; blue, 226 }  ,fill opacity=1 ] (360.6,161.6) -- (381.1,161.6) -- (381.1,182.29) -- (360.6,182.29) -- cycle ; \draw   (360.6,161.6) -- (360.6,182.29)(380.6,161.6) -- (380.6,182.29) ; \draw   (360.6,161.6) -- (381.1,161.6)(360.6,181.6) -- (381.1,181.6) ; \draw    ;
%Shape: Grid [id:dp2659626479113999] 
\draw  [draw opacity=0][fill={rgb, 255:red, 74; green, 144; blue, 226 }  ,fill opacity=1 ] (380.6,201.6) -- (401.1,201.6) -- (401.1,222.29) -- (380.6,222.29) -- cycle ; \draw   (380.6,201.6) -- (380.6,222.29)(400.6,201.6) -- (400.6,222.29) ; \draw   (380.6,201.6) -- (401.1,201.6)(380.6,221.6) -- (401.1,221.6) ; \draw    ;
%Shape: Grid [id:dp11176236869100897] 
\draw  [draw opacity=0][fill={rgb, 255:red, 74; green, 144; blue, 226 }  ,fill opacity=1 ] (420.6,181.6) -- (441.1,181.6) -- (441.1,202.29) -- (420.6,202.29) -- cycle ; \draw   (420.6,181.6) -- (420.6,202.29)(440.6,181.6) -- (440.6,202.29) ; \draw   (420.6,181.6) -- (441.1,181.6)(420.6,201.6) -- (441.1,201.6) ; \draw    ;
%Shape: Grid [id:dp17694645643886275] 
\draw  [draw opacity=0][fill={rgb, 255:red, 74; green, 144; blue, 226 }  ,fill opacity=1 ] (400.6,141.6) -- (421.1,141.6) -- (421.1,162.29) -- (400.6,162.29) -- cycle ; \draw   (400.6,141.6) -- (400.6,162.29)(420.6,141.6) -- (420.6,162.29) ; \draw   (400.6,141.6) -- (421.1,141.6)(400.6,161.6) -- (421.1,161.6) ; \draw    ;
%Shape: Grid [id:dp48721710472173596] 
\draw  [draw opacity=0][fill={rgb, 255:red, 74; green, 144; blue, 226 }  ,fill opacity=1 ] (440.6,121.6) -- (461.1,121.6) -- (461.1,142.29) -- (440.6,142.29) -- cycle ; \draw   (440.6,121.6) -- (440.6,142.29)(460.6,121.6) -- (460.6,142.29) ; \draw   (440.6,121.6) -- (461.1,121.6)(440.6,141.6) -- (461.1,141.6) ; \draw    ;
%Shape: Grid [id:dp9057472600420338] 
\draw  [draw opacity=0][fill={rgb, 255:red, 74; green, 144; blue, 226 }  ,fill opacity=1 ] (440.6,221.6) -- (461.1,221.6) -- (461.1,242.29) -- (440.6,242.29) -- cycle ; \draw   (440.6,221.6) -- (440.6,242.29)(460.6,221.6) -- (460.6,242.29) ; \draw   (440.6,221.6) -- (461.1,221.6)(440.6,241.6) -- (461.1,241.6) ; \draw    ;
%Shape: Grid [id:dp7967903989473857] 
\draw  [draw opacity=0][fill={rgb, 255:red, 248; green, 231; blue, 28 }  ,fill opacity=1 ] (420.6,221.6) -- (441.1,221.6) -- (441.1,242.29) -- (420.6,242.29) -- cycle ; \draw   (420.6,221.6) -- (420.6,242.29)(440.6,221.6) -- (440.6,242.29) ; \draw   (420.6,221.6) -- (441.1,221.6)(420.6,241.6) -- (441.1,241.6) ; \draw    ;
%Shape: Grid [id:dp6518949916498469] 
\draw  [draw opacity=0][fill={rgb, 255:red, 248; green, 231; blue, 28 }  ,fill opacity=1 ] (360.6,201.6) -- (381.1,201.6) -- (381.1,222.29) -- (360.6,222.29) -- cycle ; \draw   (360.6,201.6) -- (360.6,222.29)(380.6,201.6) -- (380.6,222.29) ; \draw   (360.6,201.6) -- (381.1,201.6)(360.6,221.6) -- (381.1,221.6) ; \draw    ;
%Shape: Grid [id:dp6665924525534066] 
\draw  [draw opacity=0][fill={rgb, 255:red, 248; green, 231; blue, 28 }  ,fill opacity=1 ] (320.6,221.6) -- (341.1,221.6) -- (341.1,242.29) -- (320.6,242.29) -- cycle ; \draw   (320.6,221.6) -- (320.6,242.29)(340.6,221.6) -- (340.6,242.29) ; \draw   (320.6,221.6) -- (341.1,221.6)(320.6,241.6) -- (341.1,241.6) ; \draw    ;
%Shape: Grid [id:dp8137112688423056] 
\draw  [draw opacity=0][fill={rgb, 255:red, 245; green, 166; blue, 35 }  ,fill opacity=1 ] (360.6,221.6) -- (381.1,221.6) -- (381.1,242.29) -- (360.6,242.29) -- cycle ; \draw   (360.6,221.6) -- (360.6,242.29)(380.6,221.6) -- (380.6,242.29) ; \draw   (360.6,221.6) -- (381.1,221.6)(360.6,241.6) -- (381.1,241.6) ; \draw    ;
%Shape: Grid [id:dp7760585231032169] 
\draw  [draw opacity=0][fill={rgb, 255:red, 74; green, 144; blue, 226 }  ,fill opacity=1 ] (380.6,101.6) -- (401.1,101.6) -- (401.1,122.29) -- (380.6,122.29) -- cycle ; \draw   (380.6,101.6) -- (380.6,122.29)(400.6,101.6) -- (400.6,122.29) ; \draw   (380.6,101.6) -- (401.1,101.6)(380.6,121.6) -- (401.1,121.6) ; \draw    ;
%Straight Lines [id:da7606075784395434] 
\draw    (482.5,162.5) -- (529.9,162.79) ;
\draw [shift={(531.9,162.8)}, rotate = 180.35] [color={rgb, 255:red, 0; green, 0; blue, 0 }  ][line width=0.75]    (10.93,-3.29) .. controls (6.95,-1.4) and (3.31,-0.3) .. (0,0) .. controls (3.31,0.3) and (6.95,1.4) .. (10.93,3.29)   ;
%Shape: Grid [id:dp24156912453701418] 
\draw  [draw opacity=0][fill={rgb, 255:red, 208; green, 2; blue, 27 }  ,fill opacity=1 ] (551.1,180.1) -- (571.6,180.1) -- (571.6,200.79) -- (551.1,200.79) -- cycle ; \draw   (551.1,180.1) -- (551.1,200.79)(571.1,180.1) -- (571.1,200.79) ; \draw   (551.1,180.1) -- (571.6,180.1)(551.1,200.1) -- (571.6,200.1) ; \draw    ;
%Shape: Grid [id:dp9727534150042074] 
\draw  [draw opacity=0][fill={rgb, 255:red, 245; green, 166; blue, 35 }  ,fill opacity=1 ] (551.1,160.1) -- (571.6,160.1) -- (571.6,180.79) -- (551.1,180.79) -- cycle ; \draw   (551.1,160.1) -- (551.1,180.79)(571.1,160.1) -- (571.1,180.79) ; \draw   (551.1,160.1) -- (571.6,160.1)(551.1,180.1) -- (571.6,180.1) ; \draw    ;
%Shape: Grid [id:dp14120100483156395] 
\draw  [draw opacity=0][fill={rgb, 255:red, 126; green, 211; blue, 33 }  ,fill opacity=1 ] (571.1,160.1) -- (591.6,160.1) -- (591.6,180.79) -- (571.1,180.79) -- cycle ; \draw   (571.1,160.1) -- (571.1,180.79)(591.1,160.1) -- (591.1,180.79) ; \draw   (571.1,160.1) -- (591.6,160.1)(571.1,180.1) -- (591.6,180.1) ; \draw    ;
%Shape: Grid [id:dp8516982827193933] 
\draw  [draw opacity=0][fill={rgb, 255:red, 248; green, 231; blue, 28 }  ,fill opacity=1 ] (571.1,180.1) -- (591.6,180.1) -- (591.6,200.79) -- (571.1,200.79) -- cycle ; \draw   (571.1,180.1) -- (571.1,200.79)(591.1,180.1) -- (591.1,200.79) ; \draw   (571.1,180.1) -- (591.6,180.1)(571.1,200.1) -- (591.6,200.1) ; \draw    ;
%Shape: Grid [id:dp9949711234398255] 
\draw  [draw opacity=0][fill={rgb, 255:red, 74; green, 144; blue, 226 }  ,fill opacity=1 ] (571.1,140.1) -- (591.6,140.1) -- (591.6,160.79) -- (571.1,160.79) -- cycle ; \draw   (571.1,140.1) -- (571.1,160.79)(591.1,140.1) -- (591.1,160.79) ; \draw   (571.1,140.1) -- (591.6,140.1)(571.1,160.1) -- (591.6,160.1) ; \draw    ;
%Straight Lines [id:da35934806464016544] 
\draw [color={rgb, 255:red, 208; green, 2; blue, 27 }  ,draw opacity=1 ]   (571.1,140.1) -- (591.1,140.1) ;
%Straight Lines [id:da5442334459296811] 
\draw [color={rgb, 255:red, 248; green, 231; blue, 28 }  ,draw opacity=1 ]   (551.1,160.1) -- (571.1,160.1) ;
%Straight Lines [id:da7473970774230985] 
\draw [color={rgb, 255:red, 74; green, 144; blue, 226 }  ,draw opacity=1 ]   (551.1,200.1) -- (571.1,200.1) ;
%Straight Lines [id:da5542147935802855] 
\draw [color={rgb, 255:red, 245; green, 166; blue, 35 }  ,draw opacity=1 ]   (571.1,200.1) -- (591.1,200.1) ;
%Straight Lines [id:da051447203714912515] 
\draw [color={rgb, 255:red, 245; green, 166; blue, 35 }  ,draw opacity=1 ]   (591.1,160.1) -- (591.1,140.1) ;
%Straight Lines [id:da4373206991280101] 
\draw [color={rgb, 255:red, 126; green, 211; blue, 33 }  ,draw opacity=1 ]   (551.1,200.1) -- (551.1,180.1) ;
%Straight Lines [id:da39411453139370756] 
\draw [color={rgb, 255:red, 74; green, 144; blue, 226 }  ,draw opacity=1 ]   (551.1,180.1) -- (551.1,160.1) ;
%Straight Lines [id:da86177971577502] 
\draw [color={rgb, 255:red, 248; green, 231; blue, 28 }  ,draw opacity=1 ]   (571.1,160.1) -- (571.1,140.1) ;
%Straight Lines [id:da5157669621272205] 
\draw [color={rgb, 255:red, 74; green, 144; blue, 226 }  ,draw opacity=1 ]   (591.1,200.1) -- (591.1,180.1) ;
%Straight Lines [id:da11338667895200238] 
\draw [color={rgb, 255:red, 208; green, 2; blue, 27 }  ,draw opacity=1 ]   (591.1,180.1) -- (591.1,160.1) ;

\end{tikzpicture}

\caption[Quotienting a decomposition of $\R$ to get a model of a Tate curve]{Quotienting the decomposition in Figure \ref{fig:decomp} by the lattice identifies squares of the same color. The hypercube decomposition gives a formal analytic cover, and therefore a model of $T/M$ as in Proposition \ref{prop:formal analytic T from M}.}\label{fig:TM model}
\end{figure}

\begin{example}\label{eg:tate curve decomp}
The construction of the Tate curve $\G_m/q^\Z$ with $|q|=c$ in Example \ref{eg:Tate curve} corresponds to the decomposition of $\R$ into intervals of length $c/2$. See Figure \ref{fig:Tate decomp}.

\begin{figure}

\tikzset{every picture/.style={line width=0.75pt}} %set default line width to 0.75pt        

\begin{tikzpicture}[x=0.75pt,y=0.75pt,yscale=-.9,xscale=.9]
%uncomment if require: \path (0,771); %set diagram left start at 0, and has height of 771

%Shape: Circle [id:dp8578513561986547] 
\draw  [color={rgb, 255:red, 144; green, 19; blue, 254 }  ,draw opacity=1 ][fill={rgb, 255:red, 255; green, 165; blue, 0 }  ,fill opacity=0.91 ][line width=3]  (204.2,635.17) .. controls (204.2,575.73) and (252.39,527.55) .. (311.83,527.55) .. controls (371.27,527.55) and (419.45,575.73) .. (419.45,635.17) .. controls (419.45,694.61) and (371.27,742.8) .. (311.83,742.8) .. controls (252.39,742.8) and (204.2,694.61) .. (204.2,635.17) -- cycle ;
%Shape: Circle [id:dp6480733831421175] 
\draw  [color={rgb, 255:red, 248; green, 231; blue, 28 }  ,draw opacity=1 ][fill={rgb, 255:red, 126; green, 211; blue, 33 }  ,fill opacity=0.91 ][line width=3]  (241.47,635.17) .. controls (241.47,596.31) and (272.97,564.81) .. (311.83,564.81) .. controls (350.69,564.81) and (382.19,596.31) .. (382.19,635.17) .. controls (382.19,674.03) and (350.69,705.53) .. (311.83,705.53) .. controls (272.97,705.53) and (241.47,674.03) .. (241.47,635.17) -- cycle ;
%Straight Lines [id:da9101513999922808] 
\draw    (437.5,628.69) -- (470.5,628.69) ;
\draw [shift={(473.5,628.69)}, rotate = 180] [fill={rgb, 255:red, 0; green, 0; blue, 0 }  ][line width=0.08]  [draw opacity=0] (12.5,-6.01) -- (0,0) -- (12.5,6.01) -- (8.3,0) -- cycle    ;
%Shape: Circle [id:dp620793181408541] 
\draw  [color={rgb, 255:red, 144; green, 19; blue, 254 }  ,draw opacity=1 ][fill={rgb, 255:red, 245; green, 166; blue, 35 }  ,fill opacity=1 ][line width=3]  (266.15,635.17) .. controls (266.15,609.94) and (286.6,589.49) .. (311.83,589.49) .. controls (337.06,589.49) and (357.51,609.94) .. (357.51,635.17) .. controls (357.51,660.4) and (337.06,680.85) .. (311.83,680.85) .. controls (286.6,680.85) and (266.15,660.4) .. (266.15,635.17) -- cycle ;
%Straight Lines [id:da6687784772601995] 
\draw [color={rgb, 255:red, 248; green, 231; blue, 28 }  ,draw opacity=1 ][line width=3]    (527.67,619.5) -- (601.67,690.83) ;
%Straight Lines [id:da8817976521284026] 
\draw [color={rgb, 255:red, 144; green, 19; blue, 254 }  ,draw opacity=1 ][line width=3]    (601,658.17) -- (533.67,739.5) ;
%Shape: Circle [id:dp26774931606530705] 
\draw  [color={rgb, 255:red, 0; green, 0; blue, 0 }  ,draw opacity=1 ][fill={rgb, 255:red, 126; green, 211; blue, 33 }  ,fill opacity=1 ][line width=1.5]  (585.41,668.02) .. controls (588.72,667.97) and (591.44,670.62) .. (591.49,673.93) .. controls (591.54,677.25) and (588.89,679.97) .. (585.57,680.02) .. controls (582.26,680.06) and (579.54,677.41) .. (579.49,674.1) .. controls (579.44,670.79) and (582.09,668.06) .. (585.41,668.02) -- cycle ;
%Straight Lines [id:da5522807110302754] 
\draw [color={rgb, 255:red, 248; green, 231; blue, 28 }  ,draw opacity=1 ][line width=3]    (524.33,534.33) -- (598.33,605.67) ;
%Straight Lines [id:da9297676148461578] 
\draw [color={rgb, 255:red, 144; green, 19; blue, 254 }  ,draw opacity=1 ][line width=3]    (597.67,573) -- (530.33,654.33) ;
%Shape: Circle [id:dp32592632623139095] 
\draw  [color={rgb, 255:red, 0; green, 0; blue, 0 }  ,draw opacity=1 ][fill={rgb, 255:red, 126; green, 211; blue, 33 }  ,fill opacity=1 ][line width=1.5]  (582.07,582.85) .. controls (585.39,582.8) and (588.11,585.45) .. (588.16,588.77) .. controls (588.2,592.08) and (585.55,594.8) .. (582.24,594.85) .. controls (578.93,594.9) and (576.2,592.25) .. (576.16,588.93) .. controls (576.11,585.62) and (578.76,582.9) .. (582.07,582.85) -- cycle ;
%Shape: Circle [id:dp3131920423687866] 
\draw  [color={rgb, 255:red, 0; green, 0; blue, 0 }  ,draw opacity=1 ][fill={rgb, 255:red, 245; green, 166; blue, 35 }  ,fill opacity=1 ][line width=1.5]  (544.75,631.69) .. controls (548.06,631.64) and (550.79,634.29) .. (550.83,637.6) .. controls (550.88,640.92) and (548.23,643.64) .. (544.92,643.69) .. controls (541.6,643.73) and (538.88,641.08) .. (538.83,637.77) .. controls (538.79,634.46) and (541.44,631.73) .. (544.75,631.69) -- cycle ;
%Straight Lines [id:da6157764736958204] 
\draw [color={rgb, 255:red, 144; green, 19; blue, 254 }  ,draw opacity=1 ][line width=3]    (591,491) -- (523.67,572.33) ;
%Shape: Circle [id:dp6632395285322947] 
\draw  [color={rgb, 255:red, 0; green, 0; blue, 0 }  ,draw opacity=1 ][fill={rgb, 255:red, 245; green, 166; blue, 35 }  ,fill opacity=1 ][line width=1.5]  (542.08,546.52) .. controls (545.4,546.48) and (548.12,549.12) .. (548.17,552.44) .. controls (548.21,555.75) and (545.56,558.47) .. (542.25,558.52) .. controls (538.94,558.57) and (536.21,555.92) .. (536.17,552.6) .. controls (536.12,549.29) and (538.77,546.57) .. (542.08,546.52) -- cycle ;
%Straight Lines [id:da02442470718607659] 
\draw    (141.5,628.69) -- (174.5,628.69) ;
\draw [shift={(177.5,628.69)}, rotate = 180] [fill={rgb, 255:red, 0; green, 0; blue, 0 }  ][line width=0.08]  [draw opacity=0] (12.5,-6.01) -- (0,0) -- (12.5,6.01) -- (8.3,0) -- cycle    ;
%Straight Lines [id:da12260214041817097] 
\draw [color={rgb, 255:red, 245; green, 166; blue, 35 }  ,draw opacity=1 ][line width=3]    (59.76,500.36) -- (60.5,553.35) ;
%Straight Lines [id:da8657556065587824] 
\draw [color={rgb, 255:red, 126; green, 211; blue, 33 }  ,draw opacity=1 ][line width=3]    (60.5,553.35) -- (61.24,606.35) ;
%Shape: Circle [id:dp5936965164703409] 
\draw  [color={rgb, 255:red, 0; green, 0; blue, 0 }  ,draw opacity=1 ][fill={rgb, 255:red, 144; green, 19; blue, 254 }  ,fill opacity=1 ][line width=1.5]  (59.76,500.36) .. controls (63.07,500.31) and (65.8,502.96) .. (65.84,506.27) .. controls (65.89,509.59) and (63.24,512.31) .. (59.93,512.36) .. controls (56.61,512.4) and (53.89,509.76) .. (53.84,506.44) .. controls (53.8,503.13) and (56.45,500.41) .. (59.76,500.36) -- cycle ;
%Shape: Circle [id:dp7594875594487283] 
\draw  [color={rgb, 255:red, 0; green, 0; blue, 0 }  ,draw opacity=1 ][fill={rgb, 255:red, 248; green, 231; blue, 28 }  ,fill opacity=1 ][line width=1.5]  (60.42,547.35) .. controls (63.73,547.31) and (66.45,549.96) .. (66.5,553.27) .. controls (66.55,556.58) and (63.9,559.31) .. (60.58,559.35) .. controls (57.27,559.4) and (54.55,556.75) .. (54.5,553.44) .. controls (54.45,550.12) and (57.1,547.4) .. (60.42,547.35) -- cycle ;
%Straight Lines [id:da8195225256168606] 
\draw [color={rgb, 255:red, 245; green, 166; blue, 35 }  ,draw opacity=1 ][line width=3]    (60.76,602.69) -- (61.5,655.69) ;
%Straight Lines [id:da6244453468080291] 
\draw [color={rgb, 255:red, 126; green, 211; blue, 33 }  ,draw opacity=1 ][line width=3]    (61.5,655.69) -- (62.24,708.68) ;
%Shape: Circle [id:dp3084723953792774] 
\draw  [color={rgb, 255:red, 0; green, 0; blue, 0 }  ,draw opacity=1 ][fill={rgb, 255:red, 144; green, 19; blue, 254 }  ,fill opacity=1 ][line width=1.5]  (62.07,696.68) .. controls (65.39,696.64) and (68.11,699.29) .. (68.16,702.6) .. controls (68.2,705.91) and (65.55,708.64) .. (62.24,708.68) .. controls (58.93,708.73) and (56.2,706.08) .. (56.16,702.77) .. controls (56.11,699.45) and (58.76,696.73) .. (62.07,696.68) -- cycle ;
%Shape: Circle [id:dp7140396125188191] 
\draw  [color={rgb, 255:red, 0; green, 0; blue, 0 }  ,draw opacity=1 ][fill={rgb, 255:red, 248; green, 231; blue, 28 }  ,fill opacity=1 ][line width=1.5]  (61.42,649.69) .. controls (64.73,649.64) and (67.45,652.29) .. (67.5,655.6) .. controls (67.55,658.92) and (64.9,661.64) .. (61.58,661.69) .. controls (58.27,661.73) and (55.55,659.08) .. (55.5,655.77) .. controls (55.45,652.46) and (58.1,649.73) .. (61.42,649.69) -- cycle ;
%Shape: Circle [id:dp1826246878748592] 
\draw  [color={rgb, 255:red, 0; green, 0; blue, 0 }  ,draw opacity=1 ][fill={rgb, 255:red, 144; green, 19; blue, 254 }  ,fill opacity=1 ][line width=1.5]  (60.68,596.69) .. controls (63.99,596.65) and (66.71,599.3) .. (66.76,602.61) .. controls (66.81,605.92) and (64.16,608.65) .. (60.84,608.69) .. controls (57.53,608.74) and (54.81,606.09) .. (54.76,602.78) .. controls (54.71,599.46) and (57.36,596.74) .. (60.68,596.69) -- cycle ;
%Shape: Circle [id:dp09438106886238784] 
\draw  [color={rgb, 255:red, 248; green, 231; blue, 28 }  ,draw opacity=1 ][fill={rgb, 255:red, 126; green, 211; blue, 33 }  ,fill opacity=1 ][line width=3]  (277.2,635.17) .. controls (277.2,616.05) and (292.71,600.55) .. (311.83,600.55) .. controls (330.95,600.55) and (346.45,616.05) .. (346.45,635.17) .. controls (346.45,654.29) and (330.95,669.8) .. (311.83,669.8) .. controls (292.71,669.8) and (277.2,654.29) .. (277.2,635.17) -- cycle ;
%Shape: Circle [id:dp7200719680636642] 
\draw  [color={rgb, 255:red, 144; green, 19; blue, 254 }  ,draw opacity=1 ][fill={rgb, 255:red, 255; green, 255; blue, 255 }  ,fill opacity=1 ][line width=3]  (286.77,635.17) .. controls (286.77,621.33) and (297.99,610.11) .. (311.83,610.11) .. controls (325.67,610.11) and (336.89,621.33) .. (336.89,635.17) .. controls (336.89,649.01) and (325.67,660.23) .. (311.83,660.23) .. controls (297.99,660.23) and (286.77,649.01) .. (286.77,635.17) -- cycle ;

% Text Node
\draw (68.68,447.85) node [anchor=north west][inner sep=0.75pt]  [font=\LARGE,rotate=-89.41]  {$\cdots $};
% Text Node
\draw (70.68,718.51) node [anchor=north west][inner sep=0.75pt]  [font=\LARGE,rotate=-89.41]  {$\cdots $};
% Text Node
\draw (318.35,447.85) node [anchor=north west][inner sep=0.75pt]  [font=\LARGE,rotate=-89.41]  {$\cdots $};
% Text Node
\draw (318.68,612.85) node [anchor=north west][inner sep=0.75pt]  [font=\small,rotate=-89.41]  {$\cdots $};

% Text Node
\draw (582.68,448.51) node [anchor=north west][inner sep=0.75pt]  [font=\LARGE,rotate=-89.41]  {$\cdots $};
% Text Node
\draw (584.01,713.85) node [anchor=north west][inner sep=0.75pt]  [font=\LARGE,rotate=-89.41]  {$\cdots $};

\end{tikzpicture}

\caption[A model of $\G_m$ coming from a decomposition of $\R$]{Left: the decomposition of $\R$ corresponding to a Tate curve. Center: the corresponding formal affinoid cover of $\G_m$. Right: the special fiber of the corresponding formal model.}\label{fig:Tate decomp}
\end{figure}
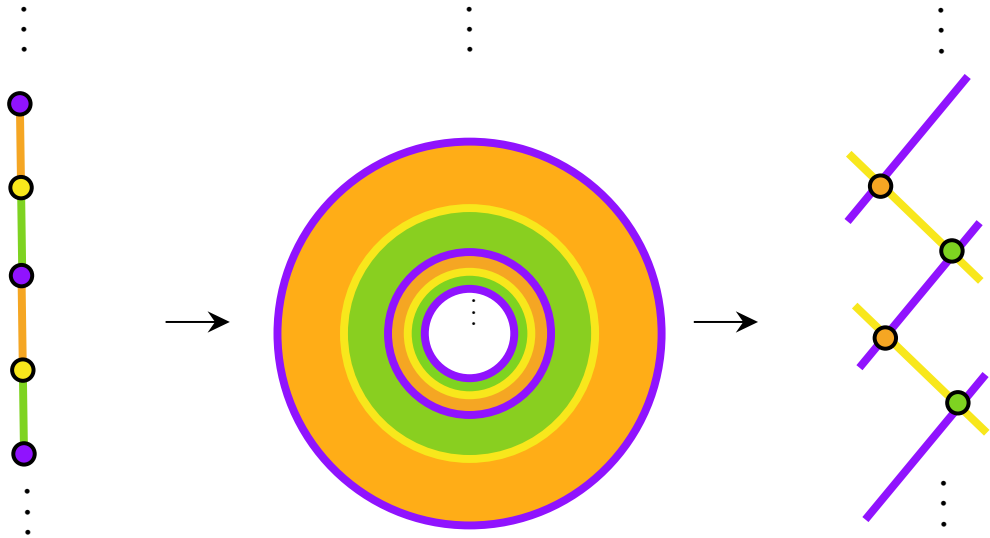
\end{example}

These models are compatible with the multiplication by $p$ map in the following sense:

\begin{proposition}\label{model for [p] on T}
The morphism $[p]:T\rightarrow T$ extends to a morphism $[\mathfrak p]_n:\mathfrak T_{\alpha/p}\rightarrow \mathfrak T_{\alpha}$ of formal models. This morphism reduces mod $\varpi$ to relative Frobenius: there is a canonical isomorphism $\widetilde{\mathfrak T}_{\alpha}\cong \widetilde{\mathfrak T}_{\alpha/p}^{(p)}$, and under this identification, the reduction of $[\mathfrak p]_n$ is relative Frobenius.
\end{proposition}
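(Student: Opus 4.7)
First I would observe that $[p]: T \to T$ is given on $K$-points by $(x_1, \ldots, x_r) \mapsto (x_1^p, \ldots, x_r^p)$, which under $\ell$ corresponds to multiplication by $p$ on $\R^r$. This sends the hypercube $\mathbf{e}\Delta_{\alpha/p}$ bijectively onto $\mathbf{e}\Delta_\alpha$ for each $\mathbf{e} \in \Z^r$, so $[p]$ restricts to a map $U_{\mathbf{e}\Delta_{\alpha/p}} \to U_{\mathbf{e}\Delta_\alpha}$ on the poly-annular opens of Definition~\ref{def:hypercube models}. Proposition~\ref{prop:fas to formal scheme}(3) then immediately yields the desired extension $[\fp]_n: \fT_{\alpha/p} \to \fT_\alpha$.

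Next I would compute $[\fp]_n$ explicitly on charts. Choose $y, y' \in K^\times$ with $|y|=c$ and $|y'|=c^{1/p}$, and after extending $K$ if necessary arrange $y'^p = y$. On the $\mathbf{e}$-th chart of $\fT_\alpha$, use the coordinates $u_i = T_i/y^{e_i}$, $v_i = y^{e_i+1}/T_i$ (satisfying $u_i v_i = y$); on the corresponding chart of $\fT_{\alpha/p}$, use $u_i' = T_i/y'^{e_i}$, $v_i' = y'^{e_i+1}/T_i$ (satisfying $u_i' v_i' = y'$). A direct substitution using $T_i \mapsto T_i^p$ and the relation $y = y'^p$ gives $[\fp]_n^*(u_i) = u_i'^p$ and $[\fp]_n^*(v_i) = v_i'^p$, confirming compatibility with the defining relations on each chart.

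Finally, both $\widetilde{\fT}_\alpha$ and $\widetilde{\fT}_{\alpha/p}$ are obtained by gluing $\mathbf{e}$-indexed copies of $\Spec((K^\circ/\varpi)[u_i, v_i]_i/(u_i v_i)_i)$ via reciprocal monomial relations (e.g.\ $u_{i,\mathbf{e}+\mathbf{f}_j} = v_{i,\mathbf{e}}^{-1}$), all with coefficients in $\F_p$. The chart-by-chart identification sending $u_i \mapsto u_i'$, $v_i \mapsto v_i'$ therefore descends to a canonical isomorphism $\widetilde{\fT}_\alpha \cong \widetilde{\fT}_{\alpha/p}^{(p)}$, and under this identification the reduction of $[\fp]_n$ sends each generator of the target to the $p$-th power of the corresponding generator of the source, which is precisely $F_{\widetilde{\fT}_{\alpha/p}/(K^\circ/\varpi)}$. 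The main obstacle is the bookkeeping in the explicit computation: without the normalization $y'^p = y$, unit scalars of the form $(y'^p/y)^{e_i}$ appear in $[\fp]_n^*$ and would need to be absorbed into the isomorphism of special fibers; fixing $y'$ as a $p$-th root of $y$ sidesteps this complication cleanly.
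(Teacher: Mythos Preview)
Your proof is correct and follows the same approach as the paper, which simply cites \cite[Proposition 6.4a]{Gub} for the fact that $[p]$ extends and is locally given by $x_i\mapsto x_i^p$; you have supplied the explicit chart computation that the paper outsources. One small imprecision: the special-fiber charts are $\Spec\bigl((K^\circ/\varpi)[u_i,v_i]/(u_iv_i-\bar y)\bigr)$ rather than $\Spec\bigl((K^\circ/\varpi)[u_i,v_i]/(u_iv_i)\bigr)$ unless $|y|\le|\varpi|$, but since you arranged $y'^p=y$ the Frobenius twist sends $\bar y'\mapsto \bar y'^p=\bar y$, so your identification $\widetilde{\fT}_\alpha\cong\widetilde{\fT}_{\alpha/p}^{(p)}$ and the relative-Frobenius conclusion remain valid verbatim. (Also, no extension of $K$ is needed: since $K$ is perfectoid you may choose $y$ in the image of $\sharp:K^\flat\to K$, which automatically has compatible $p$-power roots.)
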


\begin{proof}
This follows from \cite[Proposition 6.4a]{Gub}, which shows that $[p]$ extends to $[\mathfrak p]_n$ and describes the morphism locally on hypercubes as the morphism $U_{\Delta_{\alpha/p}}\rightarrow U_{\Delta_\alpha}$ sending $x_i\mapsto x_i^p$.
\end{proof}

\begin{corollary}\label{cor:pF for T}
The data $\{\fT_{\alpha/p^n}\}$ is an $F$-tower for $T$, so it gives rise to a perfectoid space $T_\infty\sim\varprojlim_{[p]}T$.
\end{corollary}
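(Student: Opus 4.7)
The plan is simply to verify the three conditions of Definition \ref{def:pF towers} for the inverse system $\{\fT_{\alpha/p^n}\}_{n \geq 0}$ equipped with the transition morphisms $[\fp]_n \colon \fT_{\alpha/p^{n+1}} \to \fT_{\alpha/p^n}$ provided by Proposition \ref{model for [p] on T}, and then invoke Proposition \ref{prop:pF tower} to produce the perfectoid tilde-limit.

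First I would address the generic fiber condition: by Definition \ref{def:hypercube models}, the formal analytic cover built from the polyannuli $U_{{\bf e}\Delta_\alpha}$ has generic fiber exactly $T \cong (\G_{m,K}^{\mathrm{an}})^r$, and the same holds after replacing $\alpha$ by $\alpha/p^n$; reindexing so that $\fT_\alpha$ plays the role of $\fT_1$ gives condition (1). Second, flatness over $K^\circ$: each $\fT_{\alpha/p^n}$ is glued from formal affines $\Spf(R_i^\circ)$ where $R_i$ is a product of one-variable Tate algebras for annuli, and the power-bounded subrings of such Tate algebras are admissible in the sense of Definition \ref{def:formal scheme}, so in particular have no $K^\circ$-torsion; this yields condition (2).

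Third, and most substantive, is condition (3): that the transition maps are affine and reduce to relative Frobenius mod $\varpi$. Both assertions are exactly the content of Proposition \ref{model for [p] on T}. Affineness is visible locally on hypercubes, where $[\fp]_n$ is given by the finite morphism of Tate algebras sending each coordinate $x_i$ to $x_i^p$; and the same local description identifies the mod $\varpi$ reduction with the relative Frobenius $\widetilde F_n \colon \widetilde{\fT}_{\alpha/p^{n+1}} \to \widetilde{\fT}_{\alpha/p^{n+1}}^{(p)} \cong \widetilde{\fT}_{\alpha/p^n}$ under the canonical identification supplied by the proposition.

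With all three axioms verified, Proposition \ref{prop:pF tower} immediately produces a unique perfectoid space $T_\infty = (\varprojlim_n \fT_{\alpha/p^n})_\eta$ with $T_\infty \sim \varprojlim_{[p]} T$. There is essentially no obstacle here: all of the geometric work was carried out in Proposition \ref{model for [p] on T} (extending $[p]$ to the chosen formal models and identifying the reduction with relative Frobenius), and the present corollary is a bookkeeping consequence of the $F$-tower formalism. If one wanted a concrete check of perfectoidness at the top, one would observe that locally on a polyannulus the direct limit of the algebras $K^\circ\langle x_1^{\pm 1},\dots,x_r^{\pm 1}\rangle$ along $x_i \mapsto x_i^p$ is (up to suitable rescaling of radii) the perfectoid Tate algebra of Example \ref{eg:perfectoid tate algebra}, which matches the general surjectivity-of-Frobenius argument inside the proof of Proposition \ref{prop:pF tower}.
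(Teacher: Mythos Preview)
Your proposal is correct and follows exactly the approach the paper intends: the corollary is stated without proof precisely because the three $F$-tower axioms follow immediately from the construction of $\fT_{\alpha/p^n}$ and from Proposition \ref{model for [p] on T}, after which Proposition \ref{prop:pF tower} applies. Your write-up simply makes explicit the verification that the paper leaves to the reader.
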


We note that all of the models in this tower are already defined over $K^\circ$ (that is, with no field extension needed) as the perfectoidness of $K$ implies that $|K^\times|$ is $p$-divisible. The generic fiber of the formal torus $\overline T_\eta$ described in Notation \ref{not:tori} corresponds to the multi-annulus $U_{\{0\}}\subset T$. The translation action of $\overline T$ on $T$ therefore induces translation by 0 on $\R^r$, so it fixes any hypercube decomposition. This gives:

\begin{proposition}\label{action of formal torus}
For any $n\geq 1$, the action $\overline T_\eta\times T\rightarrow T$ extends to a morphism of formal models $\overline T\times \mathfrak T_{\alpha/p^n}\rightarrow \mathfrak T_{\alpha/p^n}$. This action is compatible with the models for $[p]$ from Proposition \ref{model for [p] on T} in that the following diagram commutes:

\begin{center}
		\begin{tikzcd}
			\overline{T}\times \mathfrak T_{\alpha/p^{n+1}} \arrow[d, "{[p]\times [\mathfrak p]_n}"'] \arrow[r] & \mathfrak T_{\alpha/p^{n+1}} \arrow[d, "{[\mathfrak p]_n}"] \\
			\overline{T}\times \mathfrak T_{\alpha/p^n} \arrow[r] & \mathfrak T_{\alpha/p^n}.
		\end{tikzcd}
		\end{center}
\end{proposition}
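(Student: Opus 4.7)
My plan is to work locally using the formal analytic cover from Definition \ref{def:hypercube models} and invoke Proposition \ref{prop:fas to formal scheme}(3). The key observation is that $(\overline T)_\eta \subset T$ is exactly the unit poly-annulus $\An(1,1)^r$, which under $\ell$ maps to the origin $0\in \R^r$. Translation by an element of $(\overline T)_\eta$ therefore acts trivially on $\R^r$ and so fixes every hypercube $\Delta$ in the decomposition defining $\fT_{\alpha/p^n}$.

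To make this precise, I would cover $\overline T \times \fT_{\alpha/p^n}$ by the formal affine opens $\overline T \times U_\Delta$, where $U_\Delta = \Spf(R_\Delta^+)$ ranges over the formal affine opens of $\fT_{\alpha/p^n}$ indexed by hypercubes $\Delta = {\bf e}\Delta_{\alpha/p^n}$, and cover the target $\fT_{\alpha/p^n}$ by the same $U_\Delta$'s. On generic fibers, the action is given by $(t_1,\dots,t_r,x_1,\dots,x_r) \mapsto (t_1 x_1,\dots,t_r x_r)$. Since $|t_i|=1$ for every point of $\overline T_\eta$, we have $|t_i x_i| = |x_i|$, so for each $\Delta$ the action restricts to a morphism $\overline T_\eta\times (U_\Delta)_\eta \to (U_\Delta)_\eta$; the analogous inclusion on intersections $U_\Delta \cap U_{\Delta'}$ holds by the same absolute-value computation. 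Proposition \ref{prop:fas to formal scheme}(3) then produces the desired extension $\overline T \times \fT_{\alpha/p^n} \to \fT_{\alpha/p^n}$.

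For commutativity of the diagram, both $\overline T\times \fT_{\alpha/p^{n+1}}$ and $\fT_{\alpha/p^n}$ are flat over $K^\circ$, so their structure sheaves inject into the corresponding sheaves on the adic generic fibers; it therefore suffices to check that the diagram commutes after passing to generic fibers. There both compositions send $(t,x) \mapsto t^p x^p$ by the homomorphism property of $[p]:T\to T$ combined with the description of $[\mathfrak p]_n$ on generic fibers from Proposition \ref{model for [p] on T}. There is no serious obstacle in this argument: the one point to notice is that because $\overline T$ acts through elements of absolute value $1$, the action strictly preserves each poly-annulus in the formal cover, with no refinement of the hypercube decomposition required.
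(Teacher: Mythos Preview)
Your proof is correct and follows essentially the same approach as the paper: the key observation that $(\overline T)_\eta$ maps to $0\in\R^r$ under $\ell$, so the action preserves each hypercube in the cover, is exactly what the paper records in the paragraph immediately preceding the proposition. Your commutativity argument, reducing to the generic fiber via flatness of the source, is a cleaner and more explicit version of the paper's terse ``on the generic fiber the diagram commutes by definition'' (the paper additionally remarks on the special fiber, but your flatness argument already suffices).
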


\begin{proof}
On the generic fiber, all of the maps are well defined and the diagram commutes by definition. On the special fiber, the action of $\overline{T}$ is trivial, so the diagram commutes by functoriality.
\end{proof}

%%%
%%%
%%%

\subsection{Formal models of abeloids}\label{sub:abeloid models}

Let $A$ be an abeloid variety over $K$. By Theorem \ref{thm:Raynaud abeloid}, after a suitable finite separable extension of $K$, there is a Raynaud extension 
	\begin{equation}\label{Raynaud extension2}
		1\rightarrow T\rightarrow E\xrightarrow{\pi} B\rightarrow 1
	\end{equation}
and a full rank lattice $M\subset E(K)$ such that $A\cong E/M$. By Proposition \ref{prop:Galois descent}, if we construct a perfectoid tilde-limit for $A$ after a finite Galois extension of $K$, then we can descend to get the desired perfectoid tilde-limit over $K$. We can therefore assume that our Raynaud extension and lattice are already defined over $K$.

 By Definition \ref{def:rigid lattice}, the image of $M$ under the log map is a lattice $\Lambda$ in $\R^r$. In this section, we use the formal models of $T$ constructed in Section \ref{sub:torus models} to construct formal models of $E$ that retain an action of $M$. Quotienting out by this action, we obtain formal models of $A$, which we use to construct an $F$-tower for $A$.

Recall from Section \ref{sub:Raynaud extensions} that $E$ was constructed as a pushout $T\times^{\overline{T}_\eta}\overline{E}_\eta$. We construct models for $E$ as pushouts of our models for $T$.

\begin{proposition}\label{prop:formal model for E}
Let $\alpha$ be rational number dividing $\Lambda$ as in Definition \ref{def:lattice division}. For any integer $n>1$, let $\mathfrak T_{\alpha/p^n}$ be the corresponding model for $T$ from Definition \ref{def:hypercube models}. 
	\begin{enumerate}
	\item\label{fm1} The formal scheme $\mathfrak E_{\alpha/p^n}:= \mathfrak T_{\alpha/p^n}\times^{\overline T} \overline E$ is a flat formal model of the rigid space $E$.
	\item\label{fm2} There is a morphism $\fEn\rightarrow \overline B$ which is a fiber bundle and a formal model of $E\rightarrow B$.
	\item\label{fm3} For any integer $n>1$, there is an affine morphism $[\fp]_n:\mathfrak E_{\alpha/p^{n+1}}\rightarrow \fEn$ which is a formal model of $[p]:E\rightarrow E$ and with reduction mod $\varpi$ factoring through relative Frobenius. 
	\end{enumerate}
	
We therefore obtain an $F$-tower $\{\mathfrak E_{\alpha/p^n}\}$ for $E$, giving us a perfectoid space $E_\infty\sim\varprojlim_{[p]} E.$
\end{proposition}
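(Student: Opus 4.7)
The plan is to build each $\fEn$ as a pushout mirroring the generic-fibre construction in Section \ref{sub:Raynaud extensions}, then check that the morphisms $[\fp]_n$ are forced on us by the equivariance supplied by Proposition \ref{action of formal torus} together with multiplication-by-$p$ on $\overline E$.

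\textbf{Construction of $\fEn$ and part (\ref{fm1})--(\ref{fm2}).} Let $\{\overline V_i\}$ be the affine open cover of $\overline B$ from Section \ref{sub:Raynaud extensions} over which $\overline\pi:\overline E\to\overline B$ admits sections, and let $\overline\psi_{ij}:\overline V_i\cap\overline V_j\to\overline T$ be the resulting gluing cocycle. Because $\alpha$ divides $\Lambda$, Proposition \ref{action of formal torus} gives a translation action $\overline T\times\fTn\to\fTn$ lifting the generic-fibre action $\overline T_\eta\times T\to T$. I then define $\fEn$ as the formal scheme obtained by gluing the flat formal $K^\circ$-schemes $\fTn\times\overline V_i$ along the isomorphisms of $\fTn\times(\overline V_i\cap\overline V_j)$ given by acting by $\overline\psi_{ij}$ on the $\fTn$-factor. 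This is a well-defined gluing because the cocycle condition holds on $\overline B$ and the $\overline T$-action on $\fTn$ is a group action. Flatness is local and each chart is flat over $K^\circ$; the projections $\fTn\times\overline V_i\to\overline V_i$ glue to the advertised $\fTn$-fibre bundle map $\fEn\to\overline B$; and passing to the adic generic fibre recovers precisely Raynaud's pushout $T\times^{\overline T_\eta}\overline E_\eta=E$ from diagram (\ref{eq:Raynaud diagram}), since the gluings $\overline\psi_{ij}$ extend the generic ones.

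\textbf{Construction of $[\fp]_n$ and part (\ref{fm3}).} On generic fibres, $[p]:E\to E$ respects the Raynaud extension: it is covered by $[p]$ on the $\overline B$-factor and by $[p]$ on the $T$-factor, the latter modelled at the formal level by $[\fp]_n:\mathfrak T_{\alpha/p^{n+1}}\to\fTn$ from Proposition \ref{model for [p] on T}. The key compatibility is the commutative square of Proposition \ref{action of formal torus}, which says exactly that the pair $([\fp]_n,[p]_{\overline T})$ intertwines the $\overline T$-actions on source and target. I would therefore define $[\fp]_n:\mathfrak E_{\alpha/p^{n+1}}\to\fEn$ locally by $[\fp]_n\times[p]_{\overline B}:\mathfrak T_{\alpha/p^{n+1}}\times\overline V_i\to\fTn\times(\overline V_i')$, for a suitable common refinement $\{\overline V_i'\}$ of $\{\overline V_i\}$ and $\{[p]^{-1}(\overline V_j)\}$; the said intertwining, together with $[p]_{\overline B}$ respecting the cocycle $\overline\psi_{ij}$ (because $[p]:\overline E\to\overline E$ is a group homomorphism over $[p]:\overline B\to\overline B$), glues these local maps. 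The morphism is affine since both factors are affine, and its generic fibre is $[p]:E\to E$ by construction.

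\textbf{Reduction mod $\varpi$ and conclusion.} On the special fibre, $\widetilde{[\fp]}_n$ on the $\fTn$-factor is relative Frobenius by Proposition \ref{model for [p] on T}; on the $\overline B$-factor, $\widetilde{[p]}:\widetilde{\overline B}\to\widetilde{\overline B}$ factors through relative Frobenius because $\widetilde{\overline B}$ is an abelian scheme in characteristic $p$ (so $[p]=V\circ F$). Since relative Frobenius on $\fTn\times\overline V_i$ is compatible with the gluing (Frobenius commutes with every morphism in characteristic $p$), these local factorizations assemble into a factorization of $\widetilde{[\fp]}_n:\widetilde{\mathfrak E}_{\alpha/p^{n+1}}\to\widetilde{\fEn}$ through $F_{\widetilde{\mathfrak E}_{\alpha/p^{n+1}}/(K^\circ/\varpi)}$. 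Hence $\{\fEn\}$ satisfies the three axioms of Definition \ref{def:pF towers}, and Proposition \ref{prop:pF tower} produces the perfectoid tilde-limit $E_\infty\sim\varprojlim_{[p]}E$.

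\textbf{Main obstacle.} The bookkeeping-heavy step is the last gluing check in the construction of $[\fp]_n$: one must verify that the pair $([\fp]_n,[p]_{\overline B})$ not only intertwines the $\overline T$-actions on individual charts, but does so compatibly with the refinement of the cover needed because $[p]:\overline B\to\overline B$ need not preserve $\{\overline V_i\}$. Once one unwinds that the cocycle of $\overline E$ is sent by $[p]$ to its $p$-th power (as a map into $\overline T$) and that $\overline T$ acts on $\fTn$ so that $[p]_{\overline T}$ on $\overline T$ is intertwined by $[\fp]_n$ on $\fTn$, the gluing collapses, but this is the only place where the various compatibilities genuinely interact.
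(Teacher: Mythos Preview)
Your proposal is correct and follows exactly the approach the paper sketches: the paper's own proof of this proposition is a two-sentence pointer to Propositions \ref{model for [p] on T} and \ref{action of formal torus} combined with ``standard results on fiber bundles,'' deferring the explicit verification to Proposition \ref{prop:formal covers for E}. You have essentially written out that fiber-bundle argument in detail.

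The one place where your write-up is slightly looser than the paper's explicit version is the cover-refinement step you flag as the ``main obstacle.'' In Proposition \ref{prop:formal covers for E} the paper handles this by choosing, from the outset, covers $\{V_{j,n}\}$ of $B$ at level $n$ to be the $[p^{n-1}]$-preimages of a fixed cover $\{V_{j,1}\}$, and invokes Lemma \ref{lem:splitting pullback} to guarantee that $\pi$ still splits over each $V_{j,n}$. Your refinement $\{\overline V_i'\}=\{\overline V_i\cap[p]^{-1}(\overline V_j)\}$ works equally well (splitting over a smaller affine open is automatic once you have it over the larger one), so this is only a cosmetic difference.
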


\begin{proof}
This follows by combining Propositions \ref{model for [p] on T} and \ref{action of formal torus} with standard results on fiber bundles, transferred over to to the category of adic spaces. We don't go into detail here as everything is about to be proven more explicitly in Proposition \ref{prop:formal covers for E}, but we want to point out that this result can be obtained by more abstract means.

%As in \cite{AWS}, these statements follow from the basic results on fiber bundles of rigid spaces and formal schemes described in \cite[Appendix A]{AWS}. Parts (\ref{fm1}) and (\ref{fm2}) follow as in \cite[Lemma 4.8]{AWS}, replacing the models $\mathfrak T_{q}$ by $\fTn$. Part (\ref{fm3}) follows as in \cite[Lemma 4.9, Theorem 4.11]{AWS}.  {\color{red} write this out}
\end{proof}

The models $\fEn$ come with an action of $M$, giving us models of $A$ that lead to an $F$-tower for $A$. To see this, we explicitly construct formal analytic covers for $\fEn$ and apply Proposition \ref{prop:fas to formal scheme} to show that these covers lead to the desired models. The precise requirements for our formal analytic covers are summarized in the following:

\begin{proposition}\label{prop:formal covers for E}
For every integer $n\geq 1$, these exists a formal analytic cover (in the sense of Definition \ref{def:formal analytic cover}) $\mathcal U_n=\{U_{i,n}\}_{i\in I_n}$ of $E$ such that:
	\begin{enumerate}
	\item\label{fc1} The formal model of $E$ corresponding to $\mathcal U_n$ is $\fEn$.
	\item\label{fc2} For all $m\in M$ and $U_{i,n}$ in $\mathcal U_n$, there is some $U_{j,n}$ in $\mathcal U_n$ such that the translation map $\tau_m:E\rightarrow E$ restricts to an isomorphism $U_{i,n}\rightarrow U_{j,n}$. If $m$ is not the identity, then $U_{i,n}\cap U_{j,n}=\emptyset$.
	\item\label{fc3} For all $n\geq 1$ and $U_{i,n+1}$ in $\mathcal U_{n+1}$, there is some $U_{j, n}$ in $\mathcal U_{n}$ such that the map $[p]:E\rightarrow E$ restricts to a morphism $U_{i,n+1}\rightarrow U_{j,n}$. Furthermore, the mod $\varpi$ reduction of this morphism factors through relative Frobenius.
	\end{enumerate}
\end{proposition}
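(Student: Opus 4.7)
The plan is to build $\mathcal{U}_n$ by taking products of the poly-annulus cover of $T$ at scale $\alpha/p^n$ with a local trivialization of $\overline{E}\to\overline{B}$. Concretely, fix a formal affine open cover $\{\overline V_k\}$ of $\overline B$ over which the torsor $\overline\pi:\overline E\to\overline B$ trivializes, with transition cocycle $\overline\psi_{kk'}:\overline V_k\cap \overline V_{k'}\to \overline T$ as recalled in Section \ref{sub:Raynaud extensions}. On the generic fiber we have $\pi^{-1}(V_k)\cong T\times V_k$ where $V_k=(\overline V_k)_\eta$, and we set
\[
U_{\mathbf{e},k,n}:=U_{\mathbf{e}\Delta_{\alpha/p^n}}\times V_k\subset T\times V_k=\pi^{-1}(V_k),\qquad \mathbf{e}\in\Z^r,
\]
where $U_{\mathbf{e}\Delta_{\alpha/p^n}}$ is the poly-annulus from Definition \ref{def:hypercube models}. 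Declare $\mathcal U_n$ to be the collection of all $U_{\mathbf{e},k,n}$. On overlaps $V_k\cap V_{k'}$ the two descriptions are identified by translation by $\overline\psi_{kk'}$ on the first factor; since $\ell(\overline T_\eta)=\{0\}\subset\R^r$, this translation preserves each poly-annulus $U_{\mathbf{e}\Delta_{\alpha/p^n}}$ setwise (and in fact it extends to the corresponding formal model by Proposition \ref{action of formal torus}), so the local pieces glue consistently.

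For claim \ref{fc1}, the formal model of $U_{\mathbf{e},k,n}$ produced by Proposition \ref{prop:fas to formal scheme} is the product of the formal poly-annulus piece of $\fT_{\alpha/p^n}$ with the affine piece $\Spf\mathcal O(\overline V_k)$ of $\overline E$, and these pieces are glued via the very cocycle $\overline\psi_{kk'}$ that defines $\overline E$ as a $\overline T$-torsor over $\overline B$. This agrees tautologically with the pushout description $\fEn=\fT_{\alpha/p^n}\times^{\overline T}\overline E$ of Proposition \ref{prop:formal model for E}(\ref{fm1}). The remaining conditions of Definition \ref{def:formal analytic cover} reduce to the analogous statements on the torus factor: intersections of neighboring poly-annuli are preimages of Zariski opens in the reduction, as in Example \ref{eg:formal A1} and Example \ref{eg:formal Gm}, and this is preserved under the base change along $V_k\to\Sp K$.

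For \ref{fc2}, the translation $\tau_m:E\to E$ by $m\in M$ descends to translation by $\pi(m)\in B$ on the base (which permutes, possibly after refining, the opens $V_k$) and acts on the torus fiber by a translation with log equal to $\ell(m)\in\Lambda$. By the hypothesis that $\alpha$ divides $\Lambda$ we have $\Lambda\subset\alpha\Z^r\subset(\alpha/p^n)\Z^r$, so $+\ell(m)$ permutes the hypercubes at scale $\alpha/p^n$; hence $\tau_m$ carries each $U_{\mathbf{e},k,n}$ isomorphically onto another member of $\mathcal U_n$. Disjointness for nontrivial $m$ is immediate from injectivity of $\ell|_M$, which is part of Definition \ref{def:rigid lattice}.

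For \ref{fc3}, the map $[p]$ on $T$ satisfies $\ell\circ [p]=p\cdot\ell$, so it sends $U_{\mathbf{e}\Delta_{\alpha/p^{n+1}}}$ onto $U_{\mathbf{e}\Delta_{\alpha/p^n}}$ and acts by the $p$-th power on coordinates, reducing to relative Frobenius mod $\varpi$ by Proposition \ref{model for [p] on T}. On the base, $[p]_{\overline B}$ is multiplication by $p$ on a formal abelian scheme, hence factors through relative Frobenius mod $\varpi$ as in Example \ref{eg:good reduction}. Combining these on each product piece $U_{\mathbf{e},k,n+1}\to U_{p\mathbf{e},k',n}$ gives the desired factorization through relative Frobenius of $\widetilde{\fEn}$. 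The main obstacle I anticipate is the cocycle bookkeeping: one must verify that the trivializing data $\{\overline V_k,\overline\psi_{kk'}\}$ can be chosen compatibly as $n$ varies and as $m$ ranges over $M$, so that $[\fp]_n$ and the $M$-action are defined at the level of the cover $\mathcal U_n$ rather than only after further refinement. Once this is established, the triviality of the $\overline T_\eta$-action on hypercube decompositions collapses each verification to the already-known facts for the torus factor (Proposition \ref{model for [p] on T}) and for the formal abelian base (Example \ref{eg:good reduction}).
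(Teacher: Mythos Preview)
Your overall architecture is right, and parts (\ref{fc1}) and (\ref{fc2}) go through essentially as you say. The real gap is in (\ref{fc3}). You propose to use a \emph{fixed} cover $\{V_k\}$ of $B$ at every level $n$, and then claim that $[p]:E\to E$ sends $U_{\mathbf e,k,n+1}$ into some $U_{p\mathbf e,k',n}$. On the torus factor this is fine, but on the base it requires $[p](V_k)\subset V_{k'}$ for some $k'$, and this is simply false for a generic affine cover of $B$: multiplication by $p$ is a finite surjection, so $[p](V_k)$ will typically spill across several members of the cover. Your closing paragraph senses trouble here but misdiagnoses it as ``cocycle bookkeeping''; it is not a gluing issue but a genuine incompatibility between a fixed base cover and the tower of maps $[p]$.

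The paper's fix is to let the base cover vary with $n$: start with an $M$-equivariant cover $\{V_{j,1}\}$ of $B$ over which $\pi$ splits, and for $n\ge 1$ set $V_{j,n}:=[p^n]^{-1}(V_{j,1})$. Then $[p](V_{j,n+1})\subset V_{j,n}$ is automatic, and the opens $U_{\mathbf e\Delta_{\alpha/p^n}}\times V_{j,n}$ behave exactly as you want under $[p]$. The nontrivial point this introduces is that one must check $\pi$ still splits over each $V_{j,n}$; this is the content of Lemma \ref{lem:splitting pullback} (proved in \cite[Proposition 3.8]{AWS}), which says that if $\pi$ splits over $V$ then it splits over $[p]^{-1}(V)$. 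Once you make this change, your reductions to Proposition \ref{model for [p] on T} for the torus factor and Example \ref{eg:good reduction} for the abelian base go through unchanged. For (\ref{fc2}) you should also arrange the initial cover $\{V_{j,1}\}$ to be permuted by translation by $\pi(m)$ for $m\in M$ from the outset, rather than refining after the fact; this is easy since $M$ is finitely generated.
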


\begin{proof}
We first construct the cover $\mathcal U_1$. The map $\pi:E\rightarrow B$ sends each $m\in M$ to a point $b_m\in B(K)$ which extends to a point $\overline{b}_m\in\overline{B}(K^\circ)$. As $\overline B$ is a formal abelian scheme, we get an isomorphism $\times\overline{b}_m:\overline B\rightarrow \overline B$. Choose a formal open cover $\{\overline V_{j,1}\}$ of $\overline B$ such that for any $m\in M$, the morphism $\times\overline{b}_m$ permutes the $\overline V_{j,1}$. Note that it is enough to choose generators $m_1,\dots,m_r$ of $M$ and ensure that the corresponding morphisms permute the $\overline V_{j,1}$. Taking the generic fiber, we get an affinoid cover of $B$ which is permuted by multiplication by $b_m$ for any $m\in M$. Refining the cover if needed, we may assume that $\pi$ splits over this cover, so $E$ is locally of the form $T\times V_{j,1}$. We can now define the cover $\mathcal U_1$ of $E$ to contain all opens of the form $U_{{\bf e}\Delta_\alpha}\times V_{j,1}$ for $U_{{\bf e}\Delta_\alpha}$ as in Definition \ref{def:hypercube models} and $V_{j,1}$ in our cover of $B$.

Now to construct $\mathcal U_n$, we first cover $B$ by the preimages $V_{j,n}$ of the affinoid opens $V_{j,1}$ under the morphism $[p^n]:B\rightarrow B$. This is again a cover of $B$ by affinoid opens permuted by morphisms $\tau_{b_m}$. By iterating the following lemma, we see that $\pi$ again splits over each $V_{j,n}$.

\begin{lemma}\label{lem:splitting pullback}
Let $\{ V_j\}$ be a formal analytic cover of the abelian variety $B$ over which the map $\pi: E\rightarrow B$ splits. Then $\pi$ also splits over the cover $\{[p]^{-1}(V_j)\}$. 
\end{lemma}

\begin{proof}
See the second half of the proof of \cite[Proposition 3.8]{AWS}.
\end{proof}

We now define the cover $\mathcal U_n$ of $E$ to contain all opens of the form $U_{{\bf e}\Delta_\alpha/p^n}\times V_{j,n}$. Note that the $\mathcal U_n$ are all formal analytic covers as each open is the product of opens in formal analytic covers for $T$ and $B$. Part (\ref{fc1}) follows as the cover $\mathcal U_n$ was constructed so that the corresponding affine formal schemes are subschemes of $\fEn$. 

For part (\ref{fc2}), we see that $\tau_m: E\rightarrow E$ restricts to an isomorphism \[\tau_m:U_{{\bf e}\Delta_\alpha/p^n}\times V_{j,n}\xrightarrow{\simeq} U_{\ell(m)+{\bf e}\Delta_\alpha/p^n}\tau_{b_m}V_{j,n},\] and the target is again an open in $\mathcal U_n$ as $p^n\ell(m)+\lambda\in \Lambda$ and $b_mV_{j,n}$ is again in our cover of $B$. If $m$ is not the identity, the hypercubes ${\bf e}\Delta_\alpha/p^n$ and $\ell(m)+{\bf e}\Delta_\alpha/p^n$ are disjoint, so the corresponding open subsets of $T$ and therefore $E$ are disjoint.

For part (\ref{fc3}), we see that $[p]:E\rightarrow E$ restricts to a morphism \[ [p]:U_{{\bf e}\Delta_\alpha/p^n}\times V_{j,n}\rightarrow U_{{\bf e}\Delta_\alpha/p^{n-1}}\times V_{j,n-1}\] as multiplication by $p$ acts by scaling the hypercube ${\bf e}\Delta_\alpha/p^n$ and we chose our covers of $B$ so that $[p](V_{j,n})=V_{j,n-1}$. The target is therefore an open in $\mathcal U_{n-1}$ as desired. To see that the reduction factors through relative Frobenius, we use Proposition \ref{model for [p] on T} for the torus part and Example \ref{eg:good reduction} for the abelian part.
\end{proof}

The following theorem now follows directly.

\begin{theorem}\label{thm:formal model for A}
For every integer $n\geq 1$, let $\fEn$ be the model of $E$ constructed in Proposition \ref{prop:formal model for E}. Then
	\begin{enumerate}
	\item\label{fmA1} For every $m\in M$ and $n\geq 1$, the morphism $\tau_m:E\rightarrow E$ extends to a morphism $\tau_{\mathfrak {m}}:\fEn\rightarrow \fEn$.
	\item\label{fmA2} Taking the quotient of $\fEn$ by the action of $M$ defined in (\ref{fmA1}), we obtain models $\fAn$ of $A$ such that the analytic quotient map $E\rightarrow A$ extends to a map of models $\fEn\rightarrow\fAn$.
	\item\label{fmA3} The models for $\tau_m$ in (\ref{fmA1}) commute with the models for $[p]$ in Proposition \ref{prop:formal model for E} in that the commutative diagram 	
	
	\begin{center}
		%\begin{equation}\label{diagram with E and p}
		\begin{tikzcd}
			E \arrow[d, "{[p]}"'] \arrow[r, "\tau_m"] & E \arrow[d, "{[p]}"] \\
			E \arrow[r, "\tau_m^p"] & E
		\end{tikzcd}
		%\end{equation}
	\end{center}
	
	extends to a commutative diagram
	
		\begin{center}
		%\begin{equation}\label{extension of diagram to formal model}
		\begin{tikzcd}
			\mathfrak E_{\alpha/p^{n+1}} \arrow[d, "{[\mathfrak p]_n}"'] \arrow[r, "\tau_{\mathfrak m}"] & \mathfrak E_{\alpha/p^{n+1}} \arrow[d, "{[\mathfrak p]_n}"] \\
			\mathfrak E_{\alpha/p^n} \arrow[r, "\tau_{\mathfrak {m^p}}"] & \mathfrak E_{\alpha/p^n}.
		\end{tikzcd}
		%\end{equation}
		\end{center}
	
	\item\label{fmA4} The models for $[p]:E\rightarrow E$ induce models of $[p]:A\rightarrow A$ in that the commutative diagram
	
	\begin{center}
		\begin{tikzcd}
			E \arrow [r] \arrow[d, "{[p]}"] & A \arrow[d, "{[p]}"] \\
			E \arrow [r] & A
		\end{tikzcd}
	\end{center}

	extends to a commutative diagram of formal models
	\begin{center}
		\begin{tikzcd}
			\mathfrak E_{\alpha/p^{n+1}} \arrow [r] \arrow[d, "{[\mathfrak p]_n}"] & \mathfrak A_{\alpha/p^{n+1}} \arrow[d, "{[\mathfrak p]_n}"] \\
			\fEn \arrow [r] & \fAn.
		\end{tikzcd}
	\end{center}
	
	\item\label{fmA5} The mod $\varpi$ reduction of $[\mathfrak p]_n:\mathfrak A_{\alpha/p^{n+1}}\rightarrow\fAn$ factors through relative Frobenius.
	
	\end{enumerate}
\end{theorem}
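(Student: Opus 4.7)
The plan is to deduce all five parts directly from Proposition \ref{prop:formal covers for E}, using the gluing mechanism of Proposition \ref{prop:fas to formal scheme}(3) to promote the finite-level statements about formal analytic covers into statements about formal models.

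For part (\ref{fmA1}), property (\ref{fc2}) of Proposition \ref{prop:formal covers for E} says that translation by $m \in M$ permutes the opens of $\mathcal{U}_n$, so by Proposition \ref{prop:fas to formal scheme}(3) the morphism $\tau_m$ extends to a morphism $\tau_{\mathfrak{m}}:\fEn\to\fEn$ of formal models. For part (\ref{fmA2}), the same property (\ref{fc2}) tells us that if $m\ne 1$ then $\tau_m$ sends each $U_{i,n}$ to a disjoint open $U_{j,n}$, i.e.\ the action of $M$ on the index set is free. I would therefore form the quotient cover $\mathcal{U}_n/M$, which descends to a formal analytic cover of $A = E/M$ by affinoid opens isomorphic to the $U_{i,n}$. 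The corresponding formal model (via Proposition \ref{prop:fas to formal scheme}(1)) is by construction the categorical quotient $\fEn / M$, which I define to be $\fAn$; the quotient map $E\to A$ extends to $\fEn\to\fAn$ by sending each open to its $M$-orbit class.

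For parts (\ref{fmA3}) and (\ref{fmA4}), the key point is uniqueness: the generic fiber $E$ is dense in $\fEn$ (since $\fEn$ is flat over $K^\circ$), so any morphism between flat admissible formal schemes is determined by its restriction to the generic fiber. Both $[\fp]_n\circ\tau_{\mathfrak m}$ and $\tau_{\mathfrak{m^p}}\circ[\fp]_n$ extend the same map $[p]\circ\tau_m = \tau_{m^p}\circ[p]$ on $E$, so they agree, giving (\ref{fmA3}). Since $m^p\in M$ is again a lattice element, the commutative square in (\ref{fmA3}) shows that $[\fp]_n:\mathfrak E_{\alpha/p^{n+1}}\to\fEn$ is equivariant for the $M$-action (with $M$ acting on the source through $m\mapsto m$ and on the target through $m\mapsto m^p$, but both are actions of the same abstract group $M$ on the two formal schemes). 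The map therefore descends to the quotients, producing $[\fp]_n:\mathfrak A_{\alpha/p^{n+1}}\to\fAn$ and proving (\ref{fmA4}).

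Finally, part (\ref{fmA5}) follows from (\ref{fc3}) of Proposition \ref{prop:formal covers for E} together with functoriality of relative Frobenius. The reduction mod $\varpi$ of $[\fp]_n$ on $\fEn$ factors through the relative Frobenius $\widetilde F_{n+1}:\widetilde{\mathfrak E}_{\alpha/p^{n+1}}\to\widetilde{\mathfrak E}_{\alpha/p^{n+1}}^{(p)}$ by (\ref{fc3}). Because relative Frobenius over $K^\circ/\varpi$ commutes with quotienting by a free action (both constructions are local on the base and relative Frobenius is a natural transformation of the identity functor on $\mathbb F_p$-schemes), the induced map on special fibers $\widetilde{\mathfrak A}_{\alpha/p^{n+1}}\to\widetilde{\mathfrak A}_{\alpha/p^n}$ factors through $\widetilde{\mathfrak A}_{\alpha/p^{n+1}}^{(p)}$ as required. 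The only mild obstacle is verifying this last compatibility of relative Frobenius with the $M$-quotient, which is straightforward once one notes that the $M$-action and its twist by Frobenius agree on the special fiber (the lattice embeds in $E(K)$ and reduces to $K^\circ/\varpi$-points, which are fixed by relative Frobenius).
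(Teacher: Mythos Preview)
Your proposal is correct and follows essentially the same approach as the paper: deduce everything from Proposition~\ref{prop:formal covers for E} via the gluing machinery of Proposition~\ref{prop:fas to formal scheme}. The only notable difference is in part~(\ref{fmA5}), where the paper avoids your functoriality-of-Frobenius argument by observing that the quotient map $\fEn\to\fAn$ is a local isomorphism (since $M$ acts by permuting disjoint opens), so $[\fp]_n$ on $\fAn$ is locally on the source identical to $[\fp]_n$ on $\fEn$, and factoring through relative Frobenius is a local property.
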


\begin{proof}
Parts (\ref{fmA1}), (\ref{fmA3}), and (\ref{fmA4}) are all formal consequences of Proposition \ref{prop:formal covers for E} and Proposition \ref{prop:fas to formal scheme}, obtained by checking that all the maps in question preserve the data of the formal analytic covers. The model $\fAn$ in Part (\ref{fmA2}) is constructed from the formal analytic cover of $A$ obtained by using the translation maps $\tau_m:E\rightarrow E$ to identify the opens $U_{i,n}$ in $\mathcal U_n$ that are sent to the same place in $A$. For Part (\ref{fmA5}), we note that $[\fp]_n: \mathfrak A_{\alpha/p^{n+1}}\rightarrow \fAn$ is locally on the source the same as $[\fp]_n: \mathfrak E_{\alpha/p^{n+1}}\rightarrow \fEn$, and the latter map factors through relative Frobenius.
\end{proof}

Restating the above Theorem, we recover the main theorem of \cite{AWS}.

\begin{theorem}\label{thm:cover of A}
We have an $F$-tower $\{\mathfrak A_{\alpha/p^n}\}$ for $A$, giving us a perfectoid space $A_\infty\sim\varprojlim_{[p]} A.$ 
\end{theorem}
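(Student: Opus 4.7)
The plan is essentially a bookkeeping exercise; the serious work has been done in Theorem \ref{thm:formal model for A}. All that remains is to verify that the family $\{\fAn\}_{n\geq 1}$ satisfies the axioms of Definition \ref{def:pF towers}, invoke Proposition \ref{prop:pF tower} to produce the perfectoid space, and then descend via Proposition \ref{prop:Galois descent}.

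First I would verify the axioms. Condition (1) of Definition \ref{def:pF towers} (that the base of the tower has generic fiber $A$) is immediate from Part (2) of Theorem \ref{thm:formal model for A}, which establishes that each $\fAn$ is a formal model of $A$. For flatness over $K^\circ$, I would first note that each $\fEn$ is flat as the pushout of the flat formal schemes $\fTn$ and $\overline E$ along $\overline T$, and then observe that by condition (2) of Proposition \ref{prop:formal covers for E}, the action of $M$ on the formal analytic cover $\mathcal U_n$ of $E$ permutes the affinoid opens and sends any open to a disjoint open (except under the identity). The quotient $\fAn$ is therefore locally isomorphic to $\fEn$, so flatness is preserved. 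The transition map $[\fp]_n: \mathfrak A_{\alpha/p^{n+1}} \to \fAn$ is constructed in Part (4) of Theorem \ref{thm:formal model for A}; it is locally on the source identical to $[\fp]_n: \mathfrak E_{\alpha/p^{n+1}} \to \fEn$, which is affine because it is built from the affine coordinate maps $x_i \mapsto x_i^p$ on the torus factor and the multiplication-by-$p$ map on the formal abelian scheme $\overline B$ restricted to our chosen cover (where preimages of affines are affine by construction in Proposition \ref{prop:formal covers for E}). The required factorization through relative Frobenius modulo $\varpi$ is precisely the content of Part (5) of Theorem \ref{thm:formal model for A}.

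With the $F$-tower axioms verified, Proposition \ref{prop:pF tower} immediately delivers the desired perfectoid space $A_\infty \sim \varprojlim_{[p]} A$, at least over the finite extension $L/K$ over which the Raynaud uniformization $E \to A$ and the lattice $M$ are defined. To descend to the original base field, I would enlarge $L$ to its Galois closure over $K$ if necessary (so that Proposition \ref{prop:Galois descent} applies) and then invoke that proposition to the inverse system $\{\fAn[\eta]\}$ obtained as the generic fibers of our formal models, thought of as an inverse system of adic spaces over $\Spa(K,K^\circ)$ with finite transition maps. The only point of care in the whole argument is confirming that the quotient construction passes affineness and flatness through cleanly, but given the explicit nature of the formal analytic covers $\mathcal U_n$ built in Proposition \ref{prop:formal covers for E}, this is straightforward rather than obstructive.
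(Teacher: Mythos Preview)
Your proposal is correct and takes essentially the same approach as the paper. The paper treats this theorem as an immediate restatement of Theorem~\ref{thm:formal model for A} (the Galois descent having already been invoked at the start of Section~\ref{sub:abeloid models}), while you spell out the verification of the $F$-tower axioms and the descent step explicitly; the content is the same.
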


\begin{figure}

\tikzset{every picture/.style={line width=0.75pt}} %set default line width to 0.75pt        

\begin{tikzpicture}[x=0.75pt,y=0.75pt,yscale=-1,xscale=1]
%uncomment if require: \path (0,650); %set diagram left start at 0, and has height of 650

%Straight Lines [id:da21952651293195768] 
\draw [color={rgb, 255:red, 245; green, 166; blue, 35 }  ,draw opacity=1 ][line width=2.25]    (86.5,482.5) -- (146,482.25) ;
%Shape: Circle [id:dp9924180884983396] 
\draw  [fill={rgb, 255:red, 144; green, 19; blue, 254 }  ,fill opacity=1 ] (84.25,482.5) .. controls (84.25,481.26) and (85.26,480.25) .. (86.5,480.25) .. controls (87.74,480.25) and (88.75,481.26) .. (88.75,482.5) .. controls (88.75,483.74) and (87.74,484.75) .. (86.5,484.75) .. controls (85.26,484.75) and (84.25,483.74) .. (84.25,482.5) -- cycle ;
%Straight Lines [id:da022993712363304164] 
\draw [color={rgb, 255:red, 126; green, 211; blue, 33 }  ,draw opacity=1 ][line width=2.25]    (146,482.25) -- (205.5,482) ;
%Shape: Circle [id:dp7059888112859072] 
\draw  [fill={rgb, 255:red, 248; green, 231; blue, 28 }  ,fill opacity=1 ] (143.75,482.25) .. controls (143.75,481.01) and (144.76,480) .. (146,480) .. controls (147.24,480) and (148.25,481.01) .. (148.25,482.25) .. controls (148.25,483.49) and (147.24,484.5) .. (146,484.5) .. controls (144.76,484.5) and (143.75,483.49) .. (143.75,482.25) -- cycle ;
%Shape: Circle [id:dp3952694568157469] 
\draw  [fill={rgb, 255:red, 144; green, 19; blue, 254 }  ,fill opacity=1 ] (203.25,482) .. controls (203.25,480.76) and (204.26,479.75) .. (205.5,479.75) .. controls (206.74,479.75) and (207.75,480.76) .. (207.75,482) .. controls (207.75,483.24) and (206.74,484.25) .. (205.5,484.25) .. controls (204.26,484.25) and (203.25,483.24) .. (203.25,482) -- cycle ;
%Straight Lines [id:da9239356087073698] 
\draw [color={rgb, 255:red, 245; green, 166; blue, 35 }  ,draw opacity=1 ][line width=2.25]    (86.8,252.25) -- (108,252.25) ;
%Shape: Circle [id:dp4409929470106235] 
\draw  [fill={rgb, 255:red, 144; green, 19; blue, 254 }  ,fill opacity=1 ] (84.55,252.25) .. controls (84.55,251.01) and (85.56,250) .. (86.8,250) .. controls (88.04,250) and (89.05,251.01) .. (89.05,252.25) .. controls (89.05,253.49) and (88.04,254.5) .. (86.8,254.5) .. controls (85.56,254.5) and (84.55,253.49) .. (84.55,252.25) -- cycle ;
%Straight Lines [id:da12626501740541562] 
\draw [color={rgb, 255:red, 126; green, 211; blue, 33 }  ,draw opacity=1 ][line width=2.25]    (108,252.25) -- (128.4,252.25) ;
%Shape: Circle [id:dp7806959857363716] 
\draw  [fill={rgb, 255:red, 248; green, 231; blue, 28 }  ,fill opacity=1 ] (105.75,252.25) .. controls (105.75,251.01) and (106.76,250) .. (108,250) .. controls (109.24,250) and (110.25,251.01) .. (110.25,252.25) .. controls (110.25,253.49) and (109.24,254.5) .. (108,254.5) .. controls (106.76,254.5) and (105.75,253.49) .. (105.75,252.25) -- cycle ;
%Straight Lines [id:da9536566232377774] 
\draw [color={rgb, 255:red, 245; green, 166; blue, 35 }  ,draw opacity=1 ][line width=2.25]    (128.4,252.25) -- (149.6,252.25) ;
%Straight Lines [id:da8701149510157302] 
\draw [color={rgb, 255:red, 126; green, 211; blue, 33 }  ,draw opacity=1 ][line width=2.25]    (149.6,252.25) -- (170,252.25) ;
%Shape: Circle [id:dp5018065317915719] 
\draw  [fill={rgb, 255:red, 248; green, 231; blue, 28 }  ,fill opacity=1 ] (145.1,252.25) .. controls (145.1,251.01) and (146.11,250) .. (147.35,250) .. controls (148.59,250) and (149.6,251.01) .. (149.6,252.25) .. controls (149.6,253.49) and (148.59,254.5) .. (147.35,254.5) .. controls (146.11,254.5) and (145.1,253.49) .. (145.1,252.25) -- cycle ;
%Shape: Circle [id:dp3659097233385007] 
\draw  [fill={rgb, 255:red, 144; green, 19; blue, 254 }  ,fill opacity=1 ] (126.15,252.25) .. controls (126.15,251.01) and (127.16,250) .. (128.4,250) .. controls (129.64,250) and (130.65,251.01) .. (130.65,252.25) .. controls (130.65,253.49) and (129.64,254.5) .. (128.4,254.5) .. controls (127.16,254.5) and (126.15,253.49) .. (126.15,252.25) -- cycle ;
%Straight Lines [id:da7497944727205119] 
\draw [color={rgb, 255:red, 245; green, 166; blue, 35 }  ,draw opacity=1 ][line width=2.25]    (170,252.25) -- (191.2,252.25) ;
%Straight Lines [id:da18957223198347628] 
\draw [color={rgb, 255:red, 126; green, 211; blue, 33 }  ,draw opacity=1 ][line width=2.25]    (191.2,252.25) -- (211.6,252.25) ;
%Shape: Circle [id:dp9508386236250801] 
\draw  [fill={rgb, 255:red, 248; green, 231; blue, 28 }  ,fill opacity=1 ] (186.7,252.25) .. controls (186.7,251.01) and (187.71,250) .. (188.95,250) .. controls (190.19,250) and (191.2,251.01) .. (191.2,252.25) .. controls (191.2,253.49) and (190.19,254.5) .. (188.95,254.5) .. controls (187.71,254.5) and (186.7,253.49) .. (186.7,252.25) -- cycle ;
%Shape: Circle [id:dp7915743632640428] 
\draw  [fill={rgb, 255:red, 144; green, 19; blue, 254 }  ,fill opacity=1 ] (207.1,252.25) .. controls (207.1,251.01) and (208.11,250) .. (209.35,250) .. controls (210.59,250) and (211.6,251.01) .. (211.6,252.25) .. controls (211.6,253.49) and (210.59,254.5) .. (209.35,254.5) .. controls (208.11,254.5) and (207.1,253.49) .. (207.1,252.25) -- cycle ;
%Shape: Circle [id:dp6836934302226352] 
\draw  [fill={rgb, 255:red, 144; green, 19; blue, 254 }  ,fill opacity=1 ] (165.5,252.25) .. controls (165.5,251.01) and (166.51,250) .. (167.75,250) .. controls (168.99,250) and (170,251.01) .. (170,252.25) .. controls (170,253.49) and (168.99,254.5) .. (167.75,254.5) .. controls (166.51,254.5) and (165.5,253.49) .. (165.5,252.25) -- cycle ;
%Shape: Circle [id:dp7325263713335963] 
\draw  [color={rgb, 255:red, 144; green, 19; blue, 254 }  ,draw opacity=1 ][fill={rgb, 255:red, 245; green, 166; blue, 35 }  ,fill opacity=1 ][line width=1.5]  (300,482.7) .. controls (300,448.07) and (328.07,420) .. (362.7,420) .. controls (397.33,420) and (425.4,448.07) .. (425.4,482.7) .. controls (425.4,517.33) and (397.33,545.4) .. (362.7,545.4) .. controls (328.07,545.4) and (300,517.33) .. (300,482.7) -- cycle ;
%Shape: Circle [id:dp5639829328040622] 
\draw  [color={rgb, 255:red, 248; green, 231; blue, 28 }  ,draw opacity=1 ][fill={rgb, 255:red, 126; green, 211; blue, 33 }  ,fill opacity=1 ][line width=1.5]  (325.72,482.7) .. controls (325.72,462.28) and (342.28,445.73) .. (362.7,445.73) .. controls (383.12,445.73) and (399.68,462.28) .. (399.68,482.7) .. controls (399.68,503.12) and (383.12,519.68) .. (362.7,519.68) .. controls (342.28,519.68) and (325.72,503.12) .. (325.72,482.7) -- cycle ;
%Shape: Circle [id:dp4287637016170214] 
\draw  [color={rgb, 255:red, 144; green, 19; blue, 254 }  ,draw opacity=1 ][fill={rgb, 255:red, 255; green, 255; blue, 255 }  ,fill opacity=1 ][line width=1.5]  (343.32,482.7) .. controls (343.32,471.99) and (351.99,463.32) .. (362.7,463.32) .. controls (373.41,463.32) and (382.08,471.99) .. (382.08,482.7) .. controls (382.08,493.41) and (373.41,502.08) .. (362.7,502.08) .. controls (351.99,502.08) and (343.32,493.41) .. (343.32,482.7) -- cycle ;
%Shape: Circle [id:dp36317344687722897] 
\draw  [color={rgb, 255:red, 144; green, 19; blue, 254 }  ,draw opacity=1 ][fill={rgb, 255:red, 245; green, 166; blue, 35 }  ,fill opacity=1 ][line width=1.5]  (300,254.7) .. controls (300,220.07) and (328.07,192) .. (362.7,192) .. controls (397.33,192) and (425.4,220.07) .. (425.4,254.7) .. controls (425.4,289.33) and (397.33,317.4) .. (362.7,317.4) .. controls (328.07,317.4) and (300,289.33) .. (300,254.7) -- cycle ;
%Shape: Circle [id:dp4048611558186428] 
\draw  [color={rgb, 255:red, 248; green, 231; blue, 28 }  ,draw opacity=1 ][fill={rgb, 255:red, 126; green, 211; blue, 33 }  ,fill opacity=1 ][line width=1.5]  (308.92,254.7) .. controls (308.92,225) and (333,200.92) .. (362.7,200.92) .. controls (392.4,200.92) and (416.48,225) .. (416.48,254.7) .. controls (416.48,284.4) and (392.4,308.48) .. (362.7,308.48) .. controls (333,308.48) and (308.92,284.4) .. (308.92,254.7) -- cycle ;
%Shape: Circle [id:dp036462783986827674] 
\draw  [color={rgb, 255:red, 144; green, 19; blue, 254 }  ,draw opacity=1 ][fill={rgb, 255:red, 245; green, 166; blue, 35 }  ,fill opacity=1 ][line width=1.5]  (317.48,254.7) .. controls (317.48,229.73) and (337.73,209.48) .. (362.7,209.48) .. controls (387.67,209.48) and (407.92,229.73) .. (407.92,254.7) .. controls (407.92,279.67) and (387.67,299.92) .. (362.7,299.92) .. controls (337.73,299.92) and (317.48,279.67) .. (317.48,254.7) -- cycle ;
%Shape: Circle [id:dp0587413025405632] 
\draw  [color={rgb, 255:red, 248; green, 231; blue, 28 }  ,draw opacity=1 ][fill={rgb, 255:red, 126; green, 211; blue, 33 }  ,fill opacity=1 ][line width=1.5]  (325.72,254.7) .. controls (325.72,234.28) and (342.28,217.72) .. (362.7,217.72) .. controls (383.12,217.72) and (399.67,234.28) .. (399.67,254.7) .. controls (399.67,275.12) and (383.12,291.68) .. (362.7,291.68) .. controls (342.28,291.68) and (325.72,275.12) .. (325.72,254.7) -- cycle ;
%Shape: Circle [id:dp07944917135001384] 
\draw  [color={rgb, 255:red, 144; green, 19; blue, 254 }  ,draw opacity=1 ][fill={rgb, 255:red, 245; green, 166; blue, 35 }  ,fill opacity=1 ][line width=1.5]  (332.51,254.7) .. controls (332.51,238.03) and (346.03,224.51) .. (362.7,224.51) .. controls (379.37,224.51) and (392.89,238.03) .. (392.89,254.7) .. controls (392.89,271.37) and (379.37,284.89) .. (362.7,284.89) .. controls (346.03,284.89) and (332.51,271.37) .. (332.51,254.7) -- cycle ;
%Shape: Circle [id:dp7345755426373732] 
\draw  [color={rgb, 255:red, 248; green, 231; blue, 28 }  ,draw opacity=1 ][fill={rgb, 255:red, 126; green, 211; blue, 33 }  ,fill opacity=1 ][line width=1.5]  (339.25,254.7) .. controls (339.25,241.75) and (349.75,231.25) .. (362.7,231.25) .. controls (375.65,231.25) and (386.15,241.75) .. (386.15,254.7) .. controls (386.15,267.65) and (375.65,278.15) .. (362.7,278.15) .. controls (349.75,278.15) and (339.25,267.65) .. (339.25,254.7) -- cycle ;
%Shape: Circle [id:dp2143030512797619] 
\draw  [color={rgb, 255:red, 144; green, 19; blue, 254 }  ,draw opacity=1 ][fill={rgb, 255:red, 255; green, 255; blue, 255 }  ,fill opacity=1 ][line width=1.5]  (343.32,254.7) .. controls (343.32,243.99) and (351.99,235.32) .. (362.7,235.32) .. controls (373.41,235.32) and (382.08,243.99) .. (382.08,254.7) .. controls (382.08,265.41) and (373.41,274.08) .. (362.7,274.08) .. controls (351.99,274.08) and (343.32,265.41) .. (343.32,254.7) -- cycle ;
%Curve Lines [id:da9127087465957224] 
\draw [color={rgb, 255:red, 144; green, 19; blue, 254 }  ,draw opacity=1 ][line width=2.25]    (541.33,489.17) .. controls (570.33,519.33) and (611.67,519.33) .. (641.33,489.17) ;
%Curve Lines [id:da9241218441636541] 
\draw [color={rgb, 255:red, 248; green, 231; blue, 28 }  ,draw opacity=1 ][line width=2.25]    (542,510.5) .. controls (571,470) and (611,471.33) .. (642,510.5) ;
%Shape: Circle [id:dp33559993706679414] 
\draw  [fill={rgb, 255:red, 245; green, 166; blue, 35 }  ,fill opacity=1 ] (550.58,499.17) .. controls (550.58,497.92) and (551.59,496.92) .. (552.83,496.92) .. controls (554.08,496.92) and (555.08,497.92) .. (555.08,499.17) .. controls (555.08,500.41) and (554.08,501.42) .. (552.83,501.42) .. controls (551.59,501.42) and (550.58,500.41) .. (550.58,499.17) -- cycle ;
%Shape: Circle [id:dp30769704889505656] 
\draw  [fill={rgb, 255:red, 126; green, 211; blue, 33 }  ,fill opacity=1 ] (628.25,499.17) .. controls (628.25,497.92) and (629.26,496.92) .. (630.5,496.92) .. controls (631.74,496.92) and (632.75,497.92) .. (632.75,499.17) .. controls (632.75,500.41) and (631.74,501.42) .. (630.5,501.42) .. controls (629.26,501.42) and (628.25,500.41) .. (628.25,499.17) -- cycle ;
%Straight Lines [id:da3356650228468745] 
\draw [color={rgb, 255:red, 144; green, 19; blue, 254 }  ,draw opacity=1 ][line width=2.25]    (559.8,282.4) -- (640.6,282.4) ;
%Straight Lines [id:da9021773433109244] 
\draw [color={rgb, 255:red, 248; green, 231; blue, 28 }  ,draw opacity=1 ][line width=2.25]    (559.93,221.85) -- (641,221.6) ;
%Straight Lines [id:da24181323411382372] 
\draw [color={rgb, 255:red, 144; green, 19; blue, 254 }  ,draw opacity=1 ][line width=2.25]    (610.2,202.4) -- (645.83,272.4) ;
%Straight Lines [id:da9812552416092324] 
\draw [color={rgb, 255:red, 144; green, 19; blue, 254 }  ,draw opacity=1 ][line width=2.25]    (555.13,271.85) -- (589.8,202.4) ;
%Straight Lines [id:da8537547568055743] 
\draw [color={rgb, 255:red, 248; green, 231; blue, 28 }  ,draw opacity=1 ][line width=2.25]    (610.73,301.85) -- (645,231.6) ;
%Straight Lines [id:da7613994066630958] 
\draw [color={rgb, 255:red, 248; green, 231; blue, 28 }  ,draw opacity=1 ][line width=2.25]    (554.2,232) -- (590.63,302) ;
%Shape: Circle [id:dp8748599862192297] 
\draw  [fill={rgb, 255:red, 126; green, 211; blue, 33 }  ,fill opacity=1 ] (617.78,281.97) .. controls (617.78,280.72) and (618.79,279.72) .. (620.03,279.72) .. controls (621.28,279.72) and (622.28,280.72) .. (622.28,281.97) .. controls (622.28,283.21) and (621.28,284.22) .. (620.03,284.22) .. controls (618.79,284.22) and (617.78,283.21) .. (617.78,281.97) -- cycle ;
%Shape: Circle [id:dp14892804708495433] 
\draw  [fill={rgb, 255:red, 245; green, 166; blue, 35 }  ,fill opacity=1 ] (578.58,281.97) .. controls (578.58,280.72) and (579.59,279.72) .. (580.83,279.72) .. controls (582.08,279.72) and (583.08,280.72) .. (583.08,281.97) .. controls (583.08,283.21) and (582.08,284.22) .. (580.83,284.22) .. controls (579.59,284.22) and (578.58,283.21) .. (578.58,281.97) -- cycle ;
%Shape: Circle [id:dp028794366700830132] 
\draw  [fill={rgb, 255:red, 245; green, 166; blue, 35 }  ,fill opacity=1 ] (632.58,252.37) .. controls (632.58,251.12) and (633.59,250.12) .. (634.83,250.12) .. controls (636.08,250.12) and (637.08,251.12) .. (637.08,252.37) .. controls (637.08,253.61) and (636.08,254.62) .. (634.83,254.62) .. controls (633.59,254.62) and (632.58,253.61) .. (632.58,252.37) -- cycle ;
%Shape: Circle [id:dp5450691846531259] 
\draw  [fill={rgb, 255:red, 126; green, 211; blue, 33 }  ,fill opacity=1 ] (562.98,251.97) .. controls (562.98,250.72) and (563.99,249.72) .. (565.23,249.72) .. controls (566.48,249.72) and (567.48,250.72) .. (567.48,251.97) .. controls (567.48,253.21) and (566.48,254.22) .. (565.23,254.22) .. controls (563.99,254.22) and (562.98,253.21) .. (562.98,251.97) -- cycle ;
%Shape: Circle [id:dp5134193956191573] 
\draw  [fill={rgb, 255:red, 126; green, 211; blue, 33 }  ,fill opacity=1 ] (618.18,221.57) .. controls (618.18,220.32) and (619.19,219.32) .. (620.43,219.32) .. controls (621.68,219.32) and (622.68,220.32) .. (622.68,221.57) .. controls (622.68,222.81) and (621.68,223.82) .. (620.43,223.82) .. controls (619.19,223.82) and (618.18,222.81) .. (618.18,221.57) -- cycle ;
%Shape: Circle [id:dp6231234835977804] 
\draw  [fill={rgb, 255:red, 245; green, 166; blue, 35 }  ,fill opacity=1 ] (578.18,221.97) .. controls (578.18,220.72) and (579.19,219.72) .. (580.43,219.72) .. controls (581.68,219.72) and (582.68,220.72) .. (582.68,221.97) .. controls (582.68,223.21) and (581.68,224.22) .. (580.43,224.22) .. controls (579.19,224.22) and (578.18,223.21) .. (578.18,221.97) -- cycle ;
%Straight Lines [id:da4355220831979858] 
\draw [line width=1.5]    (149,342.5) -- (149,385.5) ;
\draw [shift={(149,388.5)}, rotate = 270] [color={rgb, 255:red, 0; green, 0; blue, 0 }  ][line width=1.5]    (14.21,-4.28) .. controls (9.04,-1.82) and (4.3,-0.39) .. (0,0) .. controls (4.3,0.39) and (9.04,1.82) .. (14.21,4.28)   ;
%Straight Lines [id:da15196308528341618] 
\draw [line width=1.5]    (601,342.5) -- (601,385.5) ;
\draw [shift={(601,388.5)}, rotate = 270] [color={rgb, 255:red, 0; green, 0; blue, 0 }  ][line width=1.5]    (14.21,-4.28) .. controls (9.04,-1.82) and (4.3,-0.39) .. (0,0) .. controls (4.3,0.39) and (9.04,1.82) .. (14.21,4.28)   ;
%Straight Lines [id:da7408603677440235] 
\draw [line width=1.5]    (361,342.5) -- (361,385.5) ;
\draw [shift={(361,388.5)}, rotate = 270] [color={rgb, 255:red, 0; green, 0; blue, 0 }  ][line width=1.5]    (14.21,-4.28) .. controls (9.04,-1.82) and (4.3,-0.39) .. (0,0) .. controls (4.3,0.39) and (9.04,1.82) .. (14.21,4.28)   ;
%Straight Lines [id:da23416937869011933] 
\draw [line width=1.5]    (149,100) -- (149,143) ;
\draw [shift={(149,146)}, rotate = 270] [color={rgb, 255:red, 0; green, 0; blue, 0 }  ][line width=1.5]    (14.21,-4.28) .. controls (9.04,-1.82) and (4.3,-0.39) .. (0,0) .. controls (4.3,0.39) and (9.04,1.82) .. (14.21,4.28)   ;
%Straight Lines [id:da8599157338548149] 
\draw [line width=1.5]    (601,100) -- (601,143) ;
\draw [shift={(601,146)}, rotate = 270] [color={rgb, 255:red, 0; green, 0; blue, 0 }  ][line width=1.5]    (14.21,-4.28) .. controls (9.04,-1.82) and (4.3,-0.39) .. (0,0) .. controls (4.3,0.39) and (9.04,1.82) .. (14.21,4.28)   ;
%Straight Lines [id:da4102715918962401] 
\draw [line width=1.5]    (361,100) -- (361,143) ;
\draw [shift={(361,146)}, rotate = 270] [color={rgb, 255:red, 0; green, 0; blue, 0 }  ][line width=1.5]    (14.21,-4.28) .. controls (9.04,-1.82) and (4.3,-0.39) .. (0,0) .. controls (4.3,0.39) and (9.04,1.82) .. (14.21,4.28)   ;

% Text Node
\draw (155.6,16.5) node [anchor=north west][inner sep=0.75pt]  [font=\LARGE,rotate=-90]  {$\cdots $};
% Text Node
\draw (608.6,16.5) node [anchor=north west][inner sep=0.75pt]  [font=\LARGE,rotate=-90]  {$\cdots $};
% Text Node
\draw (371.6,16.5) node [anchor=north west][inner sep=0.75pt]  [font=\LARGE,rotate=-90]  {$\cdots $};

\end{tikzpicture}

\caption[The first two levels of the $F$-tower for a Tate curve]{The first two levels of the $F$-tower for a Tate curve, extending Figure \ref{fig:Tate model}. Left: a tower of hypercube decompositions of $[1,c]$, coming from the quotient of the decompositions in Corollary \ref{cor:pF for T} by $c^\Z$. Center: The corresponding tower of formal affinoid covers of $\G_m/q^\Z$. Right: The corresponding tower of special fibers. The maps respect the coloring, so the preimage of the orange annulus is the three disjoint orange annuli, and the preimage of the purple $\P^1$ on the special fiber is three disjoint purple $\P^1$s above.}\label{fig:Tate tower}
\end{figure}
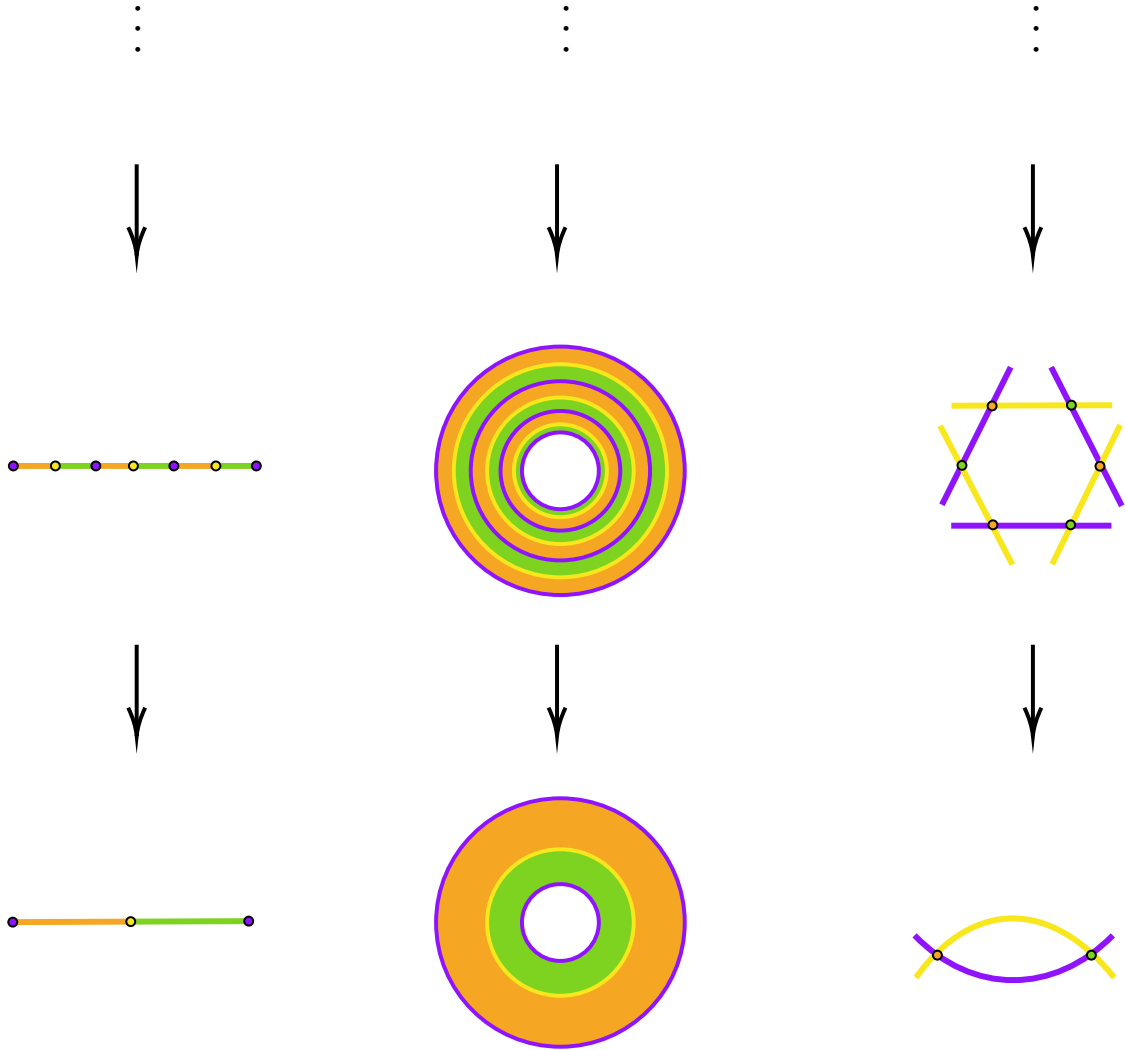

%%%%%%%
%%%%%%%
%%%%%%%

\newpage

\section{Tilting perfectoid covers of abeloids}\label{sec:tilting abeloids}

We continue to assume that $K$ is a perfectoid field with value group $\Gamma\subset\Q$. Given an abeloid $A$ over $K$, in Theorem \ref{thm:cover of A} we constructed an $F$-tower for $A$, giving us a perfectoid cover $A_\infty$ of $A$. In this section, after taking a pro-$p$ extension of $K$, we construct an abeloid $A'$ over $K^\flat$ such that the corresponding perfectoid cover $(A')_\infty$ is the tilt $A_\infty^\flat$ of $A_\infty$. By Lemma \ref{lemma:pF tilt}, if a perfectoid space is built using an $F$-tower over $K^\circ$, its tilt can be built using an $F$-tower over $K^{\flat\circ}$ such that the isomorphism $K^\circ/\varpi\cong K^{\flat\circ}/\varpi^\flat$ induces an isomorphism of the special fibers of the two $F$-towers. 

To construct $A'$, we first use the deformation theory of abelian varieties to construct a Raynaud extension $E'$ over $K^\flat$ such that the special fibers of $\overline E$ and $\overline E'$ agree. This implies that $E_\infty^\flat=E'_\infty$. Choosing a compatible system of $p$th roots of $M$, we get a subgroup $M_\infty\subset E_\infty(K)$. This is where we need the pro-$p$ extension of $K$. This subgroup tilts to a subgroup $(M_\infty)^\flat\subset E_\infty^\flat(K^\flat)=E'_\infty(K^\flat)$, which we show projects down to a lattice $M'\subset E'(K^\flat)$. Taking the quotient, we obtain the desired (non-unique) abeloid $A':=E'/M'$. 

In \cite{HW}, we give an alternate construction of $A'$ in the spirit of \cite{AWS} and explain the relationship with the perfectoid Shimura varieties of \cite{Torsion}.

\subsection{Deformation theory of Raynaud extensions}\label{sub:deformations}

Let $E$ be a Raynaud extension over $K$, recall from Section \ref{sub:Raynaud extensions} that this is equivalent to a strict exact sequence of formal groups over $K^\circ$ 
\begin{equation}\label{eq:formal Raynaud}
1\rightarrow \overline T\rightarrow \overline{E}\xrightarrow{\overline \pi}\overline B\rightarrow 1.
\end{equation}
In this section, we combine deformation theoretic results of Illusie with a theorem of Grothendieck to construct a non-unique Raynaud extension $E'$ over $K^\flat$ such that the mod $\varpi$ special fiber of the sequence (\ref{eq:formal Raynaud}) is isomorphic to the mod $\varpi^\flat$ special fiber of the corresponding sequence for $E'$. 

\begin{proposition}\label{prop:semi-abelian schemes deform}
Let $\widetilde E$ be a semi-abelian scheme over $\Spec(K^\circ/\varpi)$. Then there is a semi-abelian formal scheme $\overline E$ over $\Spf(K^\circ)$ with mod $\varpi$ special fiber $\widetilde E$.
\end{proposition}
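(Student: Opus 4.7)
The plan is to lift $\widetilde E$ in pieces along its defining extension by a torus and an abelian scheme, and then combine them. Since $K^\circ$ is $\varpi$-adically complete and $\Spf(K^\circ) = \varinjlim \Spec(K^\circ/\varpi^n)$, it suffices to produce a compatible system of semi-abelian lifts $E_n$ over the nilpotent thickenings $\Spec(K^\circ/\varpi^n)$ of $\Spec(K^\circ/\varpi)$ and take the formal colimit. Over the connected affine base $\Spec(K^\circ/\varpi)$, the semi-abelian scheme $\widetilde E$ sits canonically in a short exact sequence $1 \to \widetilde T \to \widetilde E \to \widetilde B \to 1$ with $\widetilde T$ a torus and $\widetilde B$ an abelian scheme; I would lift each piece and then the extension class.

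First, by Grothendieck's theorem on the deformation of tori (SGA 3, Exposé IX), $\widetilde T$ lifts uniquely through every nilpotent thickening. This produces a formal torus $\overline T$ over $\Spf(K^\circ)$, determined by the character lattice of $\widetilde T$. Next, to lift the abelian quotient I would proceed inductively through the thickenings $K^\circ/\varpi^n \hookrightarrow K^\circ/\varpi^{n+1}$: by the Serre--Tate theorem, deformations of $\widetilde B$ correspond to deformations of its $p$-divisible group, and the deformation functor for $p$-divisible groups is formally smooth by Illusie--Messing. Hence a lift exists at each stage, and choosing them compatibly yields a system $\{B_n\}$ of abelian schemes over $K^\circ/\varpi^n$ which assemble to a formal abelian scheme $\overline B$ over $\Spf(K^\circ)$ with special fiber $\widetilde B$.

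Finally, extensions of $\overline B$ by $\overline T$ are classified (via the Weil--Barsotti formula) by homomorphisms $X^*(\overline T) \to \overline B^\vee$ from the character lattice of $\overline T$ to the dual formal abelian scheme. The class of $\widetilde E$ is encoded by such a homomorphism $\widetilde \varphi: X^*(\widetilde T) \to \widetilde B^\vee$, and by Grothendieck's rigidity theorem for homomorphisms between abelian schemes, $\widetilde \varphi$ lifts uniquely to a homomorphism $\overline \varphi: X^*(\overline T) \to \overline B^\vee$ over $K^\circ$ (one lifts stepwise through each $K^\circ/\varpi^n$, using that $\Hom$ into an abelian scheme is étale, hence rigid under nilpotent thickenings). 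The extension classified by $\overline \varphi$ is then the desired formal semi-abelian scheme $\overline E$, strict by construction, with special fiber $\widetilde E$.

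The main technical difficulty I anticipate is that $K^\circ$ is typically non-noetherian in our setting (the value group lies in $\Q$), and the intermediate rings $K^\circ/\varpi^n$, though local with topologically nilpotent maximal ideal, are also generally non-noetherian. Accordingly, I would avoid applying any version of the deformation theory that requires noetherian hypotheses globally on $K^\circ$, and instead carry out the induction at each finite level $K^\circ/\varpi^n$, where the cited theorems of Grothendieck and Illusie apply in full generality, before finally passing to the $\varpi$-adic limit.
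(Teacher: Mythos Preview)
Your three-step strategy---lift the torus, lift the abelian quotient, lift the extension class---is exactly the paper's approach, and your treatment of the first two steps is fine (the paper invokes Oort's argument with the cotangent complex rather than Serre--Tate/Messing, but either route works and both handle the non-noetherian base).

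The extension step is misargued. You correctly classify extensions of $\overline B$ by $\overline T$ via homomorphisms $X^*(\overline T)\to \overline B^\vee$, but then invoke ``Grothendieck's rigidity theorem for homomorphisms between abelian schemes'' to claim the lift $\overline\varphi$ exists and is unique. This is wrong on two counts: $X^*(\overline T)\cong\underline{\Z}^r$ is a constant group scheme, not an abelian scheme, so that rigidity theorem does not apply; and the lift is genuinely non-unique. A homomorphism $\underline{\Z}^r\to \overline B^\vee$ is simply an $r$-tuple in $\overline B^\vee(K^\circ)$, and the reduction map $\overline B^\vee(K^\circ)\to \widetilde B^\vee(K^\circ/\varpi)$ has nontrivial fibers. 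The paper makes exactly this identification $\Ext(\overline B,\overline T)\cong (\overline B^\vee)(K^\circ)^r$ and argues that the reduction map is \emph{surjective} because $\overline B^\vee$ is (formally) smooth over $K^\circ$; that is the correct justification for existence, and you should replace the rigidity appeal with it. The non-uniqueness is in fact important later: different choices of lift yield different Raynaud extensions $E'$ over $K^\flat$, and this flexibility is exploited in Section~\ref{sec:line bundles} when one needs $B'$ to carry a lift of a prescribed line bundle.
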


\begin{proof}
Given an exact sequence of group schemes over $\Spec(K^\circ/\varpi)$
$$1\rightarrow \widetilde T\rightarrow \widetilde E\rightarrow \widetilde B\rightarrow 1,$$ we deform $\widetilde E$ by first deforming $\widetilde T$ and $\widetilde B$. The split torus $\widetilde T$ lifts to the formal torus $\overline T\cong\mathbb \overline{\G}_{m,K^\circ}^r$. To lift $\widetilde B$ to a formal abelian scheme $\overline B$, we apply the argument of \cite[Theorem 2.2.1]{Oort}, but replace his use of the cotangent bundle $\Theta_{X_0}$ with the cotangent complex as defined in \cite{Ill} or \cite[Tag 08P5]{Stacks}. 

It remains to lift $\widetilde E$ to an element $\overline E$ of $\Ext(\overline B,\overline T)$. By \cite[Theorem A.2.8]{Lut}, $\Ext(\overline B, \overline T)$ is canonically isomorphic to the set of $K^\circ$-valued points of $(\overline{B}^\vee)^r$, where $\overline{B}^\vee$ is the dual abelian scheme of $\overline{B}$. Similarly, $\Ext(\widetilde B, \widetilde T)$ is canonically isomorphic to the set of $K^\circ/\varpi$-valued points of $(\widetilde{B}^\vee)^r$. Under these isomorphisms, the reduction map $\Ext(\overline B,\overline T)\rightarrow\Ext(\widetilde B, \widetilde T)$ is the reduction map $$\overline{B}^\vee(K^\circ)^r\rightarrow \overline{B}^\vee(K^\circ/\varpi)^r\cong \widetilde{B}^\vee(K^\circ/\varpi)^r.$$ As this map is surjective, every semi-abelian $K^\circ/\varpi$-variety $\widetilde E$ lifts to a formal semi-abelian $K^\circ$-variety $\overline E$.
\end{proof}

In particular, if we start with a Raynaud extension $E$ over $K$, we can consider the mod $\varpi$ reduction $\widetilde E$ of $\overline E$ as a semi-abelian scheme over $K^{\flat \circ}/\varpi^\flat$. Applying Proposition \ref{prop:semi-abelian schemes deform}, we deform $\widetilde E$ to a formal scheme $\overline E'$ corresponding to a Raynaud extension $E'$ over $K^\flat$.

\begin{theorem}\label{thm:tilting E}
With $E$ and $E'$ defined as in the previous paragraph, we have $E_\infty^\flat\cong (E')_\infty$.
\end{theorem}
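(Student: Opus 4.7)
My plan is to invoke Lemma \ref{lemma:pF tilt}: if we can exhibit an $F$-tower for $E'$ over $K^{\flat\circ}$ whose mod $\varpi^\flat$ special fiber matches that of the $F$-tower $\{\fE_{\alpha/p^n}\}$ for $E$ under the isomorphism $K^\circ/\varpi\cong K^{\flat\circ}/\varpi^\flat$, then the corresponding perfectoid tilde-limits are tilts of each other, which is exactly the desired conclusion $E_\infty^\flat\cong (E')_\infty$.

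First, I would run the construction of Section \ref{sub:torus models} and Proposition \ref{prop:formal model for E} in parallel over $K$ and $K^\flat$. Fix a positive rational $\alpha$ in the common value group $\Gamma=|K^\times|=|K^{\flat\times}|$, and choose pseudo-uniformizers $\varpi\in K^\circ$ and $\varpi^\flat\in K^{\flat\circ}$ matching under $\sharp$. The polyannulus models $\fT_{\alpha/p^n}$ and $\fT'_{\alpha/p^n}$ are defined by the same hypercube decomposition of $\R^r$, so their special fibers, together with the transition maps described explicitly in Proposition \ref{model for [p] on T}, are canonically identified under $K^\circ/\varpi\cong K^{\flat\circ}/\varpi^\flat$.

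Second, I would push out along the Raynaud short exact sequences. By construction, $\overline E$ and $\overline E'$ are formal semi-abelian schemes whose special fibers are canonically identified with a common semi-abelian scheme $\widetilde E$ over $K^\circ/\varpi$. The pushout $\fE_{\alpha/p^n}=\fT_{\alpha/p^n}\times^{\overline T}\overline E$ from Proposition \ref{prop:formal model for E} (and its $K^\flat$-analogue, which goes through verbatim to produce $\fE'_{\alpha/p^n}$ as an $F$-tower for $E'$) reduces mod $\varpi$ to the pushout $\widetilde{\fT}_{\alpha/p^n}\times^{\widetilde T}\widetilde E$, an expression that depends only on data already living over $K^\circ/\varpi$. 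The same pushout description shows that the transition morphisms $[\fp]_n$ agree on special fibers on the two sides. Applying Lemma \ref{lemma:pF tilt} then yields $E_\infty^\flat\cong (E')_\infty$.

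The main technical obstacle I anticipate is bookkeeping rather than a genuine new idea: I must ensure that the choices made in constructing the two $F$-towers (the pseudo-uniformizers, the rational $\alpha$, the formal affinoid covers of $\overline B$ over which the Raynaud extension trivializes, and the gluing cocycles $\overline\psi_{ij}$) can all be matched on the two sides, so that the identification on special fibers is compatible with \emph{all} transition maps simultaneously. Proposition \ref{prop:semi-abelian schemes deform} guarantees the existence of $\overline E'$ with the prescribed special fiber but leaves it highly non-unique, so one has to verify that the entire $F$-tower depends only on the special-fiber data. This is precisely what the pushout description above makes manifest, and it is the same mechanism underlying Lemma \ref{cor:pF special fiber unique}.
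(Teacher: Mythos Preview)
Your proposal is correct and follows essentially the same route as the paper: invoke Lemma \ref{lemma:pF tilt} by running the pushout construction $\fE_{\alpha/p^n}=\fT_{\alpha/p^n}\times^{\overline T}\overline E$ in parallel over $K$ and $K^\flat$ with the same $\alpha$, then check that each ingredient (the polyannulus models, the formal tori, the formal semi-abelian schemes, and the transition maps reducing to relative Frobenius) has matching mod $\varpi$ special fiber. The paper carries out exactly this verification piece by piece; your anticipated ``bookkeeping obstacle'' is precisely what the paper spends its proof resolving, and your observation that the pushout description makes the special-fiber dependence manifest is the key point.
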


\begin{proof}
As $\overline{E}$ and $\overline{E'}$ both have the same special fiber, the torus parts $T$ and $T'$ of $E$ and $E'$ must have the same rank $r$. The log maps $\ell$ and $\ell'$ for $E/K$ and $E'/K^\flat$ therefore both map to $\R^r$. The images of $\ell$ and $\ell'$ in $\R^r$ are the same as the tilting equivalence identifies the value groups $|K^\times|$ and $|(K^\flat)^\times|$. We can therefore define an $F$-tower for $E'$ using models $\fE'_{\alpha/p^n}$ constructed from the same lattice $\Lambda$ and $\alpha$ dividing $\Lambda$ from Definition \ref{def:hypercube models} as we used for $E$. 

This $F$-tower defines the perfectoid cover $(E')_\infty$ of $E'$. By Lemma \ref{lemma:pF tilt}, it suffices to show that the mod $\varpi^\flat$ special fiber of this $F$-tower is isomorphic to the mod $\varpi$ special fiber of the $F$-tower constructed in Proposition \ref{prop:formal model for E}. Recall that the models $\fEn$ are defined as pushouts $\fTn\times^{\overline{T}}\overline{E},$ and that the morphisms $[\fp]_n:\fE_{\alpha/p^{n+1}}\rightarrow\fEn$ are determined by the morphisms $[\fp]_n:\fT_{\alpha/p^{n+1}}\rightarrow\fTn$, $[p]:\overline T\rightarrow \overline T$, and $[p]:\overline E\rightarrow\overline E$. Moreover, this construction commutes with reducing to the mod $\varpi$ special fiber, so it is enough to check that the special fibers of these morphisms agree with the corresponding morphisms on the $K^\flat$ side. 

The mod $\varpi$ special fiber of $\overline{T}$ is the split torus of rank $r$ over $K^\circ/\varpi$, and the mod $\varpi$ special fiber of $\overline{T'}$ is the split torus of rank $r$ over $K^{\flat\circ}/\varpi^\flat$. As $K^\circ/\varpi\cong K^{\flat\circ}/\varpi^\flat$, these tori are isomorphic, and the multiplication by $p$ maps on $\overline{T}$ and $\overline{T'}$ both reduce to the multiplication by $p$ map on the special fibers, so they also agree. By assumption, the same is true for the special fibers of $\overline{E}$ and $\overline{E'}$. 

We next check that mod $\varpi$ special fiber $\widetilde{\fT}_{\alpha/p^n}$ of a model of the rank $r$ split torus $T$ over $K$ agrees with the mod $\varpi^\flat$ special fiber $\widetilde{\fT'}_{\alpha/p^n}$ of the corresponding model of the rank $r$ split torus $T'$ over $K^\flat$. By the construction of Section \ref{sub:torus models}, this reduces to the simple check that the formal annuli $\An(c^{n+1},c^n)$ over $K^\circ$ and $K^{\flat,\circ}$ have isomorphic special fibers. 

By Proposition \ref{model for [p] on T}, the reduction of the morphisms $[\fp]_n$ on each model is relative Frobenius. Now that we know the reductions of the models over $K$ and $K^\flat$ agree, we conclude that the reductions of the morphisms also agree.
\end{proof}

%%%
%%%
%%%

\subsection{Tilting the $F$-tower for $A$}\label{sub:tilting A}

Let us summarize our current progress. Given an abeloid $A$ over $K$, uniformized as a quotient $E/M$, we constructed an $F$-tower for $A$ by constructing an $F$-tower for $E$ such that the action of the lattice $M$ on $E$ extends to an action on the formal tower. We then constructed a Raynaud extension $E'$ over $K^\flat$ and an $F$-tower for $E'$ such that the mod $\varpi$ and mod $\varpi^\flat$ special fibers of the towers for $E$ and $E'$ agree, implying that $E_\infty^\flat\cong(E')_\infty$. In this section, we construct a lattice $M'\subset E'$ such that the action of $M'$ on $E'$ extends to an action on the formal tower for $E'$, and such that the mod $\varpi^\flat$ special fiber of this action agrees with the mod $\varpi$ special fiber of the action of $M$ on $E$. Letting $A':=E'/M'$ be the corresponding abeloid over $K^\flat$, we conclude that $A_\infty^\flat\cong (A')_\infty$.

To construct $M'$, we fix lattices $M^{1/p^n}\subset E$ such that for any $n\geq 1$, the map $[p]:E\rightarrow E$ restricts to an isomorphism $M^{1/p^n}\rightarrow M^{1/p^{n-1}}$. This is equivalent to fixing a subgroup $M_\infty\subset E_\infty$ such that the projection map $E_\infty\rightarrow E$ to the bottom of the inverse system restricts to an isomorphism $M_\infty\rightarrow M$. This requires us to make a pro-$p$ extension of $K$ to ensure that these lattices exist. We noted in Proposition \ref{prop:G_inf a group} that $E_\infty$ is a perfectoid group, so for point $m_\infty\in M_\infty$, translation by $m_\infty$ induces an isomorphism $\tau_{m_\infty}:E_\infty\rightarrow E_\infty.$ This map fits into the commutative diagram

	\begin{center}
		\begin{equation}\label{roots of m commute}
		\begin{tikzcd}
			E_\infty \arrow [r, "{\tau_{m_\infty}}"] \arrow[dd, bend right = 25, "q_2"] \arrow[ddd, bend right = 35, "q_1", swap] & E_\infty \arrow[dd, bend left = 25, "q_2", swap] \arrow[ddd, bend left = 35, "q_1"] \\
			\vdots \arrow[d, "{[p]}"] & \vdots \arrow[d, "{[p]}", swap] \\
			E \arrow [r, "{\tau_{m^{1/p}}}"] \arrow[d, "{[p]}"] & E \arrow[d, "{[p]}", swap] \\
			E \arrow[r, "{\tau_m}"] & E.
		\end{tikzcd}
		\end{equation}
	\end{center}
	
Here all the rectangles are fiber diagrams, including the large rectangles with $E_\infty$ in the top row. We can therefore think of the choice of $p$th power roots of $m$ as being equivalent to fixing a choice of isomorphism $E_\infty\simeq E\times_{\tau_m,E} E_\infty$. 

By the tilting equivalence, all of the isomorphisms $E_\infty\xrightarrow{\tau_{m_\infty}} E_\infty$ correspond to isomorphisms $E_\infty^\flat\xrightarrow{\tau_{m_\infty}^\flat} E_\infty^\flat$. Here $m_\infty^\flat$ is the image of $m_\infty$ under the homeomorphism $E_\infty(K)\cong E_\infty^\flat(K^\flat)$. In Theorem \ref{thm:tilting E}, we constructed a Raynaud extension $E'$ over $K^\flat$ with an isomorphism $$\iota:E_\infty^\flat\xrightarrow{\cong}(E')_\infty\sim \varprojlim_{[p]} E'.$$ Let $q':E'_\infty\rightarrow E'$ be the induced map to the bottom of the inverse system. We can now define $M'\subset E'$ by tracing $M$ through these maps. Let $s_1:M\rightarrow E_\infty$ denote the section of $q_1|_{M}$ which sends $M$ to $M_\infty$.

\begin{lemma}\label{lemma:M' a lattice}
The lattice $M \subset E$ is mapped bijectively to a lattice $M':=q'_1\circ\iota\circ\flat\circ s_1(M)$ in $E'$ with $\ell'(M')=\ell(M)\subset\R^r$.
\end{lemma}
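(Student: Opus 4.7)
The plan is to show that the composition $\Phi := q'_1 \circ \iota \circ \flat \circ s_1$ restricts to a group homomorphism $M \to E'(K^\flat)$ satisfying $\ell' \circ \Phi = \ell$. Granted this identity, both conclusions of the lemma follow at once: since $\ell|_M : M \to \Lambda$ is a bijection onto the full-rank lattice $\Lambda = \ell(M)$, the relation $\ell' \circ \Phi = \ell$ forces $\Phi|_M$ to be injective, and $\ell'(M') = \ell'(\Phi(M)) = \ell(M) = \Lambda$. Thus $M' \subset E'$ is a subgroup on which $\ell'$ restricts to a bijection onto a full-rank lattice in $\R^r$, so $M'$ is a full-rank lattice in the sense of Definition \ref{def:rigid lattice}.

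First I would check that each factor of $\Phi$ is a group homomorphism. The section $s_1$ is the inverse of the group isomorphism $q_1|_{M_\infty} : M_\infty \xrightarrow{\sim} M$ coming from our choice of compatible $p$-power roots, hence is a group homomorphism. Tilting induces a group isomorphism of $K$-points $E_\infty(K) \cong E_\infty^\flat(K^\flat)$ by Theorem \ref{thm:tilting}. The isomorphism $\iota : E_\infty^\flat \xrightarrow{\sim} (E')_\infty$ from Theorem \ref{thm:tilting E} is an isomorphism of perfectoid groups by the uniqueness clause of Proposition \ref{prop:G_inf a group}. Finally $q'_1$ is a group homomorphism by the same proposition. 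Composing shows $M'$ is a subgroup of $E'(K^\flat)$.

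The substantive step is verifying $\ell' \circ \Phi = \ell$. I would introduce the perfectoid log maps $\ell_\infty := \ell \circ q_1$ on $E_\infty(K)$ and $\ell'_\infty := \ell' \circ q'_1$ on $(E')_\infty(K^\flat)$, both landing in the same $\R^r$ under the canonical identification of value groups $|K^\times| = |(K^\flat)^\times|$; let $\ell_\infty^\flat$ be the analogous log map on $E_\infty^\flat(K^\flat)$. Tilting preserves absolute values via $|x|_{K^\flat} = |x^\sharp|_K$, and applying this coordinate-wise on the torus part yields $\ell_\infty^\flat \circ \flat = \ell_\infty$. The key compatibility $\ell'_\infty \circ \iota = \ell_\infty^\flat$ I would extract from the proof of Theorem \ref{thm:tilting E}: the two $F$-towers $\{\fEn\}$ and $\{\mathfrak E'_{\alpha/p^n}\}$ are built from the very same hypercube decomposition of $\R^r$ dividing $\Lambda$, and $\iota$ is manufactured from the induced isomorphism of mod $\varpi$ special fibers provided by the identification $K^\circ/\varpi \cong K^{\flat\circ}/\varpi^\flat$. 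Since the log map on each Raynaud extension is determined, via the torus projection, by which poly-annular piece $U_{{\bf e}\Delta_\alpha/p^n}$ a point lives in, this common combinatorial origin forces $\iota$ to intertwine the two torus projections, and therefore $\ell_\infty^\flat$ with $\ell'_\infty$. Chaining the compatibilities gives
\[ \ell' \circ \Phi \;=\; \ell'_\infty \circ \iota \circ \flat \circ s_1 \;=\; \ell_\infty^\flat \circ \flat \circ s_1 \;=\; \ell_\infty \circ s_1 \;=\; \ell \circ q_1 \circ s_1 \;=\; \ell. \]
The main obstacle is the middle step $\ell'_\infty \circ \iota = \ell_\infty^\flat$, which requires unpacking how $\iota$ is assembled from the common hypercube data and checking that this identification intertwines the two torus projections at the perfectoid level; the remaining compatibilities are formal consequences of the definitions of $s_1$ and of the log map on perfectoid $K$-points.
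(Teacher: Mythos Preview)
Your proposal is correct and follows essentially the same approach as the paper: both reduce to the commutativity $\ell'_\infty \circ \iota \circ \flat = \ell_\infty$ (the paper phrases this as commutativity of a ``log and tilt'' square), and both identify this as the only nontrivial step. The paper's concrete device for establishing the key compatibility $\ell'_\infty \circ \iota = \ell_\infty^\flat$ is worth noting: rather than arguing directly that $\iota$ intertwines the torus projections, it fixes a target point $\mathbf{c}\in\Q^r$ and refines the hypercube decomposition so that $\mathbf{c}$ becomes a vertex, whence the perfectoid affinoid over the degenerate polytope $\{\mathbf{c}\}$ on the $K$-side tilts to the corresponding affinoid on the $K^\flat$-side by the argument of Theorem~\ref{thm:tilting E}; this makes precise your heuristic that the log map is determined by ``which poly-annular piece a point lives in,'' since after refinement the relevant piece is a single point.
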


\begin{proof}
We claim that the following diagram commutes: 
\begin{center}
	\begin{equation}\label{log and tilt diagram}
	\begin{tikzcd}
		E_\infty(K) \arrow[rr, "{\iota\circ\flat}"] \arrow[d, "{q_1}"] & & E'_\infty(K^\flat) \arrow[d, "{q'_1}"] \\
		E(K) \arrow[rd, "{\ell}"] & &  E'(K^\flat) \arrow[ld, "{\ell'}",swap] \\
		& \R^r. &
	\end{tikzcd}
	\end{equation}
\end{center}

Assuming this, the lemma is clear as $\ell\circ q_1$ maps $M_\infty$ bijectively to $\Lambda\subset \R^r$, so the same is true when we traverse the diagram in the other direction. To see commutativity of the diagram, it is enough to fix a point $\bf c$ in $\R^r$ and show that the preimage of $\bf c$ in $E_\infty(K)$ is mapped to the preimage of $\bf c$ in $E'_\infty(K^\flat)$. By our assumption that the value group of $K$ is contained in $\Q$, we can assume that $\bf c$ is in $\Q^r$. 

We can refine the hypercube decompositions of $\R^r$ used to construct $E_\infty$ without changing the Diagram \ref{log and tilt diagram}: this corresponds to choosing finer formal analytic coverings of $E$ and $E'$, which correspond to blowups on the special fiber that aren't noticed on the generic fiber. So we can go back to Definition \ref{def:hypercube models} and replace our choice of rational number $\alpha$ dividing $\Lambda$ with some $\alpha'$ with larger denominator such that $c$ is a vertex of the corresponding decomposition of $\R^r$. The claim now follows as in Theorem \ref{thm:tilting E}: the tilt of the perfectoid affinoid lying over any $U_\Delta$ in the formal cover of $E$ is the perfectoid affinoid lying over the corresponding $U'_\Delta$ in the formal cover of $E'$, and we can now take $\Delta=\{\bf c\}$. 
\end{proof}

Note that in the above Lemma, it was essential that we fixed the maps $q_1$ and $q'_1$ going all the way to the bottom of the inverse systems, and that we defined $E_\infty$ and $E'_\infty$ using the same decomposition of $\R$. If we hadn't done this, our lattices $M$ and $M'$ would be off by a factor of $p^n$. 

We are now ready to prove the main theorem of the section. 

\begin{theorem}[Heuer, W. \cite{HW}]\label{thm:tilting A}
Let  $A=E/M$ be an abeloid over a perfectoid field $K$ with value group contained in $\Q$. Let $A'$ be the abeloid $E'/M'$ over $K^\flat$ for $E'$ as in Theorem \ref{thm:tilting E} and $M'$ as in Lemma \ref{lemma:M' a lattice}, defined after a suitable pro-$p$ extension of $K$. Defining $A_\infty$ and $(A')_\infty$ as in Theorem \ref{thm:cover of A}, we have $A_\infty^\flat\cong(A')_\infty$.
\end{theorem}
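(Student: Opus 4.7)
The plan is to apply Lemma \ref{lemma:pF tilt}. First, construct an $F$-tower $\{\mathfrak A'_{\alpha/p^n}\}$ for $A'=E'/M'$ by running the construction of Section \ref{sub:abeloid models} using the same rational $\alpha$ dividing $\ell'(M')=\ell(M)\subset\R^r$ (Lemma \ref{lemma:M' a lattice}). Since $M'$ has full rank in $E'$, Theorem \ref{thm:formal model for A} applies verbatim, and $(A')_\infty$ is the corresponding perfectoid space by Theorem \ref{thm:cover of A}. It then suffices to show that the isomorphism $K^\circ/\varpi\cong K^{\flat\circ}/\varpi^\flat$ induces an isomorphism of the mod $\varpi$ special fiber of $\{\fAn\}$ with the mod $\varpi^\flat$ special fiber of $\{\mathfrak A'_{\alpha/p^n}\}$.

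Since each level is the quotient of the Raynaud-extension model by the lattice action, and the proof of Theorem \ref{thm:tilting E} already identifies the special fibers of $\{\fEn\}$ and $\{\mathfrak E'_{\alpha/p^n}\}$ under $K^\circ/\varpi\cong K^{\flat\circ}/\varpi^\flat$, the task reduces to showing that this identification intertwines the $M$-action on $\widetilde{\fEn}$ with the $M'$-action on $\widetilde{\mathfrak E'}_{\alpha/p^n}$.

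For $m\in M$ corresponding to $m'\in M'$, the formal translation $\tau_{\mathfrak m}$ of Theorem \ref{thm:formal model for A}(\ref{fmA1}) sends a formal affinoid open $U_{\mathbf e\Delta_{\alpha/p^n}}\times V_{j,n}$ to $U_{p^n\ell(m)+\mathbf e\Delta_{\alpha/p^n}}\times\tau_{b_m}V_{j,n}$, where $b_m=\pi(m)\in\overline B(K^\circ)$. The ``torus piece'' matches on special fibers because $\ell(m)=\ell'(m')$ by Lemma \ref{lemma:M' a lattice}. Matching the ``abelian piece'' reduces to showing that $\widetilde{b_m}\in\widetilde B(K^\circ/\varpi)$ corresponds to $\widetilde{b_{m'}}\in\widetilde{B'}(K^{\flat\circ}/\varpi^\flat)$ under the isomorphism $\widetilde B\cong\widetilde{B'}$ coming from Proposition \ref{prop:semi-abelian schemes deform}. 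This in turn follows by tracking $m$ through diagram (\ref{roots of m commute}) to $m_\infty=s_1(m)\in E_\infty(K)$, tilting to $m_\infty^\flat\in E_\infty^\flat(K^\flat)\cong E'_\infty(K^\flat)$, projecting to $m'\in E'(K^\flat)$, and applying the analogue of diagram (\ref{log and tilt diagram}) with the projection $E\to B$ in place of $\ell$.

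The main obstacle is this last compatibility: extending the matching of log coordinates established in Lemma \ref{lemma:M' a lattice} to a matching of the full reduction of translation operators on the abelian quotient. Concretely, by refining the formal analytic cover of $E$ (and correspondingly of $E'$) so that the points $b_m,b_{m'}$ land at vertices of the hypercube decomposition used to build $B_\infty\sim\varprojlim_{[p]} B$, the tilting equivalence identifies the affinoid of $\widetilde B$ containing $\widetilde{b_m}$ with the affinoid of $\widetilde{B'}$ containing $\widetilde{b_{m'}}$, yielding the required matching of translations. Once the $M$- and $M'$-actions agree on special fibers, taking quotients gives isomorphisms $\widetilde{\fAn}\cong\widetilde{\mathfrak A'}_{\alpha/p^n}$ compatible with the tower transition maps, and Lemma \ref{lemma:pF tilt} yields $A_\infty^\flat\cong(A')_\infty$.
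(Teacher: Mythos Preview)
Your overall strategy is correct and matches the paper: apply Lemma \ref{lemma:pF tilt} by building the $F$-tower $\{\mathfrak A'_{\alpha/p^n}\}$ for $A'$ with the same $\alpha$, then reduce to showing that the $M$-action on each $\widetilde{\fEn}$ and the $M'$-action on each $\widetilde{\mathfrak E'}_{\alpha/p^n}$ agree under the identification coming from Theorem \ref{thm:tilting E}.

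The gap is in your verification of that last step. Decomposing $\tau_{\mathfrak m}$ into a ``torus piece'' and an ``abelian piece'' and matching each separately does not suffice: knowing $\ell(m)=\ell'(m')$ only tells you which hypercube the translation lands in, not the induced automorphism of the special fiber of that formal affinoid; and the translation on $\fEn=\fTn\times^{\overline T}\overline E$ is not a simple product of a torus translation and $\tau_{\overline b_m}$, so matching $\widetilde{b_m}$ with $\widetilde{b_{m'}}$ (even if that were established) would still leave the compatibility of the $\overline T$-twisting unaccounted for. Your proposed fix in the last paragraph is also off: there is no hypercube decomposition involved in building $B_\infty$ (that construction uses only the formal abelian scheme $\overline B$ and iterates $[p]$), so the refinement you describe does not exist.

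The paper avoids this direct verification entirely. It extends diagram (\ref{roots of m commute}) to the formal models on both sides, obtaining two towers of commuting squares (one over $K^\circ$, one over $K^{\flat\circ}$) in which every square is a fiber square. On special fibers, the top horizontal maps $\tau_{\mathfrak m_\infty}$ and $\tau_{\mathfrak m_\infty^\flat}$ agree by the tilting equivalence, and all vertical maps agree by Theorem \ref{thm:tilting E}. Since fiber squares are determined by three of their sides, this forces \emph{all} horizontal maps $\tau_{\mathfrak m^{1/p^n}}$ and $\tau_{(\mathfrak m')^{1/p^n}}$ to agree on special fibers, in particular $\tau_{\mathfrak m}$ and $\tau_{\mathfrak m'}$ at the bottom level. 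This is exactly the compatibility you need, obtained without ever unpacking the local structure of $\tau_m$; the choice of $p$-power roots $M^{1/p^n}$ is what makes the fiber-square argument run.
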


\begin{proof}
Let $m$ be an element of $M$, let $m'$ be the corresponding element of $M'$ from Lemma \ref{lemma:M' a lattice}. By part (\ref{fmA3}) of Theorem \ref{thm:formal model for A}, Diagram \ref{roots of m commute} and the analogous diagram for $m'$ extend to diagrams of formal models

\begin{center}
		\begin{equation}\label{formal models of roots of m commute}
		\begin{tikzcd}
			\mathfrak E_\infty \arrow [r, "{\tau_{\mathfrak m_\infty}}"] \arrow[dd, bend right = 25, "q_2"] \arrow[ddd, bend right = 35, "q_1", swap] & \mathfrak E_\infty \arrow[dd, bend left = 25, "q_2", swap] \arrow[ddd, bend left = 35, "q_1"] & & & \mathfrak E'_\infty \arrow [r, "{\tau_{\mathfrak m_\infty^\flat}}"] \arrow[dd, bend right = 25, "q'_2"] \arrow[ddd, bend right = 35, "q'_1 ", swap] & \mathfrak E'_\infty \arrow[dd, bend left = 25, "q'_2", swap] \arrow[ddd, bend left = 35, "q'_1 "] \\
			\vdots \arrow[d, "{[\mathfrak p]_2}"] & \vdots \arrow[d, "{[\mathfrak p]_2}", swap] & & & \vdots \arrow[d, "{[\mathfrak p']_2}"] & \vdots \arrow[d, "{[\mathfrak p']_2}", swap] \\
			\mathfrak E_{\frac{1}{p^2}D} \arrow [r, "{\tau_{\mathfrak m^{1/p}}}"] \arrow[d, "{[\mathfrak p]_1}"] & \mathfrak E_{\frac{1}{p^2}D} \arrow[d, "{[\mathfrak p]_1}", swap] & & & \mathfrak E'_{\frac{1}{p^2}D} \arrow [r, "{\tau_{(\mathfrak m')^{1/p}}}"] \arrow[d, "{[\mathfrak p']_1}"] & \mathfrak E'_{\frac{1}{p^2}D} \arrow[d, "{[\mathfrak p']_1}", swap] \\
			\mathfrak E_{\frac{1}{p}D} \arrow[r, "{\tau_{\mathfrak m}}"] & \mathfrak E_{\frac{1}{p}D} & & & \mathfrak E'_{\frac{1}{p}D} \arrow[r, "{\tau_{\mathfrak m'}}"] & \mathfrak E'_{\frac{1}{p}D}.
		\end{tikzcd}
		\end{equation}
	\end{center}  
	
We claim that the mod $\varpi$ special fiber of the first diagram is isomorphic to the mod $\varpi^\flat$ special fiber of the second. By the tilting equivalence, this is true for the maps $\times\mathfrak m_\infty$ and $\times\mathfrak m^\flat_\infty$. By construction, this is true for all the vertical maps. As all squares are fiber diagrams, we conclude that this must be true for all the other horizontal maps. In particular, as $M'\subset M'^{1/p^n}$ for all $n$, we have actions of $M'$ on every $\mathfrak E'_{\alpha/p^n}$ agreeing with the $M$ action on $\fEn$ on the special fiber. Applying parts (\ref{fmA2}) and (\ref{fmA4}) of Theorem \ref{thm:formal model for A}, we obtain an $F$-tower for $A'$ satisfying the condition in Lemma \ref{lemma:pF tilt}: for all $n\geq 1$, the commutative diagrams

	\begin{center}
		\begin{tikzcd}
			\mathfrak E_{\alpha/p^{n+1}} \arrow [r] \arrow[d, "{[\mathfrak p]_n}"] & \mathfrak A_{\alpha/p^{n+1}} \arrow[d, "{[\mathfrak p]_n}"] & \mathfrak E'_{\alpha/p^{n+1}} \arrow [r] \arrow[d, "{[\mathfrak p']_n}"] & \mathfrak A'_{\alpha/p^{n+1}} \arrow[d, "{[\mathfrak p']_n}"] \\
			\fEn \arrow [r] & \fAn & \mathfrak E'_{\alpha/p^n} \arrow[r] & \mathfrak A'_{\alpha/p^n}
		\end{tikzcd}
	\end{center}

have isomorphic special fibers. We conclude that $A_\infty^\flat\cong A'_\infty$.

\end{proof}

\begin{remark}
In \cite{Heuer_uniform}, Heuer determines all morphisms between perfectoid covers of abeloids. This lets him determine precisely when two abeloids have isomorphic perfectoid covers in \cite[Theorem 1.4]{Heuer_uniform}. Roughly speaking, he shows that up to $p$-isogeny, the special fibers of the Raynaud extensions must agree, and the lattices must be ``$p$-adically close". 
\end{remark}

%%%%%%%
%%%%%%%
%%%%%%%
%%%%%%%
%%%%%%%
%%%%%%%

\section{Line bundles}\label{sec:line bundles}

%%%%%%%
%%%%%%%
%%%%%%%
%%%%%%%
%%%%%%%
%%%%%%%

In this section, we show how to transfer a line bundle $L$ on an abeloid $A/K$ to a line bundle $L'$ on a suitable abeloid $A'/K^\flat$ as constructed in Theorem \ref{thm:tilting A}. This process has two applications. If $A$ is an abelian variety, it has an ample line bundle $L$. We will see that the corresponding $L'$ on $A'$ must also be ample, giving us a refinement of Theorem \ref{thm:tilting A}: if $A$ is an abelian variety, we can choose $A'$ to be an abelian variety as well. In Section \ref{sec:weight-monodromy}, we'll see that moving line bundles from $A$ to $A'$ is the key input in our analogue of Scholze's approximation lemma, which is itself a key step in the proof of weight-monodromy. 

We first give an overview of the classification of line bundles on Raynaud extensions in Section \ref{sub:Raynaud bundles}. We use this in Section \ref{sub:formal bundles} to show that there is some model $\fA_\alpha$ in the $F$-tower for $A$ constructed in Theorem \ref{thm:cover of A} such that $L$ extends to $\fA_\alpha$. This is based off of an argument of Gubler in the totally degenerate case. Having this model gives us a well-defined metric on $L$, which will let us approximate global sections of $L$.

In Section \ref{sub:perfectoid bundles}, we transfer line bundles to the tilt by developing the theory of $\G_{m,\infty}$-torsors on perfectoid spaces. These give rise to to systems of $p$th power roots of line bundles. Any line bundle over a perfectoid space of characteristic $p$ extends to a $\G_{m,\infty}$-torsor, which we can then untilt to get a $\G_{m,\infty}$-torsor in characteristic 0. We show that every line bundle $L$ on $A$ pulls back to a line bundle on $A_\infty$ which arises in this way. 

An important subtlety arises when we try to transfer the data of a line bundle $L$ from $A$ to $A'$. When $L$ is ample, it defines a polarization: an isogeny $\varphi_L:A\rightarrow A^\vee$. Tilting $L$ will therefore involve constructing an isogeny $\varphi_{L'}:A'\rightarrow (A')^\vee$. There are many choices involved in the construction of $A'$ in Theorem \ref{thm:tilting A}. If these choices are made at random for $A$ and then again for $A^\vee$, there is no reason to expect the resulting abelian varieties $A'$ and $(A^\vee)'$ over $K^\flat$ to be duals. To get dual varieties over $K^\flat$, we need to make compatible choices. We use $\G_{m,\infty}$-torsors to do this, constructing and tilting a perfectoid cover of the Poincar\'e bundle over $B\times B^\vee$, and showing that the choices made in this process induce compatible choices for $A$ and $A^\vee$.

%%%
%%%
%%%%

\subsection{Line bundles on Raynaud extensions}\label{sub:Raynaud bundles}

%%%%
%%%
%%%

We first summarize a bit more background on Raynaud extensions. Let $A$ be an abelian variety over a non-archimedean field $K$. As described in Section \ref{sub:Raynaud extensions}, after taking a finite separable extension of $K$, there is a short exact sequence of rigid groups
\begin{equation}\label{eq:Raynaud sequence}
1\rightarrow T\rightarrow E\xrightarrow{\pi} B\rightarrow 1
\end{equation}
for a rigid torus $T$ of rank $r$ and an abelian variety $B$ with good reduction, and a lattice $h:M\hookrightarrow E$ such that $A\cong E/M$. Let $M^\vee:=\Hom(T,\G_m)$ denote the character group of the torus, so $M^\vee$ is a free abelian group of rank $r$. Let $B^\vee$ denote the dual abelian variety of $B$, let $P_{B\times B^\vee}$ denote the Poincar\`e bundle on $B\times B^\vee$. We write $P_{B\times B^\vee}^\times$ for the associated $\G_m$-torsor.

\begin{proposition}\label{prop:Raynaud morphism}
A Raynaud extension $1\rightarrow T\rightarrow E\xrightarrow{\pi} B\rightarrow 1$ is equivalent to a group homomorphism $\phi^\vee:M^\vee\rightarrow B^\vee$. 
\end{proposition}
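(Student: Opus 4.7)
The plan is to construct the correspondence via pushout of extensions along characters, invoking the rigid analytic Barsotti--Weil formula. Given a Raynaud extension $1 \to T \to E \xrightarrow{\pi} B \to 1$, each character $\chi \in M^\vee = \Hom(T, \G_m)$ lets us push out along $\chi$, producing a $\G_m$-extension $1 \to \G_m \to E_\chi \to B \to 1$ in the category of commutative rigid analytic groups over $K$. By the rigid analytic Barsotti--Weil formula, such extensions are classified by $B^\vee(K)$, since $\Ext^1(B, \G_m) \cong B^\vee$; I define $\phi^\vee(\chi)$ to be the classifying $K$-point.

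To see that $\phi^\vee$ is a group homomorphism, it suffices to check that pushout along characters is additive in the character, which amounts to saying that the Baer sum of $\G_m$-extensions matches addition of characters under pushout. This is a routine check using functoriality of pushouts in an abelian category. Thus every Raynaud extension determines a homomorphism $\phi^\vee: M^\vee \to B^\vee$.

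For the converse, I would assemble the $\G_m$-extensions $E_\chi$ into a single extension by $T$. The cleanest way is to package everything into a chain of natural isomorphisms
\[
\Ext^1(B, T) \;=\; \Ext^1(B, \Hom_\Z(M^\vee, \G_m)) \;\cong\; \Hom_\Z(M^\vee, \Ext^1(B, \G_m)) \;\cong\; \Hom_\Z(M^\vee, B^\vee),
\]
where the first equality records that $T$ has character group $M^\vee$, the middle isomorphism is additivity of $\Ext^1(B,-)$ on the finite free abelian group $M^\vee$ (after choosing a basis, or intrinsically via the decomposition $T \cong \G_m^r$), and the last is Barsotti--Weil. Reading this chain in the other direction gives an inverse construction: a homomorphism $\phi^\vee$ determines, character by character, a compatible system of $\G_m$-extensions, which reassemble into a single $T$-extension of $B$.

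The main (and essentially only) obstacle is citing the rigid analytic Barsotti--Weil formula $\Ext^1(B, \G_m) \cong B^\vee$. This is classical in the algebraic setting, and in the non-archimedean setting it was established by Raynaud; it is already referenced in the excerpt for the analogous formal statement (Theorem A.2.8 of \cite{Lut}), so I would invoke it as a black box rather than reprove it. Everything else in the argument is formal homological algebra in the abelian category of commutative rigid analytic groups.
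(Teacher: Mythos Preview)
Your proposal is correct and follows essentially the same approach as the paper: both construct $\phi^\vee$ by pushing out the extension along characters and invoking the Barsotti--Weil isomorphism $\Ext^1(B,\G_m)\cong B^\vee$ from \cite[Theorem A.2.8]{Lut}, and both reconstruct $E$ from $\phi^\vee$ by assembling the $\G_m$-extensions. The only cosmetic difference is that the paper writes the inverse construction concretely as a fiber product $E\cong P^\times_{B\times\phi^\vee(m_1^\vee)}\times_B\cdots\times_B P^\times_{B\times\phi^\vee(m_r^\vee)}$ after choosing a basis of $M^\vee$, whereas you package the same content as the chain of isomorphisms $\Ext^1(B,T)\cong\Hom_\Z(M^\vee,\Ext^1(B,\G_m))\cong\Hom_\Z(M^\vee,B^\vee)$.
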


\begin{proof}
This follows from \cite[Theorem A.2.8]{Lut} and the discussion before \cite[Definition 6.1.2]{Lut}. We sketch the construction. For any character $m^\vee\in M^\vee$, taking the pushout of Equation (\ref{eq:Raynaud sequence}) by $m^\vee$ gives a commutative diagram

\begin{equation}\label{eq:Raynaud pushout}
	\begin{tikzcd}
	T \arrow[r] \arrow[d, "{m^\vee}"] & E \arrow[r, "{\pi}"] \arrow[d, "{\langle \cdot, m^\vee\rangle}"] & B \arrow[d, equal] \\
	\G_{m} \arrow[r] & P^\times_{B\times\phi^\vee(m^\vee)} \arrow[r]  & B.
	\end{tikzcd}
\end{equation}

This diagram \emph{defines} the image $\phi^\vee(m^\vee)\in B^\vee$ and the map $\langle\cdot, m^\vee\rangle$, as there is an isomorphism $\Ext(B,\G_m)\cong B^\vee$. The line bundle $\phi^\vee(m^\vee)$ must be translation-invariant because of the group law on $E$: any group homomorphism $M^\vee\rightarrow \Pic_B$ will define an extension of $B$ by $T$, but the extension will only admit a group law if the image of lands in $\Pic^0(B)=B^\vee$. See \cite[VII.3, Theorem 6]{AGCF} and the surrounding material for more details, or \cite[Appendix A.2]{Lut}. 

To go in the other direction, we start with a group homomorphism $\phi^\vee:M^\vee\rightarrow B^\vee$. Choosing a basis $m_1^\vee,\dots,m_r^\vee$ of $M^\vee$, we can write 
\begin{equation}\label{eq:E coords}
E\cong P^\times_{B\times\phi^\vee(m_1^\vee)}\times_B\cdots\times_B P^\times_{B\times\phi^\vee(m_r^\vee)}.
\end{equation}
\end{proof}

Once a decomposition for $E$ has been fixed as in (\ref{eq:E coords}), we can describe the map $\langle\cdot, m^\vee\rangle$ more explicitly as follows. Expand $m^\vee=e_1m^\vee_1+\cdots+e_r m^\vee_r$ in terms of our basis. Given $z\in E$, choose an open of $B$ containing $\pi(z)$ over which the $T$-torsor $E$ trivializes, so that we can represent $z$ as an element $(t_1,\dots,t_r)$ in the copy of $T$ over $\pi(z)$. Then \[\langle z,m^\vee\rangle=t_1^{e_1}\cdots t_r^{e_r}\] in the copy of $\G_m$ over $\pi(z)$ in the trivialization of $P^\times_{B\times \phi^\vee(m^\vee)}$. As in the discussion before Definition \ref{def:rigid lattice}, changing the trivialization will not change the absolute value of the $t_i$. We therefore have a well-defined valuation map 
\[\ell: E(K)\rightarrow \R^r; (t_1,\dots,t_r)\mapsto (-\log |t_1|,\dots,-\log |t_r|).\]
The above description shows that $\langle \cdot,m^\vee\rangle$ ``factors over $\R$", giving an affine function $F_{m^\vee}$ so that the following diagram commutes:

\begin{equation}\label{eq:affine fiber}
\begin{tikzcd}
E(K) \arrow[r, "\ell"] \arrow[d, "{\langle \cdot, m^\vee\rangle}"] & \R^r \arrow[d, "{F_{m^\vee}}"] \\
P^\times_{B\times \phi^\vee(m^\vee)} \arrow[r, "\ell"] & \R.
\end{tikzcd}
\end{equation} 

We call $F_{m^\vee}$ the \emph{tropicalization} of $\langle \cdot, m^\vee\rangle$.

We now sketch the construction of the Raynaud extension uniformizing the dual abelian variety $A^\vee$ from the uniformization $A\cong E/M$. Composing the inclusion $h:M\rightarrow E$ with $\pi:E\rightarrow B$ gives a group homomorphism $\phi:M\rightarrow B$. As $(B^\vee)^\vee\cong B$, Proposition \ref{prop:Raynaud morphism} gives a Raynaud extension
\begin{equation}\label{eq:dual Raynaud}
1\rightarrow T^\vee \rightarrow E^\vee\xrightarrow{\pi^\vee} B^\vee\rightarrow 1
\end{equation}
where the dual torus $T^\vee$ is again a rigid torus of rank $r$. By \cite[Proposition 6.1.8]{Lut}, the map $h$ is equivalent to a non-degenerate bilinear form $\langle \cdot,\cdot\rangle:M\times M^\vee\rightarrow P_{B\times B^\vee}^\times$ living over $\phi\times \phi^\vee$. That is, a commutative diagram 
\begin{equation}\label{eq:Poincare form}
\begin{tikzcd}
& P_{B\times B^\vee}^\times \arrow[d] \\
M\times M^\vee \arrow[ur, "{\langle \cdot,\cdot\rangle}"] \arrow[r, "{\phi\times \phi^\vee}"]  & B\times B^\vee
\end{tikzcd}
\end{equation}
such that for any fixed $m^\vee\in M^\vee$, the restriction $\langle \cdot,m^\vee\rangle:M\rightarrow  P^\times_{B\times\phi^\vee(m^\vee)}$ is the restriction of the middle vertical arrow in (\ref{eq:Raynaud pushout}), and the symmetric condition is true for $m\in M$. By non-degenerate bilinear form, we mean that the tropicalization $F:M\times M^\vee\rightarrow P^\times_{B\times B^\vee}\rightarrow \R^r$ which restricts to $F_{m^\vee}$ for any fixed $m^\vee\in M^\vee$ is a non-degenerate bilinear form. By symmetry, we can extract an inclusion $h^\vee:M^\vee\hookrightarrow E^\vee$ from the bilinear form, giving the dual lattice.

By \cite[Theorem 6.3.3]{Lut} the dual lattice uniformizes the dual abelian variety $A^\vee=E^\vee/M^\vee$. We remark that the notation $E^\vee$ may be a little misleading: this is not some canonical dual associated to the semi-abelian variety $E$, it also depends on the lattice $M\subset E$. 

We are now ready to state the classification of line bundles on $A$, collecting many results from \cite[Chapter 6]{Lut}. Every line bundle on $E$ arises as  the pullback along $\pi$ of some line bundle $N$ on $B$. Any line bundle $L$ on $A$ is given by the $M$-linearization of a line bundle $\pi^*(N)$ on $E$: a compatible family of isomorphisms 
\[\{c_m:\pi^*N\rightarrow \tau^*_m\pi^* N: m\in M\}.\]
Here $\tau_m$ is the multiplication by $m$ map, and the compatibility condition is 
\begin{equation}\label{eq:compatibility}
\tau_{m_1}^*(c_{m_2})\cdot c_{m_2}=c_{m_1+m_2}
\end{equation}
for all $m_1,m_2\in M$.  For many choices of line bundle $N$ and lattice $M$, there will not be any such isomorphisms - we need to add two conditions to see when families of $c_m$ exist.

Let $\varphi_N:B\rightarrow B^\vee$ be the homomorphism sending $b\mapsto \tau_b^*N\otimes N^{-1}.$ The first compatibility condition on $M$ and $N$ is that there must be a group homomorphism $\sigma:M\rightarrow M^\vee$ making the following diagram commute:
\begin{equation}\label{eq:bundle diagram}
	\begin{tikzcd}
	M \arrow[r, "\phi"] \arrow[d, "\sigma"] & B \arrow[d, "\varphi_N"] \\
	M^\vee \arrow[r, "\phi^\vee"] & B^\vee.
	\end{tikzcd}
\end{equation}

The second condition is a trivialization $\chi:M\rightarrow \phi^* N$ of the $\G_m$-torsor $\phi^*N$ on $M$ satisfying
\begin{equation}\label{eq:r compatibility}
\chi(m_1+m_2)\otimes \chi(m_1)^{-1}\otimes \chi(m_2)^{-1}=\langle m_1,\sigma(m_2)\rangle
\end{equation}
for all $m_1,m_2\in M$. The factor $\langle m_1,\sigma(m_2)\rangle$ is needed to make translations match up for non-translation-invariant bundles. It defines a symmetric, non-degenerate bilinear form on $M$. Taking valuations, we get a bilinear form 
\begin{equation}\label{eq:line bundle form}
\langle\cdot,\cdot\rangle_\sigma:M\times M\rightarrow \R
\end{equation}
which extends to a bilinear form $F_\sigma:\R^r\times\R^r\rightarrow \R$.

We make $c_m$ more explicit for future use. Choose a basis for $M^\vee$, giving coordinates for $E$ as in (\ref{eq:E coords}), and expand $\sigma(m)=e_1m_1^\vee+\cdots e_r m_r^\vee$. Over a trivialization of $\pi^*(N)$, $c_m$ takes the fiber over a point $z\in E$ corresponding to $(t_1,\dots,t_r)$ in the copy of $T$ over $\pi(z)$, translates it to the fiber over $\tau_m(z)$, and scales the copy of $\G_m$ by the unit $r(m)t_1^{e_1}\cdots t_r^{e_r}$ in $K^\times$. Again taking valuations, we get an affine function $z_m: \R^r\rightarrow \R$ satisfying
\begin{equation}\label{eq:zm}
z_m(\ell(z)):=\ell(c_m(z))=\ell(r(m))+F(\ell(z),\ell(\sigma(m))).
\end{equation}
This is the tropicalization of $c_m$.

This is enough data to classify all line bundles on $A$:

\begin{proposition}[Theorems 6.3.2, 6.4.4 \cite{Lut}]\label{prop:Raynaud line bundles}
A line bundle $L$ on an abelian variety $A$ is determined by the data of a line bundle $N$ on $B$, a group homomorphism $\sigma:M\rightarrow M^\vee$, and a trivialization $\chi:M\rightarrow \phi^*N$ such that Diagram \ref{eq:bundle diagram} commutes and Equation (\ref{eq:r compatibility}) holds. Furthermore, $L$ is ample precisely when $N$ is ample and the bilinear form $\langle\cdot,\cdot\rangle_\sigma$ of Equation (\ref{eq:line bundle form}) is positive definite.
\end{proposition}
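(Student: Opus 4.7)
The plan is to classify line bundles on $A = E/M$ by descent along the quotient map $p\colon E\to A$, reducing the problem to line bundles on $E$ together with an $M$-linearization, and then further to line bundles on $B$ together with bookkeeping data for how they interact with the lattice.

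First I would argue that every line bundle on $E$ is of the form $\pi^{\ast}N$ for a unique line bundle $N$ on $B$. The morphism $\pi\colon E\to B$ is a $T$-torsor, and since rigid analytic tori have trivial Picard group (characters of $T$ correspond to the group $M^\vee$, but they come from trivial line bundles), the map $\pi^{\ast}\colon\Pic(B)\to\Pic(E)$ is an isomorphism. Faithfully flat descent along $p\colon E\to A$ then identifies line bundles on $A$ with line bundles $\pi^{\ast}N$ on $E$ equipped with an $M$-linearization, i.e.\ a cocycle $\{c_m\colon \pi^{\ast}N\xrightarrow{\sim}\tau_m^{\ast}\pi^{\ast}N\}$ satisfying~(\ref{eq:compatibility}).

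Next I would analyze when such a linearization exists. Existence of a single isomorphism $c_m$ is equivalent to the triviality of $\tau_m^{\ast}\pi^{\ast}N\otimes (\pi^{\ast}N)^{-1}=\pi^{\ast}(\tau_{\phi(m)}^{\ast}N\otimes N^{-1})=\pi^{\ast}\varphi_N(\phi(m))$. Using the identification of Diagram~(\ref{eq:Raynaud pushout}), $\pi^{\ast}$ kills precisely those elements of $B^\vee$ lying in the image of $\phi^\vee\colon M^\vee\to B^\vee$. The linearization thus exists if and only if $\varphi_N\circ\phi$ factors through $\phi^\vee$; this factorization is the homomorphism $\sigma\colon M\to M^\vee$ of Diagram~(\ref{eq:bundle diagram}). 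Once $\sigma$ is fixed, the family $\{c_m\}$ is determined by a choice of trivialization $\chi\colon M\to\phi^{\ast}N$, and the explicit formula~(\ref{eq:zm}) together with the computation of $\langle \cdot,\sigma(m)\rangle$ via~(\ref{eq:Raynaud pushout}) shows that the cocycle condition~(\ref{eq:compatibility}) on $c_m$ translates exactly into the twisted multiplicativity condition~(\ref{eq:r compatibility}) on $\chi$. This gives the claimed bijection between line bundles on $A$ and triples $(N,\sigma,\chi)$ satisfying the stated compatibilities.

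For the ampleness criterion, I would proceed by analyzing global sections. Sections of $L$ on $A$ correspond to $M$-invariant sections of $\pi^{\ast}N$ on $E$, which by the torus fibration decompose as Fourier/theta series indexed by $M^\vee$, with coefficients in $H^0(B,N\otimes \mathcal L_{m^\vee})$ for translates $\mathcal L_{m^\vee}$ of $N$. For $L$ to have enough sections to separate points and tangent vectors one needs (i) ampleness of $N$ on $B$, ensuring enough sections on the abelian quotient, and (ii) convergence/growth of the theta coefficients controlled by the bilinear form~(\ref{eq:line bundle form}); this convergence is equivalent to positive definiteness of $\langle\cdot,\cdot\rangle_\sigma$, exactly as in the classical Riemann relations over $\C$. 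Conversely, if either condition fails one produces a nontrivial abelian subvariety or subtorus on which $L$ restricts to a non-ample bundle, obstructing ampleness on $A$.

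The main obstacle is the ampleness half: the classification of linearized line bundles is essentially cohomological bookkeeping once Steps~1 and~2 are in place, but extracting positive definiteness of $\langle\cdot,\cdot\rangle_\sigma$ from the existence of many global sections requires the non-archimedean analogue of the theta-series convergence argument, and is the technical heart of \cite[\S 6.3--6.4]{Lut}.
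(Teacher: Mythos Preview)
The paper does not prove this proposition; it is stated with a citation to \cite[Theorems 6.3.2, 6.4.4]{Lut} and no proof environment follows. The discussion preceding the proposition in the paper does, however, sketch exactly the descent picture you outline: line bundles on $A$ as $M$-linearizations of pullbacks $\pi^*N$, the commutativity of Diagram~\eqref{eq:bundle diagram} as the obstruction to the existence of the isomorphisms $c_m$, and the compatibility~\eqref{eq:r compatibility} as the cocycle condition. Your theta-series sketch for ampleness is also the approach taken in L\"utkebohmert, as you acknowledge.

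There is one genuine inconsistency in your write-up. You first assert that $\pi^*\colon \Pic(B)\to\Pic(E)$ is an \emph{isomorphism}, arguing from the triviality of $\Pic(T)$. Two paragraphs later you (correctly) use that $\pi^*$ kills the image of $\phi^\vee\colon M^\vee\to B^\vee$. These statements are incompatible. In fact, for a $T$-torsor one has an exact sequence
\[
M^\vee \xrightarrow{\phi^\vee} \Pic(B) \xrightarrow{\pi^*} \Pic(E) \to 0,
\]
so $\pi^*$ is surjective but has kernel equal to the image of $\phi^\vee$; the triviality of $\Pic(T)$ gives surjectivity, not injectivity. This is why the paper only claims that every line bundle on $E$ ``arises as'' some $\pi^*N$, and why the proposition says $L$ is \emph{determined by} the triple $(N,\sigma,\chi)$ rather than that the triple is uniquely determined by $L$. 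Once you drop the injectivity claim, the rest of your argument goes through unchanged: the existence of $c_m$ is equivalent to $\varphi_N(\phi(m))\in\operatorname{im}\phi^\vee$, producing $\sigma$, and the cocycle condition gives~\eqref{eq:r compatibility}.
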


\begin{corollary}\label{cor:Raynaud bundle multiplication}
Given a line bundle $L$ on $A$ corresponding to the data $(N,\sigma,r)$ as in Proposition \ref{prop:Raynaud line bundles}, the line bundle $L^{\otimes p}$ corresponds to the data $(N^{\otimes p},p\sigma,\chi^p)$. 
\end{corollary}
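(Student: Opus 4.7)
The plan is to verify the three pieces of data individually, and the key observation is that the natural linearization on $L^{\otimes p}$ is simply $\{c_m^{\otimes p}\}_{m\in M}$. First, since pullback and tensor product commute, we have $\pi^\ast(N^{\otimes p}) \cong (\pi^\ast N)^{\otimes p}$, so the underlying line bundle on $B$ associated to $L^{\otimes p}$ in the sense of Proposition \ref{prop:Raynaud line bundles} is $N^{\otimes p}$. The $M$-linearization of $L^{\otimes p}$ is obtained by tensoring the isomorphisms $c_m : \pi^\ast N \to \tau_m^\ast \pi^\ast N$ with themselves $p$ times; the compatibility condition (\ref{eq:compatibility}) is preserved under tensor powers, so this indeed defines a valid linearization corresponding to $L^{\otimes p}$.

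Second, I will identify the homomorphism $M \to M^\vee$ attached to this linearization. Diagram (\ref{eq:bundle diagram}) characterizes $\sigma$ by the identity $\phi^\vee \circ \sigma = \varphi_N \circ \phi$. The polarization map is additive in the line bundle, i.e.\ $\varphi_{N^{\otimes p}} = p\cdot\varphi_N$, so the corresponding homomorphism for $L^{\otimes p}$ must satisfy $\phi^\vee \circ \sigma' = p\varphi_N \circ \phi = \phi^\vee \circ (p\sigma)$. Since $\phi^\vee$ is injective on the image of interest (the homomorphism is uniquely determined by the commutativity of (\ref{eq:bundle diagram})), we conclude $\sigma' = p\sigma$.

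Third, I will read off the trivialization and verify the cocycle condition (\ref{eq:r compatibility}). The trivialization $\chi$ is determined by the value of $c_m$ on a chosen basepoint (unwinding the explicit formula $z_m$ from Equation (\ref{eq:zm})); replacing $c_m$ by $c_m^{\otimes p}$ therefore yields the trivialization $m \mapsto \chi(m)^{\otimes p}$, which is precisely $\chi^p$. It remains to check
\[ \chi^p(m_1+m_2)\otimes\chi^p(m_1)^{-1}\otimes\chi^p(m_2)^{-1} = \langle m_1, p\sigma(m_2)\rangle. \]
The left-hand side equals $\bigl(\chi(m_1+m_2)\otimes\chi(m_1)^{-1}\otimes\chi(m_2)^{-1}\bigr)^{\otimes p} = \langle m_1,\sigma(m_2)\rangle^{\otimes p}$ by the original compatibility (\ref{eq:r compatibility}) for $L$, and this equals $\langle m_1, p\sigma(m_2)\rangle$ by biadditivity of the Poincar\'e pairing (Diagram \ref{eq:Poincare form}), in the form $P_{B\times B^\vee}^\times|_{(b, b_1'+b_2')} \cong P_{B\times B^\vee}^\times|_{(b,b_1')} \otimes P_{B\times B^\vee}^\times|_{(b,b_2')}$.

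The argument is essentially formal and I do not anticipate a real obstacle; the only thing to keep track of is that tensor powers, the polarization map $\varphi_{(\cdot)}$, and the Poincar\'e pairing are all additive/biadditive in the way one expects, so that applying $(\cdot)^{\otimes p}$ to every piece of data on $L$ produces the piece of data on $L^{\otimes p}$ with $\sigma$ scaled by $p$.
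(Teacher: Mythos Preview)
The paper states this corollary without proof, treating it as an immediate consequence of the classification in Proposition~\ref{prop:Raynaud line bundles}. Your argument supplies the missing details and is essentially correct: you correctly identify the linearization on $L^{\otimes p}$ as $\{c_m^{\otimes p}\}$, and then verify that the data $(N^{\otimes p}, p\sigma, \chi^p)$ satisfies the required constraints \eqref{eq:bundle diagram} and \eqref{eq:r compatibility}.

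One minor point: the sentence ``Since $\phi^\vee$ is injective on the image of interest (the homomorphism is uniquely determined by the commutativity of \eqref{eq:bundle diagram})'' is not needed and is not obviously true---$\phi^\vee:M^\vee\to B^\vee$ need not be injective. You do not need uniqueness here: it suffices to exhibit \emph{some} valid triple for $L^{\otimes p}$, and you do this in step three by directly checking that $(p\sigma,\chi^p)$ satisfies \eqref{eq:r compatibility}. A cleaner route to $\sigma'=p\sigma$ is to read it off the explicit local formula for $c_m$ given just before Proposition~\ref{prop:Raynaud line bundles}: if $c_m$ scales the fiber by $\chi(m)\,t_1^{e_1}\cdots t_r^{e_r}$ with $\sigma(m)=\sum e_i m_i^\vee$, then $c_m^{\otimes p}$ scales by $\chi(m)^p\,t_1^{pe_1}\cdots t_r^{pe_r}$, which corresponds to $p\sigma(m)$ and $\chi(m)^p$ on the nose.
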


\begin{example}\label{eg:ti bundles}
Translation-invariant line bundles on $A$ correspond to the case where $N$ is a translation-invariant line bundle on $B$ and $\sigma$ is the trivial morphism. In this case, $\phi_N$ is trivial, so Diagram \ref{eq:bundle diagram} commutes. The right hand side of Equation \ref{eq:r compatibility} is now 0, so $\chi$ is a group homomorphism $M\rightarrow \phi^*N$. Choosing a basis $m_1,\dots,m_r$ of $M$, we get coordinates of $E^\vee\cong P_{\phi(m_1)\times B^\vee}\times_{B^\vee}\cdots \times_{B^\vee} P_{\phi(m_r)\times B^\vee}$ as in Equation (\ref{eq:E coords}), so that $\chi$ determines a point $(\chi(m_1),\dots,\chi(m_r))\in E^\vee$. So as expected, points of $E^\vee$ give rise to translation-invariant bundles on $A$ via the surjection $E^\vee\rightarrow A^\vee$. 
\end{example}

\begin{example}\label{eg:Tate bundles}
We specialize Proposition \ref{prop:Raynaud line bundles} to the Tate curve $\G_m/q^\Z$. The abelian part of the Raynaud extension $\G_m$ is trivial, so there is no line bundle $N$ to worry about (tori have trivial Picard group). The lattice is one-dimensional, so the group homomorphism $\sigma$ and trivialization $\chi$ are completely determined by the image of the generator $q$. In other words, once we choose an automorphism $c_{q}$ of the trivial line bundle on $\G_m$, everything else will fall into place. 

As the lattice is one-dimensional, $M^\vee=\Hom(\G_m,\G_m)\cong \Z$, where the identity map $m^\vee$ is sent to 1. The homomorphism $\sigma$ sends $q$ to $dm^\vee$ for some integer $d$: this is the degree of the corresponding line bundle $L$. The map $\chi$ can send $q$ to any element $\chi(q)$ in $K^\times$. When $d=0$ and we are dealing with $\Pic^0$, the line bundle is therefore determined by an element of $K^\times$. We get the trivial line bundle precisely when $\chi(q)\in q^\Z$, giving the expected self-duality $\Pic^0(\G_m/q^\Z)\cong \G_m/q^\Z$. 
\end{example}

The takeaway from these examples is that $\chi$ controls $\Pic^0$, $\sigma$ controls the part of the N\'eron-Severi group coming from the lattice $M$, and $N$ controls the part of N\'eron-Severi coming from $B$. 

%%%%
%%%%
%%%%

\subsection{Formal models of line bundles}\label{sub:formal bundles}

%%%%
%%%%
%%%%

As a first step towards moving line bundles from $A$ to $A'$, we need to extend a line bundle $L$ on $A$ to a formal line bundle $\fL_n$ on $\fAn$ for some $n$.  Any line bundle on $B$ (and therefore $E$) extends to a formal bundle on the corresponding formal (semi-)abelian scheme by \cite[Lemma 6.2.2]{Lut}. We still need to extend the $M$-linearization to the formal model. Gubler does this very explicitly in the totally degenerate case in \cite[Proposition 6.6]{Gub}. In this section, we summarize his construction and show that it extends to the general case. We first make precise what we mean by formal model in this context.

\begin{definition}\label{def:formal_line_bundle}
Let $\fL$ be a line bundle on an admissible formal scheme $\fX$ as in Proposition \ref{prop:fas to formal scheme}. Then $\fL$ is trivial over some cover of $\fX$ by formal affine opens $\{\Spf(R_i^\circ)\}$ with transition maps given by compatible units $f_{ij}$ in $(R_{ij}^\circ)^\times$. The \emph{generic fiber} of $\fL$ is the line bundle $L$ on $X=(\fX)_\eta$ which is trivial on the formal analytic cover $\Sp(R_i)$, with transition maps given by the images of the $f_{ij}$ under the maps $R_{ij}^\circ\rightarrow R_{ij}$. In this situation, we say that $\fL$ is a formal model of $L$.
\end{definition}

\begin{remark}\label{remark:formal_line_bundle}
Note that the geometric formal line bundle $\fL$ isn't a formal model of the geometric line bundle $L$, because the generic fiber of a formal line isn't a rigid line.
\end{remark}

Fix a line bundle $L$ on an abelian variety $A\cong E/M$, with associated data $(N,\sigma,r)$ as in Proposition \ref{prop:Raynaud line bundles}. For each $m\in M$, we have the associated affine function $z_m$ from Equation \ref{eq:zm}: the tropicalization of the isomorphism $c_m:\pi^*N\rightarrow \tau_m^*\pi^*N$. Fix a formal analytic cover $\mathcal U_n$ of $E$ as in Proposition \ref{prop:formal covers for E} such that the line bundle $\pi^*N$ is trivial over this cover. Explicitly, we choose a $\pi(M)$-invariant cover $\{\overline V_{j,n}\}$ of $\overline B$ over which $N$ is trivial and a rational number $\alpha$ dividing the lattice. We can then write the typical open of $\mathcal U_n$ as a product $U_{{\bf e}\Delta_{\alpha/p^n}}\times V_{j,n}$ for ${\bf e}\in\Z^r$.  In Theorem \ref{thm:formal model for A}, we saw that the action of $M$ extends to the corresponding formal model $\fEn$ of $E$, so quotienting gives a formal model $\fAn$ of $A$. 

\begin{proposition}\label{prop:formal bundles}
With notation as in the preceding paragraph, formal models $\fLn$ of $L$ on $\fAn$ can be constructed from pairs $(\mathfrak N,f)$ where $\mathfrak N$ is a formal model of $N$ on the formal abelian scheme $\overline B$, and $f:\R^r\rightarrow \R$ is a continuous function such that:
\begin{enumerate}
\item\label{fb1} When restricted to a hypercube $\Delta={\bf e}\Delta_{\alpha/p^n}$ in our cover of $\R^r$, we have $f(u)=m_\Delta\cdot u+c_\Delta$ for some $m_\Delta\in\Z^r$, $c_\Delta\in \Gamma$, where $\cdot$ denotes dot product;
\item\label{fb2} For $\lambda\in \Lambda=\ell(M)$, we have $f(u+\lambda)=f(u)+z_{\lambda}(u)$.
\end{enumerate}
\end{proposition}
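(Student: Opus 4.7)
The plan is to build $\fLn$ in three stages: first construct a formal line bundle on $\fEn$ from the pair $(\mathfrak{N}, f)$, then extend the generic $M$-linearization of $\pi^* N$ to this formal model, and finally descend along the quotient $\fEn \to \fAn$. Throughout, condition (\ref{fb1}) on $f$ will govern the integral structure on each formal open, while condition (\ref{fb2}) will guarantee $M$-equivariance.

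For the construction on $\fEn$, recall from Proposition \ref{prop:formal covers for E} that $\fEn$ is described by the formal analytic cover $\mathcal{U}_n$ whose typical open has the form $U_{\Delta} \times V_{j,n}$, with $\Delta = \mathbf{e}\Delta_{\alpha/p^n}$. The pullback $\overline{\pi}^*\mathfrak{N}$ gives a formal model of $\pi^* N$ on $\overline E$, which extends to the pushout $\fEn$, and which is trivial on each open of $\mathcal{U}_n$ (refining if necessary so that $\mathfrak{N}$ trivializes on $\{\overline V_{j,n}\}$). On the generic fiber this gives a trivializing section $s_{\Delta,j}$. I would now rescale: on each $U_\Delta \times V_{j,n}$, declare the formal trivializing section to be $y_\Delta \cdot x^{m_\Delta}\cdot s_{\Delta,j}$, where $f|_\Delta(u) = m_\Delta\cdot u + c_\Delta$ and $y_\Delta \in K^\times$ is any element with $\ell(y_\Delta)=c_\Delta$ (which exists since $c_\Delta \in \Gamma$). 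Condition (\ref{fb1}) is exactly what makes this rescaling a unit on $U_\Delta\times V_{j,n}$. On overlaps between adjacent hypercubes, continuity of $f$ forces $m_\Delta\cdot u + c_\Delta = m_{\Delta'}\cdot u + c_{\Delta'}$ along the shared face, so the ratio of the two trivializing sections is a unit on the annular overlap; on overlaps between different $V_{j,n}$ the transition comes from $\mathfrak{N}$ itself. This produces a well-defined formal line bundle $\fLn^E$ on $\fEn$ with generic fiber $\pi^* N$.

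Next I would show that the generic $M$-linearization $c_m: \pi^* N \to \tau_m^* \pi^* N$ extends to an isomorphism of formal models. By Theorem \ref{thm:formal model for A}(\ref{fmA1}), $\tau_m$ already extends to $\tau_{\mathfrak{m}}: \fEn \to \fEn$, sending $U_\Delta\times V_{j,n}$ to $U_{\lambda+\Delta}\times V_{j',n}$ where $\lambda = \ell(m)$. On this target open the chosen trivializing section is $y_{\lambda+\Delta}\cdot x^{m_{\lambda+\Delta}}\cdot s_{\lambda+\Delta,j'}$, while the pullback of the source trivializing section via $\tau_m$ differs from $y_\Delta\cdot x^{m_\Delta}\cdot s_{\Delta,j}$ by exactly the unit whose tropicalization is $z_m(u)$, by Equation (\ref{eq:zm}). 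Condition (\ref{fb2}) says $f(u+\lambda) - f(u) = z_\lambda(u)$, which matches these two integral structures up to a unit, hence $c_m$ preserves $\fLn^E$. The cocycle condition (\ref{eq:compatibility}) on the generic $c_m$ is automatic once matched on a dense open and thus passes to the formal level. Finally, since $\fAn = \fEn/M$ and we have constructed an $M$-equivariant formal line bundle on $\fEn$, standard descent yields a formal line bundle $\fLn$ on $\fAn$ whose generic fiber is $L$.

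The main obstacle is the consistency check on hypercube overlaps combined with the $M$-equivariance on $\fEn$: one must verify that the local rescalings by $y_\Delta x^{m_\Delta}$ are genuinely compatible with both the continuity of $f$ on neighboring faces and with the affine function $z_m$ controlling the linearization. Both come down to the same principle — tropicalization of units — so the hypotheses (\ref{fb1}) and (\ref{fb2}) are precisely tailored to make this work, essentially as in Gubler's totally degenerate case \cite[Proposition 6.6]{Gub}. The only genuinely new input over the totally degenerate setting is bookkeeping for the abelian part $\overline B$, which is absorbed by choosing the $\{\overline V_{j,n}\}$ so that $\mathfrak{N}$ trivializes on them.
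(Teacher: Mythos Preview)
Your proposal is correct and follows essentially the same approach as the paper: both arguments rescale a frame of $\pi^*\mathfrak N$ on each $U_\Delta\times V_{j,n}$ by the monomial unit $a_\Delta\mathbf{x}^{m_\Delta}$ determined by $f|_\Delta$, use continuity of $f$ (condition~(\ref{fb1})) to verify that the resulting transition functions have supremum norm $1$, and invoke condition~(\ref{fb2}) to check $M$-invariance of the frame before descending to $\fAn$. Your three-stage framing (build on $\fEn$, extend the $M$-linearization, descend) is slightly more explicit than the paper's presentation, but the substance is identical.
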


\begin{proof}
When $A$ is totally degenerate, this is a weaker version of \cite[Proposition 6.6]{Gub}. Gubler shows in this case that formal models of $L$ are in bijection with these functions $f$ (the line bundle $N$ being trivial in this case). We expect a statement along these lines to hold in general, but have not checked this as we will not need it in this paper. Note that for a given model $\fAn$ and line bundle $L$, there might not be a pair $(\mathfrak N, f)$ satisfying the necessary requirements. We will see that for $n$ large enough, a pair $(\mathfrak N, f)$ must exist. 

To extend $L$ to $\fLn$, we use the data of $(\mathfrak N,f)$ to construct a suitable frame of $L$: units $g_{i,n}$ on each $U_{i,n}$ in the cover such that the transition functions $\frac{g_{i_1,n}}{g_{i_2,n}}$ on the intersection $U_{i_1,n}\cap U_{i_2,n}$ have supremum norm 1, and therefore give transition functions of a formal model. 

We first choose a frame $(h_{j,n})$ of $\mathfrak N$ on the cover $\{\overline V_{j,n}\}$ of $\overline B$. As $\mathfrak N$ is a formal line bundle, the transition functions are formal units and therefore have supremum norm 1. For each open $U_{\Delta}\times V_{j,n}$ in our cover, we choose any $a_\Delta\in K^\times$ with $\ell(a_\Delta)=c_\Delta$. Writing $T=\Spa(K\langle x_1^{\pm 1},\dots,x_r^{\pm 1}\rangle,K^\circ\langle x_1^{\pm 1},\dots,x_r^{\pm 1}\rangle)$ and shortening $x_1^{m_1}\cdots x_r^{m_r}$ to ${\mathbf x}^{\mathbf{m}}$, the restriction of $a_\Delta{\bf x}^{m_\Delta}$ from $T$ to $\Delta$ is a unit.

We claim that setting $g_{i,n}=a_\Delta{\bf x}^{\bf e}\otimes h_{j,n}$ gives an $M$-invariant frame of $\mathfrak N$. As $h_{j,n}$ does not affect the supremum norm, Gubler's argument carries over to this case. By construction, for any point $z\in U_{\Delta_i}\times V_{j_i,n},$ we have $v(g_{i,n}(z))=f(\ell(u))$. As $f$ is continuous, the absolute values of the frame units agree on intersections, so the transition functions have absolute value 1 and give a formal model as desired. By condition (\ref{fb2}), pulling back along $\tau_m$ sends the unit $g_{i,n}$ on $U_{\Delta_i}\times V_{j_i,n}$ to a frame on $\tau_m^*(U_{\Delta_i}\times V_{j_i,n})$ with the correct absolute value, so our frame descends to the quotient.

\end{proof}

\begin{example}\label{eg:formal Tate bundles}
For this example, let $K=\C_p$. We give some examples of formal models of line bundles on the Tate curve $C=\G_m/\langle p^2\rangle$ constructed in Example \ref{eg:Tate curve}. We hope this helps illuminate the role of the function $f$.  As we saw in Example \ref{eg:Tate bundles}, a line bundle $L$ on $\G_m/\langle p^2 \rangle$ is determined by its degree $d$ and an element $\chi(p^2)$ of $K^\times$. 
 
\begin{enumerate}
\item Let $L$ be the trivial bundle, corresponding to $d=0,\chi(p^2)=1$. A formal analytic cover of $C$ over which $L$ is trivial is given by the hypercube cover \[U_{[0,1]}=\Sp(K\langle T,\frac{p}{T}\rangle),U_{[1,2]}=\Sp(K\langle \frac{S}{p},\frac{p^2}{S}\rangle),\] glued along the inner circles $U_{\{1\}}$ by the map sending $T\mapsto S$, and along the outer circles by the map 
\[\tau_{p^2}:U_{\{0\}}\rightarrow U_{\{2\}}\]
\[K\langle \frac{S}{p^2},\frac{p^2}{S}\rangle\rightarrow K\langle T,\frac{1}{T}\rangle; S\mapsto p^2 T.\]

The tropicalization of the (identity) isomorphism $c_{p^2}$ is the zero map $z_{p^2}:\R\rightarrow 0$, so the functions $f:\R\rightarrow \R$ that could lead to formal models of $L$ are periodic and determined by their behavior on the closed interval $[0,2]$. They must be affine on $[0,1]$ and $[1,2]$ with integer slopes, and must satisfy $f(0)=f(2)=c_0$ for some $c_0\in \Q\cong \Gamma$. 

Letting $f(u)=u$ for $u\in[0,1]$ and $f(u)=2-u$ for $u\in [1,2]$, we get a function that will lead to a non-trivial model of the trivial line bundle. A frame corresponding to this $f$ is given by $g_{01}=T$ on $U_{[0,1]}$ and $g_{12}=\frac{p^2}{S}$ on $U_{[1,2]}$. As $L$ is trivial, the restriction maps on the intersections are the same as the gluing maps. Writing everything in terms of $T$, our chosen frame gives the transition map $\frac{g_{01}}{g_{12}}=\frac{T^2}{p^2}$ on $U_{\{1\}}$, which always has absolute value 1 when $|T|=|p|$ - the defining property of $U_{\{1\}}$, so we get a formal unit. Writing the other transition map on $U_{\{0\}}$, we first use the gluing map to send $g_{12}=\frac{p^2}{S}$ to $\frac{1}{T}$, then take the quotient $\frac{g_{01}}{g_{12}}$ to get the transition map $T^2$, which again has absolute value 1 on $U_{\{0\}}$ and is therefore a formal unit. As our transition maps are now made from formal units, we get a formal model of our line bundle. However, this is not the trivial line bundle - the unit $g_{01}=T$ on $U_{[0,1]}$ is not a formal unit as it has absolute value less than 1 on $U_{(0,1]}$

\item Let $L$ be the line bundle corresponding to $d=0,r(p^2)=1+p$. We can use the same formal analytic cover of $C$, function $f$, and frame as in the previous example. Restriction maps of this $L$ on the cover are the normal gluing map on the inner circles $U_{\{1\}}$, but are given by sending $S\mapsto p^2(1+p)T$ on the outer circles. The transition functions corresponding to the previous frame are now $\frac{T^2}{p^2}$ on $U_{\{1\}}$ and $\frac{T^2}{1+p}$ on $U_{\{0\}}$. 

\item\label{tb3} Let $L$ be the line bundle corresponding to $d=0, r(p^2)=p^{1/p}$. The formal analytic cover from the previous examples still trivializes $L$, but now there is no function $f$ satisfying the conditions of Proposition \ref{prop:formal bundles} and therefore no formal line bundle extending $L$ to the corresponding model. In this case, the tropicalization $z_{p^2}$ of $c_{p^2}$ is the constant function sending $u$ to $1/p$, so we must have $f(2)=f(0)+1/p$ by condition (\ref{fb2}) of Proposition \ref{prop:formal bundles}. But by condition (\ref{fb1}), $f$ must be continuous and linear on $[0,1]$ and $[1,2]$ with integer slopes, so this is impossible. 

To fix this, we refine the hypercube decomposition used to create the formal analytic cover, instead using the affinoids corresponding to the intervals $[\frac{\lambda}{p},\frac{\lambda+1}{p}]$ for $\lambda\in \{0,\dots,2p-1\}$. We can now choose $f$ to have slope 1 on the interval $[0,1/p]$ and slope 0 on $[1/p,2].$ 
\end{enumerate}
\end{example}

As the tropicalization maps $z_m$ all take values in $v(K)=\Gamma\subset \R$, if we assume that $\Gamma\subset \Q$, Example \ref{eg:Tate bundles}(\ref{tb3}) is the only reason the function $f$ might not exist. Refining the formal analytic cover will always give a model over which $f$ exists. As any line bundle on the abelian variety $B$ extends to a formal line bundle on the formal abelian scheme $\overline B$, we can extend line bundles to the formal models in our towers, completing our main goal of the section.

\begin{proposition}\label{prop:formal alpha}
A line bundle $L$ on $A$ extends to the formal model $\fA_\alpha$ if for every $m\in M$, $v(\chi(m))$ is an integer multiple of $\alpha$.  
\end{proposition}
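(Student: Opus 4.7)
The plan is to apply Proposition \ref{prop:formal bundles} with $n=0$: to extend $L$ to $\fA_\alpha$ it suffices to produce a formal model $\mathfrak N$ of $N$ on $\overline B$ together with a continuous function $f:\R^r \to \R$ satisfying the two conditions of that proposition. The first ingredient is free: by \cite[Lemma 6.2.2]{Lut}, any line bundle on $B$ extends to a formal line bundle on the formal abelian scheme $\overline B$. All of the real content is therefore in constructing $f$.

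First I would fix a fundamental domain $D$ for the action of $\Lambda = \ell(M)$ on $\R^r$ that is a union of hypercubes $\mathbf{e}\Delta_\alpha$ from our decomposition. This is possible since $\alpha$ divides $\Lambda$ (Definition \ref{def:lattice division}), a hypothesis already built into the construction of $\fA_\alpha$ via Proposition \ref{prop:formal analytic T from M}. On $D$ I would simply take $f \equiv 0$, so the integer-slope and $\Gamma$-constant conditions are trivially satisfied there. Then I would extend $f$ to all of $\R^r$ via the equivariance rule $f(u + \lambda) = f(u) + z_\lambda(u)$ demanded by condition (2). Recall from Equation (\ref{eq:zm}) that $z_\lambda(u) = v(\chi(m)) + F(u, \ell(\sigma(m)))$, so on a translated hypercube $\Delta + \lambda \subset D + \lambda$ the restriction of $f$ is affine with gradient equal to the integer vector $F(-, \ell(\sigma(m)))$ (integrality here comes from $\sigma(m) \in M^\vee$ and from $F$ taking integer values on the relevant lattices). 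This preserves the first part of condition (1) after translation.

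The remaining verification of condition (1) --- that the restriction of $f$ to each hypercube $\Delta + \lambda$ has constant term in $\Gamma$, and that $f$ is continuous across hypercubes straddling the boundary of $D$ --- is where the hypothesis $v(\chi(m)) \in \alpha\Z$ enters, and it is the main technical obstacle. Each $\alpha$-step along a coordinate axis must change $f$ by an element of $\alpha\Z$ to keep slopes integral; traversing a full lattice period $\lambda$ therefore forces the cumulative change to lie in $\alpha\Z$, and this cumulative change is exactly $z_\lambda(0) = v(\chi(m))$ evaluated at an appropriate vertex. Thus the condition $v(\chi(m)) \in \alpha\Z$ is precisely what makes the globally extended $f$ match up continuously across the $\Lambda$-tiling of $\R^r$ by translates of $D$. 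The well-definedness of the extension (independence of which $\Lambda$-path is used to reach a given hypercube) then follows from the symmetric cocycle relation (\ref{eq:r compatibility}) satisfied by $\chi$ together with the bilinearity of $F$, which jointly ensure that the equivariance rule is self-consistent.
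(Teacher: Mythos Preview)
Your overall strategy is exactly the paper's: reduce to Proposition~\ref{prop:formal bundles}, take $\mathfrak N$ from \cite[Lemma 6.2.2]{Lut}, and concentrate on building $f$. The paper itself gives no formal argument beyond pointing to Example~\ref{eg:formal Tate bundles}(\ref{tb3}), so in that sense you are already more explicit.

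However, your specific construction of $f$ has a genuine gap. If you set $f\equiv 0$ on a fundamental domain $D$ and then extend by the equivariance rule, the result is almost never continuous across $\partial D$. Take a boundary point $u_0\in\overline D\cap\overline{D+\lambda}$, so $u_0-\lambda\in\overline D$ as well. From the $D$ side $f(u_0)=0$; from the $D+\lambda$ side $f(u_0)=f(u_0-\lambda)+z_\lambda(u_0-\lambda)=z_\lambda(u_0-\lambda)$, which by (\ref{eq:zm}) equals $v(\chi(m))+F(u_0-\lambda,\sigma(m))$. This is nonzero as soon as either $v(\chi(m))\neq 0$ or the bilinear piece is nontrivial, so your $f$ jumps. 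Your paragraph arguing that the hypothesis forces the cumulative change across a period to lie in $\alpha\Z$ is really an argument that the hypothesis is \emph{necessary} for a suitable $f$ to exist; it does not show that \emph{your} $f$ is continuous, because on $D$ your cumulative change is identically zero, not $v(\chi(m))$.

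What actually works, and what Example~\ref{eg:formal Tate bundles}(\ref{tb3}) is illustrating, is that you must let $f$ vary nontrivially on $D$. Prescribe $f$ on the vertex set $\alpha\Z^r\cap\overline D$: start with $f(0)=0$ and, walking along coordinate edges of length $\alpha$, change $f$ by multiples of $\alpha$ so that crossing a full lattice vector $\lambda_i=\ell(m_i)$ in the $i$th direction produces the required total change $z_{\lambda_i}(0)=v(\chi(m_i))$. This is possible precisely because $v(\chi(m_i))\in\alpha\Z$. Then interpolate affinely on each hypercube; the resulting slopes are integers, the constants lie in $\alpha\Z\subset\Gamma$, and the equivariance condition (\ref{eq:compatibility}) (via (\ref{eq:r compatibility})) guarantees consistency under different $\Lambda$-paths. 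Once you correct the construction on $D$ in this way, the rest of your outline goes through.
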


Combining this with Corollary \ref{cor:Raynaud bundle multiplication}, we get:

\begin{corollary}\label{cor:formal roots}
If the line bundle $L^{\otimes p}$ on $A$ extends to the formal model $\fA_\alpha$, then $L$ extends to the model $\fA_{\alpha/p}$. 
\end{corollary}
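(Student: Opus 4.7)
The plan is to read off the statement as a direct consequence of the previous two results, namely Corollary \ref{cor:Raynaud bundle multiplication} (which describes the trivialization data for $L^{\otimes p}$) and Proposition \ref{prop:formal alpha} (which converts a divisibility condition on $v(\chi(m))$ into the existence of a formal extension). No further geometric input should be needed, since the function $f$ required by Proposition \ref{prop:formal bundles} exists whenever the valuations $v(\chi(m))$ are integer multiples of the side length of the hypercube decomposition, and any $N$ on $B$ always extends to a formal line bundle on $\overline B$.

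First I would write out what the hypothesis says. Denote by $(N,\sigma,\chi)$ the Raynaud data classifying $L$ as in Proposition \ref{prop:Raynaud line bundles}. By Corollary \ref{cor:Raynaud bundle multiplication}, the line bundle $L^{\otimes p}$ is classified by the data $(N^{\otimes p},p\sigma,\chi^{p})$, where $\chi^{p}\colon M\to\phi^{*}(N^{\otimes p})$ sends $m\mapsto \chi(m)^{\otimes p}$. Taking valuations in the $\G_m$-torsor sense, we therefore have $v(\chi^{p}(m)) = p\cdot v(\chi(m))$ for every $m\in M$. The assumption that $L^{\otimes p}$ extends to the formal model $\fA_\alpha$ forces, via Proposition \ref{prop:formal alpha}, the valuation $p\cdot v(\chi(m))$ to lie in $\alpha\Z$ for every $m\in M$.

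Next I would divide through by $p$. This immediately yields $v(\chi(m))\in (\alpha/p)\Z$ for every $m\in M$. Since $\alpha$ divides the lattice $\Lambda=\ell(M)$ in the sense of Definition \ref{def:lattice division}, so does $\alpha/p$ (we have $\Lambda\subset \alpha\Z^{r}\subset (\alpha/p)\Z^{r}$); hence the model $\fA_{\alpha/p}$ constructed in Theorem \ref{thm:formal model for A} is well-defined. Applying Proposition \ref{prop:formal alpha} with $\alpha$ replaced by $\alpha/p$ then produces the desired formal extension of $L$ to $\fA_{\alpha/p}$, completing the proof.

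There is no real obstacle here: the argument is essentially bookkeeping, and the only substantive content — the existence of a continuous piecewise-affine function $f$ satisfying conditions \eqref{fb1} and \eqref{fb2} of Proposition \ref{prop:formal bundles}, together with the extension of $N$ to $\overline B$ — is already packaged inside Proposition \ref{prop:formal alpha}. The conceptual point being recorded is simply that taking $p$th roots of a line bundle refines the admissible hypercube decomposition by a factor of $p$, which matches exactly the refinement $\fA_\alpha\leadsto \fA_{\alpha/p}$ appearing in the $F$-tower of Theorem \ref{thm:cover of A}.
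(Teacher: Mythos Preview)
Your proposal is correct and matches the paper's one-line proof exactly: the corollary is recorded as a direct combination of Proposition~\ref{prop:formal alpha} with Corollary~\ref{cor:Raynaud bundle multiplication}, and you have simply spelled out the bookkeeping. The only subtlety worth flagging is that you invoke Proposition~\ref{prop:formal alpha} in the converse direction (deducing $v(\chi^{p}(m))\in\alpha\Z$ from the existence of an extension), which the proposition does not literally state; but the paper makes the same implicit use, and Example~\ref{eg:formal Tate bundles}(\ref{tb3}) illustrates why the converse is expected to hold.
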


%%%%
%%%%
%%%%

\subsection{Perfectoid covers of line bundles}\label{sub:perfectoid bundles}

%%%%
%%%%
%%%%

In the remainder of this section, we explain how to take a line bundle $L$ on $A$ and construct a line bundle $L'$ on a suitable $A'$ with the same special fiber. One might expect this to follow directly from applying Proposition \ref{prop:formal bundles} to $A'$, but transferring the homomorphism $\sigma:M\rightarrow M^\vee$ to the tilt side such that Diagram \ref{eq:bundle diagram} still commutes seems quite subtle. When we tilted lattices in Section \ref{sub:tilting A}, we made an arbitrary choice of $p$th roots $M^{1/p^n}$ of $M$. If the choices we make when tilting the dual lattice $M^\vee$ aren't compatible with the choices we make when tilting $M$, we won't get nice duality properties on the tilt side. To solve this issue, we need to tilt both $M$ and $M^\vee$ simultaneously. We saw in Diagram \ref{eq:dual Raynaud} that the lattice $M\times M^\vee$ injects into the Poincare torsor $P^\times_{B\times B^\vee}$. We will construct and tilt a perfectoid cover of this torsor, with $M\times M^\vee$ inside it, and construct both $A'$ and $A'^\vee$ from this.

This will require us to work with perfectoid covers of line bundles, extending $\G_m$-torsors on $A$ to $\G_{m,\infty}$-torsors on $A_\infty$. This type of construction is very closely related to the arguments in \cite{DH}. Perfectoid covers of line bundles are already used implicitly in Scholze's approximation lemma \cite[Proposition 8.7]{PS}, so we will also need them when we generalize it from toric varieties to abelian varieties in Proposition \ref{prop:approx}.
 
%In this section, we make explicit the geometric interpretation of the perfectoid ring Scholze builds from $L$: it is the global sections of a $\G_{m,\infty}$-torsor living over the pullback of the corresponding $\G_m$-torsor to the perfectoid cover. It is not clear that every $\G_m$-torsor on a perfectoid space can be covered by a $\G_{m,\infty}$-torsor. We will see that this is true in characteristic $p$. In general, we will see that it is equivalent to an open question of Dorfsman-Hopkins.

In this subsection, we give some generalities on $\G_{m,\infty}$-torsors. 

\begin{definition}\label{def:Gm infty torsor}
A $\G_m$-torsor on an adic space $X$ is a morphism $\pi:L^\times\rightarrow X$ of adic spaces with an action $\G_m\times L^\times\rightarrow L^\times$ over $X$, so that over some open cover $\{U_i\}$ of $X$, we have $\G_m$-equivariant isomorphisms $\pi^{-1}(U_i)\cong \G_m\times U_i$. A $\G_{m,\infty}$-torsor is defined by replacing $\G_m$ in the above definition by the perfectoid cover $\G_{m,\infty}\sim\varprojlim_{[p]}\G_m$ constructed in Corollary \ref{cor:pF for T}.
\end{definition}

There are correspondences between $\G_m$-torsors $L^\times$, line bundles $L$, and invertible sheaves $\mathcal L$. Given an invertible sheaf $\mathcal L$ on a rigid space $X$, the corresponding $\G_m$-torsor $L^\times$ is the analytification of the scheme \[\underline{\Spec}(\bigoplus_{i\in\Z} \mathcal L^{\otimes i}).\]

Taking the pushout along the map $[p]:\G_m\rightarrow\G_m$ gives a map of $\G_m$-torsors
\begin{center}
\begin{tikzcd}
L^\times \arrow[r] \arrow[d] & X \arrow[d, equal] \\
(L^{\otimes p})^{\times} \arrow[r] & X,
\end{tikzcd}
\end{center} 
where $(L^{\otimes p})^\times$ is the torsor associated to the $p$th power of the line bundle $L$. Locally on $X$, this diagram looks like 

\begin{center}
\begin{tikzcd}
\G_m\times U_i \arrow[r] \arrow[d, "{[p]\times id}"] & U_i \arrow[d, equal] \\
\G_m\times U_i \arrow[r] & U_i.
\end{tikzcd}
\end{center}

The induced map on symmetric algebras is the analytification of the inclusion 
\begin{equation}\label{eq:p power sections}
\bigoplus_{i\in\Z}\mathcal L^{\otimes pi}\hookrightarrow\bigoplus_{i\in\Z} \mathcal L^{\otimes i}.
\end{equation}

Now assume $X_\infty$ is a perfectoid space, and let $(L_n)_{n\in \N}$ be a family of line bundles on $X_\infty$ with $L_{n+1}^{\otimes p}=L_n$. We would like to use this data to build a $\G_{m,\infty}$-torsor $L_\infty^\times$ at the top of the diagram

\begin{center}
\begin{equation}\label{eq:Gminfty diagram}
\begin{tikzcd}
 L^\times_\infty \arrow[r]  & X_\infty \\
\vdots \arrow[d]  & \vdots \arrow[d, equal] \\
L^\times_2 \arrow[r] \arrow[d] & X_\infty \arrow[d, equal] \\
L^\times_1 \arrow[r] & X_\infty.
\end{tikzcd}
\end{equation}
\end{center}

This can be done if we find an open cover $\{U_{i,\infty}\}$ of $X_\infty$ over which all of the $L^\times_i$ are compatibly locally split. In this case, $L^\times_\infty$ is locally a product $\G_{m,\infty}\times U_\infty$. This will show that $L^\times_\infty$ is perfectoid, and by Lemma \ref{lemma:product tilde} it will be a tilde-limit for the tower of $L^\times_i$. More explicitly, we can extend a $\G_m$-torsor $L^\times$ on a perfectoid space $X$ to a $\G_{m,\infty}$-torsor $L_\infty^\times$ if we can write a cocycle of transition functions $(g_{ij})$ of $L$ for some open cover $\{U_{i,\infty}\}$ for which the elements $g_{ij}\in\Gamma(U_{i,\infty}\cap U_{j,\infty})$ all have a compatible sequence of $p$th power roots $g_{ij},g_{ij}^{1/p},g_{ij}^{1/p^2},\dots$, as we can then define $L^{1/p^n}$ using the cocycle $(g_{ij}^{1/p^n})$. We can always do this in characteristic $p$ as perfectoid $K^\flat$ algebras are perfect, giving the following:

\begin{lemma}\label{lemma:Gminfty exists char p}
If $X'_\infty$ is a perfectoid space over a perfectoid field $K^\flat$ of characteristic $p$, then every $\G_{m,K^\flat}$-torsor $L'^\times$ extends to a unique $\G_{m,\infty,K^\flat}$-torsor $L'^\times_\infty$.
\end{lemma}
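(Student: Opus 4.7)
The plan is to exploit that perfectoid $K^\flat$-algebras are perfect to extract canonical compatible $p^n$-th roots of any $\G_{m,K^\flat}$-torsor, and then assemble them as in Diagram~\ref{eq:Gminfty diagram}. First, I would cover $X'_\infty$ by affinoid perfectoid opens $U_{i,\infty} = \Spa(R_i, R_i^+)$ over which $L'^\times$ is trivial, and record transition data as a cocycle $(g_{ij}) \in \Gamma(U_{i,\infty} \cap U_{j,\infty}, \mathcal{O}^\times)$. Since the structure sheaf is a sheaf of perfect $K^\flat$-algebras, Frobenius is a bijection on sections, so for each $n \geq 0$ there is a \emph{unique} unit $g_{ij}^{1/p^n}$ whose $p^n$-th power equals $g_{ij}$. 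The uniqueness means the cocycle condition for $(g_{ij}^{1/p^n})$ follows automatically from the cocycle condition for $(g_{ij})$, so it defines a $\G_{m,K^\flat}$-torsor $L'^\times_n$ with a canonical isomorphism $(L'^\times_{n+1})^{\otimes p} \cong L'^\times_n$ via the pushout construction in~(\ref{eq:p power sections}).

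Next, I would build $L'^\times_\infty$ locally as in Diagram~\ref{eq:Gminfty diagram} by observing that over each $U_{i,\infty}$ the tower trivializes compatibly as $\G_m \times U_{i,\infty}$ at every level $n$. Hence the local candidate for the top of the tower is $\G_{m,\infty} \times U_{i,\infty}$, which is a perfectoid space by Corollary~\ref{cor:pF for T}, and by Lemma~\ref{lemma:product tilde} is a tilde-limit of the local tower. The compatible roots of $g_{ij}$ provide gluing data for a global $\G_{m,\infty,K^\flat}$-torsor $L'^\times_\infty$, which is perfectoid since it is locally a product of perfectoid spaces, and which is itself a tilde-limit of the $L'^\times_n$ by the sheaf-theoretic nature of the tilde-limit property (Proposition~\ref{prop:open immersion tilde}).

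For uniqueness, suppose $L'^\times_\infty$ and $\widetilde{L}'^\times_\infty$ are two $\G_{m,\infty}$-torsor extensions of $L'^\times$. Their pushouts along $\G_{m,\infty} \to \G_m$ (at each finite level) yield two towers of $\G_m$-torsors interpolating $L'^\times$ by $p$-th powers; but the uniqueness of $p^n$-th roots of units in perfectoid $K^\flat$-algebras forces these two towers to be canonically isomorphic at every level, and these isomorphisms are compatible with the transition maps. Passing to the tilde-limit, which is unique among perfectoid tilde-limits by \cite[Proposition~2.4.5]{SW} (cited in the proof of Proposition~\ref{prop:pF tower}), gives the unique isomorphism $L'^\times_\infty \cong \widetilde{L}'^\times_\infty$.

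The main obstacle I anticipate is purely organizational rather than conceptual: one must be careful that the canonicity of the $p^n$-th roots ensures both the cocycle condition at each level and the compatibility of the pushout maps $L'^\times_{n+1} \to L'^\times_n$ with the chosen trivializations, so that no arbitrary choices enter the construction. Once this is pinned down, perfectoidness and the tilde-limit property are both local statements handled by Corollary~\ref{cor:pF for T} and Lemma~\ref{lemma:product tilde}, and uniqueness is immediate from perfectness of $\mathcal{O}^+_{X'_\infty}/\varpi^\flat$.
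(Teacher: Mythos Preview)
Your proposal is correct and follows exactly the approach the paper takes: the paper's entire argument for this lemma is the one-line observation preceding it, namely that perfectoid $K^\flat$-algebras are perfect, so any trivializing cocycle $(g_{ij})$ has unique compatible $p^n$-th roots, which feeds into the general construction of Diagram~\ref{eq:Gminfty diagram}. You have spelled out the details more carefully than the paper does, particularly the uniqueness argument, but there is no difference in strategy.
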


Given a perfectoid space $X_\infty$ with tilt $X_\infty^\flat$, and a suggestively named $\G_{m,\infty}$-torsor $L^{\flat,\times}_\infty$, we can untilt the morphism 
\[L^{\flat,\times}_\infty \rightarrow X_\infty^\flat\]
to get a sequence
\[ L^\times_\infty \rightarrow X_\infty.\]
The bottom morphism defines a $\G_{m,\infty}$-torsor $L^\times_\infty$ on $X_\infty$ living over a $\G_m$-torsor $L$ on $X_\infty$. This can be seen either by untilting the local splitting of $L^{\flat,\times}_\infty$, or by noting that the sharp map sends the cocycle $(g_{ij})$ defining $L'$ to a cocycle $(g_{ij}^\sharp)$ defining a line bundle $L$ on $X_\infty$. This recovers the map $\theta_0: \Pic(X^\flat_\infty)\rightarrow\Pic(X_\infty)$ constructed in \cite[Section 5.1]{DH}. With this interpretation, we see that Dorfsman-Hopkins's ``cohomological untilting" map on line bundles corresponds to literal untilting of the associated $\G_{m,\infty}$-torsors (see also \cite[Lemma 3.24]{Survey}). 

\begin{remark}
Dorfsman-Hopkins also studies the map $\theta:\Pic X^\flat\rightarrow \varprojlim_{\times p} \Pic X$. He shows in \cite[Theorem 1.2]{DH_untilting} that this map is an isomorphism when $\Gamma(X,\cO_X)$ is a perfectoid ring. Heuer gives an example \cite[Section 6.3]{BenGood} of a perfectoid space where this map fails to be bijective. 
\end{remark}

We will show that in our situation, the image of $\theta_0$ contains all of the line bundles on $A_\infty$ which are pulled back from finite level. This lets us transfer bundles from $A$ to $A'$. We start with the good reduction case.

\begin{proposition}\label{prop:good bundles}
Let $B$ be an abelian variety with good reduction, let $N$ be a line bundle on $B$. Then the $\G_m$-torsor $q_1^*N^\times$ on $B_\infty$ extends to a $\G_{m,\infty}$-torsor.
\end{proposition}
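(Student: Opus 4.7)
The goal is to extend the $\G_m$-torsor $q_1^*N^\times$ on $B_\infty$ to a $\G_{m,\infty}$-torsor. As summarized just before Lemma \ref{lemma:Gminfty exists char p}, this amounts to producing a compatible tower of line bundles $L_0=q_1^*N, L_1, L_2,\dots$ on $B_\infty$ with $L_{k+1}^{\otimes p}\cong L_k$, together with local trivializations whose transition cocycles carry compatible $p^k$-th roots. Since $B_\infty\sim\varprojlim_{[p]}B$, we have $q_1=[p^k]\circ q_{k+1}$ and hence $q_1^*N=q_{k+1}^*[p^k]^*N$; it therefore suffices to produce line bundles $M_k$ on $B$ with $M_k^{\otimes p^k}\cong [p^k]^*N$ compatibly in the sense that $M_{k+1}^{\otimes p}\cong[p]^*M_k$, and then set $L_k:=q_{k+1}^*M_k$.

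The construction of the $M_k$ is a question purely about line bundles on the abelian variety $B$. Since $[p]^*$ acts as multiplication by $p^2$ on $\mathrm{NS}(B)$ and as $p$ on $\Pic^0(B)=B^\vee(K)$, we may write $[p]^*N\cong N^{\otimes p^2}\otimes P$ for some $P\in\Pic^0(B)$, and one computes inductively that $[p^k]^*N\cong N^{\otimes p^{2k}}\otimes P_k$ with $P_k\in\Pic^0(B)$ determined by $P$. After a sufficient pro-$p$ extension of $K$ we can choose compatible $p^k$-th roots $Q_k$ of $P_k$ in $\Pic^0(B)$ (using $p$-divisibility of the abelian variety $B^\vee$), and setting $M_k:=N^{\otimes p^k}\otimes Q_k$ yields the required tower, with compatibility $M_{k+1}^{\otimes p}\cong[p]^*M_k$ following from a simple recursion relating $Q_{k+1}$ and $Q_k$. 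For $p$ odd one avoids any extension by taking instead the explicit Mumford choice $M_k:=N^{\otimes(p^k+1)/2}\otimes[-1]^*N^{\otimes(p^k-1)/2}$, with compatibility a direct exponent calculation using $[p]^*[-1]^*=[-1]^*[p]^*$.

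To upgrade $\{L_k\}$ to a genuine $\G_{m,\infty}$-torsor I would pass to formal models. By the good reduction hypothesis each $M_k$ extends uniquely to a formal line bundle $\overline{M_k}$ on $\overline B$ (\cite[Lemma 6.2.2]{Lut}), pulling back to compatible formal line bundles $\mathfrak L_k$ on $\fB_\infty$. Form $\mathfrak L_\infty^\times:=\varprojlim_k\mathfrak L_k^\times$ in the category of formal schemes over $\fB_\infty$ (the transition maps are affine, so Lemma \ref{lemma:inverse limit formal schemes} applies), and take the adic generic fiber, which is a tilde-limit of the $L_k^\times$ by Lemma \ref{lemma:tilde generic fiber}. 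The main obstacle will be verifying perfectoidness of this generic fiber: on an affine perfectoid $\Spf(R^+_\infty)\subset\fB_\infty$ a choice of formal frames $s_k$ of $\mathfrak L_k$ produces formal units $u_k\in(R^+_\infty)^\times$ with $s_{k+1}^p=u_k s_k$, and one must adjust the $s_k$ compatibly to trivialize the $u_k$ so that $\mathfrak L_\infty^\times$ is locally $\G_{m,\infty}\times U$. Extracting compatible $p^k$-th roots of these units is precisely where the perfectoid hypothesis is used: surjectivity of Frobenius on $R^+_\infty/\varpi$ (Proposition \ref{prop:pF tower}) together with a Hensel-type lifting argument provides the roots, and then Lemma \ref{lemma:product tilde} yields perfectoidness of the generic fiber.
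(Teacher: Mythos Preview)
Your construction of the tower $\{M_k\}$ via the Theorem of the cube is correct and matches the paper's approach (including the explicit Mumford formula for $p$ odd). The gap is in the final step.

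The Hensel-type lifting cannot work. Surjectivity of Frobenius on $R^+_\infty/\varpi$ only yields $p$-th roots modulo $\varpi$, and Hensel's lemma for $f(x)=x^p-u$ does not lift them: the derivative $f'(a)=pa^{p-1}$ is divisible by $p$ and hence not a unit, and the refined Hensel estimate $|f(a)|<|f'(a)|^2=|p|^2$ would force $|\varpi|<|p|^2$, contradicting $|p|<|\varpi|$. Units in mixed-characteristic perfectoid rings are simply not $p$-th powers in general; for a perfectoid field $K$ containing $\mu_p$ with $G_K\neq 1$, Kummer theory gives $K^\times/(K^\times)^p\cong\Hom(G_K,\Z/p\Z)\neq 0$. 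There is also a prior unjustified step: you assume each $\mathfrak L_k$ admits a frame on a fixed affine open $\Spf(R^+_\infty)$, but the $\mathfrak L_k$ are pulled back via different projections $q_{k+1}$ from different levels, and producing one cover over which all of them are simultaneously trivial is exactly the substance of the proposition.

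The paper handles both issues by passing to characteristic $p$, where perfectness supplies unique $p$-th roots for free. After reducing to $\deg N$ prime to $p$ via the isogeny trick of Lemma~\ref{lemma:isogeny covers}, one invokes Grothendieck's deformation theorem to lift $(\widetilde B,\widetilde N)$ to a pair $(B',N')$ over $K^\flat$. Lemma~\ref{lemma:Gminfty exists char p} then extends $q_1'^*(N')^\times$ to a $\G_{m,\infty}$-torsor on $B'_\infty$, producing a single cover trivializing all $p$-power roots. This cover reduces mod $\varpi^\flat$ to a cover of $\widetilde B_\infty$, and since $\Pic(\widetilde B_\infty)=\varinjlim_{[p]^*}\Pic(\widetilde B)$ each trivialization descends to finite level and then lifts by smoothness of $\overline B$ to the desired cover of $B_\infty$.
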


\begin{proof}
This follows directly from Heuer's more general result \cite[Theorem 4.1]{BenGood} which completely classifies $\Pic(B_\infty)$. As we do not need the full power of his result, we can give a shorter proof.

We need to construct $p$th power roots of $q_1^* N$ along with an open cover $\{U_{i,\infty}\}$ of $B_\infty$ over which all of these line bundles are trivial. We first construct the $p$th power roots. By the Theorem of the cube, for any integer $n$, we have
 \[[n]^* N\cong N^{\frac{n^2+n}{2}}\otimes[-1]^* N^{\frac{n^2-n}{2}}\] (see e.g. {\cite[Corollary 6.6]{Mil}}). So when $p\neq 2$, $[p]^* N$ is the $p$th power of the line bundle $N^{\frac{p+1}{2}}\otimes [-1]^*N^{\frac{p-1}{2}}$. In this case, the pullback $q_2^*(N^{\frac{p+1}{2}}\otimes [-1]^*N^{\frac{p-1}{2}})$ is the desired $p$th root of $q_1^*(N)$. Iterating this process, the $p^n$th root of $q_1^*(N)$ comes from level $n+1$ of the inverse system. If $p=2$, $[4]^*N$ is the square of the line bundle $N^5\otimes [-1]^*N^3$. The argument will therefore follow similarly in this case, but the $2^n$th root now comes from level $2n+1$.
 
To find an open cover of $B_\infty$ over which all of these line bundles are trivial, we pass to the special fiber. The argument is similar to that of Lemma \ref{lem:splitting pullback}, which handles the case of translation-invariant line bundles. By \cite[Lemma 6.2.2]{Lut}, every line bundle $N$ on $B$ extends to a formal bundle $\overline N$ on the formal abelian scheme $\overline B/K^\circ$ and then reduces mod $\varpi$ to a line bundle $\widetilde N$ on the abelian scheme $\widetilde B$ over $(K^\circ/\varpi).$ 

On the special fiber, our tower becomes an inverse system of qcqs schemes over $(K^\circ/\varpi)$ with affine transition maps $\widetilde B_{\infty}=\varprojlim_{[p]} \widetilde B$. By \cite[Tag 0B8W]{Stacks}, we have 
\[\Pic(\widetilde B_{\infty})=\varinjlim_{[p]^*} \Pic(\widetilde B).\] In particular, any trivialization of the special fiber of some line bundle $(q_1^*N)^{1/p^n}$ is pulled back from a trivialization on some finite level. By smoothness, this trivialization lifts to a trivialization on the formal scheme, and therefore the corresponding rigid space as in \cite[Proposition A.2.5]{Lut}. It is therefore enough to trivialize all our $p$th roots on a cover of $\widetilde B_\infty$. 

To do this, we work our way over to the tilt. If the degree of $N$ is prime to $p$, then a theorem of Grothendieck (see \cite[Theorem 2.4.1]{Oort} with the added input of the cotangent complex as in Section \ref{sub:deformations}) shows that there is some lift of $\widetilde B$ to a formal abelian scheme $\overline{B'}$ over $K^{\flat\circ}$ such that $\widetilde N$ lifts to a line bundle $\overline{N'}$ on $\overline{B'}$. By Lemma \ref{lemma:Gminfty exists char p}, the generic fiber $N'$ pulls back to a line bundle $q'^*_0(N')$ on $B'_\infty$ which extends to a $\G_{m,\infty,K^\flat}$-torsor. This gives a cover trivializing all the $p$th roots of $q'^*_0(N')$, which reduces to a cover trivializing the $p$th roots of the special fiber as desired. 

If the degree is not prime to $p$, we can (perhaps after a finite extension of $K$) find some isogeny $B\rightarrow B_1$ of $p$-power degree such that $N$ descends to a line bundle $N_1$ with degree prime to $p$. By Lemma \ref{lemma:isogeny covers} below, $B_\infty$ is isomorphic to $(B_1)_\infty$, so we can run the above argument for $N_1$ to get the desired result. 
\end{proof}

\begin{lemma}\label{lemma:isogeny covers} 
Let $f:A\rightarrow A_1$ be an isogeny of abelian varieties over $K$ with $p$-power degree. Then the perfectoid covers $A_\infty$ and $(A_1)_\infty$ constructed in Theorem \ref{thm:cover of A} are isomorphic.
\end{lemma}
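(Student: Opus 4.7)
The plan is to exploit the existence of a dual isogeny to construct a two-sided inverse of $f_\infty$. Since $f$ has degree $p^n$ for some $n$, there is a dual isogeny $g:A_1\to A$ with $g\circ f=[p^n]_A$ and $f\circ g=[p^n]_{A_1}$. First I would invoke Proposition \ref{prop:G_inf a group} to lift $f$ and $g$ to group homomorphisms $f_\infty:A_\infty\to (A_1)_\infty$ and $g_\infty:(A_1)_\infty\to A_\infty$ commuting with all projections to finite levels. The uniqueness part of that proposition then forces $g_\infty\circ f_\infty$ to coincide with $[p^n]_\infty$ on $A_\infty$, and analogously $f_\infty\circ g_\infty=[p^n]_\infty$ on $(A_1)_\infty$.

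It therefore suffices to show that $[p]_\infty$ is an automorphism of $A_\infty$ (so that $[p^n]_\infty$ is as well). This is the conceptual heart of the argument. The inverse system defining $A_\infty$ is $\{A_m:=A\}_{m\ge 0}$ with every transition map equal to $[p]$, so the projections $q_m:A_\infty\to A_m$ satisfy $[p]\circ q_{m+1}=q_m$. By the uniqueness in Proposition \ref{prop:G_inf a group}, the map $[p]_\infty:A_\infty\to A_\infty$ is characterized by $q_m\circ[p]_\infty=q_{m-1}$, i.e.\ it implements the index shift $m\mapsto m-1$ on the inverse system. Since shifting the index back by one is also a morphism of pro-systems, we obtain a map $\psi:A_\infty\to A_\infty$ with $q_m\circ\psi=q_{m+1}$, and both compositions $\psi\circ[p]_\infty$ and $[p]_\infty\circ\psi$ are characterized by $q_m\mapsto q_m$, hence equal the identity by uniqueness. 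Thus $[p^n]_\infty$ is an automorphism of perfectoid spaces, and the composition identities above promote $f_\infty$ to an isomorphism $A_\infty\xrightarrow{\sim}(A_1)_\infty$.

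The only real obstacle is justifying the shift argument: one must be certain that $[p]_\infty$ really does correspond to the index shift and that an honest inverse exists in the category of perfectoid spaces. Both points reduce to uniqueness statements—Proposition \ref{prop:G_inf a group} on the side of group homomorphisms, and \cite[Proposition 2.4.5]{SW} (invoked in the proof of Proposition \ref{prop:pF tower}) for uniqueness of perfectoid tilde-limits—so no new geometric input is required. The argument uses only that $\deg f$ is a power of $p$; for an isogeny whose degree is prime to $p$ the same reasoning would produce $[n]_\infty$ with $n$ invertible in the relevant sense, but here the restriction to $p$-power degree is exactly what makes the dual isogeny composition land inside the tower of $[p]$'s defining $A_\infty$.
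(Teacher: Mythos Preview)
Your proposal is correct and follows essentially the same approach as the paper: use the dual isogeny to obtain $f_\infty$ and $g_\infty$ whose compositions equal $[p^n]_\infty$, then observe that $[p^n]_\infty$ is an automorphism because it corresponds to an index shift in the defining inverse system. The paper simply asserts the last step, whereas you spell out the shift argument; otherwise the arguments coincide.
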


\begin{proof}
Let $f^\vee$ be the dual isogeny, so we have $f^\vee\cdot f=[p^k]$ for some positive integer $k$. Then we have a commutative diagram:
	\begin{center}
	\begin{tikzcd}
	&  \dots \arrow[r] & A_1 \arrow[rd, "f^\vee"] \arrow[rr, "{[p^k]}"] & & A_1 \arrow[rd, "f^\vee"] & \\
	\dots \arrow[r] & A \arrow[rr, "{[p^k]}"] \arrow[ru, "f"] & & A \arrow[rr, "{[p^k]}"] \arrow[ru, "f"] & & A.
	\end{tikzcd}
	\end{center}
	
This diagram induces morphisms $f_\infty: A_\infty\rightarrow (A_1)_\infty$ and $f^\vee_\infty: (A_1)_\infty\rightarrow A_\infty$ which compose to $[p^k]: A_\infty\rightarrow A_\infty$ and $[p^k]: (A_1)_\infty\rightarrow (A_1)_\infty$. As the latter maps are isomorphisms, we are done.
\end{proof}

\begin{example}
When $N$ is translation-invariant, the corresponding $\G_m$-torsor is a semi-abelian variety. The corresponding $\G_{m,\infty}$-torsor in this case is exactly the one constructed in Proposition \ref{prop:formal model for E}.
\end{example}

\subsection{Tilting perfectoid covers of line bundles}\label{sub:tilting bundles}

We now apply the theory of the previous subsection to move from line bundles over an abeloid over $K$ to line bundles on a suitable abeloid $A'$ over $K^\flat$ as constructed in Theorem \ref{thm:tilting A}. Applying Proposition \ref{prop:good bundles} to the Poincar\'e torsor $P_{B\times B^\vee}^\times$ on the abelian variety $B\times B^\vee$, we get our perfectoid cover.

\begin{corollary}\label{cor:P_inf-construction}
There is a $\mathbb G_{m,\infty}$-torsor $ P^\times_\infty\rightarrow (B\times B^\vee)_\infty\cong B_\infty\times B^\vee_\infty$ fitting into the commutative diagram of adic spaces

\begin{equation}\label{eqn:P_inf-diagram}
\begin{tikzcd}
 P^\times_\infty\arrow[r] \arrow[d] & B_\infty\times B^\vee_\infty \arrow[d, equal] \\
q_1^*(P_{B\times B^\vee}^\times) \arrow[r] \arrow[d] & B_\infty\times B^\vee_\infty \arrow[d, "{q_1}"] \\
P_{B\times B^\vee}^\times \arrow[r] & B\times B^\vee.
\end{tikzcd}
\end{equation}
%say something about functoriality?
\end{corollary}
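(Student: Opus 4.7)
The plan is to deduce this as a direct corollary of Proposition \ref{prop:good bundles} and Lemma \ref{lemma:product tilde}, with no new technical ingredients required.

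First, I would observe that $B \times B^\vee$ is an abelian variety with good reduction, since both $B$ and $B^\vee$ have good reduction by hypothesis (recall $B$ is the abelian part of a Raynaud extension, hence comes with a formal abelian scheme $\overline B$, and the same holds for $B^\vee$). In particular, the product formal abelian scheme $\overline B \times_{K^\circ} \overline{B^\vee}$ is a formal model of $B \times B^\vee$, and iterating $\overline{[p]}$ as in Example \ref{eg:good reduction} yields an $F$-tower whose perfectoid tilde-limit $(B\times B^\vee)_\infty$ exists by Proposition \ref{prop:pF tower}.

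Next, the identification $(B\times B^\vee)_\infty \cong B_\infty \times B^\vee_\infty$ follows from Lemma \ref{lemma:product tilde}: both $B_\infty \sim \varprojlim_{[p]} B$ and $B^\vee_\infty \sim \varprojlim_{[p]} B^\vee$ are perfectoid tilde-limits, so $B_\infty \times B^\vee_\infty$ is a perfectoid tilde-limit of the product system $\{B \times B^\vee\}$ (with transition maps $[p]\times[p] = [p]_{B\times B^\vee}$). Since perfectoid tilde-limits are unique, this must agree with $(B\times B^\vee)_\infty$.

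Now I would apply Proposition \ref{prop:good bundles} to the good-reduction abelian variety $B\times B^\vee$ and the Poincar\'e line bundle $P_{B\times B^\vee}$. This produces a $\G_{m,\infty}$-torsor $P^\times_\infty$ on $(B\times B^\vee)_\infty$ extending $q_1^*(P^\times_{B\times B^\vee})$, which under the identification above sits over $B_\infty \times B^\vee_\infty$. The two lower squares of Diagram (\ref{eqn:P_inf-diagram}) commute by construction: the bottom square comes from pulling back the torsor $P^\times_{B\times B^\vee}$ along $q_1 : B_\infty \times B^\vee_\infty \to B \times B^\vee$, and the middle square is the canonical map from the $\G_{m,\infty}$-torsor to its underlying $\G_m$-torsor produced by Proposition \ref{prop:good bundles} (equivalently, pushout along $\G_{m,\infty} \to \G_m$).

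I do not expect any genuine obstacles here, since the hard work has already been done in Proposition \ref{prop:good bundles}; the only thing worth spelling out carefully is the compatibility of the product decomposition of $(B\times B^\vee)_\infty$ with the $F$-tower used in Proposition \ref{prop:good bundles}, to ensure the torsor we produce really lives on $B_\infty \times B^\vee_\infty$ and not on some a priori distinct perfectoid cover. This is immediate from the uniqueness of perfectoid tilde-limits.
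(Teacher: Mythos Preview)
Your proposal is correct and matches the paper's approach exactly: the paper introduces this corollary with the single sentence ``Applying Proposition \ref{prop:good bundles} to the Poincar\'e torsor $P_{B\times B^\vee}^\times$ on the abelian variety $B\times B^\vee$, we get our perfectoid cover,'' and gives no further proof. Your additional remarks on the product decomposition via Lemma \ref{lemma:product tilde} and the commutativity of the diagram are reasonable elaborations that the paper leaves implicit.
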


\begin{corollary}\label{cor:P_inf-fibers}
Fix a point $x^\vee_\infty\in B_\infty^\vee$, let $x^\vee:=q_1(x^\vee_\infty)\in B^\vee$, so \[ P^\times_{x^\vee_\infty}:= P^\times_\infty\times_{B_\infty\times B_\infty^\vee} (B_\infty\times x^\vee_\infty)\] is a $(\G_{m,K})_\infty$-torsor over $B_\infty$. Then $ P^\times_{x^\vee_\infty}\sim \varprojlim_{[p]}  P^\times_{B\times x^\vee}$. The symmetric statement is true for $x_\infty\in B_\infty$, and these give compatible partial group laws which project down to the usual partial group laws on $P^\times_{B\times B^\vee}$.
\end{corollary}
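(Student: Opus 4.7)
My plan is to exhibit $P^\times_{x^\vee_\infty}$ as a perfectoid tilde-limit of the tower $\cdots \xrightarrow{[p]} P^\times_{B\times x^\vee}\xrightarrow{[p]}P^\times_{B\times x^\vee}$, where $[p]$ denotes multiplication by $p$ on the canonical semi-abelian variety structure of $P^\times_{B\times x^\vee}$ (viewed as the extension $1\to\G_m\to P^\times_{B\times x^\vee}\to B\to 1$ classified by $x^\vee$). Uniqueness of perfectoid tilde-limits (\cite[Proposition 2.4.5]{SW}, cited in the proof of Proposition \ref{prop:pF tower}) would then identify $P^\times_{x^\vee_\infty}$ with the perfectoid cover already produced by Proposition \ref{prop:formal model for E} applied to this Raynaud extension with trivial lattice.

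For the tilde-limit property I would work locally and then glue. Since $P^\times_\infty$ is a $\G_{m,\infty}$-torsor over $B_\infty\times B^\vee_\infty$, it is locally isomorphic to $\G_{m,\infty}\times U_\infty$ for affinoid perfectoid opens $U_\infty\subset B_\infty\times B^\vee_\infty$ which are themselves tilde-limits from the good-reduction $F$-tower of Example \ref{eg:good reduction}. Combining $\G_{m,\infty}\sim\varprojlim_{[p]}\G_m$ (Corollary \ref{cor:pF for T}) with Lemma \ref{lemma:product tilde}, each such local patch is a tilde-limit of trivial $\G_m$-torsors on the corresponding opens of $B\times B^\vee$. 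Restricting to the fiber over $x^\vee_\infty$ leaves the $\G_{m,\infty}$ factor untouched and replaces $U_\infty$ with its intersection with $B_\infty\times\{x^\vee_\infty\}$, which is still a tilde-limit of opens of $B$, so locally $P^\times_{x^\vee_\infty}$ is also a tilde-limit.

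To globalize, I would use the gluing data: as a $\G_{m,\infty}$-torsor, $P^\times_\infty$ comes equipped with a compatible system of $p^n$-th power roots of the transition cocycle of the $\G_m$-torsor $P^\times_{B\times B^\vee}$. At each finite level $n$, these $p^n$-th roots define a $\G_m$-torsor $L_n$ on $B\times B^\vee$ whose pushout along $[p]\colon\G_m\to\G_m$ recovers $L_{n-1}$. Translating this into the language of semi-abelian varieties over $B$ via restriction to the fiber (using that pushout along $[p]$ on the torus factor matches multiplication by $p$ on the semi-abelian variety, as in the Raynaud extension $F$-tower of Section \ref{sub:abeloid models}), the global tower becomes precisely $\cdots\xrightarrow{[p]}P^\times_{B\times x^\vee}\xrightarrow{[p]}P^\times_{B\times x^\vee}$. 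The partial group laws descend from those on $P^\times_{B\times B^\vee}$ by functoriality of perfectoid tilde-limits (Proposition \ref{prop:G_inf a group}), and their compatibility in the two slots is a consequence of the symmetry of the Poincar\'e bundle.

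The main obstacle is the globalization step: ensuring that the local $\G_{m,\infty}$-structure really assembles into the inverse system $\varprojlim_{[p]}P^\times_{B\times x^\vee}$ of semi-abelian varieties, not merely into a tower of $\G_m$-torsors over a fixed $B$. Concretely, one must check that the $p$-th root of $P^\times_{B\times B^\vee}$ arising from the $\G_{m,\infty}$-torsor structure agrees (after pullback along $[p]\colon B\times B^\vee\to B\times B^\vee$) with the canonical $p$-th root produced in Proposition \ref{prop:good bundles} from the theorem of the cube. Once this compatibility is in hand, the two constructions yield the same inverse system and hence the same perfectoid tilde-limit.
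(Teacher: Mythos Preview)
Your approach is correct but more elaborate than the paper's. The paper's proof is one sentence: insert $B\times x^\vee\to B\times B^\vee$ into diagram (\ref{eqn:P_inf-diagram}) (with $B_\infty\times x^\vee_\infty\to B_\infty\times B^\vee_\infty$ at the perfectoid level), take fiber products throughout, and observe that the result is precisely the instance of that same diagram for the line bundle $P_{B\times x^\vee}$ on $B$. No local trivialization or cocycle bookkeeping is carried out; the global fiber-product does the work at once.

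Your stated obstacle is slightly mis-aimed. You describe it as a compatibility between ``the $p$-th root arising from the $\G_{m,\infty}$-torsor structure'' and ``the canonical $p$-th root produced in Proposition \ref{prop:good bundles},'' but these are the same by construction: Corollary \ref{cor:P_inf-construction} \emph{defines} $P^\times_\infty$ by applying Proposition \ref{prop:good bundles} to the Poincar\'e bundle, so there is no separate comparison to make. The real content underneath your concern is why the restricted tower over $B_\infty$ agrees with the \emph{semi-abelian} $[p]$-tower on $P^\times_{B\times x^\vee}$. The paper packages this into the Example following Lemma \ref{lemma:isogeny covers}: for translation-invariant $N$ (in particular $P_{B\times x^\vee}\in\Pic^0(B)$) one has $[-1]^*N=N^{-1}$, so the theorem-of-the-cube $p^n$-th root of $q_1^*N$ is simply $q_{n+1}^*N$, and the resulting $\G_{m,\infty}$-torsor is exactly the $E_\infty$ of Proposition \ref{prop:formal model for E}. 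Once you recognize this, your globalization worry evaporates and both routes land on the same identification.
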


\begin{proof}
Add the map $B\times x^\vee\rightarrow B\times B^\vee$ to (\ref{eqn:P_inf-diagram}), take fiber products everywhere, and notice that we recover precisely the diagram for $ P_{x_\infty^\vee}$.
\end{proof}

This construction tilts nicely. As $B$ has good reduction, it extends to an abelian scheme $\overline B/K^\circ$ with special fiber $\widetilde B/(K^\circ/\varpi)$. Using the isomorphism $K^\circ/\varpi\cong K^{\flat\circ}/\varpi^\flat$, we lift $\widetilde B$ to a formal abelian scheme $\overline B'/K^{\flat\circ}$ with adic generic fiber $B'/K^\flat$. Then we have $B_\infty^\flat\cong (B')_\infty$ as the special fibers agree, and $(B^\vee)_\infty^\flat\cong (B'^\vee)_\infty$ as taking duals commutes with taking the special fiber. 

By \cite[Theorem 6.1.1]{Lut}, the Poincar\'e bundle $ P_{B\times B^\vee}$ extends to a formal line bundle on $\overline{B\times B^\vee}$ with the same special fiber as the Poincar\'e bundle $ P_{B'\times B'^\vee}$ on $B'\times B'^\vee$. Let $ P'^\times_\infty$ be the $(\G_{m,K^\flat})_\infty$-torsor over the Poincar\'e bundle on $B'\times B'^\vee$. Now returning to Diagram \ref{eqn:P_inf-diagram}, we have formal models for the spaces making up the outer columns and bottom row. As the bottom right square is a pullback and the top left square is a pushout, this is enough to give formal models for the middle column as well. As all of this commutes with taking the special fiber, we conclude:

\begin{proposition}\label{prop:Poincare-tilt}
There is an isomorphism $ (P^\times_\infty)^\flat\cong  P'^\times_\infty$.
\end{proposition}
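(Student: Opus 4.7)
The strategy is to construct compatible $F$-towers whose tilde-limits are $P^\times_\infty$ and $P'^\times_\infty$ respectively, verify that their special fibers agree under the isomorphism $K^\circ/\varpi \cong K^{\flat\circ}/\varpi^\flat$, and then invoke Lemma \ref{lemma:pF tilt}.

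First, I would unpack the construction of $P^\times_\infty$ from Proposition \ref{prop:good bundles} into an explicit $F$-tower. The $p^n$-th roots of $q_1^*P_{B \times B^\vee}$ used there are pulled back from line bundles $N_n$ on $B \times B^\vee$ via the projection $q_{n+1}$, where the $N_n$ are built by iterated application of the Theorem of the Cube (so for $p$ odd, $N_{n+1} = N_n^{(p+1)/2} \otimes [-1]^* N_n^{(p-1)/2}$, with the analogous formula in characteristic $2$). Since $B \times B^\vee$ has good reduction, each $N_n$ extends to a formal line bundle $\overline{N_n}$ on $\overline{B \times B^\vee}$ by \cite[Lemma 6.2.2]{Lut}, and the $\G_m$-torsors $\overline{N_n^\times}$ assemble into a formal inverse system living over the $F$-tower $\{\overline{B \times B^\vee}\}$ from Example \ref{eg:good reduction}. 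I would check that this inverse system is itself an $F$-tower: affineness and flatness of the transition maps are inherited from the base together with the fact that the torsor transition maps come from the pushout along $[p] : \G_m \to \G_m$, and factoring through relative Frobenius on the mod $\varpi$ special fiber follows from the analogous factoring on the base combined with the fact that $[p]$ on $\G_m$ reduces to relative Frobenius (Example \ref{eg:pF P1}/Proposition \ref{model for [p] on T}). By Proposition \ref{prop:pF tower}, the generic fiber of $\varprojlim \overline{N_n^\times}$ is the unique perfectoid tilde-limit, which must agree with the $P^\times_\infty$ of Corollary \ref{cor:P_inf-construction}.

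Second, I would repeat the construction verbatim on the tilt side, producing an $F$-tower $\{\overline{N'^\times_n}\}$ whose tilde-limit is $P'^\times_\infty$. Here the formal abelian scheme $\overline{B'}$ was chosen so that its reduction agrees with $\widetilde{B}$; since forming the dual and the Poincar\'e bundle commutes with passage to the special fiber (\cite[Theorem 6.1.1]{Lut}), the formal Poincar\'e bundles on $\overline{B \times B^\vee}$ and $\overline{B' \times B'^\vee}$ have canonically isomorphic mod-uniformizer special fibers. The Theorem-of-the-Cube expressions defining the $N_n$ are purely formal group-scheme operations, so they commute with reduction; hence $\overline{N_n}$ and $\overline{N'_n}$ have isomorphic special fibers for every $n$, and the same is true of the associated $\G_m$-torsors and of the transition maps (since the latter are determined by $[p]$ on $\G_m$ and on the base, both of which reduce compatibly).

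Third, I would apply Lemma \ref{lemma:pF tilt} to the two $F$-towers. This immediately gives the isomorphism $(P^\times_\infty)^\flat \cong P'^\times_\infty$ of perfectoid spaces, and the isomorphism is compatible with the $\G_{m,\infty}$-torsor structure on both sides because the torsor structure is built into the $F$-tower via the $\G_m$-action on each $\overline{N_n^\times}$, and this action reduces compatibly mod uniformizer. The main obstacle is the bookkeeping of the previous paragraph: verifying that the Theorem-of-the-Cube construction of $p$-th roots really does extend to formal models and really is compatible with reduction in a sufficiently functorial way to produce an honest $F$-tower rather than just an inverse system of formal schemes. Since both sides are constructed by the same formula from data that agree on the special fiber, once this compatibility is established everything falls into place formally.
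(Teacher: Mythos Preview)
Your proposal is correct and follows essentially the same strategy as the paper: construct $F$-towers on each side whose special fibers agree (using that the formal Poincar\'e bundles over $\overline{B\times B^\vee}$ and $\overline{B'\times B'^\vee}$ have the same reduction, via \cite[Theorem 6.1.1]{Lut}), then invoke Lemma~\ref{lemma:pF tilt}. The paper's argument is terser---it appeals to the pullback/pushout structure of Diagram~\ref{eqn:P_inf-diagram} to propagate formal models rather than writing down the tower $\{\overline{N_n^\times}\}$ explicitly---but your level-by-level construction via the Theorem of the Cube is exactly what underlies that diagram chase, and your identification of the bookkeeping (that the Cube formula commutes with reduction and that the transition maps factor through relative Frobenius locally as $[p]$ on $\G_m$ times $[p]$ on the base) is the substantive content.
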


Restricting this isomorphism to the situation of Corollary \ref{cor:P_inf-fibers} and noting that the fiber diagram of perfectoid spaces

\begin{tikzcd}
 P^\times_{x_\infty^\vee}\arrow[r] \arrow[d] &  P^\times_\infty \arrow[d] \\
 B_\infty\times x_\infty^\vee \arrow[r] &  B_\infty\times B_\infty^\vee
\end{tikzcd} tilts to the diagram \begin{tikzcd}
 (P^\times_{x_\infty^\vee})^\flat\arrow[r] \arrow[d] &  P'^\times_\infty \arrow[d] \\
 B'_\infty\times (x_\infty^\vee)^\flat \arrow[r] &  B'_\infty\times B_\infty'^\vee,
\end{tikzcd}
we obtain

\begin{corollary}\label{cor:Poincare-fiber-tilt}
For any point $x_\infty^\vee\in B_\infty^\vee$, there is an isomorphism $( P^\times_{x_\infty^\vee})^\flat\cong  P^\times_{(x_\infty^\vee)^\flat}$.
\end{corollary}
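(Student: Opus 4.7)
The plan is to deduce this fiber-level statement from the global tilting isomorphism of Proposition \ref{prop:Poincare-tilt} by exploiting the fact that tilting commutes with fiber products of perfectoid spaces. First I would recall that by Corollary \ref{cor:P_inf-fibers}, the $\G_{m,\infty}$-torsor $P^\times_{x_\infty^\vee}$ is defined as the fiber product
\[
P^\times_{x_\infty^\vee} = P^\times_\infty \times_{B_\infty \times B_\infty^\vee}(B_\infty \times \{x_\infty^\vee\}),
\]
and similarly on the characteristic $p$ side we have
\[
P^\times_{(x_\infty^\vee)^\flat} = P'^\times_\infty \times_{B'_\infty \times B'^\vee_\infty}(B'_\infty \times \{(x_\infty^\vee)^\flat\}).
\]
Here the point $(x_\infty^\vee)^\flat \in B'^\vee_\infty$ is obtained from $x_\infty^\vee \in B_\infty^\vee$ via the homeomorphism $|B_\infty^\vee| \cong |B_\infty^{\vee,\flat}| \cong |B'^\vee_\infty|$ coming from the tilting equivalence and the identification $B_\infty^{\vee,\flat} \cong B'^\vee_\infty$ established just before Proposition \ref{prop:Poincare-tilt}.

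Next I would invoke the key input: tilting is a functor on perfectoid spaces over $(K,K^\circ)$ that commutes with fiber products (part of Theorem \ref{thm:tilting}; this is how fiber products of perfectoid spaces are handled in \cite{PS}). Applying this functoriality to the fiber diagram defining $P^\times_{x_\infty^\vee}$ gives
\[
(P^\times_{x_\infty^\vee})^\flat \cong (P^\times_\infty)^\flat \times_{(B_\infty \times B_\infty^\vee)^\flat} (B_\infty \times \{x_\infty^\vee\})^\flat.
\]
Now I substitute the three global tilting identifications: Proposition \ref{prop:Poincare-tilt} gives $(P^\times_\infty)^\flat \cong P'^\times_\infty$; the identification $(B_\infty \times B_\infty^\vee)^\flat \cong B'_\infty \times B'^\vee_\infty$ follows from Lemma \ref{lemma:product tilde} together with the tilts of $B_\infty$ and $B_\infty^\vee$; and the closed immersion $B_\infty \times \{x_\infty^\vee\} \hookrightarrow B_\infty \times B_\infty^\vee$ tilts to the closed immersion $B'_\infty \times \{(x_\infty^\vee)^\flat\} \hookrightarrow B'_\infty \times B'^\vee_\infty$ by the characterization of the tilted point $(x_\infty^\vee)^\flat$. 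Combining these gives precisely the fiber product defining $P^\times_{(x_\infty^\vee)^\flat}$, yielding the desired isomorphism.

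I expect no significant obstacle: once the functorial compatibility of tilting with fiber products is cited, the result is essentially a diagram chase. The only subtlety worth flagging is making sure the base-point identification is coherent, i.e.\ that the image of $x_\infty^\vee$ under the homeomorphism $|B_\infty^\vee| \to |B'^\vee_\infty|$ really is the point cutting out the fiber on the tilt side; this is the content of how tilting acts on rank-one points of affinoid perfectoid subsets of $B_\infty^\vee$, which is built into the tilting equivalence. With that understood, the proof is one line, as the author indicates in the sentence preceding the corollary.
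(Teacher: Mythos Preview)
Your proposal is correct and matches the paper's approach essentially line for line: the paper also observes that the fiber diagram defining $P^\times_{x_\infty^\vee}$ tilts to the analogous fiber diagram over $K^\flat$, using Proposition \ref{prop:Poincare-tilt} and the identifications $B_\infty^\flat\cong B'_\infty$, $(B_\infty^\vee)^\flat\cong B'^\vee_\infty$. The only quibble is that Lemma \ref{lemma:product tilde} concerns tilde-limits rather than tilting of products; the product identification you need is really a direct consequence of tilting commuting with fiber products, which you already invoke.
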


We have now constructed a perfectoid cover of the vertical map in Diagram \ref{eq:Poincare form} and showed that it tilts to an analogous map over $K^\flat$ .To complete the perfectoid version of this diagram we must lift and tilt the bilinear form $\langle \cdot,\cdot\rangle$. This requires us to make choices, so we then check that $A_\infty$ and $(A^\vee)_\infty$ are independent of these choices. 

Fix a basis $m_1,\dots,m_r$ for $M$ and $m_1^\vee,\dots,m_r^\vee$ for $M^\vee$. We first choose group homomorphisms $\phi_\infty: M\rightarrow B_\infty$ and $\phi_\infty^\vee: M^\vee\rightarrow B_\infty^\vee$ lifting $\phi$ and $\phi^\vee$ by picking any lift on our basis elements and then extending from there. Concretely, the element $\phi_\infty(m)$ corresponds to a sequence $(\phi(m), \frac{\phi(m)}{p}, \frac{\phi(m)}{p^2},\dots)$ of elements of $B$ where $[p](\frac{\phi(m)}{p^i})=\frac{\phi(m)}{p^{i-1}}$ for all $i>0$. We then choose a bilinear form $\langle \cdot,\cdot\rangle_\infty\rightarrow P^\times_\infty$ lifting $\langle \cdot,\cdot\rangle$ by picking any lift on the elements $m_i\times m^\vee_j$ which lies over $\phi_\infty(m_i)\times \phi_\infty^\vee(m^\vee_j)$, then extending bilinearly to all of $M\times M^\vee$ (using the compatibility of the partial group laws). This gives us the diagram

\begin{equation}\label{eq:Poincare form-inf}
\begin{tikzcd}
& &  P^\times_\infty \arrow[ld] \arrow[dd] \\
M\times M^\vee \arrow[rru, "{\langle \cdot,\cdot\rangle_\infty}"] \arrow[r, "{\phi_\infty\times\phi^\vee_\infty}", swap] \arrow[dd, equal] & B_\infty\times B^\vee_\infty \arrow[dd] & \\
& & P_{B\times B^\vee}^\times \arrow[ld] \\
M\times M^\vee \arrow[rru, "{\langle \cdot, \cdot\rangle}"] \arrow[r, "{\phi\times \phi^\vee}", swap] & B\times B^\vee &
\end{tikzcd}
\end{equation}

Here the top triangle is a commutative diagram of perfectoid spaces, and does depend on our choice of lifts. However, we claim that no matter the choice, we can still extract the spaces $E_\infty$, $E^\vee_\infty$, $A_\infty$, and $A^\vee_\infty$ just from the top triangle alone. 

\begin{proposition}\label{prop:E_inf-coordinates}
We have \[E_\infty\cong  P^\times_{\phi^\vee_\infty(m_1^\vee)}\times_{B_\infty}\cdots\times_{B_\infty}  P^\times_{\phi^\vee_\infty(m_r^\vee)},\] with notation as in Corollary \ref{cor:P_inf-fibers}.
\end{proposition}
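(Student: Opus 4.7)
The plan is to show that both sides of the proposed isomorphism are perfectoid tilde-limits of the same inverse system $\{E\xleftarrow{[p]} E \xleftarrow{[p]}\cdots\}$, and then invoke the uniqueness of perfectoid tilde-limits cited in the proof of Proposition \ref{prop:pF tower}. The left-hand side $E_\infty$ is such a tilde-limit by construction (Proposition \ref{prop:formal model for E}). The work is therefore to recognize the fiber product on the right-hand side as a tilde-limit of the same system.

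First I would unpack the finite-level decomposition from Proposition \ref{prop:Raynaud morphism}: the chosen basis gives
\[E\cong P^\times_{B\times\phi^\vee(m_1^\vee)}\times_B\cdots\times_B P^\times_{B\times\phi^\vee(m_r^\vee)},\]
and the crucial point is that the map $[p]:E\to E$ respects this product decomposition, restricting to the $[p]$-endomorphism of each $\G_m$-torsor factor lying over $[p]_B$ (this is just the statement that the group law on $E$ is built compatibly from the partial group laws on the torsors, which is precisely how $E$ was constructed as an extension in the first place). Iterating gives a compatible inverse system of fiber-product decompositions at each finite level.

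Next, Corollary \ref{cor:P_inf-fibers} identifies each factor on the right-hand side, $P^\times_{\phi_\infty^\vee(m_i^\vee)}$, as a perfectoid tilde-limit of the inverse system $\{P^\times_{B\times\phi^\vee(m_i^\vee)}\xleftarrow{[p]}\cdots\}$. What remains is to promote these factor-wise tilde-limits to a tilde-limit of the fiber product. This is a fiber-product analogue of Lemma \ref{lemma:product tilde}: given compatible rings of definition carrying the $\varpi$-adic topology, the fiber product of perfectoid tilde-limits over a common base $B_\infty$ (which itself is a perfectoid tilde-limit of $\{B\xleftarrow{[p]}\cdots\}$ by Example \ref{eg:good reduction}) is again a perfectoid tilde-limit. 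One verifies this by writing the fiber product as a rational-subspace-type construction inside the absolute product and checking the two conditions of Definition \ref{def:tilde limit} directly; alternatively, one can cover $B_\infty$ (and its finite-level approximations) by affinoids over which each $P^\times_{B\times\phi^\vee(m_i^\vee)}$ is trivialized, reducing the statement to the absolute case handled by Lemma \ref{lemma:product tilde}.

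The main obstacle is the fiber-product extension of Lemma \ref{lemma:product tilde}, but this is mild: one can avoid it entirely by working formally. Namely, the $F$-tower $\{\fE_{\alpha/p^n}\}$ constructed in Proposition \ref{prop:formal model for E} was defined via a pushout, and the pushout is compatible with the product-of-torsors description of $E$. Explicitly, the $B$-scheme factorization $E\cong \prod_B P^\times_{B\times\phi^\vee(m_i^\vee)}$ lifts to a formal fiber product of formal $\G_m$-torsors over $\overline B$, which persists through the inverse limit $\varprojlim_n \fE_{\alpha/p^n}$ and through taking adic generic fibers via Lemma \ref{lemma:tilde generic fiber}. Passing to the generic fiber yields the fiber-product expression on the right-hand side, and uniqueness of perfectoid tilde-limits (\cite[Proposition 2.4.5]{SW}) identifies it with $E_\infty$, completing the proof.
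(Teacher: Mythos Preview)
Your proposal is correct and follows essentially the same approach as the paper: start from the finite-level fiber-product decomposition of $E$ from Equation~(\ref{eq:E coords}), observe that $[p]$ acts factorwise over $[p]_B$, invoke Corollary~\ref{cor:P_inf-fibers} on each factor, and conclude by uniqueness of perfectoid tilde-limits. The paper's proof is terser and does not explicitly justify the passage from factorwise tilde-limits to a tilde-limit of the fiber product; your discussion of the fiber-product analogue of Lemma~\ref{lemma:product tilde} (and the formal-model alternative) fills in exactly the step the paper leaves implicit.
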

 
\begin{proof}
This is the analogue of Equation (\ref{eq:E coords}), and the proof builds off of this decomposition. The map $[p]:E\rightarrow E$ is induced by the maps $[p]: P^\times_{B\times\phi^\vee(m_i^\vee)}\rightarrow  P^\times_{B\times\phi^\vee(m_i^\vee)}$, which live over $[p]:B\rightarrow B$. By Corollary \ref{cor:P_inf-fibers}, we have $ P^\times_{\phi^\vee_\infty(m_i^\vee)}\sim \varprojlim_{[p]}  P^\times_{B\times\phi^\vee(m_i^\vee)}.$  Combining these, the proposition follows from uniqueness of perfectoid tilde-limits.
 
\end{proof}

The bilinear form $\langle\cdot,\cdot\rangle_\infty$ defines a group homomorphism $M \rightarrow E_\infty$ by sending $m\mapsto (\langle m,m_1^\vee\rangle_\infty,\dots,\langle m,m_r^\vee\rangle_\infty)$. By the commutativity of Diagram (\ref{eq:Poincare form-inf}), this lifts the homomorphism $M\rightarrow E$. In the language of \cite[Section 4]{AWS}, we have recovered a choice $M_\infty$ of $p$-power roots in $E$ of the lattice $M$, and our homomorphism is precisely the map $M_\infty\rightarrow E_\infty$ from \cite[Definition 4.2]{AWS}. Continuing with the notation from \cite{AWS}, we define $D_\infty:= M_\infty\times_\Z \Z_p$. Then by \cite[Theorem 4.6.5]{AWS}, we have $A_\infty=D_\infty\times^{M_\infty} E_\infty$. By symmetry, the same bilinear form also defines $M^\vee\rightarrow E^\vee_\infty$, from which we can recover $A^\vee_\infty$. In summary, we have:

\begin{proposition}\label{prop:A_inf-from-Poincare} 
The spaces $E_\infty$, $E^\vee_\infty$, $A_\infty$, and $A^\vee_\infty$ are determined by the top triangle of Diagram (\ref{eq:Poincare form-inf}).
\end{proposition}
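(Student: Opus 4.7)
The plan is to extract each of the four spaces from the top triangle in three steps: first reconstruct $E_\infty$ and $E^\vee_\infty$ as fiber products of perfectoid covers of fibers of $P^\times_\infty$, then use the bilinear form to produce the required compatible systems of $p$-power roots of the lattices, and finally apply the AWS uniformization to recover $A_\infty$ and $A^\vee_\infty$.

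First, $E_\infty$ is supplied by Proposition \ref{prop:E_inf-coordinates}. The top triangle determines the points $\phi^\vee_\infty(m^\vee_i) \in B^\vee_\infty$ by restricting $\phi_\infty \times \phi^\vee_\infty$ to $\{0\}\times\{m^\vee_i\}$, and hence determines each fiber $P^\times_{\phi^\vee_\infty(m^\vee_i)}$ as the pullback $P^\times_\infty \times_{B_\infty\times B^\vee_\infty}(B_\infty\times\phi^\vee_\infty(m^\vee_i))$. Taking the fiber product of these over $B_\infty$ reconstructs $E_\infty$. Exchanging the roles of the two factors reconstructs $E^\vee_\infty$ in the symmetric fashion, using the points $\phi_\infty(m_i) \in B_\infty$.

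Second, the bilinear form $\langle\cdot,\cdot\rangle_\infty$ in the top triangle defines a map
\[
\iota_\infty : M \longrightarrow E_\infty, \qquad m \longmapsto \bigl(\langle m, m^\vee_1\rangle_\infty,\ldots,\langle m, m^\vee_r\rangle_\infty\bigr),
\]
where the right-hand side lies in $P^\times_{\phi^\vee_\infty(m^\vee_1)}\times_{B_\infty}\cdots\times_{B_\infty}P^\times_{\phi^\vee_\infty(m^\vee_r)} = E_\infty$ by Proposition \ref{prop:E_inf-coordinates}; commutativity of the top triangle of \eqref{eq:Poincare form-inf} forces each coordinate to live over $\phi_\infty(m)$, so the tuple is well-defined. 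Bilinearity of $\langle\cdot,\cdot\rangle_\infty$ ensures that $\iota_\infty$ is a group homomorphism, and commutativity of the outer diagram of \eqref{eq:Poincare form-inf} ensures that it lifts the lattice inclusion $h: M \hookrightarrow E$. Exactly symmetrically one obtains $\iota^\vee_\infty : M^\vee \to E^\vee_\infty$ lifting $h^\vee$.

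Third, writing $M_\infty := \iota_\infty(M)$ and $D_\infty := M_\infty\otimes_\Z \Z_p$, and likewise on the dual side, we invoke \cite[Theorem 4.6.5]{AWS} to obtain
\[
A_\infty \;=\; D_\infty \times^{M_\infty} E_\infty, \qquad A^\vee_\infty \;=\; D^\vee_\infty \times^{M^\vee_\infty} E^\vee_\infty,
\]
where every ingredient has just been produced from the top triangle of \eqref{eq:Poincare form-inf}. Since any two choices of lifts $(\phi_\infty,\phi^\vee_\infty,\langle\cdot,\cdot\rangle_\infty)$ recover the same $E_\infty, E^\vee_\infty$ and produce lattice lifts $\iota_\infty,\iota^\vee_\infty$ falling under the same AWS formalism, we conclude that $E_\infty$, $E^\vee_\infty$, $A_\infty$, and $A^\vee_\infty$ depend only on the top triangle. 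The only subtle point is checking that $\iota_\infty$ really lands in $E_\infty$ with the correct projection to $B_\infty$; this is immediate from the commutativity of the prism \eqref{eq:Poincare form-inf} together with the description of $E_\infty$ in Proposition \ref{prop:E_inf-coordinates}, so no serious obstacle arises.
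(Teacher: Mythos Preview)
Your proof is correct and follows essentially the same approach as the paper: recover $E_\infty$ (and $E^\vee_\infty$ by symmetry) from Proposition~\ref{prop:E_inf-coordinates}, use the bilinear form $\langle\cdot,\cdot\rangle_\infty$ to define the homomorphism $M\to E_\infty$ via $m\mapsto(\langle m,m_1^\vee\rangle_\infty,\ldots,\langle m,m_r^\vee\rangle_\infty)$, identify its image as $M_\infty$, set $D_\infty=M_\infty\times_\Z\Z_p$, and invoke \cite[Theorem~4.6.5]{AWS} to obtain $A_\infty=D_\infty\times^{M_\infty}E_\infty$. Your write-up is slightly more explicit about why the coordinates of $\iota_\infty(m)$ agree over $B_\infty$, but the argument is the same.
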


Tilting the top triangle of (\ref{eq:Poincare form-inf}) and applying Proposition \ref{prop:Poincare-tilt} to fill in the bottom triangle, we get a diagram full of leading notation which will take some unpacking

\begin{equation}\label{eqn:Poincare-triangle-inf-tilt}
\begin{tikzcd}
& &  P'^\times_\infty \arrow[ld] \arrow[dd] \\
M'\times M'^\vee \arrow[rru, "{\langle \cdot,\cdot\rangle'_\infty}"] \arrow[r, "{\phi'_\infty\times\phi'^\vee_\infty}", swap] \arrow[dd, equal] & B'_\infty\times B'^\vee_\infty \arrow[dd] & \\
& &  P^\times_{B'\times B'^\vee} \arrow[ld] \\
M'\times M'^\vee \arrow[rru, "{\langle \cdot, \cdot\rangle'}"] \arrow[r, "{\phi'\times \phi'^\vee}", swap] & B'\times B'^\vee. &
\end{tikzcd}
\end{equation}

Here we have applied the isomorphism $B_\infty^\flat\cong B'_\infty$ which comes because the special fibers of $B$ and $B'$ agree, the analogous isomorphism for $B'^\vee$, and the isomorphism $ P^\times_\infty\cong  P'^\times_\infty$ from Proposition \ref{prop:Poincare-tilt}, which also gives the commutativity of the right vertical square. We are \emph{defining} $M'\times M'^\vee$ to be the tilt of $M\times M^\vee$, and the morphisms filling out the top triangle are defined by the tilting correspondence. The morphisms in the bottom triangle are defined by composition of the morphisms in the top triangle with projections to the bottom. The map $\phi'\times\phi'^\vee$ is the composition of homomorphisms of adic groups, so it is itself a group homomorphism.

\begin{lemma}\label{lemma:bilinear-tilt}
The map $\langle\cdot,\cdot\rangle':M'\times M'^\vee\rightarrow  P^\times_{B'\times B'^\vee}$ is a non-degenerate bilinear form.
\end{lemma}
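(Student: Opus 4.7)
The plan is to establish bilinearity and non-degeneracy separately, in both cases by tracing the construction through the tilting correspondence.

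For bilinearity, I would argue as follows. By construction, $\langle\cdot,\cdot\rangle_\infty$ is bilinear: on a fixed basis it takes arbitrary lifted values, and it is extended to all of $M\times M^\vee$ using the compatible partial group laws on the fibers of $P^\times_\infty\to B_\infty\times B^\vee_\infty$ provided by Corollary \ref{cor:P_inf-fibers}. By Proposition \ref{prop:Poincare-tilt} and Corollary \ref{cor:Poincare-fiber-tilt}, these partial group laws are identified under tilting with the analogous partial group laws on $P'^\times_\infty\to B'_\infty\times B'^\vee_\infty$. Since the tilting correspondence sends group morphisms to group morphisms, $\langle\cdot,\cdot\rangle'_\infty$ is bilinear with respect to these tilted partial group laws. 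Finally, the projection $P'^\times_\infty\to P^\times_{B'\times B'^\vee}$ is a map of $\G_{m,\infty}$-torsors, and so is compatible with the partial group laws on fibers; composing with it preserves bilinearity, so $\langle\cdot,\cdot\rangle'$ is bilinear and lies over $\phi'\times\phi'^\vee$ as required.

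For non-degeneracy, the strategy is to identify tropicalizations. Non-degeneracy of $\langle\cdot,\cdot\rangle$ is by definition non-degeneracy of the tropicalization $F\colon M\times M^\vee\to\R$ (the map (\ref{eq:affine fiber}) assembled over all characters), so it suffices to show that the tropicalization $F'$ of $\langle\cdot,\cdot\rangle'$ agrees with $F$ under the bijection $M\times M^\vee\cong M'\times M'^\vee$ induced by the tilting correspondence on $K$-points. This should follow from a commutative diagram analogous to (\ref{log and tilt diagram}): namely, the valuation map $\ell\colon P^\times_{B\times B^\vee}(K)\to\R$ factors the perfectoid valuation $P^\times_\infty(K)\to\R$, and the sharp map $(-)^\sharp$ preserves absolute values, hence preserves valuations. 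Therefore, for corresponding elements $(m,m^\vee)$ and $(m',m'^\vee)$, we have $\ell(\langle m,m^\vee\rangle)=\ell(\langle m',m'^\vee\rangle')$, so $F=F'$ under the identification and $F'$ is non-degenerate.

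The main obstacle, as in Lemma \ref{lemma:M' a lattice}, is justifying that the tropicalizations really do match on the nose, rather than up to some translation or scaling coming from the choice of lifts. To handle this cleanly, I would refine the hypercube decomposition used to build the perfectoid covers $B_\infty\times B^\vee_\infty$ and $B'_\infty\times B'^\vee_\infty$ so that the images of the chosen basis elements $\phi_\infty(m_i)\times\phi^\vee_\infty(m^\vee_j)$ lie at vertices; then the tilting equivalence applies verbatim on the local pieces containing the points $\langle m_i,m^\vee_j\rangle_\infty$, and the identification of valuations is exactly the content of the tilting equivalence of perfectoid fields. Once this local matching is in place, bilinearity over $K^\flat$ extends the agreement $F=F'$ to all of $M'\times M'^\vee$, completing the proof.
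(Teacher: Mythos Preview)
Your approach matches the paper's: bilinearity follows from preservation of the partial group laws under tilting, and non-degeneracy is the argument of Lemma~\ref{lemma:M' a lattice} that tilting preserves valuations and hence tropicalizations. One small correction: there is no hypercube decomposition involved in building $B_\infty\times B^\vee_\infty$, since $B\times B^\vee$ has good reduction and its perfectoid cover comes directly from the formal abelian scheme as in Example~\ref{eg:good reduction}; the refinement you describe should instead be applied in the $\G_m$-fiber direction of the $\G_{m,\infty}$-torsor $P^\times_\infty$, which is where the valuation map $\ell$ actually lives.
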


\begin{proof}
Bilinearity follows because tilting preserves the two partial group laws on $ P_\infty$. Non-degeneracy follows as in Lemma \ref{lemma:M' a lattice}. 
\end{proof}

Applying \cite[Proposition 6.1.8]{Lut} to this bilinear form, we get a pair of abeloids $A':=E'/M'$ and $A'^\vee:=E'^\vee/M'^\vee$. Our goal is to show that $A'_\infty\cong A_\infty^\flat$, which will imply by symmetry that $A'^\vee_\infty\cong (A'^\vee_\infty)^\flat$. By Theorem \ref{thm:tilting A}, it is enough to show that $E'_\infty\cong E_\infty^\flat$ and that the map $M'\rightarrow E'_\infty$ induced by $\langle\cdot,\cdot\rangle'_\infty$ agrees with the tilt of the map $M\rightarrow E_\infty$ induced by $\langle\cdot,\cdot\rangle'_\infty$.

By the decomposition in Proposition \ref{prop:E_inf-coordinates}, to see that $E'_\infty\cong E_\infty^\flat$ it is enough to see that 
\[( P^\times_{\phi^\vee_\infty(m^\vee)})^\flat\cong  P'^\times_{\phi'^\vee_\infty (m^\flat)},\] but this is precisely the statement of Corollary \ref{cor:Poincare-fiber-tilt}. The statement about the bilinear forms follows because $h':M'\rightarrow E'$ is determined by the $r$ maps 
\[\langle\cdot,\phi'^\vee(m'^\vee_i)\rangle'_\infty:M'\rightarrow  P'^\times_{B'_\infty\times \phi'^\vee_\infty(m_i^\vee)},\]
which are by definition the tilts of the $r$ analogous maps defining $h$ in terms of $\langle\cdot,\cdot\rangle_\infty.$ We therefore get the desired result:

\begin{proposition}\label{prop:dual tilts}
Let $A$ be an abeloid over a perfectoid field $K$ with dual $A^\vee$, let $A_\infty$ and $(A^\vee)_\infty$ be the corresponding perfectoid covers. Then there is an abeloid $A'$ over $K^\flat$ with $A'_\infty\cong (A_\infty)^\flat$ and $(A'^\vee)_\infty\cong (A^\vee)_\infty^\flat$.
\end{proposition}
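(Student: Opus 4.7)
The plan is to assemble the result by combining the Poincar\'e bundle machinery of Section \ref{sub:perfectoid bundles} with the tilting of $F$-towers established in Theorem \ref{thm:tilting A}, using the symmetric role played by $A$ and $A^\vee$ in the Raynaud picture. The idea is that once the data $(B, B^\vee, P_{B\times B^\vee}^\times, \langle \cdot, \cdot\rangle)$ has been lifted compatibly to the perfectoid level and then tilted, the induced data over $K^\flat$ will simultaneously produce the dual pair $A', (A')^\vee$ satisfying both isomorphisms at once.

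First, I would invoke Corollary \ref{cor:P_inf-construction} to obtain the $\G_{m,\infty}$-torsor $P^\times_\infty \to B_\infty\times B^\vee_\infty$, choose group-homomorphism lifts $\phi_\infty:M\to B_\infty$, $\phi^\vee_\infty:M^\vee\to B^\vee_\infty$ of $\phi,\phi^\vee$, and use the partial group laws of Corollary \ref{cor:P_inf-fibers} to extend a choice of bilinear lift on a basis to a bilinear map $\langle\cdot,\cdot\rangle_\infty:M\times M^\vee\to P^\times_\infty$ lying above $\phi_\infty\times\phi^\vee_\infty$, as in Diagram (\ref{eq:Poincare form-inf}). By Proposition \ref{prop:A_inf-from-Poincare}, the top triangle of that diagram already encodes $E_\infty$, $E^\vee_\infty$, $A_\infty$, and $A^\vee_\infty$, so the choices made do not affect the perfectoid covers we end up with.

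Next, I would tilt: apply Proposition \ref{prop:Poincare-tilt} to identify $(P^\times_\infty)^\flat$ with the corresponding object $P'^\times_\infty$ over the lift $B'\times B'^\vee/K^\flat$ (where $B'$ is the good-reduction deformation from Section \ref{sub:deformations} with matching special fiber), and tilt the top triangle of (\ref{eq:Poincare form-inf}) to obtain Diagram (\ref{eqn:Poincare-triangle-inf-tilt}) with a bilinear map $\langle\cdot,\cdot\rangle'_\infty$ on $M'\times M'^\vee$. Projecting to the bottom gives $\langle\cdot,\cdot\rangle':M'\times M'^\vee\to P^\times_{B'\times B'^\vee}$, which is a non-degenerate bilinear form by Lemma \ref{lemma:bilinear-tilt}. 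By \cite[Proposition 6.1.8]{Lut}, this bilinear form simultaneously produces a lattice $M'\hookrightarrow E'$ and a dual lattice $M'^\vee\hookrightarrow E'^\vee$, and hence an abeloid pair $A':=E'/M'$, $(A')^\vee:=E'^\vee/M'^\vee$.

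Finally, I would verify the two isomorphisms. The decomposition of Proposition \ref{prop:E_inf-coordinates} expresses $E_\infty$ as a fiber product of torsors $P^\times_{\phi^\vee_\infty(m_i^\vee)}$, and each of these tilts to $P'^\times_{\phi'^\vee_\infty(m_i'^\vee)}$ by Corollary \ref{cor:Poincare-fiber-tilt}, yielding $E_\infty^\flat\cong E'_\infty$; symmetrically for $E^\vee$. The map $M_\infty\to E_\infty$ determined by $\langle\cdot,\cdot\rangle_\infty$ then tilts to the analogous map $M'_\infty\to E'_\infty$ induced by $\langle\cdot,\cdot\rangle'_\infty$ by construction, so the hypotheses of Theorem \ref{thm:tilting A} are satisfied for both $A$ and $A^\vee$ with the choices recorded in the single bilinear form $\langle\cdot,\cdot\rangle_\infty$. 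The main obstacle, already addressed in the preceding subsection, is precisely this compatibility: one must avoid making independent arbitrary choices of $p$-power roots on the two sides, and the use of a single perfectoid cover $P^\times_\infty$ of the Poincar\'e torsor is what ensures the resulting $A'$ and $(A')^\vee$ really are dual over $K^\flat$.
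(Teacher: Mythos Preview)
Your proposal is correct and follows essentially the same approach as the paper: the paper's argument is precisely the construction of the perfectoid Poincar\'e torsor $P^\times_\infty$, the compatible lifts $\phi_\infty,\phi^\vee_\infty,\langle\cdot,\cdot\rangle_\infty$ of Diagram~(\ref{eq:Poincare form-inf}), its tilt to Diagram~(\ref{eqn:Poincare-triangle-inf-tilt}) via Proposition~\ref{prop:Poincare-tilt} and Lemma~\ref{lemma:bilinear-tilt}, and then the verification $E_\infty^\flat\cong E'_\infty$ via Proposition~\ref{prop:E_inf-coordinates} and Corollary~\ref{cor:Poincare-fiber-tilt}, reducing to Theorem~\ref{thm:tilting A}. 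You have identified all the pieces and assembled them in the same order as the text preceding the proposition.
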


With this in hand, we tilt our line bundles.

\begin{theorem}\label{thm:tilting bundles}
Given a line bundle $L$ on an abeloid $A/K$, there is some model $\fA_\alpha$ of $A$ such that $L$ extends to a formal line bundle $\fL_\alpha$ on $\fA_\alpha$. If the degree of $L$ is prime to $p$, then after replacing $K$ by a pro-$p$ extension, there is some $A'/K^\flat$ as in Theorem \ref{thm:tilting A} and line bundle $L'$ extending to a formal line bundle $\fL'_{\alpha}$ such that the special fibers of the formal line bundles agree. The line bundle $L$ is ample if and only if $L'$ is.
\end{theorem}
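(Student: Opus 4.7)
For the first statement, the plan is to apply Proposition \ref{prop:formal bundles} together with Corollary \ref{cor:formal roots}. Given $L$ with data $(N,\sigma,\chi)$, the function $f:\R^r\to\R$ required to extend $L$ to a formal model is constrained only by the tropicalizations of the $c_m$, i.e.\ by the values $v(\chi(m))\in\Gamma\subset\Q$. By choosing $\alpha$ divisible enough (so that $\alpha$ divides both $\Lambda$ and every $v(\chi(m_i))$ on a fixed generating set of $M$), we can piece together a continuous, piecewise-affine $f$ with integer slopes on each hypercube of the $\alpha$-decomposition of $\R^r$ satisfying condition (\ref{fb2}). Combined with the formal extension $\mathfrak{N}$ of $N$ coming from \cite[Lemma 6.2.2]{Lut}, this yields $\fL_\alpha$ on $\fA_\alpha$, which is Proposition \ref{prop:formal alpha}.

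For the tilting statement, the strategy is to tilt the data $(N,\sigma,\chi)$ to $(N',\sigma',\chi')$ on $A'$ using the $\G_{m,\infty}$-torsor framework developed in Sections \ref{sub:perfectoid bundles} and \ref{sub:tilting bundles}. For $N$, I would apply Proposition \ref{prop:good bundles}: the $\G_{m,\infty}$-torsor $q_1^\ast N^\times$ on $B_\infty$ admits a compatible system of $p$-power roots (using the Theorem of the cube), and the assumption that $\deg N$ is prime to $p$ allows me to invoke Grothendieck's lifting theorem to deform $\widetilde N$ from $K^\circ/\varpi\cong K^{\flat\circ}/\varpi^\flat$ to a formal line bundle $\overline{N'}$ on $\overline{B'}$. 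Its generic fiber $N'$ has the same special fiber as $N$ by construction.

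To tilt $\sigma$ and $\chi$ simultaneously (and in compatible fashion), I would exploit the Poincar\'e-torsor apparatus of Section \ref{sub:tilting bundles}. The homomorphism $\sigma:M\to M^\vee$, together with the trivialization $\chi$, is equivalent via Diagram (\ref{eq:bundle diagram}) and the compatibility (\ref{eq:r compatibility}) to specifying a lift of the bilinear form $(m_1,m_2)\mapsto\langle m_1,\sigma(m_2)\rangle$ through $P^\times_{B\times B^\vee}$. I would lift this to a bilinear form valued in $P^\times_\infty$ using Proposition \ref{prop:A_inf-from-Poincare}, tilt it via Proposition \ref{prop:Poincare-tilt} to a bilinear form on $M'\times M'^\vee$ valued in $P^\times_{B'\times B'^\vee}$, and then recover $(\sigma',\chi')$ on the tilt side. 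Compatibility of diagram (\ref{eq:bundle diagram}) on the tilt is automatic because the tilt is a functor on the Poincar\'e torsor and its partial group laws, and non-degeneracy/bilinearity are preserved by Lemma \ref{lemma:bilinear-tilt}. (This is where the pro-$p$ extension of $K$ is consumed, as in the proof of Theorem \ref{thm:tilting A}, to ensure $p$-power roots of lattice points exist.) Applying the first part of the theorem to $L'$ then produces $\fL'_\alpha$; its special fiber agrees with that of $\fL_\alpha$ because $\widetilde N=\widetilde{N'}$, the valuation data determining $f$ lies in the common value group $|K^\times|=|(K^\flat)^\times|$, and the function $f$ (depending only on $v(\chi(m))=v(\chi'(m'))$) can be taken to be the same on both sides.

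For ampleness, Proposition \ref{prop:Raynaud line bundles} reduces the claim to two checks: $N$ is ample iff $N'$ is ample, and the bilinear form $\langle\cdot,\cdot\rangle_\sigma$ is positive definite iff $\langle\cdot,\cdot\rangle_{\sigma'}$ is. The first follows because ampleness of a formal line bundle on an abelian formal scheme is detected on the special fiber, and $\widetilde N=\widetilde{N'}$. The second follows because the tropicalization $F_\sigma:\R^r\times\R^r\to\R$ (as in (\ref{eq:line bundle form})) is computed purely from valuations, which are preserved by the tilting equivalence; concretely, the tropicalized form on $M'$ agrees with that on $M$ under the identification of $|K^\times|$ with $|(K^\flat)^\times|$. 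The main obstacle I anticipate is verifying the cocycle condition (\ref{eq:r compatibility}) for $\chi'$; the cleanest way to handle this is to package $\chi$ as a $\G_{m,\infty}$-equivariant section of $(\phi^\vee)^\ast N^\times$ over $M_\infty\hookrightarrow E^\vee_\infty$, so that the condition (\ref{eq:r compatibility}) becomes a compatibility between the section and the group law on $P^\times_\infty$, which transfers directly under tilting by Proposition \ref{prop:Poincare-tilt}.
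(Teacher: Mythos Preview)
Your overall strategy---tilt the triple $(N,\sigma,\chi)$ piece by piece and then reassemble via Proposition~\ref{prop:Raynaud line bundles}---matches the paper's, and your treatments of the first statement, of $N'$ (via Grothendieck's lifting with the prime-to-$p$ hypothesis), of $\sigma'$ (via the Poincar\'e-torsor tilting identifications $M\cong M'$, $M^\vee\cong M'^\vee$), and of ampleness are all essentially what the paper does.

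The gap is in your handling of $\chi$. You propose to ``package $\chi$ as a $\G_{m,\infty}$-equivariant section of $(\phi^\vee)^\ast N^\times$ over $M_\infty\hookrightarrow E^\vee_\infty$'' and then tilt via the Poincar\'e torsor. But $\chi$ is a trivialization of $\phi^\ast N$ on $M$ (not $(\phi^\vee)^\ast N$, and the relevant embedding is $M\hookrightarrow E$, not $M_\infty\hookrightarrow E^\vee_\infty$). More importantly, $\chi$ takes values in the fibers of $N$, not of the Poincar\'e bundle; the Poincar\'e-torsor machinery tilts the pairing $\langle\cdot,\cdot\rangle$ and hence $\sigma$, but it does not directly carry the section $\chi$ of $\phi^\ast N$. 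The paper sidesteps this entirely: since $\phi^\ast N$ over the discrete set $M$ is just $M\times K^\times$, one simply picks, for a basis $m_1,\dots,m_r$ of $M$, any elements $a_i'\in K^{\flat\times}$ with $(a_i')^\sharp=\chi(m_i)$ (the pro-$p$ extension guarantees such preimages exist), sets $\chi'(m_i'):=(m_i',a_i')$, and then \emph{extends to all of $M'$ using the rule~(\ref{eq:r compatibility})} with $\sigma'$ in place of $\sigma$. This last point is what makes the cocycle condition hold by construction, rather than something to be verified after the fact. Your instinct that~(\ref{eq:r compatibility}) is the delicate point is correct; the resolution is just to impose it as the definition of $\chi'$ off the basis.
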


\begin{proof}
By Proposition \ref{prop:Raynaud line bundles}, $L$ is equivalent to a line bundle $N$ on $B$, group homomorphism $\sigma:M\rightarrow M^\vee$, and trivialization $\chi:M\rightarrow \phi^*N$ such that the diagram 
\begin{equation}\label{eq:bundle diagram2}
	\begin{tikzcd}
	M \arrow[r, "\phi"] \arrow[d, "\sigma"] & B \arrow[d, "\varphi_N"] \\
	M^\vee \arrow[r, "\phi^\vee"] & B^\vee.
	\end{tikzcd}
\end{equation}
commutes, and the equation
\begin{equation}\label{eq:r compatibility2}
\chi(m_1+m_2)\otimes \chi(m_1)^{-1}\otimes \chi(m_2)^{-1}=\langle m_1,\sigma(m_2)\rangle
\end{equation}
holds for all $m_1,m_2\in M$. 

By Proposition \ref{prop:formal bundles} and Proposition \ref{prop:formal alpha}, $L$ extends to some formal model $\fA_\alpha$, and is equivalent to a choice of formal model $\mathfrak N'$ of $N$ and a continuous function $f$ built from ``tropical data" associated to $\sigma$ and $\chi$. To construct the desired $L'$, we use the data for $L$ to construct analogous data $N', \sigma',$ and $\chi'$ for $A'$. 

As we saw while proving Proposition  \ref{prop:good bundles}, there is some lift of $\widetilde{B'}$ to a formal abelian scheme $\overline{B'}$ over $K^{\flat\circ}$ such that the line bundle $\overline {\mathfrak N}$ lifts to a line bundle $\mathfrak N'$ with generic fiber $N'$. The line bundle $N'$ is ample if and only if its special fiber is. Using this $B'$ when we construct $A'$, we get the desired $N'$. We note that this step requires the condition on the degree of $L$ (and is the only place where it is used).

Constructing $A'$ and $A'^\vee$ simultaneously as in Proposition \ref{prop:dual tilts}, we get isomorphisms $\flat:M\cong M'$ and $\flat:M'\cong M'^\vee$ which agree on the special fiber (that is, their actions on the special fiber agree). We use these isomorphisms to transfer $\sigma$ to a group homomorphism $\sigma':M'\rightarrow M'^\vee$ which agrees with $\sigma$ on the special fiber. These isomorphisms therefore respect the resulting bilinear forms (\ref{eq:line bundle form}), so one is positive definite precisely when the other is. By the ampleness criteria in Proposition \ref{prop:Raynaud line bundles}, we see that $L$ is ample if and only if $L'$ is. 

To tilt the trivialization $\chi:M\rightarrow \phi^*N\cong M\times K$, we let $m'_1,\dots,m'_r$ be the image in $M'$ of the basis $m_1,\dots, m_r$ of $M$ under $\flat$. We define $r'(m'_i)$ to be $(m_i', a_i')$ for any $a_i'$ in $\sharp^{-1}(\chi(m_i))$ - perhaps making a pro-$p$ extension of $K$ to ensure that such an element exists - then extend to all of $M'$ using the rule (\ref{eq:r compatibility2}). 
\end{proof}

The condition on the degree of $L$ is somewhat annoying, but we believe it is unavoidable. For our perfectoid applications, Lemma \ref{lemma:isogeny covers} will let us get around the issue.

As a first application, we can upgrade Theorem \ref{thm:tilting A} to work for abelian varieties, not just abeloids.

\begin{theorem}\label{thm:tilting polarizations}
Let $A$ be an abelian variety over a perfectoid field $K$ with value group contained in $\Q$. Let $A_\infty$ be the perfectoid cover constructed in Theorem \ref{thm:cover of A}. After replacing $K$ by a pro-$p$ extension, there is an abelian variety $A'$ over $K^\flat$ such that $(A_\infty)^\flat\cong (A')_\infty$.
\end{theorem}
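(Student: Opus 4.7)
The plan is to upgrade Theorem \ref{thm:tilting A} by producing an ample line bundle on the abeloid $A'$, which will force $A'$ to be algebraic. The strategy splits into two steps: first reduce to a situation where Theorem \ref{thm:tilting bundles} applies directly, then use the ampleness transfer in that theorem to conclude algebraicity of $A'$.

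First I would reduce to the case where $A$ admits an ample line bundle $L$ of degree prime to $p$. Starting from any ample line bundle $L_0$ on $A$, the finite subgroup $K(L_0)[p^\infty] \subset K(L_0)$ has a (possibly after a finite extension of $K$, a maximal isotropic) subgroup $H$ for the Weil pairing associated to $L_0$, along which $L_0$ descends to an ample line bundle $L_1$ on the quotient abelian variety $A_1 := A/H$. Iterating if necessary, we arrange that $\deg L_1$ is prime to $p$. Since the isogeny $A\to A_1$ has $p$-power degree, Lemma \ref{lemma:isogeny covers} yields $A_\infty \cong (A_1)_\infty$, so it suffices to prove the theorem for $(A_1, L_1)$ in place of $(A, L_0)$. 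Replace $A$ by $A_1$ and $L$ by $L_1$.

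Next I would apply Theorem \ref{thm:tilting bundles} to this $L$. After replacing $K$ with a pro-$p$ extension, the theorem produces an abeloid $A'$ over $K^\flat$ satisfying $(A')_\infty \cong (A_\infty)^\flat$ as in Theorem \ref{thm:tilting A}, together with a line bundle $L'$ on $A'$ whose formal model has the same special fiber as that of $L$. The final clause of Theorem \ref{thm:tilting bundles} asserts that $L'$ is ample if and only if $L$ is; since $L$ is ample, so is $L'$. The existence of an ample line bundle on the rigid analytic abeloid $A'$ means $A'$ is algebraizable (by rigid GAGA, e.g.\ \cite[Theorem 6.4.4]{Lut} combined with the characterization of projective abeloids), hence $A'$ is an abelian variety.

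The main obstacle is the reduction in the first step: we need to know that every abelian variety is $p$-power isogenous to one admitting an ample line bundle of degree prime to $p$, and that the tilting construction is compatible with this replacement. The compatibility is handled cleanly by Lemma \ref{lemma:isogeny covers}, which ensures that passing along $p$-power isogenies does not change the perfectoid cover, so the $A'$ produced for $A_1$ also tilts the cover of $A$. The existence of a suitable descent of $L_0$ follows from standard descent of line bundles along isogenies (a subgroup $H\subset K(L_0)$ admits a descent of $L_0$ iff $H$ lies in the kernel of the theta group, which can be arranged inductively by taking maximal isotropic subgroups of the $p$-primary part of $K(L_0)$). With those inputs in hand, the remaining work is bookkeeping of the pro-$p$ extension required by Theorems \ref{thm:tilting A} and \ref{thm:tilting bundles}.
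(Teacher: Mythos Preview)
Your proposal is correct and follows essentially the same approach as the paper: reduce via a $p$-power isogeny (using Lemma \ref{lemma:isogeny covers}) to an abelian variety carrying an ample line bundle of degree prime to $p$, then apply Theorem \ref{thm:tilting bundles} to transport ampleness to $A'$ and conclude algebraicity. Your descent step via a maximal isotropic subgroup of $K(L_0)[p^\infty]$ is in fact a slightly more careful formulation of what the paper states tersely as quotienting by a ``$p$-Sylow subgroup of $\ker(\varphi_A)$''.
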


\begin{proof}
Proposition \ref{thm:tilting bundles} proves this for abelian varieties with a polarization of degree prime to $p$. Fixing any polarization $\varphi_A$ of $A$, we can quotient $A$ by a $p$-Sylow subgroup of $\ker(\varphi_A)$, giving us an isogeny $A\rightarrow A/\ker(\varphi_A)=:A_1$ of degree $p^k$ for some positive integer $k$. Then $\varphi_A$ induces a polarization on $A_1$ of degree prime to $p$. By Lemma \ref{lemma:isogeny covers} we can prove the theorem after replacing $A$ by $A_1$.
\end{proof}

\begin{corollary}\label{cor:Ginf bundles}
Given a line bundle $L$ on an abelian variety $A$ over $K$, after replacing $K$ by a pro-$p$ extension, there is a $(\G_{m,K})_\infty$ torsor $L^\times_\infty$ living over $q_1^*(L^\times)$.
\end{corollary}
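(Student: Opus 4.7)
The plan is to tilt $L$ to the $K^\flat$ side, use the fact that $\G_{m,\infty}$-torsors exist automatically in characteristic $p$, and then untilt.

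First I would reduce to the case where the degree of $L$ is prime to $p$. As at the end of the proof of Proposition \ref{prop:good bundles}, after a finite extension of $K$ there is an isogeny $f\colon A\to A_1$ of $p$-power degree such that $L$ descends to a line bundle $L_1$ on $A_1$ of degree prime to $p$. By Lemma \ref{lemma:isogeny covers}, $f$ induces an isomorphism $A_\infty\cong (A_1)_\infty$, and under this isomorphism $q_1^*(L^\times)$ corresponds to $q_1^*(L_1^\times)$ because $L=f^*L_1$ and $f\circ q_1 = q_1\circ f_\infty$. It therefore suffices to produce $L_\infty^\times$ for $L_1$.

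Now assuming $\deg L$ is prime to $p$, I would apply Theorem \ref{thm:tilting bundles}: after a pro-$p$ extension of $K$ there exists an abeloid $A'/K^\flat$ as in Theorem \ref{thm:tilting A} and a line bundle $L'$ on $A'$ together with formal models $\fL_\alpha$ on $\fA_\alpha$ and $\fL'_\alpha$ on $\fA'_\alpha$ having isomorphic special fibers under $K^\circ/\varpi\cong K^{\flat\circ}/\varpi^\flat$. Since $A'$ is in characteristic $p$, Lemma \ref{lemma:Gminfty exists char p} applied to the $\G_m$-torsor $q'^{*}_1((L')^\times)$ on $A'_\infty$ produces a canonical $\G_{m,\infty,K^\flat}$-torsor $(L')^\times_\infty$ sitting above it.

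Next I would untilt. The total space $(L')^\times_\infty$ is a perfectoid space over $A'_\infty$; under the isomorphism $A_\infty^\flat\cong A'_\infty$ from Theorem \ref{thm:tilting A}, the tilting equivalence of Theorem \ref{thm:tilting} produces a perfectoid space $L_\infty^\times$ over $A_\infty$ carrying an action of the untilt $(\G_{m,\infty,K^\flat})^{-\flat}=\G_{m,\infty,K}$, hence a $\G_{m,\infty,K}$-torsor on $A_\infty$. Pushing out along the projection $\G_{m,\infty}\to\G_m$ gives a $\G_m$-torsor on $A_\infty$, and I must identify it with $q_1^*(L^\times)$. For this, note that $q_1^*(L^\times)$ is the generic fiber of the pullback of the formal $\G_m$-torsor $\fL_\alpha^\times$ along $\fA_\infty\to\fA_\alpha$, and similarly for $q'^{*}_1((L')^\times)$. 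Because $\fL_\alpha$ and $\fL'_\alpha$ have the same special fiber, so do the pullbacks to $\fA_\infty$ and $\fA'_\infty$; the tilting-equivalence argument of Lemma \ref{cor:pF special fiber unique} (applied to the total spaces of the $\G_m$-torsors, which are perfectoid after pullback to the cover) then identifies $q_1^*(L^\times)$ with the untilt of $q'^{*}_1((L')^\times)$.

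The main obstacle is the final compatibility check: one must verify that the untilt of the bottom $\G_m$-torsor $q'^{*}_1((L')^\times)$ of $(L')^\times_\infty$ really is $q_1^*(L^\times)$ and not some twist. This amounts to running the same tilting-via-special-fiber comparison used throughout Sections \ref{sec:tilting abeloids} and \ref{sec:line bundles}, now applied at the level of $\G_m$-torsors rather than of the abeloids themselves, and is ultimately where the hypothesis that $L$ and $L'$ have matching formal models pays off.
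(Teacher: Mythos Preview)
Your overall strategy---reduce to degree prime to $p$, tilt $L$ to $L'$ on $A'$ via Theorem \ref{thm:tilting bundles}, build the $\G_{m,\infty}$-torsor automatically in characteristic $p$ via Lemma \ref{lemma:Gminfty exists char p}, and untilt---is exactly the paper's approach. The paper's proof is terse and just says the argument is the same as Proposition \ref{prop:good bundles}.

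There is, however, one difference in execution that matters. The paper invokes the Theorem of the cube to \emph{explicitly} construct the $p$th power roots $L_n$ of $q_1^*L$ at finite levels of the tower, exactly as in the proof of Proposition \ref{prop:good bundles}. Together with Corollary \ref{cor:formal roots} this produces an $F$-tower of formal line bundles over the $F$-tower $\{\fA_{\alpha/p^n}\}$, so the inverse limit $L^\times_\infty$ is perfectoid by the mechanism of Proposition \ref{prop:pF tower}. The comparison with the untilted object then happens at the level of perfectoid $\G_{m,\infty}$-torsors, where Lemma \ref{cor:pF special fiber unique} legitimately applies: both have the same special fiber, hence are isomorphic, and since the candidate $L^\times_\infty$ was built by hand to sit over $q_1^*(L^\times)$, the identification of the base $\G_m$-torsor is automatic.

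You instead skip the Theorem of the cube and attempt the identification directly at the level of the $\G_m$-torsors $q_1^*(L^\times)$ and the untilt of $q_1'^*((L')^\times)$, invoking Lemma \ref{cor:pF special fiber unique} on their total spaces. This is the gap: those total spaces are \emph{not} perfectoid (locally they look like $\G_m\times U_\infty$, and rigid $\G_m$ is not perfectoid), so that lemma does not apply. Matching special fibers of formal $\G_m$-torsors over $\fA_\infty$ does not, on its own, force the generic fibers to agree---this would amount to the statement $\Pic(A_\infty)\cong\Pic(A_\infty^\flat)$, which is true (see Remark \ref{rem:Ben}) but is not what you cite and is not proved in the paper. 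The cleanest fix is precisely what the paper does: construct the roots first via the Theorem of the cube, so that the special-fiber comparison happens between genuinely perfectoid objects.
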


\begin{proof}
Now that we can transfer line bundles from $A$ to $A'$, the argument is essentially the same as in Proposition \ref{prop:good bundles}. We can again use the Theorem of the cube to construct $p$th power roots at finite levels of the tower, and we can use Theorem \ref{thm:tilting bundles} to find a system of line bundles over $A'$ with isomorphic special fibers. On the tilt side, Lemma \ref{lemma:Gminfty exists char p} gives a $\G_{m,\infty}^\flat$-torsor. Untilting, we get a $\G_{m,\infty}$ torsor with the correct special fiber, so it must be the one we are looking for.
\end{proof}

\begin{remark}\label{rem:Ben}
Ben Heuer tells us that he can prove the stronger result that $\Pic(A_\infty)\cong \Pic(A_\infty^\flat)$, using that $H^i(A_\infty,\Z_p)=0$ for $i>0$. In particular, this gives an affirmative answer to \cite[Open Problem 5.14]{DH} in this case. See \cite[Theorem 1.8]{BenGood} for a description of $\Pic^0(A_\infty)$. 
\end{remark}

%%%%%%%%%%%%%
%%%%%%%%%%%%%
%%%%%%%%%%%%%
%%%%%%%%%%%%%
%%%%%%%%%%%%%

\section{Weight-monodromy for complete intersections in abelian varieties}\label{sec:weight-monodromy}

%%%%%%%%%%%%%
%%%%%%%%%%%%%
%%%%%%%%%%%%%
%%%%%%%%%%%%%
%%%%%%%%%%%%%

We are now ready to prove our main result: the weight-monodromy conjecture for complete intersections in abelian varieties. In Section \ref{sub:WMC background}, we give a more detailed overview of the setup of the weight-monodromy conjecture. We then prove the theorem in Section \ref{sub:WMC proof}.

%%%%%
%%%%%
\subsection{Background on the weight-monodromy conjecture}\label{sub:WMC background}
%%%%%
%%%%%

Let $X$ be a proper smooth $d$-dimensional variety over a local field $k$ of residue characteristic $p$, let $q$ be the cardinality of the residue field. Let $\ell$ be a prime not equal to $p$, let $i$ be an integer between 0 and $2d$. Then the $i$th \'etale cohomology group $V:=H^i_{\et}(X_{\overline k},\overline \Q_\ell)$ is a finite-dimensional $\overline \Q_\ell$-vector space with an action of the absolute Galois group $G_k:=\Gal(\overline k/k)$ of $k$. This action defines a Galois representation $\rho:G_k\rightarrow \GL(V)$.

In this section, we explain how to define two filtrations on $V$. The monodromy filtration comes from the inertia subgroup $I_k$ of $G_k$, and the weight filtration comes from the action of a lift of Frobenius from the residue field. The weight-monodromy conjecture says that these filtrations are the same.

The monodromy filtration comes out of the following result of Grothendieck. 

\begin{proposition}[{\cite[Proposition on page 515]{ST}}]\label{prop:unipotent}
Let $\rho:G_k\rightarrow \GL(V)$ be a finite-dimensional $\overline \Q_\ell$-representation. Then there is an open subgroup $I_1$ of the inertia subgroup $I_k$ such that for all $g$ in $I_1$, the matrix $\rho(g)$ is unipotent.
\end{proposition}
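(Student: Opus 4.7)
The plan is to carry out the classical argument of Grothendieck, which proceeds in four steps: (i) use compactness to put the representation into integral form, (ii) exploit $\ell \neq p$ to kill wild inertia on an open subgroup, (iii) reduce to a single topological generator via the structure of tame inertia, and then (iv) use the conjugation action of Frobenius to pin down eigenvalues.

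First I would pick a basis of $V$ and observe that the image $\rho(G_k) \subset \GL(V)$ is a compact subgroup of $\GL_n(\overline{\Q}_\ell)$. Since any compact subgroup of $\GL_n(\overline{\Q}_\ell)$ is contained in $\GL_n(\cO_E)$ for some finite extension $E/\Q_\ell$ (stabilize a lattice by averaging, noting that the image lands in a finitely generated $\overline{\Z}_\ell$-submodule because it is compact), I can assume $\rho$ takes values in $\GL_n(\cO_E)$. Let $\lambda \in \cO_E$ be a uniformizer and let $\Gamma \subset \GL_n(\cO_E)$ be the kernel of reduction mod $\lambda$; then $\Gamma$ is a pro-$\ell$ group, and its preimage $I_1 := \rho^{-1}(\Gamma) \cap I_k$ is an open subgroup of $I_k$.

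Next I would use that the wild inertia subgroup $P_k \subset I_k$ is pro-$p$. Since $p \neq \ell$, any continuous homomorphism from a pro-$p$ group to a pro-$\ell$ group is trivial, so $\rho(P_k \cap I_1) = 1$. Therefore $\rho|_{I_1}$ factors through the tame quotient $I_1/(P_k \cap I_1)$, which sits inside the tame inertia $I_k/P_k \cong \prod_{\ell' \neq p} \Z_{\ell'}(1)$. Projecting to the pro-$\ell$ factor gives a topological generator $t \in I_1$ of the pro-$\ell$ image, and since $\rho(I_1) \subset \Gamma$ is pro-$\ell$, the element $\rho(t)$ topologically generates $\rho(I_1)$.

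Now I would exploit the key identity: for any lift $\Phi \in G_k$ of geometric Frobenius, one has $\Phi t \Phi^{-1} = t^q$ in the tame quotient (where $q$ is the residue cardinality). Applying $\rho$ shows $\rho(t)$ is conjugate to $\rho(t)^q$, so the multiset of eigenvalues of $\rho(t)$ is stable under $\lambda \mapsto \lambda^q$. Since there are only finitely many eigenvalues, each is a root of unity. Replacing $I_1$ by the open subgroup on which $\rho(t)^{N!}$ generates the image, for $N$ large enough that all the eigenvalues become $1$, ensures $\rho(g)$ is unipotent for every $g \in I_1$.

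The main obstacle is really step (i) plus the bookkeeping in (iv): one must be careful that ``replacing $I_1$'' preserves openness and that the stable lattice construction works over $\overline{\Q}_\ell$ rather than a fixed finite extension, but both points are standard. Everything else is formal once one has the separation of $\ell$ and $p$ and the structure of tame inertia.
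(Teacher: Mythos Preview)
The paper does not give its own proof of this proposition: it is stated with a citation to \cite{ST} and no further argument. Your proposal reproduces exactly the classical Grothendieck argument from that reference (integral lattice via compactness, kill wild inertia by the $p\neq\ell$ dichotomy, reduce to a tame generator, then use $\Phi t\Phi^{-1}=t^q$ to force eigenvalues to be roots of unity), so it is correct and matches what the paper is invoking.
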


We can now define the monodromy filtration using the nilpotent operator $N:=\log(\rho(g))$ on $V$ constructed in the course of the previous proof (after a Tate twist).

\begin{DefProp}\label{def:monodromy filtration}
Let $N:V\rightarrow V$ be a nilpotent operator on a finite-dimensional vector space. Then there is a unique finite increasing filtration $\Fil_j^N$ of $V$ such that $N\Fil_j^N$ is contained in $\Fil_{j-2}^N$ and such that $N^k$ induces an isomorphism $\Gr_{i+j}^N V\cong \Gr_{i-j}^N V.$ Explicitly, we have
\[\Fil_j^N V = \displaystyle\sum_{j_1-j_2=j} \ker N^{j_1+1}\cap \tIm N^{j_2}.\]
\end{DefProp}

\begin{proof} 
\cite[Proposition 1.6.1]{Weil2}
\end{proof} 

The monodromy filtration therefore depends only on the pro-$\ell$ part of the inertia in the Galois group. Taking pro-$p$ extensions of our base field will therefore preserve the monodromy filtration.

To define the weight filtration, we fix an element $\Phi\in \Gal_k$ which maps to geometric Frobenius on the residue field. We use the matrix $\rho(\Phi)\in \GL(V)$ to decompose $V$ into eigenspaces. Work of Rapoport-Zink, along with de Jong's alterations theorem, gives us information about the eigenvalues.

\begin{proposition}\label{prop:weight decomp}
With notation as above, we have a decomposition \[V=\bigoplus_{j=0}^{2i} W'_j,\] where the eigenvalues of $\rho(\Phi)$ acting on $W'_j$ are Weil numbers of weight $j$. That is, they are algebraic numbers, and for any embedding of $k$ into $\C$, they have absolute value $q^{j/2}$.
\end{proposition}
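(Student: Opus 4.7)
The plan is to combine de Jong's alterations theorem with the Rapoport--Zink weight spectral sequence and Deligne's proof of the Weil conjectures. First I would reduce to the case where $X$ has strictly semistable reduction. By de Jong's alterations, after a finite extension $k'/k$ (which replaces $\Phi$ by a power, preserving both the claimed decomposition and the purity statement), there is a proper generically \'etale alteration $\pi: X' \to X_{k'}$ such that $X'$ admits a strictly semistable regular model $\mathfrak{X}'$ over the ring of integers of $k'$. Because $\pi$ is generically \'etale and proper, $\pi_*\overline{\Q}_\ell$ contains $\overline{\Q}_\ell$ as a $G_k$-equivariant direct summand on the generic fiber, so $H^i_{\et}(X_{\overline{k}}, \overline{\Q}_\ell)$ is a Galois subrepresentation of $H^i_{\et}(X'_{\overline{k}}, \overline{\Q}_\ell)$. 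Hence it suffices to prove the claim for $X'$.

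Next I would apply the Rapoport--Zink weight spectral sequence associated to the strictly semistable model $\mathfrak{X}'$. Its $E_1$-page is assembled from $\ell$-adic cohomology groups of the smooth proper intersections of irreducible components of the special fiber $\widetilde{\mathfrak{X}'}$, suitably Tate-twisted, and it converges to $H^{p+q}_{\et}(X'_{\overline{k}}, \overline{\Q}_\ell)$. Each $E_1^{p,q}$ term is the $\ell$-adic cohomology of a smooth projective variety over the finite residue field, so Deligne's Weil II shows that Frobenius acts on $E_1^{p,q}$ with eigenvalues that are algebraic numbers whose complex absolute values equal $q^{q/2}$. By purity the spectral sequence degenerates at $E_2$, and the induced filtration on the abutment has graded pieces on which $\Phi$ acts with eigenvalues of a single prescribed weight.

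Finally, I would extract the direct sum decomposition. Working over $\overline{\Q}_\ell$, the operator $\rho(\Phi)$ decomposes $V$ into its generalized eigenspaces, and I would define $W'_j$ to be the sum of those generalized eigenspaces whose eigenvalues are Weil numbers of weight $j$ (that is, algebraic numbers of complex absolute value $q^{j/2}$). The Rapoport--Zink spectral sequence guarantees that every Frobenius eigenvalue on $V$ is a Weil number of some integer weight between $0$ and $2i$, and that all eigenvalues are algebraic; this ensures that the sum is direct and exhausts $V$. The main obstacle is the reduction to the semistable case: without a semistable (or at least strictly semistable) model the weight spectral sequence is unavailable, which is precisely why de Jong's alterations theorem is indispensable. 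Once strict semistability is achieved, purity and the eigenspace decomposition follow from standard Weil II inputs.
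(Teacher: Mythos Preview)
Your proposal is correct and follows essentially the same route as the paper's sketch: reduce via de Jong's alterations to the strictly semistable case, then invoke the Rapoport--Zink weight spectral sequence together with Deligne's Weil II to see that every Frobenius eigenvalue on $V$ is a Weil number of some integer weight, and finally define $W'_j$ as the sum of generalized eigenspaces of a given weight. Your write-up is in fact more careful than the paper's sketch (e.g., spelling out why alterations give a direct summand and why the eigenspace decomposition exhausts $V$); the only superfluous step is the appeal to $E_2$-degeneration, which is true but not needed here since the eigenvalues on the abutment are already subquotients of those on $E_1$.
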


\begin{proof}[Sketch]
See \cite{RZ} for details, or the nice expositions in \cite[Section 3]{Ill_monodromy} and \cite[Section 2]{Scholl}. We use de Jong's alterations theorem to reduce to the case where $X$ has a semistable model. The special fiber of this model consists of finitely many smooth varieties, intersecting transversely. By the Riemann hypothesis part of the Weil conjectures, the Frobenius eigenvalues on each irreducible component of the special fiber are all Weil numbers of weight $i$. If $j$ components have non-empty intersection, it is a smooth variety with codimension $j$, so the Frobenius eigenvalues are Weil numbers of weight $i-j$. The Rapoport-Zink spectral sequence builds the actual cohomology of $X$ out of these pieces, giving the desired decomposition.
\end{proof}

If we chose a different lift of Frobenius, we'd get a different decomposition of $V$. However, the weights and dimensions remain the same, as does the weight filtration:

\begin{definition}\label{def:weight filtration}
With notation as in Proposition \ref{prop:weight decomp}, the \emph{weight filtration} on $V$ is defined by letting the $m$th piece be \[W_m=\bigoplus_{j=0}^m W'_j.\]
\end{definition}

For any lift $\Phi$ of geometric Frobenius, we have $N\Phi = q\Phi N$. This implies that the monodromy operator interacts nicely with the weight filtration: we have $NW_m\subset W_{m-2}$. The weight-monodromy conjecture says more:

\begin{conjecture}[Deligne, \cite{Hodge}]\label{conj:WMC7}
For a proper smooth variety $X$ over a local field $k$, the weight and monodromy filtrations on the $i$th $\ell$-adic \'etale cohomology group of $X$ are the same. Equivalently, the eigenvalues of any lift $\Phi$ of geometric Frobenius on any graded piece $\operatorname{gr}_j^N V$ of the monodromy filtration are Weil numbers of weight $i+j$.
\end{conjecture}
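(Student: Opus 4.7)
The plan is as follows. First, I would dispose of the equal-characteristic case, which was settled by Deligne in Weil II (with the simplifying hypothesis removed by Terasoma and Ito), and concentrate on the case where $k$ has mixed characteristic with residue characteristic $p$. I would then replace $k$ by a pro-$p$ extension $K$ that is perfectoid. This step is cost-free for the conjecture: the monodromy filtration depends only on the pro-$\ell$ part of the inertia, and the weight decomposition of Proposition \ref{prop:weight decomp} depends only on the image of Frobenius in the unramified quotient, so both filtrations are preserved by a pro-$p$ extension.

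Second, I would attempt to realize the $G_K$-representation $V=H^i_{\et}(X_{\overline K},\overline\Q_\ell)$ as coming, via the Fontaine--Wintenberger isomorphism $G_K\cong G_{K^\flat}$, from the $\ell$-adic cohomology of a proper smooth variety $X'$ over the tilt $K^\flat$. Once this is achieved, Deligne's theorem in equal characteristic $p$ applies to $X'$ and transfers back to $X$, giving the conjecture. The blueprint for producing $X'$ is Scholze's: find a proper smooth ambient space $Y$ containing $X$ for which there is a perfectoid tilde-limit $Y_\infty\sim \varprojlim Y$ whose tilt $Y_\infty^\flat$ has the form $\varprojlim Y'$ for some proper smooth $Y'$ over $K^\flat$, and then approximate the (transcendental) pullback of $X$ in $Y_\infty^\flat$ by a genuine subvariety $X'\subset Y'$ whose $\ell$-adic cohomology matches that of the pullback of $X$ to $Y_\infty$.

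The cohomological infrastructure for such a transfer is already available. Proposition \ref{prop:tilde etale} tells us that \'etale topoi and hence $\ell$-adic cohomology behave well under perfectoid tilde-limits, and the tilting equivalence of Scholze identifies the \'etale site of $Y_\infty$ with that of $Y_\infty^\flat$. Thus, once a candidate $X'$ is constructed and shown to have the same $\ell$-adic cohomology as the pullback of $X$ to $Y_\infty$, one obtains a canonical Galois-equivariant isomorphism $H^i_{\et}(X_{\overline K},\overline\Q_\ell)\cong H^i_{\et}(X'_{\overline{K^\flat}},\overline\Q_\ell)$ from which the conjecture for $X$ follows from Deligne's theorem for $X'$.

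The main obstacle -- and the reason the conjecture is still open in general -- is carrying out this program for an \emph{arbitrary} proper smooth $X$. One needs (i) a suitable ambient space $Y$ admitting a perfectoid cover that tilts in a controlled way, and (ii) an approximation lemma allowing a genuine subvariety $X'\subset Y'$ to replace the fractal pullback of $X$ without changing \'etale cohomology. Scholze carried this out for $Y$ a toric variety and $X$ a set-theoretic complete intersection in $Y$; Theorem \ref{thm:WMC abelian varieties} of this paper does the same for $Y$ an abelian variety. Even choosing $Y$ is unclear in general: the only ambient spaces for which (i) is currently known are toric varieties and abeloids, and (ii) is presently only understood for complete intersections, whose cohomology by Lefschetz is largely determined by that of $Y$ itself. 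I expect the principal obstruction to lie in step (ii): a general approximation lemma, valid beyond the complete-intersection setting, would likely require controlling higher cohomology of twists of line bundles tilted from $X$ (via the framework developed in Section \ref{sec:line bundles}), and would be the place where any serious attack on the full conjecture should be focused.
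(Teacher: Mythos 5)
You have not given a proof, and you could not have: the statement in question is Conjecture \ref{conj:WMC7}, Deligne's weight-monodromy conjecture, which the paper merely \emph{states} (as motivation and target) and does not prove. In mixed characteristic the conjecture remains open; the paper's actual contribution is the special case Theorem \ref{thm:WMC abelian varieties}, namely set-theoretic complete intersections in abelian varieties, proved via the framework of Proposition \ref{prop:Sch-framework} together with the approximation result Proposition \ref{prop:approx}. So there is no proof in the paper for your proposal to be compared against, and your text is best read as a strategy summary rather than an argument.

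Within that reading, your preliminary reductions are sound and agree with the paper's own discussion in Section \ref{sub:WMC background}: the equal-characteristic case is Deligne's (with Terasoma and Ito removing the simplifying hypothesis), the monodromy filtration depends only on the pro-$\ell$ part of inertia, and the weight filtration is insensitive to a pro-$p$ extension, so passing to a perfectoid $K$ is harmless. But from there your plan is exactly Scholze's program as extended in this paper, and the two steps you label (i) and (ii) — producing a perfectoid cover of a general ambient space with a controlled tilt, and an approximation lemma valid beyond complete intersections that preserves $\ell$-adic cohomology — are precisely the content that is missing and that nobody currently knows how to supply for an arbitrary proper smooth $X$. You acknowledge this yourself, which is honest, but it means the ``proof'' establishes nothing beyond what is already known: the conjecture holds in the cases covered by Deligne, Scholze, and Theorem \ref{thm:WMC abelian varieties}, and is open otherwise. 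If you want to work on this seriously, your closing diagnosis is a reasonable place to start, but as it stands the proposal is an outline of an open problem, not a proof of the statement.
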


The equivalence of these statements is in \cite[Section 1.6]{Weil2}. Deligne gave another interpretation of this conjecture in terms of the poles of $L$-functions associated to sheaves. He used this interpretation to prove the conjecture when $k$ has characteristic $p$ and $X$ is defined over a curve. See \cite[Section 1.8]{Weil2} for the proof, or \cite[Section 9]{PS} for a quick overview of the idea.  

%{\color{red} To do (ideally, but low priority...) sketch Deligne's proof in char p, sketch relation with poles of L-functions}

%%%%%
%%%%%
\subsection{Weight-monodromy for complete intersections in abelian varieties}\label{sub:WMC proof}
%%%%%
%%%%%

Scholze's approach to weight-monodromy \cite[Section 9]{PS} is summarized in the following:

\begin{proposition}\label{prop:Sch-framework}
Let $Y$ be a $d$-dimensional, geometrically connected, proper smooth variety over a local field $k$ of residue characteristic $p$, fix a prime $\ell\neq p$. Let $K$ be a perfectoid pro-$p$ extension of $k$, let $G$ be the absolute Galois group of $K$ and $K^\flat$. To prove the weight-monodromy conjecture for $Y$, it is enough to construct a $d$-dimensional, geometrically connected, proper smooth variety $Z$ over $K^\flat$ which can be defined over some local field contained in $K^\flat$, along with maps \[f_i^*: H^i(Y_{\C_p,\et}, \overline\Q_\ell)\rightarrow H^i(Z_{\C_p^\flat,\et},\overline\Q_\ell)\] for $i\in\{0,\dots,2d\}$ which are $G$-equivariant, compatible with cup product, and such that $f^*_{2d}$ is an isomorphism.
\end{proposition}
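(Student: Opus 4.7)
The plan is to use the data $(Z, f_i^*)$ as a bridge from Deligne's characteristic-$p$ theorem (applied to $Z$) back to weight-monodromy for $Y$. The three ingredients I will combine are: base change invariance of both filtrations under the pro-$p$ extension $k \subset K$; Deligne's theorem on $H^i(Z_{\C_p^\flat, \et}, \overline{\Q}_\ell)$ transported to a $G$-representation via Fontaine-Wintenberger; and a Poincar\'e duality trick that converts the hypothesis ``$f_{2d}^*$ is an isomorphism'' into injectivity of every $f_i^*$.

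First I would observe that it suffices to prove the conjecture for the $G$-representation $V := H^i(Y_{\C_p,\et}, \overline{\Q}_\ell)$ rather than as a $G_k$-representation, exactly as argued in the introduction: any lift of geometric Frobenius in $G$ extends to one in $G_k$ acting by the same matrix on cohomology, so both the set of weights and the weight filtration are unchanged; and the monodromy filtration depends only on the pro-$\ell$ quotient of inertia, which is insensitive to a pro-$p$ extension. So it is enough to show the two filtrations agree on $V$.

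Next, using Fontaine-Wintenberger to identify $G = G_K$ with $G_{K^\flat}$, I would view $V' := H^i(Z_{\C_p^\flat,\et}, \overline{\Q}_\ell)$ as a $G$-representation. Because $Z$ descends to a local field contained in $K^\flat$, Deligne's theorem (as extended in \cite{Terasoma} and \cite{Ito} to remove the simplifying hypothesis) gives that the weight and monodromy filtrations on $V'$ coincide. The $G$-equivariance of $f_i^*$ guarantees that it intertwines both Frobenius and the monodromy operator $N$; so once I know $f_i^*$ is injective, the monodromy filtration on $V$ is the preimage of that on $V'$, and on each graded piece $\Gr_j^N V \hookrightarrow \Gr_j^N V'$ the eigenvalues of any Frobenius lift are Weil numbers of weight $i+j$. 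This is precisely the statement of weight-monodromy for $V$.

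The one step requiring real care --- and the main obstacle internal to this proposition --- is the injectivity of each $f_i^*$. Here I would invoke Poincar\'e duality for proper smooth rigid analytic varieties: geometric connectedness of $Y$ makes $H^{2d}(Y_{\C_p,\et},\overline{\Q}_\ell(d))$ one-dimensional, and the cup product pairing
\[
H^i(Y_{\C_p,\et}, \overline{\Q}_\ell) \times H^{2d-i}(Y_{\C_p,\et}, \overline{\Q}_\ell(d)) \longrightarrow H^{2d}(Y_{\C_p,\et}, \overline{\Q}_\ell(d))
\]
is perfect, and likewise for $Z$. Given $v \in V$ with $f_i^*(v) = 0$, I pick $w$ with $v \cup w \neq 0$; compatibility of the $f_i^*$ with cup product then gives $0 = f_i^*(v) \cup f_{2d-i}^*(w) = f_{2d}^*(v \cup w)$, contradicting that $f_{2d}^*$ is an isomorphism. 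This yields injectivity and completes the reduction. The genuine difficulty of the paper lies not in this framework proposition but in the subsequent construction of $Z$ together with the maps $f_i^*$, and in verifying that $f_{2d}^*$ is an isomorphism via an analogue of Scholze's approximation lemma adapted to abelian varieties.
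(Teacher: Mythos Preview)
Your proposal is correct and follows essentially the same route as the paper: you prove injectivity of each $f_i^*$ via Poincar\'e duality and the hypothesis that $f_{2d}^*$ is an isomorphism, then combine $G$-equivariance with Deligne's theorem for $Z$ to transport weight-monodromy back to $Y$. The only cosmetic differences are that you spell out the base-change reduction to $K$ (which the paper defers to its introduction) and you place the injectivity argument at the end rather than the beginning.
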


\begin{proof}
We first check that the maps $f_i^*$ are all injective. To see this, we work with the diagram

\begin{center}
\begin{tikzcd}
H^i(Y_{\C_p,\et}, \overline\Q_\ell)\times H^{2d-i}(Y_{\C_p,\et}, \overline\Q_\ell) \arrow[d, "{f_i^*\times f_{2d-i}^*}"] \arrow[r, "\cup"] & H^{2d}(Y_{\C_p,\et}, \overline\Q_\ell) \arrow[d, "{f_{2d}^*}"] \arrow[r, "\cong"] & \overline\Q_\ell \arrow[d, "\cong"] \\
H^i(Z_{\C^\flat_p,\et}, \overline\Q_\ell)\times H^{2d-i}(Z_{\C^\flat_p,\et}, \overline\Q_\ell) \arrow[r, "\cup"] & H^{2d}(Z_{\C^\flat_p,\et}, \overline\Q_\ell)  \arrow[r, "\cong"] & \overline\Q_\ell. 

\end{tikzcd}
\end{center}

By Poincar\'e duality, the rows give perfect pairings between $H^i$ and $H^{2d-i}$ for every $i$. The left square commutes because the $f_i^*$ are compatible with cup product. The right square is all isomorphisms because $f_{2d}^*$ is an isomorphism and $Y$ and $Z$ are geometrically connected. 

Let $c\in H^i(Y_{\C_p,\et}, \overline\Q_\ell)$ be a class with $f_i^*(c)=0\in H^i(Z_{\C^\flat_p,\et}, \overline\Q_\ell).$ As every class in $H^{2d-i}(Z_{\C^\flat_p,\et}, \overline\Q_\ell)$ pairs to 0 with 0, commutativity of the diagram tells us that every class in $H^{2d-i}(Y_{\C_p,\et}, \overline\Q_\ell)$ must pair to 0 with $c$. As the top row is a perfect pairing, $c$ must therefore be 0, so $f_i^*$ is injective as desired.

By $G$-equivariance, the map $f_i^*$ respects the monodromy filtrations on $H^i(Y_{\C_p,\et}, \overline\Q_\ell)$ and $H^i(Z_{\C^\flat_p,\et}, \overline\Q_\ell)$. By injectivity, the $j$th graded piece of $H^i(Y_{\C_p,\et}, \overline\Q_\ell)$ is a sub-vector space of the $j$th graded piece of $H^i(Z_{\C^\flat_p,\et}, \overline\Q_\ell).$ Every Frobenius eigenvalue for $Y$ is therefore a Frobenius eigenvalue for $Z$. As $Z$ is proper, smooth, and can be defined over a characteristic $p$ local field, Deligne proved weight-monodromy for $Z$. That is, every Frobenius eigenvalue of $Z$ is a Weil number of degree $i+j$. The same is therefore true for $Y$ and weight-monodromy holds. 
\end{proof}

Specializing to our case of interest, let $Y$ be a complete intersection in an abelian variety $A$ which satisfies the properties of Proposition \ref{prop:Sch-framework}. We must construct a variety $Z\subset A'$ and maps $f^*_i$ which also satisfy the properties of Proposition \ref{prop:Sch-framework}. To construct $Z$, we use the following approximation result: our analogue of Scholze's approximation result \cite[Proposition 8.7]{PS}. 

\begin{proposition}\label{prop:approx}
Let $Y\subset A$ be a hypersurface, let $\tilde{Y}$ be a small open neighborhood of $Y$, where we are considering $Y$ and $A$ as adic spaces. Let $q_1:|(A_\infty)^\flat|\rightarrow |A|$ be the map of topological spaces induced by tilting to $A_\infty$, then projecting to the bottom of the inverse system. Let $q_1':|(A_\infty)^\flat|\rightarrow |A'|$ be the map induced by projection. Then there exists a hypersurface $Z'\subset A'$ such that $q_1'^{-1}(Z')\subset q_1^{-1}(\tilde Y)$. Equivalently, there exists some open $\tilde{Y'}\subset A'$ with $q_1'^{-1}(\tilde{Y'})=q_1^{-1}(\tilde Y)$ and a hypersurface $Z'\subset \tilde{Y'}$.
\end{proposition}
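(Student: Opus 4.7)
I follow the strategy of Scholze's approximation lemma \cite[Proposition 8.7]{PS}, using the perfectoid covers of line bundles from Section \ref{sec:line bundles} to replace Scholze's manipulations with toric coordinates. The outline is: cut $Y$ out of $A$ by a section $s$ of an ample line bundle $L$; approximate the pullback of $s$ to $A_\infty$ by a $p^n$-th power in the tower of $p^n$-th roots of $q_1^*L$ provided by Corollary \ref{cor:Ginf bundles}; tilt to $A_\infty^\flat \cong A'_\infty$; and exploit that in characteristic $p$, the $p^n$-th power of such a root descends to a section on $A'$ whose zero locus is the desired hypersurface $Z'$.

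More concretely, first express $Y$ as the vanishing locus of a section $s$ of an ample line bundle $L$, extend $L$ to a formal line bundle on some model $\fAn$ using Proposition \ref{prop:formal alpha}, and arrange $\tilde Y$ to be the locus $\{|s| < |\varpi|^k\}$ measured in this integral structure. After a pro-$p$ extension of $K$, Corollary \ref{cor:Ginf bundles} supplies a $\G_{m,\infty}$-torsor $L^\times_\infty$ on $A_\infty$ lying over $q_1^* L^\times$, giving a compatible system $\{L_n\}$ with $L_n^{\otimes p^n} \cong q_1^* L$. Perfectoidness of $A_\infty$ --- surjectivity of Frobenius on $\cO_{A_\infty}^+/\varpi$ --- then lets one approximate $q_1^* s$ by $p^n$-th powers: for each $k$, taking $n$ large, there exists $t_n \in \Gamma(A_\infty, L_n)$ with $t_n^{p^n} \equiv q_1^* s \pmod{\varpi^k}$. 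Applying the tilting equivalence to these $\G_{m,\infty}$-torsors and their sections (in the manner of \cite{DH}), the class of $t_n$ modulo $\varpi^k$ transports to a section $t'_n$ of a line bundle $L'_n$ on $A'_\infty$ modulo $(\varpi^\flat)^k$, with $(L'_n)^{\otimes p^n}$ arising as the pullback $q_1'^* L'$ of a line bundle $L'$ on $A'$.

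Now one forms $(t'_n)^{p^n}$, a section of $q_1'^* L'$ on $A'_\infty$. Since we are in characteristic $p$, the $p^n$-th power distributes over sums and is essentially a Frobenius pullback; combining this with the fact from Theorem \ref{thm:formal model for A} that the transition maps in the $F$-tower for $A'$ factor through relative Frobenius mod $\varpi^\flat$, one concludes that $(t'_n)^{p^n}$ descends through the $F$-tower to an honest section $s'$ of a suitable line bundle on $A'$ itself. The zero locus of $s'$ is the sought hypersurface $Z'$. The inclusion $q_1'^{-1}(Z') \subset q_1^{-1}(\tilde Y)$ is then verified by unwinding the approximations: a point $x \in A_\infty^\flat$ with $(t'_n)^{p^n}(x) = 0$ corresponds under the sharp map to a point $x^\sharp \in A_\infty$ at which $|q_1^* s(x^\sharp)| \leq |\varpi|^k$, placing its image in $A$ inside $\tilde Y$ provided $k$ was chosen large enough relative to the opening of $\tilde Y$.

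The main obstacle will be this final descent. In the toric setting the descent is immediate because $p^n$-th powers of functions in the perfectoid Tate algebra visibly descend to polynomials; in the abelian setting one must argue via the $F$-tower structure and the classification of line bundles on Raynaud extensions (Proposition \ref{prop:Raynaud line bundles}) that the section $(t'_n)^{p^n}$ lives on $A'$ and not just on some intermediate level of the tower, and that the line bundle it is a section of matches the tilted $L'$ coming from Theorem \ref{thm:tilting bundles} (or at worst differs from it by a predictable Frobenius twist which can be absorbed by replacing $L$ with a sufficient tensor power, invoking Lemma \ref{lemma:isogeny covers}). A secondary concern is matching the approximation quality $|\varpi|^k$ to the original open neighborhood $\tilde Y$; this should be possible after a suitable refinement of the formal model, in the spirit of Proposition \ref{prop:formal bundles}.
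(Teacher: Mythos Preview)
Your strategy is in the right spirit, but your packaging introduces an obstacle that the paper's proof avoids. The paper does not work on $A_\infty$ with the $\G_{m,\infty}$-torsor and then attempt to descend $(t'_n)^{p^n}$ to $A'$. Instead it builds a single perfectoid \emph{ring} $R_\infty = (\varinjlim_n R_n)^\wedge$, where $R_n = \bigoplus_{m \geq 0} H^0(A, \cL_n^{\otimes m})$ is the graded section ring of the line bundle $L_n$ on $A$, and the transition maps come from a tower of total spaces $L_{n+1} \to L_n$ lying over $[p]: A \to A$ (these $L_n$ are the same theorem-of-the-cube $p$-th roots you invoke, but kept at finite level rather than pulled up to $A_\infty$). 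The integral structure comes from formal models $\fL_n$ on $\fA_{\alpha/p^n}$, and a short lemma shows the tower $\fL_{n+1} \to \fL_n$ reduces mod $\varpi$ through relative Frobenius, so $R_\infty$ is perfectoid. By Theorem \ref{thm:tilting bundles} the same construction on the $K^\flat$ side gives $R_\infty^\flat = (\varinjlim_n R'_n)^\wedge$ with each $R'_n$ a section ring on $A'$. One then applies the ordinary perfectoid approximation lemma \cite[Lemma 6.5]{PS} (as packaged in \cite[Proposition 9.2.7]{Bhatt_notes}) to approximate $q_1^* f \in R_\infty$ by $g^\sharp$ with $g \in R_\infty^\flat$, and perturbs $g$ to lie in some $R'_i$. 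Since $R'_i$ is already a ring of sections of a line bundle on $A'$, the zero locus of $g$ is the hypersurface $Z'$ with no further descent argument.

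Your concern about descent is therefore legitimate in your formulation but dissolved by the paper's choice of perfectoid object: every finite level of $R_\infty^\flat$ consists of sections on $A'$ itself. By contrast, your $(t'_n)^{p^n}$ lives a priori only on $A'_\infty$, and since $q'_1: A'_\infty \to A'$ is the $[p]$-tower rather than the Frobenius tower (there is an \'etale Verschiebung piece), $p^n$-th powers do not transparently descend along it. Relatedly, your step approximating $q_1^* s$ by $t_n^{p^n}$ appeals to surjectivity of Frobenius on $\cO_{A_\infty}^+/\varpi$, but that gives $p$-th roots of \emph{functions}, not of sections of a nontrivial line bundle; the correct perfectoid object carrying $q_1^* f$ is precisely $R_\infty$ (equivalently, the structure sheaf of the total space of your $\G_{m,\infty}$-torsor), and once you make that explicit you have essentially recovered the paper's argument.
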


\begin{proof}

There is some invertible sheaf $\cL_1$ on $A$ and an element $f\in H^0(A,\cL_1)$ with zero locus $Y$. We will inductively define an inverse system of line bundles $L_1\rightarrow L_2\rightarrow\cdots$ so that the induced direct system of rings of global functions $R_1\leftarrow R_2\leftarrow\cdots$ gives a perfectoid ring $R_\infty=(\varinjlim R_i)^\wedge$. The tilt $R_\infty^\flat$ will be given by an analogous limit over $A'$. This will allow us to approximate $q_1^*f$ by a function pulled back from $A'$.
%The tilt $R_\infty^\flat$ will be made up of functions on $A_\infty^\flat$, and functions pulled back from finite level will be dense in $R_\infty^\flat$.

Given an invertible sheaf $\cL_n$ on $A$, the associated line bundle (in the category of schemes, not adic or rigid spaces) is \[L_n:=\underline{\Spec}\big( \bigoplus_{m\in \N} \cL_n^{\otimes m}\big).\] We write \[R_n \cong  \bigoplus_{m\in \N} H^0(A,\cL_n^{\otimes m})\] for the ring of global functions of this line bundle. Note that if we worked in the category of rigid or adic spaces and analytified this line bundle (as we did in the previous chapter), we would get many more global functions: the rigid affine line has many more global functions than just $K[T]$. However, if we start with a formal model $\fL_n$ in the sense of \ref{def:formal_line_bundle}, we have $H^0(\fA_n, \fL_n)[\frac{1}{p}]\cong H^0(A^{an},\cL^{an}_n)\cong H^0(A,\cL_n)$, where the last isomorphism is rigid GAGA \cite[\S 5]{Kopf}, \cite[Example 3.2.6]{Conrad}. These isomorphisms compatibly extend to the tensor powers $\cL_n^{\otimes m}$, so we can use them define a $K^\circ$-submodule of $R_n$. We topologize $R_n$ by letting this submodule be a ring of definition. This allows us to make sense of the completion in the definition of $R_\infty$.

When $p$ is odd, we define \[L_{n+1}=L_n^{\otimes \frac{p+1}{2}}\otimes [-1]^*L_n^{\frac{p-1}{2}}.\] By the Theorem of the cube, we have $L_{n+1}^{\otimes p}\cong [p]^*L_n$. If $p=2$, we instead let $L_{n+1}=L_n^{\otimes 5}\otimes[-1]^*L_n^{\otimes 3}$, which satisfies $L_{n+1}^{\otimes 2}\cong [4]^*L_n$, then argue similarly. As in the discussion after Definition \ref{def:Gm infty torsor}, we have a morphism $L_{n+1}\rightarrow L_{n+1}^{\otimes p}$ induced by the inclusion $ \bigoplus_{m\in \N} \cL_{n+1}^{\otimes pm}\hookrightarrow  \bigoplus_{m\in \N} \cL_{n+1}^{\otimes m}$. Now we can define the morphism $L_{n+1}\rightarrow L_n$ as the composition \[L_{n+1}\rightarrow L_{n+1}^{\otimes p}\cong [p]^*{L_n} \rightarrow L_n.\] This fits into the diagram 
\begin{equation}
\begin{tikzcd}
L_{n+1} \arrow[d, "f_n"] \arrow[r] & A \arrow[d, "{[p]}"] \\
L_n \arrow[r] & A.
\end{tikzcd}
\end{equation}

By Proposition \ref{prop:formal bundles}, there is some formal model $\fA_\alpha$ of $A$ and formal line bundle $\fL_1$ on $\fA_\alpha$ which is a model of $L_1$. Inductively applying Corollary \ref{cor:formal roots}, there is then a formal line bundle $\fL_n$ on $\fA_{\alpha/p^n}$ which is a model of $L_n$. We therefore get a formal model of the above diagram
\begin{equation}\label{eq:formal line bundle square}
\begin{tikzcd}
\fL_{n+1} \arrow[d] \arrow[r] & \fA_{\alpha/p^{n+1}} \arrow[d, "{[p]}"] \\
\fL_n \arrow[r] & \fA_{\alpha/p^n}.
\end{tikzcd}
\end{equation}

\begin{lemma}
The mod $\varpi$ special fiber of the map $\fL_{n+1}\rightarrow\fL_{n}$ in diagram \ref{eq:formal line bundle square} factors through relative Frobenius.
\end{lemma}

\begin{proof}
In the proof of Theorem \ref{thm:formal model for A}, we constructed formal analytic covers $\cU_{n+1}=\{U_{i,n+1}\}$ and $\cU_n=\{U_{j,n}\}$ of $A$ leading to the models $\fA_{\alpha/p^{n+1}}$ and $\fA_{\alpha/p^n}$. These covers were constructed so that the map $[p]:A\rightarrow A$ sends each $U_{i,n+1}$ to some $U_{j,n}$, and the special fibers of the corresponding map of formal affine opens factors through relative Frobenius. We have chosen $\alpha$ so that the line bundles $L_{n+1}$ and $L_n$ are trivial over the covers $\cU_{n+1}$ and $\cU_n$ respectively. So locally on the source, $f_n$ restricts to the map $\A^1_K\times U_{i,n+1}\rightarrow \A^1_K\times U_{j,n}$ which is the product of the map $\A^1_K\rightarrow \A^1_K$ sending the coordinate $T$ to $T^p$ with the restriction of $[p]$ to $U_{i,n+1}$. Both of these factor through relative Frobenius on the special fiber, so their product does as well.
\end{proof}

\begin{corollary}
The ring $R_\infty$ is perfectoid.
\end{corollary}

\begin{proof}
As the morphism $\overline{\fL_{n+1}}\rightarrow\overline{\fL_{n}}$ factors through relative Frobenius, the induced map $R^\circ_n/\varpi\rightarrow R^\circ_{n+1}/\varpi$ also factors through relative Frobenius. Indeed, by the Kunneth formula (see \cite[Theorem 14]{Kempf} for a version with the appropriate generality), we have
 \[\Gamma(\overline{\fL_{n+1}}^{(p)})= \Gamma(\overline{\fL_{n+1}}\times_{\Spec(K^\circ/\varpi), \Frob} \Spec(K^\circ/\varpi))\cong  \Gamma(\overline{\fL_{n+1}})\otimes_{K^\circ/\varpi,\Frob} K^\circ/\varpi\cong\]
 \[R_{n+1}^\circ/\varpi\otimes_{K^\circ/\varpi,\Frob} K^\circ/\varpi\cong (R_{n+1}^\circ/\varpi)^{(p)},\] so the correct universal property is satisfied on global functions. Now the local argument of Proposition \ref{prop:pF tower} applies.
\end{proof}

Now by Theorem \ref{thm:tilting bundles}, we have a line bundle $L'_1$ over an abelian variety $A'/K^\flat$ with a formal model $\fL'_1$ defined over $\fA'_{\alpha}$ such that the mod $\varpi^\flat$ special fiber agrees with the mod $\varpi$ special fiber of $\fL_1\rightarrow \fA_{\alpha}$. Tracing through the above construction with $\fL'_1$ in place of $\fL_1$ gives $R_\infty^\flat=(\varprojlim R'_i)^\wedge$.

We can let $\tilde Y=\{x\in A: |f(x)|\leq \epsilon\}$ for some $\epsilon$, where the absolute value comes from the integral model for $L$ defined in Proposition \ref{prop:formal bundles}. Pulling back $\tilde Y$ and $f$ along $q_1$, we get an open set in $A_\infty$ defined as an $\epsilon$-neighborhood of $q_1^*(f)$, which we can view as an element of $R_\infty$. Using \cite[Proposition 9.2.7]{Bhatt_notes}, which builds off of \cite[Lemma 6.5]{PS}, we can approximate the vanishing locus of $q_1^*(f)$ by that of an element $g^\sharp$ in the image of $\sharp:R_\infty^\flat\rightarrow R_\infty.$ We can tweak $g$ so that it is the pullback of some $g\in R'_i$ from some finite level of the tower, and so that it is defined over $k$. Then $q_1^{-1}(\tilde Y)$ is an $\epsilon$-neighborhood of the vanishing locus of $g$. Taking a big enough power of $g$, we can assume that it comes from the bottom copy of $A'$ on the tower. The vanishing locus $Z'$ of $g$ is the desired hypersurface. 
\end{proof}

As in \cite[Corollary 8.8]{PS}, we can intersect hypersurfaces to get the analogous result when $Y$ is a complete intersection in $A$. We get an open $\tilde Y'\subset A'$ such that $q_1'^{-1}(\tilde Y')=q_1^{-1}(\tilde Y)$ as topological spaces in $A'_\infty$, and a closed variety $Z'\subset A'$ of dimension $d$. We can choose $Z'$ to be defined over a local field in $K^\flat$, and take a component to ensure that it is geometrically connected. We emphasize that even though we used the theory of adic spaces to construct $Z'$, it is a variety over a local field.

As in \cite[Section 9]{PS}, we now make some reductions. By \cite[Theorem 3.6a]{Huber_finite}, the relevant cohomology groups can be computed on the associated adic spaces. We therefore analytify everything from now on, working strictly with adic spaces. By \cite[Theorem 3.8.1]{Huber_etale}, there is an open neighborhood $\tilde Y$ of $Y$ in $A$ such that $Y$ and $Y'$ have the same cohomology. By de Jong's alterations theorem \cite{deJ}, we can choose a projective smooth alteration $Z\rightarrow Z'$. The induced maps on cohomology are injective, giving $H^i(Z'_{\C_p^\flat, \et},\overline\Q_\ell)$ as a direct summand of $H^i(Z_{\C_p^\flat, \et},\overline\Q_\ell)$. It is therefore enough to construct maps from the cohomology of $Y$ to the cohomology of $Z$.

The final difficulty, compared with Scholze's proof for toric varieties, is that the map $[p]:A'\rightarrow A'$ does not induce an isomorphism of \'etale topoi. We therefore have no analogue of Scholze's projection map $\pi:\P^n_{K^\flat}\rightarrow \P^n_K$ (defined on \'etale topoi and topological spaces, and more generally defined for toric varieties). To get around this, our cohomology computations must run through the perfectoid covers more explicitly.

The tilting equivalence along with \cite[Theorem 7.17]{PS} gives isomorphisms of \'etale topoi

\[\varprojlim_{[p]} (A'_{K^\flat})^\sim_{\text{\'et}} \cong (A_\infty^\flat)^\sim_{\text{\'et}}  \cong \varprojlim_{[p]} (A_K)^\sim_{\text{\'et}}.\] 
By \cite[Theorem 7.16]{PS}, these inverse limits commute with taking fiber products along \'etale maps to $A_K$ or $A'_{K^\flat}$. We therefore get the following diagram of \'etale topoi

\begin{center}
\begin{tikzcd}
 & (A_\infty^\flat)^\sim_{\text{\'et}} \arrow[rd, "q_1"]  \arrow[ld, swap, "q_1'"] & \\
 (A'_{\C_p^\flat})^\sim_{\text{\'et}} & (q_1^{-1}(\tilde Y)_{\C_p^\flat})^\sim_{\text{\'et}} \arrow[dr] \arrow[u] \arrow[dl] &  (A_{\C_p})^\sim_{\text{\'et}}  \\
(\tilde Y'_{\C_p^\flat})^\sim_{\text{\'et}} \arrow[u] & & (\tilde Y_{\C_p})^\sim_{\text{\'et}} \arrow[u] \\
(Z_{\C_p^\flat})^\sim_{\text{\'et}} \arrow[u] & & (Y_{\C_p})^\sim_{\text{\'et}}. \arrow[u, "{\cong}"]

\end{tikzcd}
\end{center}
Here the parallelograms are fiber diagrams, and the diagonal maps are projection maps to the bottom of the tilde-limits constructed by iterating the maps $[p]$ for $A'$ and $A$. 

We first construct the maps $f^*_i$ from this diagram, then check that it satisfies the desired properties. In the above diagram, pullback induces isomorphisms 
\[H^i(\tilde Y_{\C_p,\et},\Z/\ell^m\Z)\rightarrow H^i(Y_{\C_p,\et},\Z/\ell^m\Z)\]
for all $i$ and $m$, and maps
 \[H^i(\tilde Y_{\C_p,\text{\'et}},\Z/\ell^m\Z)\rightarrow H^i(q_1^{-1}(\tilde Y)_{\C_p^\flat,\et},\Z/\ell^m\Z),\]
 \[H^i(\tilde Y'_{\C^\flat_p,\text{\'et}},\Z/\ell^m\Z)\rightarrow H^i(Z_{\C_p^\flat,\text{\'et}},\Z/\ell^m\Z),\]
It remains to construct a map \[H^i(q_1^{-1}(\tilde Y)_{\C_p^\flat},\Z/\ell^m\Z)\rightarrow H^i(\tilde Y'_{\C^\flat_p,\text{\'et}},\Z/\ell^m\Z)\]

To do this, we factor the morphism $[p]:A'\rightarrow A'$ as \[A'\xrightarrow{\varphi} A'/A'[p]^0\xrightarrow{\psi} A'.\] Here $A'[p]^0$ is the identity component of the finite group scheme $A'[p]$. The morphism $\varphi$ is radicial and induces a homeomorphism of topological spaces, so it induces an isomorphism of \'etale topoi as in \cite[Corollary 7.19]{PS}. The morphism $\psi$ is finite \'etale.

Restricting these morphisms to \[[p]^{-1}(\tilde Y'_{\C_p^\flat})\xrightarrow{\varphi} \psi^{-1}(\tilde Y'_{\C_p^\flat})\xrightarrow{\psi} \tilde Y'_{\C_p^\flat},\] we again have a radicial homeomorphism followed by a finite \'etale map. As the radicial map induces an isomorphism of \'etale topoi, we get isomorphisms \[H^i([p]^{-1}(\tilde Y)_{\C_p^\flat},\Z/\ell^m\Z)\xrightarrow{\cong} H^i(\psi^{-1}(\tilde Y_{\C_p^\flat}),\Z/\ell^m\Z)\] for all $i$ and $m$. As $\psi$ is finite \'etale, the trace map \cite[Tag 03SH]{Stacks} gives maps \[H^i(\psi^{-1}(\tilde Y_{\C_p^\flat}),\Z/\ell^m\Z)\rightarrow H^i(\tilde Y_{\C_p^\flat},\Z/\ell^m\Z)\] for all $i$ and $m$. These maps are induced by the adjunction maps $\psi_*\psi^*(\Z/\ell^m\Z)\rightarrow \Z/\ell^m\Z$ of sheaves on $\tilde Y_{\C^\flat_p}$, and are defined as the composition 
 \[H^i(\psi^{-1}(\tilde Y_{\C_p^\flat}),\Z/\ell^m\Z)\xrightarrow{\cong}H^i(\tilde Y_{\C_p^\flat}, \psi_*\psi^*(\Z/\ell^m\Z))\rightarrow H^i(\tilde Y_{\C_p^\flat},\Z/\ell^m\Z).\]

Composing these, we get a map
\[H^i([p]^{-1}(\tilde Y)_{\C_p^\flat},\Z/\ell^m\Z)\rightarrow H^i(\tilde Y_{\C_p^\flat},\Z/\ell^m\Z).\]
Iterating this process for the inverse system defining $q_1^{-1}(\tilde Y)_{\C_p^\flat}\sim\varprojlim_n [p^n]^{-1}(\tilde Y'_{\C_p}),$ we get the desired map 
 \[H^i(q_1^{-1}(\tilde Y)_{\C_p^\flat},\Z/\ell^m\Z)\rightarrow H^i(\tilde Y'_{\C^\flat_p,\text{\'et}},\Z/\ell^m\Z).\] 
 
All of the maps are induced by a combination of pullback along morphisms of adic spaces and (in the case of trace) homomorphisms of \'etale sheaves. By functoriality of these constructions, the compositions $H^i(Y_{\C_p,\et},\Z/\ell^m\Z)\rightarrow H^i(Z_{\C_p^\flat,\et},\Z/\ell^m\Z)$ are $G$-equivariant and compatible with cup product. Taking the inverse limit and tensoring with $\overline \Q_\ell$, the same is true of the maps $f^*_i:H^i(Y_{\C_p,\et},\overline \Q_\ell)\rightarrow H^i(Z_{\C_p^\flat,\et},\overline \Q_\ell).$ There is only one condition left to check for Proposition \ref{prop:Sch-framework} to apply.

\begin{lemma}
The map $f_{2d}^*:H^{2d}(Y_{\C_p,\et},\overline \Q_\ell)\rightarrow H^{2d}(Z_{\C_p^\flat,\et},\overline \Q_\ell)$ is an isomorphism.
\end{lemma}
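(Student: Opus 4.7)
By Poincar\'e duality applied to the geometrically connected, proper smooth $d$-dimensional varieties $Y$ and $Z$, both $H^{2d}(Y_{\C_p,\et},\overline\Q_\ell)$ and $H^{2d}(Z_{\C_p^\flat,\et},\overline\Q_\ell)$ are one-dimensional $\overline\Q_\ell$-vector spaces, canonically isomorphic to $\overline\Q_\ell(-d)$. Hence $f_{2d}^*$ is an isomorphism if and only if it is nonzero, and the whole plan is devoted to showing nonvanishing.

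The approach is to track the fundamental class of $Y$ through the factorization
\[H^{2d}(Y_{\C_p}) \xleftarrow{\sim} H^{2d}(\tilde Y_{\C_p}) \xrightarrow{q_1^*} H^{2d}(q_1^{-1}\tilde Y_{\C_p^\flat}) \xrightarrow{\mathrm{tr}} H^{2d}(\tilde Y'_{\C_p^\flat}) \to H^{2d}(Z_{\C_p^\flat}),\]
where the first isomorphism is Huber's tube-neighborhood invariance, and the second and third maps are the ones constructed above the statement. The key finite-level calculation is that for each $n$ the restriction $[p]^n : [p^n]^{-1}\tilde Y' \to \tilde Y'$ is finite flat of degree $\deg([p]^n) = p^{2gn}$ (where $g=\dim A'$), factoring through a radicial piece (inducing an isomorphism on \'etale topoi) and a finite \'etale piece. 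Therefore the composition of pullback $[p]^{n,*}$ with the associated trace is multiplication by $p^{2gn}$ on cohomology, which is a unit in $\Z_\ell$ since $\ell\neq p$. This shows that at every finite level the trace-pullback composition is an isomorphism, and passing to the direct/inverse limit gives that the composite $H^{2d}(\tilde Y) \to H^{2d}(\tilde Y')$ is nonzero on any class which matches a nonzero class in the limit.

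To conclude, I need the final map $H^{2d}(\tilde Y') \to H^{2d}(Z)$ to be nonzero. Here I would apply Huber's theorem once more to the neighborhood $\tilde Y'$ of $Z'$ in $A'$, giving $H^{2d}(\tilde Y') \cong H^{2d}(Z')$, and then use that the alteration $Z \to Z'$ from de Jong's theorem induces an injective pullback on top-degree cohomology (as explained earlier in the paper). Combining all the pieces, the fundamental class of $Y$ traces to a nonzero multiple of the fundamental class of $Z'$ in $H^{2d}(\tilde Y')$, and then restricts nontrivially to $H^{2d}(Z)$.

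The main obstacle I anticipate is precisely the cycle-class matching in the middle: one must show that the fundamental class of $Y$, pulled back to $H^{2d}(q_1^{-1}\tilde Y) \cong H^{2d}(q_1'^{-1}\tilde Y')$ via the tilting equivalence and then traced down to $H^{2d}(\tilde Y')$, becomes a nonzero scalar multiple of the class of $Z'$. This is the cohomological incarnation of Proposition \ref{prop:approx}: $Z'$ was constructed precisely so that $q_1'^{-1}Z'$ is contained in an arbitrarily small $\epsilon$-neighborhood of $q_1^{-1}Y$ inside $A_\infty^\flat$. Huber's invariance of cohomology under small perturbations of the underlying subvariety should then force $[Y]$ and $[Z']$ to define the same class up to a nonzero scalar once we pass through the perfectoid cover, at which point the multiplication-by-$p^{2gn}$ trace calculation finishes the proof.
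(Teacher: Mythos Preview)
Your reduction to nonvanishing and your trace-times-pullback computation are both correct and match the paper's setup. The genuine gap is in the last step. You want to invoke Huber's theorem to obtain $H^{2d}(\tilde Y')\cong H^{2d}(Z')$, but Huber's result applies only to a \emph{sufficiently small} tubular neighborhood of $Z'$, whereas $\tilde Y'$ is not chosen that way: it is defined by the condition $q_1'^{-1}(\tilde Y')=q_1^{-1}(\tilde Y)$, i.e.\ as the tilt of a neighborhood of $Y$, and $Z'$ merely happens to lie inside it. There is no reason for $\tilde Y'$ to be close enough to $Z'$ for the isomorphism to hold. You see this yourself --- the ``main obstacle'' you describe, matching the class of $Y$ with that of $Z'$ through the perfectoid cover, is exactly this point, and your appeal to ``Huber's invariance under small perturbations'' does not resolve it; there is no such statement comparing two distinct subvarieties of $A_\infty^\flat$ that are merely close.

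The paper bypasses this entirely by enlarging the commutative diagram to include the ambient abelian varieties $A$ and $A'$. Since $[p]$ induces an isomorphism on $\ell$-adic cohomology of an abelian variety, the maps $H^{2d}(A_{\C_p})\to H^{2d}(A_\infty^\flat)\to H^{2d}(A'_{\C_p^\flat})$ along the top are isomorphisms. By commutativity, nonvanishing of $f_{2d}^*$ reduces to showing that the restriction $H^{2d}(A'_{\C_p^\flat})\to H^{2d}(Z_{\C_p^\flat})$ is nonzero, and this is immediate: the $d$th power of the first Chern class of an ample line bundle on $A'$ restricts nontrivially to the $d$-dimensional projective variety $Z$. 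The point is that working with classes pulled back from the ambient abelian variety (rather than the fundamental class of $Y$) completely avoids the cycle-matching problem you were stuck on.
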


\begin{proof}
Both cohomology groups are isomorphic to $\overline\Q_\ell$, so it is enough to show that the map is non-zero. It is enough to check this when the coefficients are $\Z/\ell^m\Z$, then pass to the inverse limit. In this situation, the above constructions give the following commutative diagram.

\begin{center}
\begin{tikzcd}
 & H^{2d}(A_\infty^\flat,\Z/\ell^m\Z) \arrow[ld, swap, "tr"]  \arrow[d] & \\
 H^{2d}(A'_{\C_p^\flat},\Z/\ell^m\Z) \arrow[d] & H^{2d}(q_1^{-1}(\tilde Y)_{\C_p^\flat},\Z/\ell^m\Z)  \arrow[dl,swap, "tr"] &  H^{2d}(A_{\C_p},\Z/\ell^m\Z) \arrow[lu] \arrow[d] \\
H^{2d}(\tilde Y'_{\C_p^\flat},\Z/\ell^m\Z) \arrow[d] & & H^{2d}(\tilde Y_{\C_p},\Z/\ell^m\Z) \arrow[ul] \\
H^{2d}(Z_{\C_p^\flat},\Z/\ell^m\Z)  & & H^{2d}(Y_{\C_p},\Z/\ell^m\Z) \arrow[u, "{\cong}"]

\end{tikzcd}
\end{center}

The argument is now essentially the same as \cite[Lemma 9.8]{PS}, with a slightly bigger diagram. As multiplication by $p$ induces an isomorphism on all $\ell$-adic cohomology groups of an abelian variety, the two diagonal maps at the top of the diagram are isomorphisms. It therefore suffices to show that the map $H^{2d}(A'_{\C^\flat_p},\Z/\ell^m\Z)\rightarrow H^{2d}(Z_{\C^\flat_p},\Z/\ell^m\Z)$ is non-zero. The $d$th power of the first Chern class of an ample line bundle on $A'$ will have non-zero image, so we are done. 

\end{proof}

This concludes the proof of Theorem \ref{thm:WMC abelian varieties}.

\begin{remark}\label{rem:lefschetz}
The words ``set-theoretic" in the statement of both Scholze's theorem and our version make it difficult to know what varieties the theorems apply to. For most varieties, it is unknown if it is possible to write them in this fashion. Ignoring the words set-theoretic then, most proper smooth complete intersections $Y$ in abelian varieties are not also proper smooth complete intersections in toric varieties. We learned the following argument from the MathOverflow answer \cite{FP}. When $\dim(Y)\geq 3$, the Lefschetz hyperplane theorem says that $\pi_1(Y_{\C_p})=\pi_1(A_{\C_p})$. If $Y$ were also a complete intersection in a proper smooth toric variety $X$, we would also have $\pi_1(Y_{\C_p})=\pi_1(X_{\C_p})$. But $\pi_1(X_{\C_p})$ is trivial and $\pi_1(A_{\C_p})$ is not, giving a contradiction.
\end{remark}

\bibliographystyle{alpha}  
\bibliography{thebib}

\end{document}